\newcommand{\nind}[3]{#1 & \hspace{.5cm} & #2; #3.} 
\newtheorem{thm}{Theorem}
\newtheorem{cor}[thm]{Corollary}
\newtheorem{lemma}[thm]{Lemma}
\newtheorem{prop}[thm]{Proposition}
\theoremstyle{definition}
\newtheorem{defn}[thm]{Definition}
\newtheorem{remark}[thm]{Remark}
\newtheorem{non-example}[thm]{Non-Example}
\newtheorem{fact}[thm]{Fact}
\DeclareMathOperator{\End}{End}
\DeclareMathOperator{\Hom}{Hom}
\DeclareMathOperator{\Res}{Res}
\DeclareMathOperator{\Ind}{Ind}
\DeclareMathOperator{\Irr}{Irr}
\DeclareMathOperator{\diag}{diag}
\DeclareMathOperator{\antidiag}{antidiag}
\def\mf{\mathfrak}
\def\phi{\varphi}
\def\wtilde{\widetilde} 
\def\hat{\widehat}
\def\C{\mathbb{C}}
\title{Dual Pairs in $PGL(n,\C)$}
\author{Marisa Gaetz}
\email{mgaetz@mit.edu}
\thanks{The author was supported by the NSF Graduate Research Fellowship Program under Grant Nos.~1745302 and 2141064, and by the Fannie \& John Hertz Foundation.}
\begin{document}

\maketitle

\begin{abstract}
In Roger Howe's seminal 1989 paper ``Remarks on classical invariant theory," he introduces the notion of Lie algebra dual pairs, and its natural analog in the groups context: a pair $(G_1,G_2)$ of reductive subgroups of an algebraic group $G$ is a \textit{dual pair} in $G$ if $G_1$ and $G_2$ equal each other's centralizers in $G$. While reductive dual pairs in the complex reductive Lie algebras have been classified, much less is known about algebraic group dual pairs, which were only fully classified in the context of certain classical matrix groups. In this paper, we classify the reductive dual pairs in $PGL(n,\mathbb{C})$. 
\end{abstract}

\tableofcontents

\section{Introduction} \label{sec:introduction}

Throughout this paper, $U$ will denote a finite-dimensional complex vector space. The goal of this paper is to classify the conjugacy classes of \textit{reductive dual pairs} in $PGL(U)$.

\begin{defn}[{Howe \cite[p.~551]{HoweRemarks}}] \label{defn:dual-pair}
Let $G$ be a complex algebraic group. A pair $(G_1, G_2)$ of algebraic subgroups of $G$ is a \textbf{reductive dual pair} in $G$ if
\begin{enumerate}[label = (\arabic*)]
\item $G_1$ and $G_2$ are reductive in $G$, and 
\item $Z_G(G_1) = G_2$ and $Z_G(G_2) = G_1$,
\end{enumerate} 
where $Z_G(\cdot)$ denotes taking the centralizer in $G$.
\end{defn}

Since we will only be considering reductive dual pairs in this paper, we will just use ``dual pair" to refer to a reductive dual pair.

\subsection{Prior work}

Dual pairs were first introduced in Roger Howe's seminal 1989 paper \cite{HoweRemarks}, in which he primarily focuses on \textit{Lie algebra dual pairs}: a pair of Lie subalgebras $(\mf{g}_1, \mf{g}_2)$ of a Lie algebra $\mf{g}$ is a \textit{dual pair} in $\mf{g}$ if $\mf{z}_{\mf{g}}(\mf{g}_1) = \mf{g}_2$ and $\mf{z}_{\mf{g}}(\mf{g}_2) = \mf{g}_1$ (where $\mf{z}_{\mf{g}}(\cdot)$ denotes the centralizer in $\mf{g}$). Since their initial introduction by Howe, dual pairs in Lie groups and Lie algebras have been widely used and studied \cite{superalgebras,Collingwood, myarxiv, my-liealgebras,cubic,Ginzburg,Savin4,Savin2,KovT,Kov,Cases,Savin3,theta-lifting,ohta-1,Panyushev,rubenthaler,Savin1,partial-ordering,global-theta-lifting,theta-lifting-reps,local-theta-lifting,przebinda-4,przebinda-5,przebinda-6,przebinda-7,przebinda-8,przebinda-9}.

Some of the earliest significant work on dual pairs was done by H.~Rubenthaler in a 1994 paper \cite{rubenthaler}. In this paper, Rubenthaler outlines a classification of the conjugacy classes of reductive dual pairs in complex reductive Lie algebras. The paper \cite{my-liealgebras} uses Rubenthaler's outline to obtain complete and explicit lists of the dual pairs in the complex exceptional Lie algebras, and the paper \cite{myarxiv} derives independent classifications of the dual pairs in the complex classical Lie algebras using elementary methods. Lie algebra dual pairs are also intimately connected to the structure theory of reductive Lie algebras. Namely, certain reductive dual pairs in the exceptional Lie algebras are implicitly used in some of the well-known constructions of these algebras: $(G_2^1, A_2^{2''})$ and $(G_2^1, F_4^1)$ correspond to the Freudenthal-Tits constructions of $E_6$ and $E_8$, respectively \cite{tits-construction}, and $(D_4, D_4)$ corresponds to the triality construction of $E_8$ \cite{allison, barton-sudbery}.

While dual pairs in Lie algebras have been extensively studied and have been classified in the primary case of interest (i.e.,~reductive dual pairs in complex reductive Lie algebras), Lie group dual pairs are not understood in as much generality. In the same paper in which he introduces the notion of dual pairs \cite{HoweRemarks}, Howe classifies the \textit{classical reductive dual pairs} in the complex symplectic group $Sp(2n,\C)$, where a \textit{classical reductive dual pair} is a reductive dual pair in which both members are themselves classical groups. This classification is nicely summarized by T.~Levasseur and J.T.~Stafford in \cite{Cases}, where they go on to use these dual pairs in their study of rings of differential operators on classical rings of invariants.

The classical reductive dual pairs in $Sp(2n,\C)$ turn out to coincide with the \textit{irreducible reductive dual pairs} in $Sp(2n,\C)$, as defined by M.~Schmidt in \cite{partial-ordering}. In the same paper, Schmidt derives complete lists of the conjugacy classes of the reductive dual pairs in the groups of isometries of real, complex, and quaternionic Hermitian spaces (including $O(n,\C)$ and $Sp(2n,\C)$). Classifications of the reductive dual pairs in $GL(n,\C)$, $SL(n,\C)$, and $SO(n,\C)$ are straightforward to derive using similar methods. The paper \cite{myarxiv} explicitly describes classifications for $GL(n, \C)$, $SL(n, \C)$, $Sp(2n, \C)$, $O(n, \C)$, and $SO(n, \C)$ and establishes certain classes of dual pairs in the projective counterparts of these groups (i.e.,~$PGL(n,\C)$, $PSp(2n,\C)$, $PO(n,\C)$, and $PSO(n,\C)$). In particular, \cite{myarxiv} classifies the connected dual pairs in $PGL(n,\C)$ (recounted in Subsection \ref{subsec:connected-dps}).

Except for the complex classical groups and for the isometry groups considered in \cite{partial-ordering} and for the work in \cite{myarxiv}, classifications of reductive dual pairs in Lie groups seem to be unknown, as the literature only mentions a scattered assortment of dual pairs. The majority of the results about Lie group dual pairs concern groups defined over $p$-adic fields and are due to G.~Savin and various collaborators \cite{Savin4, Savin2, Savin3, Savin1}. For example, in \cite{Savin1}, G.~Savin and M.~Woodbury consider certain dual pairs of split algebraic groups of types $D_4$, $D_5$, $D_6$, $E_6$, $E_7$, and $E_8$ defined over a $p$-adic field. For \textit{exceptional dual pairs} of the form $(G_2, H)$ (i.e.,~where one of the groups is of type $G_2$), they construct correspondences between the representations of $G_2$ and of $H$.

\subsection{Further motivation}

In addition to being a natural analog of the well-studied notion of dual pairs in Lie algebras, the topic of dual pairs in Lie groups is made interesting by its potential to play an important role in the study of nilpotent orbits in complex semisimple Lie algebras. To see this, let $\mf{g}$ be a complex semisimple Lie algebra, and let $G_{\text{ad}}$ denote its adjoint group. As was shown by Mal'cev and Kostant \cite{malcev, kostant}, the nonzero nilpotent $G_{\text{ad}}$-orbits in $\mf{g}$ are in one-to-one correspondence with the $G_{\text{ad}}$-conjugacy classes of the $\mf{sl}_2$-subalgebras of $\mf{g}$ \cite[Section 3.4]{Collingwood}. (In this correspondence, the nilpotent element $X \in \mf{g}$ defining the orbit $\mathcal{O}_X$ corresponds to the nilpotent element of a standard $\mf{sl}_2$-triple $\{H,X,Y\}$). 

Let $\phi$ denote the homomorphism of $\mf{sl}_2$ into $\mf{g}$ determined by the standard triple $\{ H,X,Y \}$, and let $G_{\text{ad}}^{\phi}$ denote the centralizer of $\phi (\mf{sl}_2)$ in $G_{\text{ad}}$. The group $G_{\text{ad}}^{\phi}$ is a complex reductive Lie group and is usually disconnected. Since $G_{\text{ad}}^{\phi}$ is potentially disconnected, it cannot be fully understood using based root datum and the structure theory of connected reductive algebraic groups. However, $G_{\text{ad}}^{\phi}$ could be better understood with the help of the theory of Lie group dual pairs. Indeed, since $SL(2,\C)$ is simply connected, there is a unique Lie group homomorphism $\Phi \colon SL(2,\C) \rightarrow G_{\text{ad}}$ such that $\phi = d\Phi$. Moreover, $G_{\text{ad}}^{\phi} = G_{\text{ad}}^{\Phi}$, where $G_{\text{ad}}^{\Phi}$ is the centralizer of $\Phi(SL(2,\C))$ in $G_{\text{ad}}$. Therefore, by the following general fact, $(G_{\text{ad}}^{\phi}, Z_{G_{\text{ad}}}( G_{\text{ad}}^{\phi} ))$ is a reductive dual pair in $G_{\text{ad}}$:

\begin{fact} \label{fact:triple centralizer}
Let $G$ be a group, and $S \subseteq G$ a subset. Then $Z_G(Z_G(Z_G(S))) = Z_G(S)$. In particular, $(Z_G(S), Z_G(Z_G(S)) )$ is a dual pair in $G$.
\end{fact}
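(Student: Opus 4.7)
The plan is to derive the identity $Z_G(Z_G(Z_G(S))) = Z_G(S)$ from two elementary properties of the centralizer operator, viewed as a map from subsets of $G$ to subgroups of $G$. The first is that it is order-reversing: if $A \subseteq B$, then $Z_G(A) \supseteq Z_G(B)$, since anything commuting with every element of the larger set $B$ commutes in particular with every element of $A$. The second is the ``unit'' inclusion $S \subseteq Z_G(Z_G(S))$, valid for every $S \subseteq G$, which holds because each $s \in S$ commutes by definition with every element of $Z_G(S)$.

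Granting these, the equality will follow from two applications of each property. Applying the unit property to $S$ gives $S \subseteq Z_G(Z_G(S))$, and applying order reversal to this inclusion yields
\[
Z_G(Z_G(Z_G(S))) \subseteq Z_G(S).
\]
Conversely, applying the unit property with the subset $Z_G(S)$ in place of $S$ produces the reverse containment
\[
Z_G(S) \subseteq Z_G(Z_G(Z_G(S))).
\]
Combining the two yields the claimed identity.

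For the ``in particular'' statement, I set $G_1 = Z_G(S)$ and $G_2 = Z_G(Z_G(S)) = Z_G(G_1)$. The identity just proved reads $Z_G(G_2) = G_1$, so $G_1$ and $G_2$ are each other's centralizers in $G$, which is the mutual-centralizer condition appearing in Definition \ref{defn:dual-pair}. There is no real obstacle here: the argument is a standard Galois-connection formality, and the only care required is to keep track of the order reversal when iterating $Z_G$.
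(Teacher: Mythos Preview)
Your argument is correct and is the standard Galois-connection proof of the triple-centralizer identity. The paper does not actually supply a proof of this Fact; it is stated without justification, so there is nothing to compare against beyond noting that your approach is the expected one.

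One small remark on the ``in particular'' clause: you verify the mutual-centralizer condition (2) of Definition~\ref{defn:dual-pair}, which is all the Fact is really asserting at the level of abstract groups. Strictly speaking, Definition~\ref{defn:dual-pair} also requires reductivity (condition (1)), which is not addressed here and indeed cannot be in the generality of an arbitrary group $G$ and subset $S$. The paper is using ``dual pair'' a bit loosely in this Fact to mean only the mutual-centralizer relation; when the Fact is invoked later (e.g.\ just before Remark~\ref{rmk:all pairs}), reductivity is supplied by the ambient context. Your proof is complete for what the Fact actually establishes.
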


More generally, one might hope to better understand the structure of a complex reductive algebraic group $G$ by looking at the groups $G^{\Phi}$ corresponding to various algebraic group homomorphisms $\Phi \colon H \rightarrow G$. Here, $G^{\Phi}$ is a complex reductive algebraic group and is usually disconnected (and hence again not easily understandable). However, as before, Fact \ref{fact:triple centralizer} shows that $(G^{\Phi}, Z_G(G^{\Phi}))$ is a reductive dual pair in $G$. Moreover, we note that all reductive dual pairs in $G$ arise in this way:

\begin{remark} \label{rmk:all pairs}
Let $G$ be a complex reductive algebraic group. Then any reductive dual pair $(G_1, G_2)$ in $G$ can be written in the form $(G^{\Phi}, Z_G(G^{\Phi}))$. Indeed, take $\Phi$ to be the inclusion $G_2 \hookrightarrow G$. We get $G_1 = Z_G(\Phi (G_2))= G^{\Phi}$ and $G_2 = Z_G(G_1) = Z_G(G^{\Phi})$.
\end{remark}

The goal of this paper is to begin to understand how disconnectedness factors into the classification of dual pairs by studying the dual pairs of $PGL(U)$.

\section{Constructing ``single-orbit" dual pairs in \texorpdfstring{$PGL(U)$}{PGL(U)}} \label{sec:constructing-single-orbit}

In this section, we classify the connected dual pairs in $PGL(U)$ (Subsection \ref{subsec:connected-dps}), and construct a class of ``single-orbit" disconnected dual pairs (Subsection \ref{subsec:single-orbit-disconnected-construction}). 

\subsection{Connected dual pairs in $PGL(U)$} \label{subsec:connected-dps}

To start, we state a result about dual pairs in $GL(U)$. While this result is likely well-known, we cite \cite{myarxiv} for an explicit statement and treatment.

\begin{thm}[G.~{\cite[Theorem 6.2]{myarxiv}}] \label{thm:dual-pairs-in-GL}
The dual pairs of $GL(U)$ are exactly the pairs of groups of the form
\begin{equation*} 
\left ( \prod_{i=1}^{r} GL(V_i) , \; \prod_{i=1}^r GL(W_i) \right ),
\end{equation*}
where $U = \bigoplus_{i=1}^{r} V_i \otimes W_i$ is a vector space decomposition of $U$. In particular, any member of a $GL(U)$ dual pair is connected.
\end{thm}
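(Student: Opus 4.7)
The plan is to reduce everything to a double-centralizer computation applied to the isotypic decomposition of $U$. For the nontrivial direction, let $(G_1, G_2)$ be a reductive dual pair in $GL(U)$. Since $G_1$ is reductive and acts rationally on the finite-dimensional space $U$, the module $U$ is semisimple as a $G_1$-module, and its isotypic decomposition takes the canonical form
\[ U \cong \bigoplus_{i=1}^r V_i \otimes W_i, \]
where the $V_i$ are the distinct irreducible $G_1$-subrepresentations of $U$ and $W_i = \Hom_{G_1}(V_i, U)$ is the corresponding multiplicity space. A standard Schur-lemma calculation identifies $\End_{G_1}(U)$ with $\prod_{i=1}^r \End(W_i)$, and passing to units gives $Z_{GL(U)}(G_1) = \prod_{i=1}^r GL(W_i)$, where the $i$th factor acts on the $W_i$ tensorand in the $i$th summand. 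Combined with the dual pair hypothesis, this shows $G_2 = \prod_{i=1}^r GL(W_i)$.

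Next, I would repeat the argument with the roles of $G_1$ and $G_2$ exchanged. With respect to the same direct sum decomposition, the irreducible $G_2$-subrepresentations of $U$ are the $W_i$ and the corresponding multiplicity spaces are the $V_i$, so a second application of the double-centralizer identity yields $G_1 = Z_{GL(U)}(G_2) = \prod_{i=1}^r GL(V_i)$. The converse direction---that for any vector space decomposition $U = \bigoplus V_i \otimes W_i$ the pair $(\prod_i GL(V_i), \prod_i GL(W_i))$ really is a reductive dual pair---follows from the same Schur-lemma computation applied in both directions. The ``in particular'' clause is then immediate, since a finite product of general linear groups is connected.

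The step I expect to be the main obstacle is justifying the double-centralizer identity at the level of algebraic groups rather than merely as a statement about associative algebras. Concretely, one must check that $G_1$, which is reductive in $GL(U)$ in the sense of Definition~\ref{defn:dual-pair}, acts semisimply on $U$ (the standard characterization of linearly reductive subgroups in characteristic zero), and one must verify that the identification $\End_{G_1}(U) \cong \prod_{i=1}^r \End(W_i)$ is compatible with the algebraic group structure, so that passing to units really recovers the algebraic centralizer $Z_{GL(U)}(G_1)$ as opposed to some abstract group of units. Both facts are routine in characteristic zero, but they are exactly the inputs that let Schur-Weyl-type reasoning drop out of a dual pair hypothesis phrased purely in terms of centralizers.
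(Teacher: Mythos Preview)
The paper does not actually prove this theorem; it is stated with a citation to \cite{myarxiv} and introduced with the remark that the result ``is likely well-known.'' There is therefore no in-paper proof to compare against. Your argument is correct and is the standard one: complete reducibility of $U$ under the reductive group $G_1$ gives the isotypic decomposition $U \cong \bigoplus_i V_i \otimes W_i$, Schur's lemma identifies $Z_{GL(U)}(G_1)$ with $\prod_i GL(W_i)$, and applying the same reasoning to $G_2 = \prod_i GL(W_i)$ recovers $G_1 = \prod_i GL(V_i)$. The two caveats you flag are exactly the right ones, and both are unproblematic over $\mathbb{C}$: reductive implies linearly reductive in characteristic zero, and the centralizer of a subgroup $G_1 \subseteq GL(U)$ as an algebraic group coincides with the unit group of the commutant algebra $\End_{G_1}(U)$ because the centralizer condition is Zariski-closed and cuts out the same set either way.
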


In what follows, $p_V$ will denote the canonical projection $p_V \colon GL(V) \rightarrow PGL(V)$ for a complex vector space $V$. When the vector space is clear from context, we will sometimes just use $p$ to denote this projection.

\begin{prop}[G.~{\cite[Proposition 6.6]{myarxiv}}] \label{prop:GL-PGL-descent}
Let $(G_1, G_2)$ be a dual pair in $GL(U)$. Then $(p (G_1), p(G_2))$ is a dual pair in $PGL(U)$. 
\end{prop}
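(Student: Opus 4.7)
My plan is to verify each part of the dual pair definition for $(p(G_1), p(G_2))$ separately. Reductivity is essentially automatic: since the kernel of $p\colon GL(U)\to PGL(U)$ is the central subgroup $Z = \C^\times\cdot I_U$, the image $p(G_i)$ is isomorphic to $G_i/(G_i\cap Z)$, which remains reductive because quotients of complex reductive algebraic groups by closed normal subgroups are reductive. The inclusion $p(G_2)\subseteq Z_{PGL(U)}(p(G_1))$ is immediate from $G_2 = Z_{GL(U)}(G_1)$. The whole argument then reduces to showing the reverse inclusion $Z_{PGL(U)}(p(G_1)) \subseteq p(G_2)$, and symmetrically with $G_1,G_2$ swapped.

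For the reverse inclusion, I would take $g \in GL(U)$ with $p(g) \in Z_{PGL(U)}(p(G_1))$. Unpacking the definition of the projection, the condition $[p(g),p(h)]=1$ for every $h\in G_1$ forces $ghg^{-1}h^{-1}\in Z$, so there exists $\lambda(h) \in \C^\times$ with
\[
ghg^{-1} = \lambda(h)\,h.
\]
A short verification with the cocycle identity $g(h_1h_2)g^{-1} = (gh_1g^{-1})(gh_2g^{-1})$ shows $\lambda\colon G_1 \to \C^\times$ is a homomorphism, and it is regular because $\lambda(h) = \det(ghg^{-1})/\det(h) \cdot (\text{normalization})$ — more carefully, $\lambda$ is determined on each factor by an explicit rational expression in matrix entries.

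The crux is to show $\lambda\equiv 1$. Applying $\det\colon GL(U)\to\C^\times$ to the identity $ghg^{-1}=\lambda(h)h$ gives
\[
\det(h) = \lambda(h)^{\dim U}\det(h),
\]
so $\lambda(h)^{\dim U}=1$ for every $h\in G_1$. Thus $\lambda$ takes values in the finite group of $(\dim U)$-th roots of unity. By Theorem 2, $G_1 = \prod_i GL(V_i)$ is connected, and a regular character from a connected algebraic group into a finite (hence discrete) subgroup of $\C^\times$ must be trivial. Therefore $ghg^{-1}=h$ for every $h\in G_1$, i.e., $g \in Z_{GL(U)}(G_1)=G_2$, giving $p(g)\in p(G_2)$.

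I expect the only subtle point to be articulating why the map $\lambda$ deserves to be called a character (rather than merely a set-theoretic function), but the cocycle computation handles this cleanly. Once $\lambda\equiv 1$ is established, swapping the roles of $G_1$ and $G_2$ gives $Z_{PGL(U)}(p(G_2)) = p(G_1)$, completing the proof that $(p(G_1), p(G_2))$ is a dual pair in $PGL(U)$.
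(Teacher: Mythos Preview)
Your proof is correct. Note that the paper does not actually contain a proof of this proposition: it is quoted from the author's earlier work \cite{myarxiv}, so there is no in-paper argument to compare against. That said, your approach aligns perfectly with the techniques developed later in the present paper: the determinant computation showing $\lambda(h)^{\dim U}=1$ is precisely Lemma~\ref{lem:roots-of-unity}, and the appeal to connectedness of $G_1$ is exactly the content of Theorem~\ref{thm:dual-pairs-in-GL}. One minor comment: your parenthetical about regularity of $\lambda$ via a determinant formula is unnecessary and slightly garbled; it suffices to observe that $\lambda(h)=ghg^{-1}h^{-1}$ is visibly continuous (indeed regular) as a map $G_1\to\mathbb{C}^\times$, and a continuous map from a connected space to a finite set is constant.
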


Proposition \ref{prop:GL-PGL-descent} shows that every dual pair in $GL(U)$ descends to a dual pair in $PGL(U)$ under the canonical projection. In fact, as the following proposition will show, the dual pairs in $GL(U)$ are in bijection with the connected dual pairs in $PGL(U)$. 

\begin{prop}[G.~{\cite[Proposition 6.7]{myarxiv}}] \label{prop:connected-connected}
Let $(\overline{G_1}, \overline{G_2})$ be a connected dual pair in $PGL(U)$, and define $G_1 := p^{-1}(\overline{G_1})$ and $G_2 := p^{-1}(\overline{G_2})$. Then $( G_1 , G_2 )$ is a dual pair in $GL(U)$.
\end{prop}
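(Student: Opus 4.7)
The plan is to verify the two conditions of Definition \ref{defn:dual-pair}: that each $G_i$ is a reductive subgroup of $GL(U)$, and that $Z_{GL(U)}(G_1) = G_2$ and $Z_{GL(U)}(G_2) = G_1$. First I would note that $G_i = p^{-1}(\overline{G_i})$ sits in a central extension $1 \to \C^* I \to G_i \to \overline{G_i} \to 1$. Since both $\overline{G_i}$ and $\ker p = \C^* I$ are connected, $G_i$ is connected as well. For reductivity, any connected unipotent normal subgroup of $G_i$ projects under $p$ to a connected unipotent normal subgroup of the reductive group $\overline{G_i}$, which must be trivial; hence the unipotent radical of $G_i$ lies in the torus $\C^* I$ and is therefore trivial, so $G_i$ is reductive.

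Next I would dispatch the easy direction of the centralizer condition. If $g \in Z_{GL(U)}(G_i)$, then $p(g)$ commutes with every element of $\overline{G_i} = p(G_i)$, so $p(g) \in Z_{PGL(U)}(\overline{G_i}) = \overline{G_{3-i}}$; therefore $g \in p^{-1}(\overline{G_{3-i}}) = G_{3-i}$. This yields $Z_{GL(U)}(G_i) \subseteq G_{3-i}$.

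The main obstacle is the reverse containment: one must show that $G_1$ and $G_2$ actually commute elementwise in $GL(U)$, not merely modulo scalars. Given $g \in G_1$ and $h \in G_2$, the projections $p(g)$ and $p(h)$ commute in $PGL(U)$, so $[g,h] = ghg^{-1}h^{-1} \in \ker p = \C^* I$. The crucial observation is that commutators in $GL(U)$ lie in $SL(U)$, since $\det[g,h] = 1$. Writing $[g,h] = \lambda I$ with $\lambda \in \C^*$ and setting $n = \dim U$, the condition $\lambda^n = \det(\lambda I) = 1$ forces $\lambda$ to be an $n$-th root of unity. Hence the regular map $c \colon G_1 \times G_2 \to \C^*$ defined by $c(g,h)\cdot I = [g,h]$ takes values in the finite (hence discrete) group of $n$-th roots of unity. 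Since $G_1 \times G_2$ is connected by the first paragraph and $c(I,I) = 1$, the map $c$ is identically $1$, so $G_1$ and $G_2$ commute in $GL(U)$. Combined with the previous step this gives $Z_{GL(U)}(G_i) = G_{3-i}$ for both $i$, completing the proof.
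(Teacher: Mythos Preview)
Your proof is correct. The paper does not prove this proposition in the body of the text---it is quoted from \cite{myarxiv}---but the key step you isolate (the commutator $[g,h]$ is a scalar, has determinant $1$, hence is a $(\dim U)$-th root of unity, and therefore the commutator map is constant on the connected set $G_1\times G_2$) is exactly the content of Lemma~\ref{lem:roots-of-unity} and the construction of $\mu$ in \eqref{al:mu-G-H}--\eqref{eq:mu-gamma-delta}, which the paper develops immediately afterward for the general (disconnected) case. In that language, your argument is the observation that when $\Gamma$ and $\Delta$ are trivial, $\mu$ is identically $1$, so $G_1$ and $G_2$ genuinely commute in $GL(U)$; combined with the easy containment $Z_{GL(U)}(G_i)\subseteq G_{3-i}$, this gives the dual pair. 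Your treatment of reductivity (unipotent radical injects under $p$ into the reductive quotient, hence is trivial) is a clean addition that the paper does not spell out.
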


\subsection{Disconnected dual pairs in $PGL(U)$} \label{subsec:single-orbit-disconnected-construction}

Although all connected dual pairs in $PGL(U)$ arise as the images in $PGL(U)$ of dual pairs in $GL(U)$, we will soon see that not all dual pairs in $PGL(U)$ are connected. 

\begin{prop} \label{prop:AxA^-simple}
Let $\mathcal{X}$ be a finite abelian group of order $n$, and let $\hat{\mathcal{X}}$ denote its group of characters. Let $X$ be an $n$-dimensional complex vector space. Then 
$$\left ( \mathcal{X} \times \hat{\mathcal{X}} , \; \mathcal{X} \times \hat{\mathcal{X}} \right )$$
can be realized as a dual pair in $PGL(X)$. 
\end{prop}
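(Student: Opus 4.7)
The plan is to realize both copies of $\mathcal{X} \times \hat{\mathcal{X}}$ as the \emph{same} abelian subgroup of $PGL(X)$ via a Heisenberg-type construction. Identify $X$ with the group algebra $\mathbb{C}[\mathcal{X}]$, with basis $\{e_b : b \in \mathcal{X}\}$. For each $a \in \mathcal{X}$, let $T_a \in GL(X)$ be the shift $T_a(e_b) = e_{a+b}$, and for each $\chi \in \hat{\mathcal{X}}$, let $D_\chi \in GL(X)$ be the character operator $D_\chi(e_b) = \chi(b) e_b$. A direct computation yields the Heisenberg relation $D_\chi T_a D_\chi^{-1} = \chi(a) T_a$, so $[T_a, D_\chi] = \chi(a)^{-1} I$ is a scalar. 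Let $H \subseteq GL(X)$ denote the finite subgroup generated by all the $T_a$ and $D_\chi$; its image $\bar{H} := p(H) \subseteq PGL(X)$ is then abelian, and because the $n^2$ matrices $T_a D_\chi$ are linearly independent in $\End(X)$, no two of them are proportional, so $|\bar{H}| = n^2$ and $\bar{H} \cong \mathcal{X} \times \hat{\mathcal{X}}$.

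Once this is in place, it suffices to prove $Z_{PGL(X)}(\bar{H}) = \bar{H}$, since every finite subgroup of $PGL(X)$ is automatically reductive. The argument splits into two steps. First, I claim $Z_{GL(X)}(H) = \mathbb{C}^* \cdot I$: a matrix commutes with every shift $T_a$ iff it lies in the commutant of the (left) regular representation of $\mathcal{X}$, which for $\mathcal{X}$ abelian is the commutative algebra $\mathbb{C}\langle T_a : a \in \mathcal{X}\rangle$ itself; a matrix commutes with every $D_\chi$ iff it is diagonal in the basis $\{e_b\}$; and the only matrices in the intersection of these two maximal tori are the scalars (since $T_a$ for $a \neq 0$ has no diagonal entries).

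Second, suppose $g \in GL(X)$ has $p(g) \in Z_{PGL(X)}(\bar{H})$. For every $h \in H$ there is a scalar $\lambda_g(h) \in \mathbb{C}^*$ with $ghg^{-1} = \lambda_g(h)\, h$, and expanding $g(h_1 h_2) g^{-1}$ two ways shows $\lambda_g \colon H \to \mathbb{C}^*$ is a group homomorphism; since $[H,H] \subseteq \mathbb{C}^* I \cap H$ lies in $\ker \lambda_g$, it descends to a character of $\bar{H}$. Using the Heisenberg relation, conjugation by $T_a D_\chi$ multiplies $T_b D_\psi$ by the scalar $\chi(b)\,\psi(a)^{-1}$, so the map $\bar{H} \to \widehat{\bar{H}}$, $h \mapsto \lambda_h$, is exactly the canonical Pontryagin duality pairing of $\mathcal{X} \times \hat{\mathcal{X}}$ with its dual $\hat{\mathcal{X}} \times \mathcal{X}$, which is nondegenerate (hence an isomorphism, by a cardinality count). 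Picking $h_0 \in H$ with $\lambda_g = \lambda_{h_0}$ then yields $gh_0^{-1} \in Z_{GL(X)}(H) = \mathbb{C}^* \cdot I$ by the first step, so $p(g) = p(h_0) \in \bar{H}$. The main obstacle is exactly this surjectivity step: showing that every character of $\bar{H}$ is realized by conjugation by some element of $\bar{H}$, which reduces to the standard nondegeneracy of the duality pairing between a finite abelian group and its dual.
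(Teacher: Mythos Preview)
Your proof is correct and uses the same Heisenberg-group realization as the paper (your $T_a$, $D_\chi$ are the paper's $\tau_x$, $\sigma_\xi$), but the centralizer computation follows a genuinely different route. The paper argues concretely: for $t$ in the preimage of the centralizer, commuting up to scalar with every $\sigma_\xi$ forces $t$ to normalize the diagonal torus, hence $t = sd$ with $s$ a permutation matrix and $d$ diagonal; it then pins $s$ down to some $\tau_x$ and, using the commutation with the $\tau_x$'s and linear independence of characters, forces $d$ to be a scalar multiple of some $\sigma_{\xi_i}$. Your argument is more structural: you first compute $Z_{GL(X)}(H) = \mathbb{C}^\times$ directly, then observe that any $g$ projecting into the centralizer defines a character $\lambda_g$ of $\bar H$, and use nondegeneracy of the commutator pairing $\bar H \to \widehat{\bar H}$ to realize $\lambda_g$ as conjugation by some $h_0 \in H$, whence $gh_0^{-1}$ is scalar. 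Your approach is cleaner and isolates exactly the abstract mechanism (Pontryagin self-duality of $\mathcal{X}\times\hat{\mathcal{X}}$ via the commutator form) that makes the dual pair work; the paper's matrix-level argument, on the other hand, is recycled almost verbatim in the proofs of the subsequent Theorems~\ref{thm:AxA^-general} and~\ref{thm:type-2}, so its concreteness pays off downstream.
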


\begin{proof}
Let us view $X$ as the space of functions $L^2(\mathcal{X},\mathbb{C})$. Then $f \in X$ can be viewed as a column vector $[f(x_1) \; \cdots \; f(x_n)]^T$, where $x_1, \ldots, x_n$ are the elements of $\mathcal{X}$. Each element $x \in \mathcal{X}$ acts by translation ($\tau_x$) on $f \in X$, and each element $\xi \in \hat{\mathcal{X}}$ acts by multiplication ($\sigma_{\xi}$) on $f \in X$. Since multiplying by $x^{-1}$ permutes the elements of $\mathcal{X}$, each
\begin{align*}
    \tau_x \colon X & \rightarrow X \\
    f(x_0) & \mapsto f(x_0 x^{-1})
\end{align*}
can be viewed as a permutation matrix in $GL(X)$, and $x \in \mathcal{X}$ can be viewed as $p(\tau_x) \in PGL(X)$. Additionally, we can view each
\begin{align*}
    \sigma_{\xi} \colon X & \rightarrow X \\
    f(x_0) & \mapsto \xi (x_0) f(x_0)
\end{align*}
as a diagonal matrix $\sigma_{\xi} = \diag (\xi(x_1), \ldots, \xi (x_n)) \in GL(X)$, and $\xi \in \hat{\mathcal{X}}$ can be viewed as $p ( \sigma_{\xi} )\in PGL(X)$.

Now, observe that we have the following:
\begin{align*}
(\sigma_{\xi} \tau_x \sigma_{\xi}^{-1} f)(x_0) &= \xi (x_0) (\tau_x \sigma_{\xi}^{-1} f)(x_0) = \xi (x_0) (\sigma_{\xi}^{-1} f)(x_0 x^{-1}) \\
&= \xi(x_0) \xi (x_0 x^{-1})^{-1} f(x_0x^{-1}) = \xi(x) (\tau_x f)(x_0)
\end{align*}
and
\begin{align*}
(\tau_x \sigma_{\xi} \tau_x^{-1} f)(x_0) &= (\sigma_{\xi} \tau_x^{-1} f)(x_0 x^{-1}) = \xi (x_0 x^{-1}) (\tau_x^{-1} f)(x_0 x^{-1}) \\
&= \xi (x_0 x^{-1}) f(x_0) = \xi (x^{-1}) (\sigma_{\xi} f)(x_0).
\end{align*}
In summary, we have the following relations:
\begin{equation} \label{eq:sigma-tau-relation}
\sigma_{\xi} \tau_x \sigma_{\xi}^{-1} = \xi (x) \tau_x \hspace{.25cm} \text{ and } \hspace{.25cm} \tau_x \sigma_{\xi} \tau_x^{-1} = \xi (x^{-1}) \sigma_{\xi}.
\end{equation}

From these relations, we see that $\langle \{ p(\tau_x) \}_{x \in \mathcal{X} }, \, \{ p(\sigma_{\xi}) \}_{\xi \in \hat{\mathcal{X}}} \rangle = \mathcal{X} \times \hat{\mathcal{X}}$ and that $\mathcal{X} \times \hat{\mathcal{X}}$ is contained in its own centralizer in $PGL(X)$. We have left to show that 
$$Z_{PGL(X)}(\mathcal{X} \times \hat{\mathcal{X}}  ) \subseteq \mathcal{X} \times \hat{\mathcal{X}} .$$ 

To this end, let $t \in p^{-1} (Z_{PGL(X)}( \mathcal{X} \times \hat{\mathcal{X}} ) )$. Then for each $\xi \in \hat{\mathcal{X}}$, 
\begin{equation} \label{eq:self-dp-commute}
t \sigma_{\xi} t^{-1} = k_{t,\xi} \sigma_{\xi} \text{ for some }  k_{t, \xi} \in \mathbb{C}^{\times}.
\end{equation}
Recall that $\sigma_{\xi} = \diag (\xi(x_1), \ldots, \xi (x_n))$, and assume (without loss of generality) that $x_1 = 1$. Since the irreducible characters of a finite group are linearly independent, the linear span of the $\sigma_{\xi}$ is all diagonal matrices. Consequently, (\ref{eq:self-dp-commute}) implies that $t$ normalizes the set of diagonal matrices, meaning
$$t \in N_{GL(X)}(\{\text{diagonal matrices}\}) = \{\text{permutation matrices}\} \cdot \{\text{diagonal matrices}\}.$$ 
Therefore, $t$ can be written as $t = sd$ for some permutation matrix $s$ and some diagonal matrix $d$. Then
$$s \sigma_{\xi} s^{-1} = t \sigma_{\xi} t^{-1} = k_{t,\xi} \sigma_{\xi} \text{ for all } \xi \in \hat{\mathcal{X}},$$
which gives that $\xi (s^{-1} \cdot x_i) = k_{t,\xi} \xi (x_i)$ for all $1 \leq i \leq n$ and $\xi \in \hat{\mathcal{X}}$. In particular, considering $x_1 = 1$, we see that 
$$\xi (s^{-1} \cdot 1) = k_{t,\xi} \xi(1) = k_{t,\xi} \text{ for all } \xi \in \hat{\mathcal{X}}.$$
It follows that 
$$s \sigma_{\xi} s^{-1} = \xi (s^{-1} \cdot 1) \sigma_{\xi} = \tau_{s \cdot 1} \sigma_{\xi} \tau_{s \cdot 1}^{-1} \text{ for all } \xi \in \hat{\mathcal{X}},$$
where we have used (\ref{eq:sigma-tau-relation}). It follows that $s = \tau_{s \cdot 1}$ and that $t = \tau_{s \cdot 1} d$.

Using again our assumption that $t \in p^{-1}(Z_{PGL(X)}(  \mathcal{X} \times \hat{\mathcal{X}} ))$, we have for each $x \in \mathcal{X}$ that
$$\tau_x t \tau_x^{-1} = \ell_{t,x} t \text{ for some } \ell_{t,x} \in \mathbb{C}^{\times}.$$
Since $\tau_x t \tau_x^{-1} = \tau_x \tau_{s \cdot 1} d \tau_x^{-1} = \tau_{s \cdot 1} (\tau_x d \tau_x^{-1})$, we see that $\tau_x d \tau_x^{-1} = \ell_{t,x} d$. Letting $\{ \xi_1, \ldots , \xi_n \}$ denote the elements of $\hat{\mathcal{X}}$, we can write $d = d_1 \sigma_{\xi_1} + \cdots + d_n \sigma_{\xi_n} $ for some $d_1 , \ldots , d_n \in \mathbb{C}$. Then for each $x \in \mathcal{X}$, we get that
\begin{align*}
\tau_x d \tau_x^{-1} &= d_1 \tau_x \sigma_{\xi_1} \tau_x^{-1} + \cdots + d_n \tau_x \sigma_{\xi_n} \tau_x^{-1} \\
&= d_1 \xi_1 (x^{-1}) \sigma_{\xi_1} + \cdots + d_n \xi_n(x^{-1}) \sigma_{\xi_n} \\
&= \ell_{t,x} (d_1 \sigma_{\xi_1} + \cdots + d_n \sigma_{\xi_n}).
\end{align*}
The linear independence of the $\xi_1, \ldots , \xi_n$ therefore implies that for any $1 \leq i \leq n$ with $d_i \neq 0$, we have that $\xi_i(x^{-1}) = \ell_{t,x}$. Using linear independence again, we see that $d_i \neq 0$ for exactly one value of $i$. Consequently, $d = d_i \sigma_{\xi_i}$ for some $1 \leq i \leq n$. 

Putting this all together, we see that $t = \tau_{s \cdot 1} d_i \sigma_{\xi_i}$, and hence that $p(t) \in \mathcal{X} \times \hat{\mathcal{X}} $. Therefore, $Z_{PGL(X)} ( \mathcal{X} \times \hat{\mathcal{X}} ) = \mathcal{X} \times \hat{\mathcal{X}} $, completing the proof. 
\end{proof}

For the results in the remainder of this section, we require some notation. To start, let $U_1$ and $U_2$ be two finite-dimensional complex vector spaces of dimensions $n_1$ and $n_2$, respectively. Then $GL(U_1)$ and $GL(U_2)$ naturally embed in $GL(U_1 \otimes U_2)$ in such a way that the intersection of their images is $\C^{\times}$. Explicitly, if we choose a basis for $U_2$, then $\End (U_1 \otimes U_2)$ gets identified with $n_2 \times n_2$ matrices with entries in $\End (U_1)$. We define the embedding $\iota \colon GL(U_1) \rightarrow GL(U_1 \otimes U_2)$ by 
$$x \xmapsto{\iota} \begin{pmatrix} x & & \\ & \ddots & \\ & & x \end{pmatrix} \in GL(U_1 \otimes U_2),$$
and the embedding $\kappa \colon GL(U_2) \rightarrow GL(U_1 \otimes U_2)$ by
$$ y \xmapsto{\kappa} \begin{pmatrix} y_{11}I_{U_1} & \cdots & y_{1,n_2} I_{U_1} \\ \vdots & \ddots & \vdots \\ y_{n_2,1}I_{U_1} & \cdots & y_{n_2,n_2}I_{U_1} \end{pmatrix} \in GL(U_1 \otimes U_2),$$

Let $H_i \supseteq \C^{\times}$ be a subgroup of $GL(U_i)$ for $i=1,2$. Then we see that 
$$\langle \iota (H_1), \, \kappa (H_2) \rangle = [ \iota (H_1) \times \kappa (H_2) ] / (\C^{\times})_{\antidiag} ,$$
where $(\mathbb{C}^{\times})_{\antidiag}$ denotes the antidiagonal embedding $z \mapsto (z,z^{-1})$ of $\mathbb{C}^{\times}$ in $\iota (H_1) \times \kappa ( H_2 )$. We will typically drop the $\iota$ and $\kappa$ and will denote this group by 
\begin{equation} \label{eq:weird-direct-product-def}
    H_1 \times_{\mathbb{C}^{\times}} H_2 := \left [ \iota( H_1 ) \times \kappa ( H_2 ) \right ]/(\mathbb{C}^{\times})_{\antidiag}.
\end{equation}

Next, let $\mathcal{X}$ be a finite abelian group, and let $X := L^2(\mathcal{X},\C)$. Then, as explained in the proof of Proposition \ref{prop:AxA^-simple}, $\mathcal{X}$ can be viewed as the image in $PGL(X)$ of a group of permutation matrices in $GL(X)$. Suppose that $H \supseteq \mathbb{C}^{\times}$ is a subgroup of $GL(W)^{\dim X}$ that is normalized by $\kappa( p_X^{-1}(\mathcal{X}))$ in $GL(W \otimes X)$. Then $H \cap \kappa (p_X^{-1}(\mathcal{X})) = \mathbb{C}^{\times}$ in $GL(W \otimes X)$, and 
\begin{equation} \label{eq:weird-semidirect-product-def}
\langle H, \, \kappa ( p_X^{-1}(\mathcal{X}) ) \rangle = \left [ H \rtimes \kappa ( p_X^{-1}(\mathcal{X}) ) \right ]/ (\mathbb{C}^{\times})_{\antidiag} =: H \rtimes_{\mathbb{C}^{\times}} p_X^{-1}(\mathcal{X}).
\end{equation}
Note that both $\times_{\mathbb{C}^{\times}}$ and $\rtimes_{\mathbb{C}^{\times}}$ defined in this way are associative: for $H_i \in GL(U_i)$ ($i=1,2,3$), we have
$$(H_1 \times_{\C^{\times}} H_2) \times_{\C^{\times}} H_3 \simeq H_1 \times_{\C^{\times}} (H_2 \times_{\C^{\times}} H_3) \in GL(U_1 \otimes U_2 \otimes U_3),$$
and for $\C^{\times} \subseteq H \subseteq GL(W)^{\dim X}$ (resp.~$\C^{\times} \subseteq H' \subseteq GL(W')^{\dim X}$) normalized by $\kappa ( p_X^{-1}(\mathcal{X}))$ in $GL(W \otimes X)$ (resp.~$GL(W' \otimes X)$), we have
$$ (H \times_{\C^{\times}} H') \rtimes_{\C^{\times}} p_X^{-1}(\mathcal{X}) \simeq H \times_{\C^{\times}} (H' \rtimes_{\C^{\times}} p_X^{-1}(\mathcal{X})) \in GL(W \otimes W' \otimes X).$$

We are now ready to state a well-known lemma and to establish Theorem \ref{thm:AxA^-general}, which generalizes Proposition \ref{prop:AxA^-simple} and describes a major class of disconnected dual pairs in $PGL(W \otimes X)$.

\begin{lemma} \label{lem:natural-iso}
Let $\mathcal{X}$ be a finite abelian group of order $n$, let $X = L^2(\mathcal{X},\mathbb{C})$, and let $W$ be an $m$-dimensional complex vector space. Then there is a natural isomorphism $W \otimes X \xrightarrow{\sim} L^2(\mathcal{X},W)$.
\end{lemma}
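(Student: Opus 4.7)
The plan is to write down the natural candidate map directly on simple tensors and verify it is an isomorphism by exhibiting an explicit inverse. Specifically, I would define
\[
\Phi \colon W \otimes X \longrightarrow L^2(\mathcal{X}, W), \qquad \Phi(w \otimes f)(x_0) := f(x_0)\, w,
\]
and check that the map $(w, f) \mapsto (x_0 \mapsto f(x_0)w)$ is $\mathbb{C}$-bilinear, so that by the universal property of the tensor product it induces a well-defined linear map $\Phi$. Naturality means $\Phi$ uses no choice of basis for $W$ and commutes with linear maps $W \to W'$, which is immediate from the formula.

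Next, I would produce an explicit two-sided inverse. Let $\{\delta_x\}_{x \in \mathcal{X}}$ denote the standard basis of $X = L^2(\mathcal{X}, \mathbb{C})$ (with $\delta_x(y) = 1$ if $y = x$ and $0$ otherwise). Define
\[
\Psi \colon L^2(\mathcal{X}, W) \longrightarrow W \otimes X, \qquad \Psi(F) := \sum_{x \in \mathcal{X}} F(x) \otimes \delta_x.
\]
A short calculation gives $(\Phi \circ \Psi)(F)(x_0) = \sum_{x} \delta_x(x_0) F(x) = F(x_0)$, and on simple tensors $(\Psi \circ \Phi)(w \otimes f) = \sum_x f(x)\, w \otimes \delta_x = w \otimes \sum_x f(x) \delta_x = w \otimes f$. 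Both sides extend linearly, so $\Phi$ and $\Psi$ are mutually inverse.

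There is essentially no obstacle: both vector spaces have dimension $mn$, and the identification is completely canonical once one notes that specifying a function $\mathcal{X} \to W$ is the same as specifying its values $F(x) \in W$ for each $x \in \mathcal{X}$, which is in turn the same data as the coefficient vector $\sum_x F(x) \otimes \delta_x \in W \otimes X$. The only mild subtlety worth flagging is that ``natural'' here really means functorial in $W$ (the isomorphism commutes with any $W \to W'$), and independent of a basis choice for $W$; both properties are manifest from the defining formula for $\Phi$.
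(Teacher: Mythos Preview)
Your argument is correct and complete: the bilinear map $(w,f)\mapsto (x_0\mapsto f(x_0)w)$ induces the desired linear map, the explicit inverse $\Psi(F)=\sum_{x\in\mathcal{X}} F(x)\otimes\delta_x$ works, and naturality in $W$ is clear from the formula. The paper itself does not supply a proof of this lemma at all, introducing it simply as ``a well-known lemma,'' so there is no approach to compare against; your direct construction is exactly the standard one and is entirely appropriate here.
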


\begin{thm} \label{thm:AxA^-general}
Let $\mathcal{X}$ be a finite abelian group of order $n$, and let $\hat{\mathcal{X}}$ denote its group of characters. Let $X = L^2(\mathcal{X},\mathbb{C})$, and let $W$ be an $m$-dimensional complex vector space. Let $(\overline{H}_1, \overline{H}_2)$ be a dual pair in $PGL(W)$ with $H_i := p_W^{-1}(\overline{H}_i)$. Then the groups
$$ H_1 \times_{\mathbb{C}^{\times}} p_X^{-1}(\mathcal{X} \times \hat{\mathcal{X}}) \hspace{.5cm} \text{ and } \hspace{.5cm} H_2 \times_{\mathbb{C}^{\times}} p_X^{-1}(\mathcal{X} \times \hat{\mathcal{X}})$$
descend to a dual pair
$$\left (\overline{H}_1 \times \mathcal{X} \times \hat{\mathcal{X}} , \; \overline{H}_2 \times \mathcal{X} \times \hat{\mathcal{X}}  \right )$$
in $PGL(W \otimes X)$ with component groups $H_1/H_1^{\circ} \times \mathcal{X} \times \hat{\mathcal{X}}$ and $H_2/H_2^{\circ} \times \mathcal{X} \times \hat{\mathcal{X}}$.
\end{thm}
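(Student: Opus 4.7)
The plan is to verify three claims in turn: that the image in $PGL(W \otimes X)$ of each $H_i \times_{\mathbb{C}^\times} p_X^{-1}(\mathcal{X} \times \hat{\mathcal{X}})$ is the announced direct product, that the two images lie in each other's centralizer, and that this inclusion is an equality. Throughout, I use the identification $W \otimes X \cong L^2(\mathcal{X}, W)$ from Lemma \ref{lem:natural-iso}, under which $\iota(h)$ (for $h \in GL(W)$) acts by postcomposition $f \mapsto h \circ f$, the operator $\kappa(\tau_x)$ acts by translation $(\kappa(\tau_x) f)(y) = f(y x^{-1})$, and $\kappa(\sigma_\xi)$ acts by pointwise scalar multiplication $(\kappa(\sigma_\xi) f)(y) = \xi(y) f(y)$. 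The image identification is immediate from $\mathbb{C}^\times \subseteq H_i$ together with $\iota(GL(W)) \cap \kappa(GL(X)) = \mathbb{C}^\times \cdot I$: the two subgroup images in $PGL(W \otimes X)$ are $\overline{H}_i$ and $\mathcal{X} \times \hat{\mathcal{X}}$, they commute (since $\iota$- and $\kappa$-images commute in $GL(W \otimes X)$), and they intersect trivially. The easy centralizer containment follows from the same observations combined with Proposition \ref{prop:AxA^-simple}, which says $\mathcal{X} \times \hat{\mathcal{X}}$ is abelian in $PGL(X)$.

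The main work is the reverse centralizer containment. Let $t \in GL(W \otimes X)$ represent an element of $Z_{PGL(W \otimes X)}(\overline{H}_1 \times \mathcal{X} \times \hat{\mathcal{X}})$. I would peel off the structure of $t$ in three stages mirroring Proposition \ref{prop:AxA^-simple}. First, projective commutation with $\{\kappa(\sigma_\xi)\}_{\xi \in \hat{\mathcal{X}}}$, combined with linear independence of characters, shows that $t$ normalizes the commutative algebra $I_W \otimes \mathrm{Diag}(X)$, whose joint eigenspaces are the summands $W \otimes e_i^X$. Hence $t$ permutes these summands via some $\pi$, and the scalar constraint $t \kappa(\sigma_\xi) t^{-1} = k_{t,\xi} \kappa(\sigma_\xi)$ forces $\pi$ to be translation by some $x_0 \in \mathcal{X}$, exactly as in Proposition \ref{prop:AxA^-simple}. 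Hence $t = \kappa(\tau_{x_0}) \cdot D'$ for some block-diagonal $D' \in \prod_{i=1}^n GL(W)$, which I record as a function $g \colon \mathcal{X} \to GL(W)$ via $(D'f)(y) = g(y) f(y)$.

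Second, the projective commutation $t \kappa(\tau_x) t^{-1} = \ell_{t,x} \kappa(\tau_x)$ reduces (after canceling the $\kappa(\tau_{x_0})$ factor with $\kappa(\tau_x)$, which commute) to the functional equation $g(y) = \ell_{t,x} g(y x^{-1})$ for all $x, y \in \mathcal{X}$. Setting $y = 1$ shows $g(y) = \lambda(y) g(1)$ for a function $\lambda \colon \mathcal{X} \to \mathbb{C}^\times$, and substituting back forces $\lambda$ to be multiplicative, hence $\lambda \in \hat{\mathcal{X}}$. Thus $t = \iota(g(1)) \cdot \kappa(\tau_{x_0} \sigma_\lambda)$, with the $\kappa$-factor landing in $p_X^{-1}(\mathcal{X} \times \hat{\mathcal{X}})$. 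Third, since $\iota$- and $\kappa$-images commute, $t \iota(h_1) t^{-1} = \iota(g(1) h_1 g(1)^{-1})$, so requiring this to lie in $\mathbb{C}^\times \iota(h_1)$ for all $h_1 \in H_1$ is equivalent to $p_W(g(1)) \in Z_{PGL(W)}(\overline{H}_1) = \overline{H}_2$, i.e., $g(1) \in H_2$. Therefore $p(t) \in \overline{H}_2 \times \mathcal{X} \times \hat{\mathcal{X}}$; the analogous argument with $\overline{H}_1$ and $\overline{H}_2$ swapped yields the reverse containment and establishes the dual pair.

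The component group claim follows at once: $\mathcal{X} \times \hat{\mathcal{X}}$ is finite and hence has trivial identity component, while $\mathbb{C}^\times \subseteq H_i^\circ$ implies $\overline{H}_i/\overline{H}_i^\circ = H_i/H_i^\circ$. The main obstacle should be the second stage: extracting multiplicativity of $\lambda$ from a single functional equation is the crucial subtlety, as it is precisely what ensures that stray diagonal matrices in $GL(X)$ do not contribute and only elements of $p_X^{-1}(\mathcal{X} \times \hat{\mathcal{X}})$ survive in the centralizer.
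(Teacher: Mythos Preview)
Your proposal is correct and follows essentially the same three-stage structure as the paper's proof: use commutation with $\hat{\mathcal{X}}$ to reduce $t$ to a block permutation (which must be a translation) times a block-diagonal operator, use commutation with $\mathcal{X}$ to show the block-diagonal part is $d_0 \otimes \sigma_\xi$ for a single $d_0 \in GL(W)$ and $\xi \in \hat{\mathcal{X}}$, and finally use commutation with $H_1$ to force $d_0 \in H_2$. The only cosmetic difference is in the second stage, where the paper adapts the character-basis expansion argument from Proposition~\ref{prop:AxA^-simple} while you argue directly via the functional equation $g(y) = \ell_{t,x}\, g(yx^{-1})$; these are equivalent formulations of the same step.
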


\begin{proof}
Lemma \ref{lem:natural-iso} shows that we can view $W \otimes X$ as the space of functions $L^2(\mathcal{X},W)$. A function $f \in L^2(\mathcal{X},W)$ can be viewed as a column vector $[f(x_1) \; \cdots \; f(x_n)]^T$, where $x_1 , \ldots , x_n $ are the elements of $\mathcal{X}$. For each $\xi \in \hat{\mathcal{X}}$, we can view
\begin{align*}
    \sigma_{\xi} \colon W \otimes X & \rightarrow W \otimes X \\
    f(x_0) & \mapsto \xi (x_0 ) f(x_0)
\end{align*}
as a block diagonal matrix $\sigma_{\xi} = \diag ( \xi (x_1) \cdot \text{id}_W , \ldots, \xi (x_n) \cdot \text{id}_W )$. Additionally, since multiplying by $x^{-1}$ permutes the elements of $\mathcal{X}$, each 
\begin{align*}
    \tau_x \colon W \otimes X & \rightarrow W \otimes X \\
    f (x_0 ) & \mapsto f(x_0 x^{-1}) 
\end{align*}
can be represented as a block permutation matrix in $GL(W \otimes X)$. By the calculations from the proof of Proposition \ref{prop:AxA^-simple}, we get the inclusions 
$$ \overline{H}_1 \times \mathcal{X} \times \hat{\mathcal{X}} \subseteq Z_{PGL(W \otimes X)} (\overline{H}_2 \times \mathcal{X} \times \hat{\mathcal{X}} )$$ 
and 
$$\overline{H}_2 \times \mathcal{X} \times \hat{\mathcal{X}} \subseteq Z_{PGL(W \otimes X)} (\overline{H}_1 \times \mathcal{X} \times \hat{\mathcal{X}} ).$$

Now, let $t \in p^{-1} (Z_{PGL(W \otimes X)} (\overline{H}_1 \times \mathcal{X} \times \hat{\mathcal{X}} ) )$. Then, in particular, $t$ commutes with each $\sigma_{\xi}$ up to a scalar. By the same argument as in the proof of Proposition \ref{prop:AxA^-simple}, this shows that $t$ is a product of a block permutation matrix $s^{-1}$ and a block diagonal matrix $d$, and that $s^{-1} = \tau_{s \cdot 1}$. Using the proof of Proposition \ref{prop:AxA^-simple} yet again, we see that $d = \diag (d_0, \ldots , d_0) \cdot \sigma_{\xi_i}$ for some $d_0 \in GL(W)$ and some $1 \leq i \leq n$. Then $d_0$ commutes (up to a scalar) with every element of $H_1$, meaning $d_0 \in H_2$. In this way, we see that $p(t) \in \overline{H}_2 \times \mathcal{X} \times \hat{\mathcal{X}}$, and hence that 
$$Z_{PGL(W \otimes X)} (\overline{H}_1 \times \mathcal{X} \times \hat{\mathcal{X}} ) = \overline{H}_2 \times \mathcal{X} \times \hat{\mathcal{X}} .$$ 
Reversing the roles of $H_1$ and $H_2$ shows that 
$$Z_{PGL(W \otimes X)} (\overline{H}_2 \times \mathcal{X} \times \hat{\mathcal{X}} ) = \overline{H}_1 \times \mathcal{X} \times \hat{\mathcal{X}} ,$$ 
completing the proof.   
\end{proof}

The following theorem describes classes of disconnected dual pairs in $PGL(W \otimes X)$ constructed from certain $PGL(W)$ or $GL(W)$ dual pairs.

\begin{thm} \label{thm:type-2}
Let $\mathcal{X}$ be a finite abelian group of order $n$, and let $\hat{\mathcal{X}}$ denote its group of characters. Let $X = L^2 (\mathcal{X},\mathbb{C})$, and let $W$ be an $m$-dimensional complex vector space. 
\begin{enumerate}[label = (\roman*)]
\item Let $(\overline{H}_1, \overline{H}_2)$ be a dual pair in $PGL(W)$ (with $H_i := p_W^{-1}(\overline{H}_i)$) such that $H_1^{\circ} = \mathbb{C}^{\times}$. Then the groups
$$ [ H_1  \times_{\mathbb{C}^{\times}} (\mathbb{C}^{\times})^n ] \rtimes_{\mathbb{C}^{\times}} p_X^{-1}(\mathcal{X}) \hspace{.5cm} \text{ and } \hspace{.5cm}  H_2 \times_{\mathbb{C}^{\times}} p_X^{-1} (\hat{\mathcal{X}})$$ 
descend to a dual pair
$$ ( [\overline{H}_1 \times p_X((\mathbb{C}^{\times})^n) ] \rtimes \mathcal{X} , \; \overline{H}_2 \times \hat{\mathcal{X}} ) $$
in $PGL(W \otimes X)$ with component groups $H_1/H_1^{\circ} \times \mathcal{X}$ and $H_2/H_2^{\circ} \times \hat{\mathcal{X}}$.
\item Let $(H_1, H_2)$ be a dual pair in $GL(W)$. Then the groups
$$H_1^n \rtimes_{\mathbb{C}^{\times}} p_X^{-1}(\mathcal{X}) \hspace{.5cm} \text{ and } \hspace{.5cm} H_2 \times_{\mathbb{C}^{\times}} p_X^{-1}(\hat{\mathcal{X}})$$
descend to a dual pair
$$( p(H_1^n) \rtimes \mathcal{X} , \; p(H_2) \times \hat{\mathcal{X}} )$$
in $PGL(W \otimes X)$ with component groups $\mathcal{X}$ and $\hat{\mathcal{X}}$.
\end{enumerate}
\end{thm}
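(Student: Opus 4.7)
The plan is to adapt the block-matrix arguments used in the proofs of Proposition \ref{prop:AxA^-simple} and Theorem \ref{thm:AxA^-general} to the enlarged ``first'' groups appearing here. I realize $W \otimes X$ as $L^2(\mathcal{X}, W)$ via Lemma \ref{lem:natural-iso} and work with the block-matrix descriptions of $\sigma_\xi$ (block diagonal with blocks $\xi(x_i) \cdot \mathrm{id}_W$) and $\tau_x$ (block permutation) from the proof of Theorem \ref{thm:AxA^-general}. The ``easy'' inclusion in each direction -- that the proposed groups centralize each other in $PGL(W \otimes X)$ -- follows from the facts that $\iota(H_1)$ and $\iota(H_2)$ commute in $PGL(W \otimes X)$ (up to scalar in part (i), exactly in part (ii) by the $GL(W)$ dual pair hypothesis), that $\iota(H_i)$ commutes exactly with every $\sigma_\xi$ and every $\tau_x$, and the relation $\tau_x \sigma_\xi \tau_x^{-1} = \xi(x^{-1}) \sigma_\xi$ from (\ref{eq:sigma-tau-relation}).

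For the inclusion $Z_{PGL(W \otimes X)}(\text{first group}) \subseteq \text{second group}$, take $t$ in the preimage of the centralizer. The first group contains the full block-scalar torus $(\C^\times)^n$ -- by hypothesis in part (i), and since $(\C^\times)^n \subseteq H_1^n$ in part (ii). Commuting with every element of this torus up to scalar forces $t$ to normalize it, so $t = s d$ with $s$ a block permutation and $d$ block diagonal; testing against a generic torus element with distinct scalar entries then forces the underlying permutation to be trivial, so $t = d$ is itself block diagonal. Commuting with $\tau_x$ for every $x \in \mathcal{X}$ yields, exactly as in the proof of Theorem \ref{thm:AxA^-general}, the constraint $d_j = \mu(x_j) d_1$ for a character $\mu \in \hat{\mathcal{X}}$, so $t = \sigma_\mu \iota(d_1)$. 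Commuting with $\iota(H_1)$ (in part (i)) or with each factor of $H_1^n$ independently (in part (ii)) then forces $d_1 \in H_2$, so $p(t)$ lies in the second group.

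For the inclusion $Z_{PGL(W \otimes X)}(\text{second group}) \subseteq \text{first group}$, commuting with the $\sigma_\xi$ only forces $t$ to be block monomial, and the character-separation argument from Proposition \ref{prop:AxA^-simple} identifies the permutation as $\tau_z$ for some $z \in \mathcal{X}$, giving $t = \tau_z d$ with $d$ block diagonal. Commuting with $\iota(H_2)$ then forces each $d_j$ to lie in $H_1$. In part (ii) this directly places $t$ in $H_1^n \rtimes_{\C^\times} p_X^{-1}(\mathcal{X})$; in part (i), the fact that the commutator scalars $d_j h_2 d_j^{-1}/h_2$ are independent of $j$ means $p(d)$ lies in $\overline{H}_1 \times p_X((\C^\times)^n)$ inside $PGL(W \otimes X)$, placing $t$ in the first group. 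The component-group statements follow by reading off the identity components from the explicit descriptions, using that $\iota(H_i^\circ)$ and $\kappa((\C^\times)^n)$ generate the identity components and that $\mathcal{X}, \hat{\mathcal{X}}, H_i/H_i^\circ$ are the discrete quotients.

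The main obstacle I expect is the part (i) case of the second inclusion: extracting the torus factor $p_X((\C^\times)^n)$ from the block-diagonal element $d$ requires carefully using that the commutator character $h_2 \mapsto d_j h_2 d_j^{-1}/h_2$ agrees across all $n$ blocks, so that the residual $d_j$-to-$d_1$ ratios descend in $PGL(W \otimes X)$ precisely to the torus factor and nothing more. Part (ii), by contrast, is cleaner because the $GL(W)$ dual pair assumption makes all commutations exact, so the argument terminates as soon as each $d_j$ is shown to lie in $H_1$.
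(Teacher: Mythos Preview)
Your proof is correct and follows essentially the same block-matrix strategy as the paper. The one organizational difference is in the direction $Z_{PGL(W\otimes X)}(\text{first group}) \subseteq \text{second group}$: the paper observes that the first group contains $\overline{H}_1 \times \mathcal{X} \times \hat{\mathcal{X}}$ (since $p_X^{-1}(\hat{\mathcal{X}}) \subseteq (\mathbb{C}^\times)^n$, respectively $H_1 \times_{\mathbb{C}^\times} p_X^{-1}(\hat{\mathcal{X}}) \subseteq H_1^n$), invokes Theorem~\ref{thm:AxA^-general} to place $t$ in $H_2 \times_{\mathbb{C}^\times} p_X^{-1}(\mathcal{X}\times\hat{\mathcal{X}})$, and then uses commutation with the full torus $(\mathbb{C}^\times)^n$ to kill the $\mathcal{X}$-factor. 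You instead unwind this directly---first forcing $t$ block diagonal via the torus, then extracting the $\sigma_\mu$ factor via the $\tau_x$'s---which amounts to re-running the relevant portion of the proofs of Proposition~\ref{prop:AxA^-simple} and Theorem~\ref{thm:AxA^-general} in place. For the reverse inclusion your argument matches the paper's exactly, including the key step in part~(i) of using the $j$-independence of the commutator scalar $d_j h_2 d_j^{-1} h_2^{-1}$ to conclude (via the nondegeneracy of the pairing between component groups, Theorem~\ref{thm:dual-finite-ab-gps}) that all $d_j$ lie in a single connected component of $H_1$, hence differ from one another by elements of $H_1^\circ = \mathbb{C}^\times$.
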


\begin{proof}
As in the proofs of Theorem \ref{thm:AxA^-general}, view $\mathcal{X}$ and $\hat{\mathcal{X}}$ as subgroups of $PGL(X) \subset PGL(W \otimes X)$. For notational convenience, set $G:= PGL(W \otimes X)$ and $p:=p_{W \otimes X}$.

We start by showing that 
\begin{equation} \label{eq:type-2-goal-1}
Z_{G} ( p( [ H_1 \times_{\mathbb{C}^{\times}} (\mathbb{C}^{\times})^{n}  ] \rtimes_{\mathbb{C}^{\times}} p_X^{-1}(\mathcal{X}) )) = p ( H_2 \times_{\mathbb{C}^{\times}} p_X^{-1} (\hat{\mathcal{X}}) ) = \overline{H}_2 \times \hat{\mathcal{X}} .
\end{equation}
Since $p_X^{-1}(\hat{\mathcal{X}})$ sits inside of $(\mathbb{C}^{\times})^{n}$, we see that 
\begin{align*}
Z_{G} ( p ( [H_1 \times_{\mathbb{C}^{\times}} (\mathbb{C}^{\times})^n ] \rtimes_{\mathbb{C}^{\times}} p_X^{-1}(\mathcal{X})  ) ) & \subseteq Z_{G} ( p ( [ H_1 \times_{\mathbb{C}^{\times}} p_X^{-1}(\hat{\mathcal{X}})  ] \rtimes_{\mathbb{C}^{\times}} p_X^{-1}(\mathcal{X}) )  )\\
&= Z_G(\overline{H}_1 \times \mathcal{X} \times \hat{\mathcal{X}} )\\
&= \overline{H}_2 \times \mathcal{X} \times \hat{\mathcal{X}},
\end{align*}
where we have used Theorem \ref{thm:AxA^-general}. Therefore, for $t \in p^{-1}( Z_{G} ( p( [H_1 \times_{\mathbb{C}^{\times}} (\mathbb{C}^{\times})^{n}] \rtimes_{\mathbb{C}^{\times}} p_X^{-1}(\mathcal{X}) ))$, we can write $t = h \cdot \sigma_{\xi} \cdot \tau_x$ for some $h \in H_2$, $\xi \in \hat{\mathcal{X}}$, and $x \in \mathcal{X}$. But since $t$ commutes (up to a scalar) with any element of $(\mathbb{C}^{\times})^n$, we see that $\tau_x = \text{id}_{W \otimes X}$. It follows that $t \in H_2 \times_{\mathbb{C}^{\times}} p_X^{-1}(\hat{\mathcal{X}})$, and hence that 
$$Z_{G} ( p( [H_1 \times_{\mathbb{C}^{\times}} (\mathbb{C}^{\times})^{n}] \rtimes_{\mathbb{C}^{\times}} p_X^{-1}(\mathcal{X}) ) \subseteq p ( H_2 \times_{\mathbb{C}^{\times}} p_X^{-1} (\hat{\mathcal{X}}) ) = \overline{H}_2 \times \hat{\mathcal{X}}. $$
Inclusion the other way is clear, so we have established (\ref{eq:type-2-goal-1}).

It remains to show that 
$$Z_G( \overline{H}_2 \times \hat{\mathcal{X}} ) = p( [H_1  \times_{\mathbb{C}^{\times}} (\mathbb{C}^{\times})^n ] \rtimes_{\mathbb{C}^{\times}} p_X^{-1}(\mathcal{X}) ) = [\overline{H}_1 \times p((\mathbb{C}^{\times})^n)] \rtimes \mathcal{X}.$$ 
To this end, let $t' \in p^{-1}( Z_G( \overline{H}_2 \times \hat{\mathcal{X}} ) )$. Then, in particular, $t'$ commutes with each $\sigma_{\xi}$ up to a scalar, and hence must be of the form $\tau_x \cdot \diag (d_1,\ldots, d_n)$ for some $x \in \mathcal{X}$ and some $d_1, \ldots, d_n \in GL(W)$ (by the same argument as in the proof of Theorem \ref{thm:AxA^-general}). Moreover, $t'$ commutes (up to a scalar) with $\diag (h,\ldots, h)$ for each $h \in H_2$. Therefore, $d_i \in H_1$ for $1 \leq i \leq n$. Moreover, for each $h \in H_2$, there is a scalar $Z_h \in \mathbb{C}^{\times}$ such that $d_i h d_i^{-1} = Z_h h$ for all $1 \leq i \leq n$. It follows that $d_1 , \ldots , d_n$ are all in the same connected component of $H_1$. Since $H_1^{\circ} = \mathbb{C}^{\times}$, it follows that $\diag (d_1, \ldots , d_n)$ can be written as 
$$ \diag (d,\ldots ,d) \cdot \diag (Z_1, \ldots , Z_n)$$ 
for some $d \in H_1$ and some $Z_1,\ldots , Z_n \in \mathbb{C}^{\times}$. Putting this all together, we see that $t' \in [H_1 \times_{\mathbb{C}^{\times}} (\mathbb{C}^{\times})^n] \rtimes_{\mathbb{C}^{\times}} p_X^{-1}(\mathcal{X})$, and hence that 
$$Z_G(\overline{H}_2 \times \hat{\mathcal{X}}) \subseteq [\overline{H}_1 \times p ( (\mathbb{C}^{\times})^n ) ] \rtimes \mathcal{X}.$$ 
Since inclusion in the other direction is clear, this completes the proof of $(i)$.

The proof of $(ii)$ is nearly identical, with just a couple of small differences. Instead of using that $p_X^{-1}(\hat{\mathcal{X}}) \subseteq (\mathbb{C}^{\times})^n$, we use that $H_1 \times_{\mathbb{C}^{\times}} p_X^{-1}(\hat{\mathcal{X}}) \subseteq H_1^n$ when proving that $Z_G( p(H_1^n) \rtimes \mathcal{X}) = p(H_2) \times \hat{\mathcal{X}}$. Finally, when proving that $Z_G(p(H_2) \times \hat{\mathcal{X}}) = p(H_1^n) \rtimes \mathcal{X}$, we get that $t' \in p^{-1}( Z_G ( p(H_2) \times \hat{\mathcal{X}} ) )$ can be written as $\diag (d_1, \ldots , d_n) \cdot \tau_x$, and that $d_1, \ldots , d_n$ are all in the same connected component of $H_1$. Since $H_1$ is connected (by Theorem \ref{thm:dual-pairs-in-GL}), this completes the proof.  
\end{proof}

We now construct a class of dual pairs that (as we will later see) encompases all of the ``single-orbit" dual pairs (as will be defined in Subsection \ref{subsec:G-irred-reps}).

\begin{thm} \label{thm:single-orbit-general}
Let $\mathcal{L}$, $\mathcal{J}$, and $\mathcal{K}$ be finite abelian groups. Set $L:= L^2 (\mathcal{L},\mathbb{C})$, $J:= L^2(\mathcal{J},\mathbb{C})$, and $K:= L^2 (\mathcal{K},\mathbb{C})$. Let $B$ and $E$ be finite-dimensional complex vector spaces. Then the groups
$$G_1' := \left [ (\mathbb{C}^{\times})^{\vert \mathcal{K} \vert} \times_{\mathbb{C}^{\times}} p_L^{-1}(\mathcal{L} \times \hat{\mathcal{L}}) \times_{\mathbb{C}^{\times}} p_J^{-1}(\hat{\mathcal{J}}) \right ] \rtimes_{\mathbb{C}^{\times}} p_K^{-1}(\mathcal{K})$$
and 
$$G_2' := \left [ (\mathbb{C}^{\times})^{\vert \mathcal{J} \vert} \times_{\mathbb{C}^{\times}} p_L^{-1}(\mathcal{L} \times \hat{\mathcal{L}}) \times_{\mathbb{C}^{\times}} p_K^{-1}(\hat{\mathcal{K}}) \right ] \rtimes_{\mathbb{C}^{\times}} p_J^{-1}(\mathcal{J}) $$
descend to a dual pair in $PGL(L \otimes J \otimes K)$. Additionally, the groups
$$G_1:= [ GL(B)^{\vert \mathcal{K} \vert} \rtimes G_1' ]/( (\mathbb{C}^{\times})^{\vert \mathcal{K} \vert} )_{\antidiag} \hspace{.5cm} \text{ and } \hspace{.5cm} G_2 := [GL(E)^{\vert \mathcal{J} \vert} \rtimes G_2']/( (\mathbb{C}^{\times})^{\vert \mathcal{J} \vert} )_{\antidiag}$$
descend to a dual pair in $PGL(B \otimes E \otimes L \otimes J \otimes K)$. Both dual pairs have component groups 
$$\mathcal{L} \times \hat{\mathcal{L}} \times \hat{\mathcal{J}} \times \mathcal{K} \hspace{.5cm} \text{ and } \hspace{.5cm} \mathcal{L} \times \hat{\mathcal{L}} \times \hat{\mathcal{K}} \times \mathcal{J}.$$ 
\end{thm}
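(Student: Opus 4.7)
The plan is to prove the two dual-pair claims in sequence. The first follows from iterated application of earlier theorems, while the second requires a direct centralizer computation modeled on the proofs of Theorem \ref{thm:AxA^-general} and Theorem \ref{thm:type-2}.

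For the pair $(G_1', G_2')$ in $PGL(L \otimes J \otimes K)$, I would iterate Theorem \ref{thm:type-2}(i), starting from the self-dual pair $(\mathcal{L} \times \hat{\mathcal{L}}, \mathcal{L} \times \hat{\mathcal{L}})$ in $PGL(L)$ furnished by Proposition \ref{prop:AxA^-simple}. Both components have $GL(L)$-preimage $p_L^{-1}(\mathcal{L} \times \hat{\mathcal{L}})$, whose identity component is $\mathbb{C}^{\times}$ since $\mathcal{L} \times \hat{\mathcal{L}}$ is finite, so Theorem \ref{thm:type-2}(i) applies with $\mathcal{X} = \mathcal{K}$ and yields a dual pair in $PGL(L \otimes K)$. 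One side of this new pair has preimage $p_L^{-1}(\mathcal{L} \times \hat{\mathcal{L}}) \times_{\mathbb{C}^{\times}} p_K^{-1}(\hat{\mathcal{K}})$ with identity component $\mathbb{C}^{\times}$; taking this as the new first component and applying Theorem \ref{thm:type-2}(i) once more with $\mathcal{X} = \mathcal{J}$, I would obtain a dual pair in $PGL(L \otimes J \otimes K)$. Using the associativity of $\times_{\mathbb{C}^{\times}}$ and $\rtimes_{\mathbb{C}^{\times}}$ noted before the theorem (together with the commutativity of tensor factors that act on disjoint slots), this pair rearranges to exactly $(G_1', G_2')$.

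For the pair $(G_1, G_2)$ in $PGL(B \otimes E \otimes L \otimes J \otimes K)$, I would work directly with the matrix realization, following the template of Theorem \ref{thm:AxA^-general} and Theorem \ref{thm:type-2}. Use Lemma \ref{lem:natural-iso} iteratively to identify $B \otimes E \otimes L \otimes J \otimes K$ with $L^2(\mathcal{L} \times \mathcal{J} \times \mathcal{K}, B \otimes E)$. Then the character subgroups $p_L^{-1}(\hat{\mathcal{L}}), p_J^{-1}(\hat{\mathcal{J}}), p_K^{-1}(\hat{\mathcal{K}})$ act by block-scalar multiplication along the corresponding coordinate, the translation subgroups $p_L^{-1}(\mathcal{L}), p_J^{-1}(\mathcal{J}), p_K^{-1}(\mathcal{K})$ act by coordinate translation, and $GL(B)^{|\mathcal{K}|}$ (respectively $GL(E)^{|\mathcal{J}|}$) acts on the $B$-factor (respectively $E$-factor) at each $\mathcal{K}$-slot (respectively $\mathcal{J}$-slot). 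Combining relation (\ref{eq:sigma-tau-relation}) with the fact that $GL(B)$ centralizes $GL(E)$ in $GL(B \otimes E)$ (Theorem \ref{thm:dual-pairs-in-GL} with $r = 1$), the containments $\overline{G_1} \subseteq Z_{PGL}(\overline{G_2})$ and $\overline{G_2} \subseteq Z_{PGL}(\overline{G_1})$ follow from routine commutator computations.

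The main obstacle is the reverse containment $Z_{PGL}(\overline{G_2}) \subseteq \overline{G_1}$ (and its symmetric counterpart). To handle it, I would take any preimage $t$ of an element of $Z_{PGL}(\overline{G_2})$ and constrain $t$ in stages. Commutation with the $p_L^{-1}(\hat{\mathcal{L}})$ and $p_K^{-1}(\hat{\mathcal{K}})$ characters in $G_2'$, combined with the normalizer-of-diagonal-matrices argument from Proposition \ref{prop:AxA^-simple}, forces $t$ to be a block-permutation times block-diagonal matrix across the $\mathcal{L}$- and $\mathcal{K}$-slots, with permutation parts coming from $p_L^{-1}(\mathcal{L})$ and a $\mathcal{K}$-translation. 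Commutation with $p_J^{-1}(\mathcal{J})$ and $(\mathbb{C}^{\times})^{|\mathcal{J}|} \subset G_2'$ then pins down the $\mathcal{J}$-slot structure as in the proof of Theorem \ref{thm:type-2}(i), forcing the $\mathcal{J}$-dependence to come from a single element of $p_J^{-1}(\hat{\mathcal{J}})$ multiplied by a factor constant across $\mathcal{J}$-slots. Finally, commutation with each copy of $GL(E)$ in $GL(E)^{|\mathcal{J}|}$, combined with Theorem \ref{thm:dual-pairs-in-GL}, collapses the $B \otimes E$-acting part of $t$ into $GL(B) \cdot \mathbb{C}^{\times}$, producing $p(t) \in \overline{G_1}$. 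The reverse inclusion $Z_{PGL}(\overline{G_1}) \subseteq \overline{G_2}$ follows by the symmetric argument, with the connectedness of $GL(B)$ (Theorem \ref{thm:dual-pairs-in-GL}) playing the role that the condition $H_1^{\circ} = \mathbb{C}^{\times}$ played in the proof of Theorem \ref{thm:type-2}(i).
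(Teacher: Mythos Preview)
Your proof of the first claim, that $(G_1', G_2')$ descends to a dual pair in $PGL(L \otimes J \otimes K)$, matches the paper's argument exactly: start from Proposition \ref{prop:AxA^-simple}, then apply Theorem \ref{thm:type-2}(i) twice, once with $\mathcal{X} = \mathcal{K}$ and once with $\mathcal{X} = \mathcal{J}$, and use associativity to rearrange.

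For the second claim your approach is correct but organized differently from the paper's. You run a direct element-by-element constraint on a preimage $t$ of $Z_{PGL(U)}(p(G_2))$, peeling off structure one generating subgroup of $G_2$ at a time. The paper instead computes, as black boxes, the four centralizers
\[
Z_{GL(U)}\bigl(GL(B)^{|\mathcal{K}|}\bigr),\quad
p^{-1}\bigl(Z_{PGL(U)}(\hat{\mathcal{J}})\bigr),\quad
p^{-1}\bigl(Z_{PGL(U)}(\mathcal{L}\times\hat{\mathcal{L}})\bigr),\quad
p^{-1}\bigl(Z_{PGL(U)}(\mathcal{K})\bigr)
\]
by citing Theorem \ref{thm:dual-pairs-in-GL}, Theorem \ref{thm:type-2}(ii), Theorem \ref{thm:AxA^-general}, and the irreducibility argument from Proposition \ref{prop:AxA^-simple} respectively, and then observes that $p^{-1}(Z_{PGL(U)}(p(G_1)))$ sits in the intersection of these four groups, which is exactly $G_2$. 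The two arguments are equivalent in content, but the paper's version is cleaner: each of your ``stages'' corresponds to one of the four intersections, and by packaging them as citations of earlier results the paper avoids redoing the normalizer-of-diagonals and block-permutation arguments at the element level. Your remark that connectedness of $GL(B)$ plays the role of the $H_1^\circ = \mathbb{C}^\times$ hypothesis is correct and corresponds to the paper's use of Theorem \ref{thm:type-2}(ii) rather than (i) for the $\hat{\mathcal{J}}$-piece.
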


\begin{proof}
By the discussion in the proof of Proposition \ref{prop:AxA^-simple}, $\mathcal{L}$ and $\hat{\mathcal{L}}$ can be viewed as subgroups of $PGL(L)$, where the preimage of $\mathcal{L}$ in $GL(L)$ sits in the set of generalized permutation matrices, and where the preimage of $\hat{\mathcal{L}}$ in $GL(L)$ sits in the set of diagonal matrices. With this realization, Proposition \ref{prop:AxA^-simple} also gives that $(\mathcal{L} \times \hat{\mathcal{L}}, \, \mathcal{L} \times \hat{\mathcal{L}})$ is a dual pair in $PGL(L)$.

Now, view $\mathcal{J}$ and $\hat{\mathcal{J}}$ as subgroups of $PGL(J)$, and view $\mathcal{K}$ and $\hat{\mathcal{K}}$ as subgroups of $PGL(K)$ (in the same way that we did for $\mathcal{L}$ and $\hat{\mathcal{L}}$). Then by Theorem \ref{thm:type-2}, 
$$[ p_L^{-1}(\mathcal{L} \times \hat{\mathcal{L}})  \times_{\mathbb{C}^{\times}} (\mathbb{C}^{\times})^{\vert \mathcal{K} \vert}] \rtimes_{\mathbb{C}^{\times}} p_K^{-1}(\mathcal{K}) \hspace{.5cm} \text{ and } \hspace{.5cm} p_L^{-1}(\mathcal{L} \times \hat{\mathcal{L}}) \times_{\mathbb{C}^{\times}} p_K(\hat{\mathcal{K}})$$
descend to a dual pair in $PGL(L \otimes K)$. Applying Theorem \ref{thm:type-2} again, we get that
$$G_1' := \left [ (   p_L^{-1}(\mathcal{L} \times \hat{\mathcal{L}}) \times_{\mathbb{C}^{\times}} (\mathbb{C}^{\times})^{\vert \mathcal{K} \vert} ) \rtimes_{\mathbb{C}^{\times}} p_K^{-1}(\mathcal{K}) \right ] \times_{\mathbb{C}^{\times}} p_J^{-1}(\hat{\mathcal{J}})$$
and 
$$G_2' := \left [  (p_L^{-1}(\mathcal{L} \times \hat{\mathcal{L}}) \times_{\mathbb{C}^{\times}} p_K^{-1}(\hat{\mathcal{K}})) \times_{\mathbb{C}^{\times}}   (\mathbb{C}^{\times})^{\vert \mathcal{J} \vert} \right ] \rtimes_{\mathbb{C}^{\times}} p_J^{-1}(\mathcal{J}) $$
descend to a dual pair in $PGL(L \otimes J \otimes K )$. Now, define
$$G_1 := [ GL(B)^{\vert \mathcal{K} \vert} \rtimes G_1' ]/ ( (\mathbb{C}^{\times})^{\vert \mathcal{K} \vert} )_{\antidiag} \hspace{.25cm} \text{ and } \hspace{.25cm} G_2:= [GL(E)^{\vert \mathcal{J} \vert} \rtimes G_2' ]/( (\mathbb{C}^{\times})^{\vert \mathcal{J} \vert} )_{\antidiag}$$
so that
$$G_1 = \left [ GL(B)^{\vert \mathcal{K} \vert} \times_{\mathbb{C}^{\times}} p_L^{-1} ( \mathcal{L} \times \hat{\mathcal{L}} ) \times_{\mathbb{C}^{\times}} p_J^{-1}(\hat{\mathcal{J}})  \right ] \rtimes_{\mathbb{C}^{\times}} p_K^{-1}(\mathcal{K}) $$
and
$$G_2 = \left [ GL(E)^{\vert \mathcal{J} \vert} \times_{\mathbb{C}^{\times}} p_L^{-1} ( \mathcal{L} \times \hat{\mathcal{L}} ) \times_{\mathbb{C}^{\times}} p_K^{-1}(\hat{\mathcal{K}})  \right ] \rtimes_{\mathbb{C}^{\times}} p_J^{-1}(\mathcal{J}).$$
Let $U := B \otimes E \otimes L \otimes J \otimes K$. We would like to show that $p^{-1}(Z_{PGL(U)}(p(G_1))) = p^{-1}(G_2)$. To this end, note that 
\begin{enumerate}[label=(\alph*)]
\item $Z_{GL(U)} ( GL(B)^{\vert \mathcal{K} \vert} ) = GL(E \otimes L \otimes J)^{\vert \mathcal{K} \vert}$ (by Theorem \ref{thm:dual-pairs-in-GL}). 
\item $p^{-1} ( Z_{PGL(U)} ( \hat{\mathcal{J}} ) ) = GL(B \otimes E \otimes L \otimes K)^{\vert \mathcal{J} \vert} \rtimes_{\mathbb{C}^{\times}} p_J^{-1} (\mathcal{J}) $ (by Theorem \ref{thm:type-2} $(ii)$).
\item $p^{-1}( Z_{PGL(U)}( \mathcal{L} \times \hat{\mathcal{L}} ) ) = GL(B \otimes E \otimes J \otimes K) \times_{\mathbb{C}^{\times}} p_L^{-1}(\mathcal{L} \times \hat{\mathcal{L}})$ (by Theorem \ref{thm:AxA^-general}).
\item $p^{-1} ( Z_{PGL(U)} (\mathcal{K}) ) = GL(B \otimes E \otimes L \otimes J) \times_{\mathbb{C}^{\times}} [ p_K^{-1}(\hat{\mathcal{K}}) \rtimes_{\mathbb{C}^{\times}} p_K^{-1} ( \mathcal{K} ) ] $ (by irreducibility of the embedding of $p_K^{-1}(\mathcal{K})$ in $GL(K)$ and the proof of Proposition \ref{prop:AxA^-simple}). 
\end{enumerate}
Now, since $p^{-1}(Z_{PGL(U)}(p(G_1)))$ is contained in groups (a), (b), (c), and (d), it is also contained in their intersection:
$$p^{-1}(Z_{PGL(U)} (p(G_1))) \subseteq [ GL(E)^{\vert \mathcal{J} \vert} \times_{\mathbb{C}^{\times}} p_L^{-1} ( \mathcal{L} \times \hat{\mathcal{L}} ) \times_{\mathbb{C}^{\times}} p_K^{-1}(\hat{\mathcal{K}}) ] \rtimes_{\mathbb{C}^{\times}} p_J^{-1}(\mathcal{J}) = p^{-1}(G_2).$$ 
Since containment in the other direction is clear, this shows that $p^{-1}(Z_{PGL(U)}(p(G_1))) = p^{-1}(G_2)$. The proof is completed by swapping the roles of $G_1$ and $G_2$ and repeating this argument. 
\end{proof}

Notice that the component groups of the resulting dual pairs are 
$$G_1/G_1^{\circ} = G_1'/(G_1')^{\circ} = \mathcal{L} \times \hat{\mathcal{L}} \times \hat{\mathcal{J}} \times \mathcal{K} \hspace{.25cm} \text{ and }\hspace{.25cm} G_2/G_2^{\circ} = G_2'/(G_2')^{\circ} = \mathcal{L} \times \hat{\mathcal{L}} \times \mathcal{J} \times \hat{\mathcal{K}} .$$
We will soon see that the component groups of the two members of a dual pair in $PGL(U)$ are always dual finite abelian groups (Theorem \ref{thm:dual-finite-ab-gps}).

\section{Preliminaries for showing we've found all ``single-orbit" dual pairs}  \label{sec:prelims-for-showing-all}

As previously mentioned, the connected dual pairs in $PGL(U)$ are in bijection with the dual pairs in $GL(U)$. Additionally, Theorem \ref{thm:single-orbit-general} describes a general class of ``single-orbit" dual pairs in $PGL(U)$ (where a connected dual pair is obtained by taking $\mathcal{J}$, $\mathcal{K}$, and $\mathcal{L}$ to be trivial). Sections \ref{sec:prelims-for-showing-all}--\ref{sec:explicit} are dedicated to proving that Theorem \ref{thm:single-orbit-general} accounts for all of the ``single-orbit" dual pairs in $PGL(U)$ (where ``single-orbit" will be defined in Subsection \ref{subsec:G-irred-reps}). Note that a lot of notation is introduced in these sections; an Index of Notation can be found at the end of the paper.

For the remainder of the paper, we will be considering an arbitrary dual pair $(\overline{G}, \overline{H})$ in $PGL(U)$ and will be trying to understand the possible preimages $G := p^{-1}(\overline{G})$ and $H := p^{-1}(\overline{H})$ in $GL(U)$. These preimages appear in the short exact sequences
\begin{equation} \label{eq:short-exact-sequence}
    1 \rightarrow G^{\circ} \rightarrow G \xrightarrow{\pi_G} \Gamma \rightarrow 1 \hspace{.5cm} \text{ and } \hspace{.5cm}
    1 \rightarrow H^{\circ} \rightarrow H \xrightarrow{\pi_H} \Delta \rightarrow 1,
\end{equation}
where $\Gamma := G/G^{\circ}$ and $\Delta := H / H^{\circ}$. Since $GL(U)$ is a $\mathbb{C}^{\times}$-bundle over $PGL(U)$, we have that $G$ (resp.~$H$) is a $\mathbb{C}^{\times}$-bundle over $\overline{G}$ (resp.~$\overline{H}$). It follows that $\Gamma \simeq \overline{G}/\overline{G}^{\circ}$ and that $\Delta \simeq \overline{H}/\overline{H}^{\circ}
$.

By the mutual centralizer relation and the following lemma, we have a map
\begin{align}
    \mu \colon G \times H & \rightarrow \{ (\dim U)\text{-th roots of unity} \} \label{al:mu-G-H} \\ \nonumber 
    (g,h) & \mapsto ghg^{-1}h^{-1}.
\end{align}

\begin{lemma} \label{lem:roots-of-unity}
Suppose $xyx^{-1} = cy$, where $x,y \in GL(U)$ and $c \in \mathbb{C}^{\times}$. Then $c$ is a $(\dim U)$-th root of unity. 
\end{lemma}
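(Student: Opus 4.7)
The plan is to extract the scalar $c$ by applying the determinant, which is a group homomorphism $\det \colon GL(U) \to \mathbb{C}^{\times}$ and in particular satisfies $\det(cy) = c^{\dim U} \det(y)$ for any scalar $c$ and any $y \in GL(U)$.

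Starting from the hypothesis $xyx^{-1} = cy$, I would take determinants of both sides. On the left, $\det(xyx^{-1}) = \det(x)\det(y)\det(x)^{-1} = \det(y)$. On the right, $\det(cy) = c^{\dim U} \det(y)$. Setting the two equal yields
\begin{equation*}
\det(y) = c^{\dim U} \det(y).
\end{equation*}
Since $y \in GL(U)$ we have $\det(y) \neq 0$, so we may cancel it to conclude $c^{\dim U} = 1$, i.e.\ $c$ is a $(\dim U)$-th root of unity.

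There is no real obstacle here; the only thing to note is that the argument uses nothing beyond the multiplicativity and scaling behavior of $\det$ on $GL(U)$, and in particular does not require any hypothesis relating $x$ and $y$ beyond the given commutation-up-to-scalar relation.
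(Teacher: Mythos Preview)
Your proof is correct and follows essentially the same approach as the paper: take determinants of both sides of $xyx^{-1} = cy$, use multiplicativity on the left and the scaling rule on the right to get $\det(y) = c^{\dim U}\det(y)$, and cancel the nonzero $\det(y)$.
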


\begin{proof}
Using that the determinant is multiplicative, we see that $\det (xyx^{-1}) = \det (y)$. Therefore, the relation $xyx^{-1} = cy$ gives that 
$$\det (y) = \det (cy) = c^{\dim U} \det (y).$$
This shows that $c^{\dim U} = 1$, or that $c$ is a $(\dim U)$-th root of unity (not necessarily primitive).
\end{proof}

Moreover, it is not hard to check from the defining equation that $\mu$ is actually a group homomorphism for both $G$ and $H$ (i.e.,~that $\mu(g g', h) = \mu(g,h)\mu(g',h)$ and that $\mu(g,h h') = \mu(g,h)\mu(g,h')$ for all $g,g' \in G$ and all $h,h' \in H$).

Note that for each connected component of $G \times H$, $\mu$ restricts to a continuous map from a connected set to a discrete set, and hence must be constant. Therefore, $\mu$ descends to a map 
\begin{equation} \label{eq:mu-gamma-delta}
\mu \colon \Gamma \times \Delta \rightarrow \{ (\dim U)\text{-th roots of unity} \},
\end{equation}
where $\mu (\gamma , \delta)$ can be calculated as $\mu (g_{\gamma}, h_{\delta})$ for any $g_{\gamma} \in \pi_G^{-1}(\gamma)$ and $h_{\delta} \in \pi_H^{-1}(\delta)$. Using that $\mu$ is a group homomorphism for both $G$ and $H$, it is straightforward to check that $\mu$ is a group homomorphism for both $\Gamma$ and $\Delta$ (i.e.,~that $\mu (\gamma \gamma', \delta ) = \mu (\gamma , \delta) \mu (\gamma' , \delta)$ and that $\mu (\gamma ,\delta \delta ' ) = \mu (\gamma , \delta) \mu (\gamma , \delta ')$ for all $\gamma , \gamma' \in \Gamma$ and all $\delta , \delta' \in \Delta$).

\begin{thm} \label{thm:dual-finite-ab-gps}
$\Gamma$ and $\Delta$ are finite abelian groups with natural isomorphisms 
$$\Delta \simeq \hat{\Gamma} \hspace{.5cm} \text{ and } \hspace{.5cm} \Gamma \simeq \hat{\Delta}.$$
\end{thm}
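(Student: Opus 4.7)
The plan is to show that $\mu \colon \Gamma \times \Delta \to \{(\dim U)\text{-th roots of unity}\}$ is non-degenerate in each variable; once this is proven, everything else follows formally. Indeed, once the induced homomorphism $\Gamma \to \hat{\Delta}$ is injective, $\Gamma$ embeds into an abelian group and is therefore abelian; the symmetric argument handles $\Delta$; and a double cardinality count $|\Gamma| \leq |\hat{\Delta}| = |\Delta| \leq |\hat{\Gamma}| = |\Gamma|$ upgrades both injections to isomorphisms $\Gamma \simeq \hat{\Delta}$ and $\Delta \simeq \hat{\Gamma}$. Finiteness of $\Gamma$ and $\Delta$ is immediate, since $G$ and $H$ are algebraic subgroups of $GL(U)$ and hence have only finitely many connected components.

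The heart of the matter is therefore left non-degeneracy (the right side being symmetric). Suppose $\gamma \in \Gamma$ satisfies $\mu(\gamma, \delta) = 1$ for every $\delta \in \Delta$, and pick a representative $g \in \pi_G^{-1}(\gamma)$. Then $g$ commutes \emph{exactly} (not just up to scalar) with every element of $H$, so $g \in Z_{GL(U)}(H)$. Conversely, every element of $G^{\circ}$ lies in $Z_{GL(U)}(H)$: for fixed $h \in H$, the map $g' \mapsto g' h (g')^{-1} h^{-1}$ is continuous on $G^{\circ}$, takes values in the discrete finite set of $(\dim U)$-th roots of unity, and sends the identity to $1$, so it is identically $1$. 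Moreover, $Z_{GL(U)}(H) \subseteq p^{-1}(Z_{PGL(U)}(\overline{H})) = p^{-1}(\overline{G}) = G$. Thus the non-degeneracy claim reduces to the equality $Z_{GL(U)}(H) = G^{\circ}$.

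To establish this equality I would lean on the $GL(U)$-classification. Since $\overline{H}$ is reductive in $PGL(U)$, its preimage $H = p^{-1}(\overline{H})$ is a $\mathbb{C}^{\times}$-extension of a reductive group and hence reductive in $GL(U)$; its centralizer $Z_{GL(U)}(H)$ is then itself reductive. By Fact \ref{fact:triple centralizer}, the pair $(Z_{GL(U)}(H), \, Z_{GL(U)}(Z_{GL(U)}(H)))$ is a reductive dual pair in $GL(U)$, and Theorem \ref{thm:dual-pairs-in-GL} forces both of its members to be \emph{connected}. A connected subgroup of $G$ containing the identity lies in $G^{\circ}$, which finishes the proof of $Z_{GL(U)}(H) = G^{\circ}$, and hence of left non-degeneracy. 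The main obstacle is precisely the connectedness of $Z_{GL(U)}(H)$; the clean reduction to Theorem \ref{thm:dual-pairs-in-GL} via Fact \ref{fact:triple centralizer} is what does the real work here, and after that the theorem is purely formal character-theoretic bookkeeping.
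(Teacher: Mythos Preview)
Your proof is correct and follows essentially the same route as the paper: both arguments identify the kernel of $\Gamma \to \hat{\Delta}$ with $Z_{GL(U)}(H)/G^{\circ}$ and kill it by invoking Fact~\ref{fact:triple centralizer} together with Theorem~\ref{thm:dual-pairs-in-GL} to force $Z_{GL(U)}(H)$ to be connected. The only cosmetic difference is in the endgame: you finish with the cardinality squeeze $|\Gamma| \leq |\hat{\Delta}| = |\Delta| \leq |\hat{\Gamma}| = |\Gamma|$, whereas the paper phrases the same step as ``the dual group functor is contravariant exact, so injectivity of $M'$ gives surjectivity of $M$''---for finite abelian groups these are the same argument.
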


\begin{proof}
Recall that for any algebraic group $G$, the centralizer in $G$ of any subset of $G$ is algebraic. As a consequence, we have that any member of a dual pair is algebraic, and hence that $\Gamma$ and $\Delta$ are finite. 

To see that $\Gamma$ and $\Delta$ are dual and abelian, consider the group homomorphism
\begin{align*}
    M \colon \Gamma & \rightarrow \hat{\Delta} \\
    \gamma & \mapsto \mu(\gamma, \cdot ).
\end{align*}
We will show that $M$ is an isomorphism. Towards proving injectivity, define $\Gamma' := \ker M$. Then $\Gamma '$ corresponds to the following subgroup of $G$:
$$G' := \{ g \in G \, : \, ghg^{-1} = h \text{ for all } h \in H \}.$$
Recall that 
$$G = \{ g \in GL(U) \, : \, ghg^{-1}h^{-1} \in \mathbb{C}^{\times} \text{ for all } h \in H \}.$$
Therefore, we see that
$$G ' = \{ g \in GL(U) \, : \, ghg^{-1} = h \text{ for all } h \in H \} = Z_{GL(U)}(H).$$
Since $G'$ is a centralizer in $GL(U)$, it is part of a dual pair $(G', Z_{GL(U)}(G'))$ (by Fact \ref{fact:triple centralizer}), and is therefore connected (by Theorem \ref{thm:dual-pairs-in-GL}). It follows that $\Gamma '$ is trivial, and hence that $M$ is injective. Since $\hat{\Delta}$ is abelian, this gives that $\Gamma$ is abelian as well. Switching the roles of $\Gamma$ and $\Delta$ in this argument gives that 
\begin{align*}
    M' \colon \Delta & \rightarrow \hat{\Gamma} \\
    \delta & \mapsto \mu(\cdot, \delta)
\end{align*}
is injective, and that $\Delta$ is abelian. Finally, since the dual group functor is a contravariant exact functor for locally compact abelian groups, the injectivity of $M'$ implies the surjectivity of $M$, completing the proof. 
\end{proof}

Moving forward, we will identify $\Delta$ with $\hat{\Gamma}$ and will no longer use ``$\Delta$." As a particular consequence of Theorem \ref{thm:dual-finite-ab-gps}, we obtain information regarding the identity components of preimages of members of $PGL(U)$ dual pairs. In particular, these identity components necessarily equal products of general linear groups.

\begin{cor} \label{cor:identity-components-centralizers}
We have that
$$G^{\circ} = Z_{GL(U)} (H) = \prod_{i} GL(B_i) \hspace{.25cm} \text{ and } \hspace{.25cm} H^{\circ} = Z_{GL(U)} (G) = \prod_{j} GL(E_j),$$
where the $B_i$'s and $E_j$'s are finite-dimensional complex vector spaces satisfying 
$$U \simeq \bigoplus_{i} B_i \otimes A_i \simeq \bigoplus_j E_j \otimes F_j$$ 
for some sets of finite-dimensional complex vector spaces $\{ A_i \}$ and $\{ F_j \}$.
\end{cor}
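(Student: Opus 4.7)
The plan is to extract the equality $G^{\circ} = Z_{GL(U)}(H)$ directly from the argument already used inside the proof of Theorem \ref{thm:dual-finite-ab-gps}, and then to impose the tensor-product structure using the classification in Theorem \ref{thm:dual-pairs-in-GL}.

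First, I would establish $G^{\circ} = Z_{GL(U)}(H)$. Since $\mathbb{C}^{\times} \subseteq H$, any element $g \in GL(U)$ that commutes strictly with every element of $H$ automatically satisfies the weaker scalar-commutation condition defining $G$, so $Z_{GL(U)}(H) \subseteq G$. Under the projection $\pi_G \colon G \to \Gamma$, the preimage of the kernel of the map $M \colon \Gamma \to \hat{\Delta}$ from Theorem \ref{thm:dual-finite-ab-gps} is exactly $Z_{GL(U)}(H)$, because the commutator $ghg^{-1}h^{-1}$ used to define $\mu$ is trivial in $\mathbb{C}^{\times}$ precisely when $g$ and $h$ commute on the nose. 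The proof of Theorem \ref{thm:dual-finite-ab-gps} already establishes that $M$ is injective, so $\ker M = \{e\}$ and hence $Z_{GL(U)}(H) = \pi_G^{-1}(e) = G^{\circ}$. The symmetric argument (swapping $G$ and $H$) gives $H^{\circ} = Z_{GL(U)}(G)$.

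To obtain the explicit form, I would then invoke Fact \ref{fact:triple centralizer}: the pair $\bigl(Z_{GL(U)}(H),\, Z_{GL(U)}(Z_{GL(U)}(H))\bigr)$ is a reductive dual pair in $GL(U)$. Applying Theorem \ref{thm:dual-pairs-in-GL} to this pair produces a decomposition $U \simeq \bigoplus_i B_i \otimes A_i$ with $Z_{GL(U)}(H) = \prod_i GL(B_i)$, yielding $G^{\circ} = \prod_i GL(B_i)$. The same argument applied to $Z_{GL(U)}(G)$ supplies a decomposition $U \simeq \bigoplus_j E_j \otimes F_j$ and the equality $H^{\circ} = \prod_j GL(E_j)$.

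The proof is essentially a direct assembly of prior results, with no genuine obstacle. The one point that warrants care is verifying that Fact \ref{fact:triple centralizer} produces a \emph{reductive} dual pair, as required to invoke Theorem \ref{thm:dual-pairs-in-GL}. This is fine because $H \supseteq \mathbb{C}^{\times}$ is a $\mathbb{C}^{\times}$-bundle over the reductive group $\overline{H} \subseteq PGL(U)$ and is hence itself reductive, and the centralizer of a reductive subgroup in the reductive group $GL(U)$ is again reductive.
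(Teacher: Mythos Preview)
Your proof is correct and follows essentially the same approach as the paper: the equalities $G^{\circ} = Z_{GL(U)}(H)$ and $H^{\circ} = Z_{GL(U)}(G)$ are extracted from the proof of Theorem~\ref{thm:dual-finite-ab-gps} (via injectivity of $M$), and the product-of-$GL$'s structure then comes from Fact~\ref{fact:triple centralizer} together with Theorem~\ref{thm:dual-pairs-in-GL}. Your additional remark about reductivity is a bit more cautious than the paper's own proof, but is not needed to carry the argument.
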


\begin{proof}
The equalities $G^{\circ} = Z_{GL(U)} (H)$ and $H^{\circ} = Z_{GL(U)} (G)$ follow from the proof of Theorem \ref{thm:dual-finite-ab-gps}. Then by Fact \ref{fact:triple centralizer} and Theorem \ref{thm:dual-pairs-in-GL}, we see that $Z_{GL(U)} (H) = \prod_{i} GL(B_i)$ and that $Z_{GL(U)} (H) = \prod_{j} GL(E_j)$ for some $\{ B_i \}$ and $\{ E_j \}$ as described in the lemma statement.
\end{proof}

We now require a result about the algebraic extensions of reductive groups. Let $G_0$ be a complex reductive algebraic group, and let $\Gamma$ be a finite group. Let $T_0 \subseteq B_0 \subseteq G_0$ be a maximal torus and Borel subgroup of $G_0$, and let $\{ X_{\alpha} \, : \, \alpha \in \Pi (B_0, T_0) \}$ be a choice of root vectors for the corresponding simple root spaces. For an extension $E$ of $\Gamma$ by $G_0$, define 
$$E(\{ T_0, X_{\alpha} \}) := \{ e \in E \; \vert \; \text{Ad}(e)(\{ T_0, X_{\alpha} \}) = \{ T_0, X_{\alpha} \} \},$$ 
where by $\text{Ad}(e) (\{ T_0, X_{\alpha} \}) = \{ T_0, X_{\alpha} \}$, we mean that $\text{Ad}(e)(T_0) = T_0$ and that $\text{Ad}(e)$ preserves the set $\{ X_{\alpha} \}$ (not necessarily pointwise). This is known as the subgroup of $E$ defining \textit{distinguished automorphisms} of $G_0$. 

\begin{prop}[G.--Vogan {\cite[Corollary 3.2 and Theorem 3.3]{Disconnected}}] \label{prop:bijection-extensions}
\hspace{1cm}
\begin{enumerate}[label=(\arabic*)] 
\item The group $E(\{ T_0, X_{\alpha} \})$ is an extension of $\Gamma$ by the abelian group $Z(G_0)$:
$$1 \rightarrow Z(G_0) \rightarrow E(\{ T_0, X_{\alpha} \}) \rightarrow \Gamma \rightarrow 1.$$
\item There is a natural surjective homomorphism
$$G_0 \rtimes E(\{ T_0, X_{\alpha} \}) \rightarrow E$$
with kernel the antidiagonal copy $Z(G_0)_{\antidiag}=\{ (z,z^{-1}) \; \vert \; z \in Z(G_0) \}$.
\item There is a natural bijection between algebraic extensions $E$ of $\Gamma$ by $G_0$ and algebraic extensions $E(\{ T_0, X_{\alpha} \})$ of $\Gamma$ by $Z(G_0)$. These latter are parameterized by the group cohomology $H^2 (\Gamma, Z(G_0))$, and the bijection is given by
$$E = [G_0 \rtimes E(\{ T_0, X_{\alpha} \})]/Z(G_0)_{\antidiag}.$$ 
\end{enumerate}
\end{prop}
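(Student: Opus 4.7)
The plan is to reduce the whole statement to the classical \emph{pinning rigidity} theorem for a connected reductive algebraic group: the image of the natural map $G_0/Z(G_0) \to \operatorname{Aut}(G_0)$ is the group of inner automorphisms, and the stabilizer of the pinning data $\{T_0, X_\alpha\}$ is a complement to it---equivalently, any two pinnings of $G_0$ are conjugate by a unique element of $G_0/Z(G_0)$. Once this single external input is accepted, everything else is book-keeping inside the short exact sequence $1 \to G_0 \to E \to \Gamma \to 1$.

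For (1) I would first identify the kernel of the restricted projection $E(\{T_0, X_\alpha\}) \to \Gamma$ as $G_0 \cap E(\{T_0, X_\alpha\})$. If $g$ lies in this intersection, then $\ad(g) T_0 = T_0$ places $g \in N_{G_0}(T_0)$; the induced permutation of $\{X_\alpha\}$ forces the corresponding Weyl element to preserve the set of positive simple roots, and hence (since only the identity preserves the positive system) to be trivial, so $g \in T_0$; finally $\ad(g) X_\alpha = \alpha(g) X_\alpha = X_\alpha$ forces $\alpha(g) = 1$ for every simple root, giving $g \in \bigcap_\alpha \ker\alpha = Z(G_0)$. For surjectivity of $E(\{T_0, X_\alpha\}) \to \Gamma$, lift any $\gamma \in \Gamma$ to some $\tilde{\gamma} \in E$; by pinning rigidity there exists $g \in G_0$ with $g\tilde{\gamma}$ stabilizing $\{T_0, X_\alpha\}$, and this element still projects to $\gamma$.

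For (2), $E(\{T_0, X_\alpha\})$ acts on $G_0$ by $\ad$ (inherited from conjugation inside $E$), so $G_0 \rtimes E(\{T_0, X_\alpha\})$ is well-defined and the multiplication map $(g,e) \mapsto ge$ is immediately a homomorphism into $E$. Surjectivity follows by writing any $x \in E$ as $x = (x\tilde{e}^{-1})\tilde{e}$ for a pinning-stabilizing lift $\tilde{e}$ of the image of $x$ provided by (1); the kernel consists of pairs $(g,e)$ with $g = e^{-1} \in G_0 \cap E(\{T_0, X_\alpha\}) = Z(G_0)$, i.e., exactly $Z(G_0)_{\antidiag}$. For (3), the forward map $E \mapsto E(\{T_0, X_\alpha\})$ is inverted by the formula of (2): starting from an extension $\widetilde{E}$ of $\Gamma$ by $Z(G_0)$ with a compatible $\ad$-action on $G_0$, form $[G_0 \rtimes \widetilde{E}]/Z(G_0)_{\antidiag}$ and verify directly that its pinning-stabilizer recovers $\widetilde{E}$. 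The parameterization by $H^2(\Gamma, Z(G_0))$ is then the standard classification of extensions of $\Gamma$ by an abelian kernel.

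The main obstacle is really the invocation of pinning rigidity in (1): once it is granted, the rest is mechanical diagram-chasing. A secondary subtlety is that the bijection in (3) should be understood as fibered over the homomorphism $\Gamma \to \operatorname{Out}(G_0)$ implicit in both $E$ and $\widetilde{E}$, since the $\Gamma$-action on $Z(G_0)$ (and hence both sides of the $H^2$ classification) depends on this datum.
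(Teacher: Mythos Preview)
The paper does not supply its own proof of this proposition; it is quoted without proof from \cite{Disconnected} (Corollary~3.2 and Theorem~3.3). Your sketch is the standard argument and is correct: the single external input you isolate---pinning rigidity, that any two pinnings of $G_0$ are conjugate by a unique element of $G_0/Z(G_0)$---is precisely what yields both the kernel computation and the surjectivity in (1), and parts (2) and (3) are the routine book-keeping you describe. Your closing remark that the bijection in (3) should be read as fibered over the induced homomorphism $\Gamma \to \operatorname{Out}(G_0)$ (equivalently, over a fixed $\Gamma$-action on $Z(G_0)$) is a genuine clarification that the paper's statement leaves implicit.
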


Now, by Corollary \ref{cor:identity-components-centralizers} we have that $G^{\circ}$ and $H^{\circ}$ are both products of $GL$'s. We can therefore let $T_G$ (resp.~$T_H$) be the standard maximal torus of $G^{\circ}$ (resp.~$H^{\circ}$) composed of diagonal matrices. Similarly, we can let $\{ X_{\alpha} \}$ (resp.~$\{ Y_{\beta} \}$) be the standard choice of basis vectors for the appropriate simple root spaces (see \cite[Plate I]{bourbaki}). Set $G_d:= G(\{ T_G, X_{\alpha} \})$ and $H_d := H(\{ T_H, Y_{\beta} \})$. Then by Proposition \ref{prop:bijection-extensions}, we get that there exist lifts $\wtilde{(\cdot)} \colon \Gamma \rightarrow G_d$ and $\wtilde{(\cdot)} \colon \hat{\Gamma} \rightarrow H_d$ along with corresponding 2-cocycles 
\begin{equation} \label{eq:s_G-and-s_H}
s_G \colon \Gamma \times \Gamma \rightarrow Z(G^{\circ}) \hspace{.5cm} \text{ and } \hspace{.5cm} s_H \colon \hat{\Gamma} \times \hat{\Gamma} \rightarrow Z(H^{\circ})
\end{equation}
defining our extensions
\begin{equation} \label{eq:pairs-of-extensions}
1 \rightarrow G^{\circ} \rightarrow G \xrightarrow{\pi_G} \Gamma \rightarrow 1 \hspace{.5cm} \text{ and } \hspace{.5cm} 1 \rightarrow H^{\circ} \rightarrow H \xrightarrow{\pi_H} \hat{\Gamma} \rightarrow 1.
\end{equation}
Since $G^{\circ}$ and $H^{\circ}$ are both products of $GL$'s, we see that $Z(G^{\circ})$ and $Z(H^{\circ})$ are both products of copies of $\mathbb{C}^{\times}$.

In this way, we see that the problem of classifying dual pairs in $PGL(U)$ can be reframed as the problem of classifying the pairs of extensions \eqref{eq:pairs-of-extensions}, where we already have a good understand of the identity components, the relationship between the component groups, and the defining lifts and cocycles. In the next section, we further our understanding of these extensions by defining actions of $\hat{\Gamma}$ (resp.~$\Gamma$) on the $G$- and $H^{\circ}$-irreducibles (resp.~$G^{\circ}$- and $H$-irreducibles) appearing in $U$.

\section{Orbits of $G$-, $G^{\circ}$-, $H$-, and $H^{\circ}$-irreducible representations} \label{sec:orbits}

The goal of this section is to define $\hat{\Gamma}$-actions on the $G$- and $H^{\circ}$-irreducibles in $U$, as well as $\Gamma$-actions on the $G^{\circ}$- and $H$-irreducibles in $U$, so that the embeddings of $G$, $H^{\circ}$, $G^{\circ}$, and $H$ can be expressed nicely in terms of irreducibles ``twisted" by these actions.

\subsection{Choosing $\Gamma$-representatives (resp.~$\hat{\Gamma}$-representatives) in $G$ (resp.~$H$)} \label{subsec:choose-Gamma-reps}

Recall that by Proposition \ref{prop:bijection-extensions}, there exist lifts $\wtilde{(\cdot)} \colon \Gamma \rightarrow G_d$ and $\wtilde{(\cdot)} \colon \hat{\Gamma} \rightarrow H_d$ along with 2-cocycles $s_G \colon \Gamma \times \Gamma \rightarrow Z(G^{\circ})$ and $s_H \colon \hat{\Gamma} \times \hat{\Gamma} \rightarrow Z(H^{\circ})$ defining our extensions
$$1 \rightarrow G^{\circ} \rightarrow G \xrightarrow{\pi_G} \Gamma \rightarrow 1 \hspace{.5cm} \text{ and } \hspace{.5cm} 1 \rightarrow H^{\circ} \rightarrow H \xrightarrow{\pi_H} \hat{\Gamma} \rightarrow 1.$$
Then we have
\begin{equation} \label{eq:def-tilde-lift}
\wtilde{\gamma} \, \wtilde{\gamma'} = s_G( \gamma , \gamma' ) \wtilde{\gamma \gamma'} \hspace{.5cm} \text{ and } \hspace{.5cm} \wtilde{\delta} \, \wtilde{\delta'} = s_H(\delta , \delta ' ) \wtilde{\delta \delta '}
\end{equation}
for all $\gamma, \gamma' \in \Gamma$ and $\delta , \delta ' \in \hat{\Gamma}$. Moreover, modifying our lifts $s_G$ and $s_H$ if necessary, we can assume that $\wtilde{1}_{\Gamma} = \text{id}_G$ and that $\wtilde{1}_{\hat{\Gamma}} = \text{id}_H$. (Note that we are using the same notation $\wtilde{(\cdot)}$ for two different lifts, but it should always be clear from context which lift we are referring to.)

\subsection{Defining a $\hat{\Gamma}$-action on the $G$-irreducibles} \label{subsec:hat-Gamma-action-on-G-irreps}

For $h \in H$ and $(\varphi_j, F_j)$ a $G$-irreducible in $U$, define 
\begin{equation} \label{eq:phi_i-twisted-by-h}
\prescript{h}{}{\varphi_j} (g) := \varphi_j ( h g h^{-1} ) = \varphi_j (\mu (g,h)^{-1} g) = (\varphi_j (\cdot) \otimes \mu (\cdot , h)^{-1}) (g).
\end{equation}
Recall that for $\delta \in \hat{\Gamma}$, we have $\mu (\cdot , h_{\delta}) = \mu (\cdot , h_{\delta}')$ for any $h_{\delta}, h_{\delta}' \in \pi_H^{-1}(\delta)$. Therefore, this $H$-action descends to a $\hat{\Gamma}$-action on the $G$-irreducibles: for $\delta \in \hat{\Gamma}$, define 
\begin{equation} \label{eq:phi-twisted-by-delta}
\prescript{\delta}{}{\varphi_j} (\cdot ) := \varphi_j (\cdot ) \otimes \mu (\cdot, \delta)^{-1}.
\end{equation}
Note that since $\prescript{h_{\delta}}{}{\varphi_j}$ and $\prescript{h_{\delta}'}{}{\varphi_j}$ are not only equivalent but \textit{equal} representations of $G$ for $h_{\delta}, h_{\delta}' \in \pi_H^{-1}(\hat{\Gamma})$, $\prescript{\delta}{}{\varphi_j}$ is a well-defined representation of $G$. Since $\varphi_j$ is irreducible and $\mu (\cdot , \delta)^{-1}$ is a one-dimensional (irreducible) representation of $G$, we indeed have that $\prescript{\delta}{}{\varphi_j}$ is irreducible. Write $(\prescript{\delta}{}{\varphi_j}, \prescript{\delta}{}{F_j})$ for this twisted representation; here, the space $\prescript{\delta}{}{F_j}$ is the same vector space as $F_j$ but with a different $G$-action. We can check that this indeed defines a $\hat{\Gamma}$-action:
\begin{align*}
    \prescript{\delta}{}{( \prescript{\delta'}{}{\varphi_j} (g) )} &= \mu(g,\delta)^{-1} \prescript{\delta'}{}{\varphi_j}(g) = \mu (g, \delta)^{-1} \mu (g, \delta')^{-1} \varphi_j (g) \\
    &= \mu (g, \delta \delta')^{-1} \varphi_j (g) = \prescript{(\delta \delta')}{}{ \varphi_j} (g).
\end{align*}

For a $G$-irreducible $\varphi_j$ in $U$, set $I_H(\varphi_j) := \{ h \in H \, : \, \prescript{h}{}{\varphi_j} \simeq \varphi_j \}$. Note that
$$H/I_H(\varphi_j) \simeq (H/H^{\circ})/(I_H(\varphi_j)/H^{\circ}) \simeq \hat{\Gamma}/(I_H(\varphi_j)/H^{\circ}),$$
where $I_H(\varphi_j)/H^{\circ} \simeq \{ \delta \in \hat{\Gamma} \, : \, \prescript{\delta}{}{\varphi_j} \simeq \varphi_j \}$. (So $H/I_H(\varphi_j)$ is the quotient of $\hat{\Gamma}$ by the subgroup of $\hat{\Gamma}$ that sends $\varphi_j$ to an equivalent representation.)

\subsection{Choosing representatives for the $G$-irreducibles} \label{subsec:G-irred-reps}

Now, fix a $G$-irreducible $(\varphi, F)$ in $U$ and set $\mathcal{J}:= H/I_H(\varphi)$, written as an additive group. For each $j \in \mathcal{J}$, choose a corresponding coset representative $\delta_j \in \hat{\Gamma}$. For $j = 0$, choose $\delta_{0} = 1$. Then, by construction, $\delta_j \delta_{j'}$ and $\delta_{j + j'}$ both have image $j+j'$ in $\mathcal{J}$. \textbf{Assume (for now) that the set $\{ (\prescript{\delta_j}{}{\varphi}, \prescript{\delta_j}{}{F}) \}_{ j \in \mathcal{J} }$ is a complete set of representatives of the equivalence classes of $G$-irreducibles appearing in $U$.} We call a dual pair that satisfies this assumption a \textit{single-orbit dual pair}. We make this assumption now largely to avoid excessively complicated notation in this section. In Section \ref{sec:single-orbit}, we will explain how to reduce the general case to the single orbit case, and we will see that many of the results in this section easily extend to the general case. 

With the single orbit assumption, we have a decomposition 
$$U = \bigoplus_{ j \in \mathcal{J} } \prescript{\delta_j}{}{F} \otimes \Hom_{G} ( \prescript{\delta_j}{}{F}, U ),$$
where the embedding $G \hookrightarrow GL(U)$ realizes $G$ as a subgroup of $\prod_{j \in \mathcal{J}} GL(\prescript{\delta_j}{}{F})$.

\subsection{Defining a $\Gamma$-action on the $G^{\circ}$-irreducibles} \label{subsec:Gamma-action-on-G-circ}

For $g \in G$ and $(\beta_i , B_i)$ a $G^{\circ}$-irreducible in $U$,
define 
\begin{equation} \label{eq:beta-twisted-by-g}
\prescript{g}{}{\beta_i}(g_0) := \beta_i ( gg_0g^{-1} ).
\end{equation}
We see that each $\prescript{g}{}{\beta_i}$ is irreducible. Note, however, that this does not descend to a well-defined $\Gamma$-action on $G^{\circ}$-irreducibles: For $g_{\gamma}, g_{\gamma}' \in \pi_G^{-1}(\gamma)$, we have that $\prescript{g_{\gamma}'}{}{\beta_i} \simeq \prescript{g_{\gamma}}{}{\beta_i}$, but not necessarily that $\prescript{g_{\gamma}'}{}{\beta_i} = \prescript{g_{\gamma}}{}{\beta_i}$ (and this will be an important distinction throughout the paper). To get a well-defined $\Gamma$-action, we make use of our lift $\wtilde{(\cdot)} \colon \Gamma \rightarrow G_d$, letting $\gamma \in \Gamma$ act as
\begin{equation*} 
\prescript{\wtilde{\gamma}}{}{\beta_i}(g_0) = \beta_i (\wtilde{\gamma} g_0 (\wtilde{\gamma})^{-1}).
\end{equation*}

Let $\prescript{\wtilde{\gamma}}{}{B_i}$ denote the corresponding representation space; note that $\prescript{\wtilde{\gamma}}{}{B_i}$ is the same vector space as $B_i$ but with a different $G^{\circ}$-action. We can check that this indeed defines a $\Gamma$-action:
\begin{align*}
    \prescript{\wtilde{\gamma}}{}{( \prescript{\wtilde{\gamma'}}{}{\beta_i} (g_0) )} &= \prescript{\wtilde{\gamma'}}{}{\beta_i}( \wtilde{\gamma} g_0 \wtilde{\gamma}^{-1} ) = \beta_i (\wtilde{\gamma'} \, \wtilde{\gamma} g_0 \wtilde{\gamma}^{-1} \wtilde{\gamma'}^{-1}) \\
    &= \beta_i ( s_G(\gamma', \gamma) \wtilde{\gamma' \gamma} g_0 \wtilde{\gamma' \gamma}^{-1} s_G(\gamma', \gamma)^{-1} ) \\
    &= \beta_i ( \wtilde{\gamma \gamma'} g_0 \wtilde{\gamma \gamma'}^{-1} ) = \prescript{\wtilde{\gamma \gamma'}}{}{\beta_i(g_0)}.
\end{align*}
For a $G^{\circ}$-irreducible $\beta_i$ in $U$, set $I_G(\beta_i) := \{ g \in G \, : \, \prescript{g}{}{\beta_i} \simeq \beta_i \}$. Note that 
$$G/I_G(\beta_i) \simeq (G/G^{\circ}) / (I_G(\beta_i)/G^{\circ}) \simeq \Gamma / ( I_G(\beta_i)/G^{\circ} ),$$
where $I_G(\beta_i)/G^{\circ} \simeq \{ \gamma \in \Gamma \, : \, \prescript{\wtilde{\gamma}}{}{\beta_i} \simeq \beta_i \}$. (So $G/I_G(\beta_i)$ is the quotient of $\Gamma$ by the subgroup of $\Gamma$ that sends $\beta_i$ to an equivalent representation.) 

Now, let $\{ (\beta_i , B_i) \}_{1 \leq i \leq r}$ be the set of $G^{\circ}$-irreducibles in $U$ such that
$$\Res^G_{G^{\circ}} (F) = \bigoplus_{1 \leq i \leq r} B_i \otimes \Hom_{G^{\circ}} (B_i , F).$$
Set $(\beta , B) := (\beta_1, B_1)$ and $\mathcal{K}:= G/I_G(\beta)$ (where $\mathcal{K}$ will be written as an additive group). For each $k \in \mathcal{K}$, choose a corresponding coset representative $\gamma_k \in \Gamma$. For $k=0$, choose $\gamma_0 = 1$. Then, by construction, $\gamma_k \gamma_{k'}$ and $\gamma_{k+k'}$ both have image $k+k'$ in $\mathcal{K}$.

\subsection{Identifying the embedding of $G^{\circ}$ with $\bigoplus_{k \in \mathcal{K}} \prescript{\wtilde{\gamma_k}}{}{\beta} \otimes \Hom_{G^{\circ}} ( \prescript{\wtilde{\gamma_k}}{}{B}, U )$} \label{subsec:identify-G-circ-irreps}

By our choice of $(\beta , B)$, \cite[Theorem 2.1]{Clifford} gives that we have an equivalence of representations 
$$d \colon \bigoplus_{1 \leq i \leq r} B_i \otimes \Hom_{G^{\circ}} (B_i , F) \xrightarrow{\sim} \bigoplus_{k \in \mathcal{K}} \prescript{\wtilde{\gamma_k}}{}{B} \otimes \Hom_{G^{\circ}} (\prescript{\wtilde{\gamma_k}}{}{B}, F).$$
Recalling that $\prescript{\delta_j}{}{\varphi}(\cdot) = \varphi (\cdot) \otimes \mu (\cdot , \delta_j)^{-1}$ (where $\mu (\cdot , \delta_j)^{-1}$ is trivial on $G^{\circ}$), we see that $\Res^G_{G^{\circ}} (\prescript{\delta_j}{}{F}) = \Res^G_{G^{\circ}} (F)$, that $\Hom_{G^{\circ}} (B_i, \prescript{\delta_j}{}{F}) = \Hom_{G^{\circ}} (B_i, F)$, and that $\Hom_{G^{\circ}} ( \prescript{\wtilde{\gamma_k}}{}{B}, \prescript{\delta_j}{}{F} ) = \Hom_{G^{\circ}} (\prescript{\wtilde{\gamma_k}}{}{B}, F)$ for all $j \in \mathcal{J}$. Noting that 
$$U = \bigoplus_{j \in \mathcal{J}} \left ( \bigoplus_{1 \leq i \leq r} B_i \otimes \Hom_{G^{\circ}} (B_i,F) \right ) \otimes \Hom_G(\prescript{\delta_j}{}{F},U)$$
and defining 
$$U' := \bigoplus_{j \in \mathcal{J}} \left ( \bigoplus_{k \in \mathcal{K}} \prescript{\wtilde{\gamma_k}}{}{B} \otimes \Hom_{G^{\circ}} (\prescript{\wtilde{\gamma_k}}{}{B}, F) \right ) \otimes \Hom_G(\prescript{\delta_j}{}{F},U),$$
we see that $D:= \diag (d,\ldots , d)$ (with $d$ appearing $\vert \mathcal{J} \vert$ times) gives an isomorphism $U \xrightarrow{\sim} U'$. Under this isomorphism, each $x \in GL(U)$ gets sent to $\sigma (x) := DxD^{-1} \in GL(U')$. In particular, $G^{\circ} = \prod_{1 \leq i \leq r} GL(B_i) \subseteq GL(U)$ gets sent to $\sigma (G^{\circ}) = \prod_{k \in \mathcal{K}} GL(\prescript{\wtilde{\gamma_k}}{}{B}) \subseteq GL(U')$. 

Set $F':= \bigoplus_{k \in \mathcal{K}} \prescript{\wtilde{\gamma_k}}{}{B} \otimes \Hom_{G^{\circ}} (\prescript{\wtilde{\gamma_k}}{}{B}, F)$, and define $\varphi' \colon \sigma(G) \rightarrow GL(F')$ by $\varphi'( \sigma (g) ) = d \varphi (g) d^{-1}$. Then for $\delta \in \hat{\Gamma}$,  
$$\prescript{\sigma (\wtilde{\delta})}{}{\varphi '} ( \sigma (g) ) = \varphi' ( \sigma (\wtilde{\delta}) \sigma (g) \sigma (\wtilde{\delta})^{-1} ) = \varphi ' ( \sigma ( \wtilde{\delta} g \wtilde{\delta}^{-1} ) ) = d \prescript{\delta}{}{\varphi} (g) d^{-1}.$$ 
In this way, we see that $\{ (\prescript{\sigma (\wtilde{\delta_j})}{}{\varphi'}, \prescript{\sigma (\wtilde{\delta_j})}{}{F'} ) \}_{j \in \mathcal{J}}$ is a full set of representatives for the equivalence classes of $\sigma (G)$-irreducibles appearing in $U'$. By construction, we have that 
$$\Hom_{\sigma (G)} ( \prescript{\sigma (\delta_j)}{}{F'}, U' ) = \Hom_{G} (\prescript{\delta_j}{}{F}, U).$$ 
Similarly, we see that $\{ ( \prescript{\sigma (\wtilde{\gamma_k}) }{}{\beta} , \prescript{\sigma ( \wtilde{\gamma_k} )}{}{B} ) \}_{k \in \mathcal{K}}$ is a full set of representatives for the equivalence classes of $\sigma (G^{\circ})$-irreducibles appearing in $U'$. Additionally, we can define $\wtilde{(\cdot)}^{\,'} \colon \Gamma \rightarrow \sigma (G_d)$ via $\wtilde{\gamma}^{\,'} := \sigma (\wtilde{\gamma})$ and can define $\wtilde{(\cdot)}^{\,'} \colon \hat{\Gamma} \rightarrow \sigma (H_d)$ via $\wtilde{\delta}^{\,'} := \sigma (\wtilde{\delta})$.

Finally, we can replace $U$ with $U'$, $H$ with $\sigma (H)$, $G$ with $\sigma (G)$, $(\varphi,F)$ with $(\varphi',F')$, $\wtilde{(\cdot)} \colon \Gamma \rightarrow G_d$ with $\wtilde{(\cdot)}^{\,'} \colon \Gamma \rightarrow \sigma(G_d)$, and $\wtilde{(\cdot)} \colon \hat{\Gamma} \rightarrow H_d$ with $\wtilde{(\cdot)}^{\,'} \colon \hat{\Gamma} \rightarrow \sigma (H_d)$ moving forward so that
\begin{itemize}
\item $\bigoplus_{k \in \mathcal{K}} \prescript{\wtilde{\gamma_k}}{}{\beta} \otimes \Hom_{G^{\circ}} (\prescript{\wtilde{\gamma_k}}{}{B},U)$ is the embedding of $G^{\circ}$ in $U$, and
\item $\bigoplus_{j \in \mathcal{J}} \prescript{\delta_j}{}{\varphi} \otimes \Hom_G (\prescript{\delta_j}{}{F}, U)$ is the embedding of $G$ in $U$.
\end{itemize}
Note that we can make these replacements because we are interested in classifying the \textit{conjugacy classes} of dual pairs in $PGL(U)$.

\subsection{Defining the corresponding $H^{\circ}$-irreducibles} \label{subsec:H0-irreps}

For $\delta \in \hat{\Gamma}$, define 
\begin{equation} \label{eq:E_delta}
E_{\delta} := \Hom_{G} (\prescript{\delta}{}{F}, U) = \{ e \colon F \rightarrow U \; \vert \; e( \prescript{\delta}{}{\varphi} (g) \cdot f) = g \cdot e(f) \; \forall f \in F, \, g \in G \}.
\end{equation}
Then
$$U = \bigoplus_{j \in \mathcal{J}} \prescript{\delta_j}{}{F} \otimes E_{\delta_j} \hspace{.25cm} \text{ and } \hspace{.25cm} H^{\circ} = Z_{GL(U)} ( G ) = \prod_{ j \in \mathcal{J} } GL(E_{\delta_j}).$$
For $j \in \mathcal{J}$, define $\varepsilon_{\delta_j} \colon H^{\circ} \rightarrow GL(E_{\delta_j})$ as $h_0 \in H^{\circ} \mapsto (h_0)\vert_{E_{\delta_j}}$, so that the set $\{ (\varepsilon_{\delta_j}, E_{\delta_j}) \}_{j \in \mathcal{J} }$ is a complete set of representatives of the equivalence classes of $H^{\circ}$-irreducibles appearing in $U$. For notational convenience, set $E:=E_1=E_{\delta_0}$. Similarly, set $\varepsilon := \varepsilon_1 = \varepsilon_{\delta_0}$.

\subsection{Defining the corresponding $H$-irreducibles} \label{subsec:define-H-irreps}

For $\gamma \in \Gamma$, define
\begin{equation} \label{eq:A_gamma}
A_{\wtilde{\gamma}} := \Hom_{G^{\circ}} ( \prescript{\wtilde{\gamma}}{}{B}, U ) = \{ a \colon B \rightarrow U \; \vert \; a(\prescript{\wtilde{\gamma}}{}{\beta}(g_0) \cdot b ) = g_0 \cdot a(b) \; \forall b \in B, \, g_0 \in G^{\circ} \}.
\end{equation}
Then $U = \bigoplus_{k \in \mathcal{K}} \prescript{\wtilde{\gamma_k}}{}{B} \otimes A_{\wtilde{\gamma_k}}$ and the embedding $H \hookrightarrow GL(U)$ realizes $H$ as a subgroup of $\prod_{k \in \mathcal{K}} GL(A_{\wtilde{\gamma_k}})$ (since $H \subseteq Z_{GL(U)} (G^{\circ}) = \prod_{k \in \mathcal{K}} GL(A_{\wtilde{\gamma_k}})$). For $k \in \mathcal{K}$, define $\alpha_{\wtilde{\gamma_k}} \colon H \rightarrow GL(A_{\wtilde{\gamma_k}})$ as $h \in H \mapsto (h)\vert_{A_{\wtilde{\gamma_k}}}$. We claim that the $A_{\wtilde{\gamma_k}}$'s are irreducible and distinct. To see this, let $\{ X_{\ell} \}$ be a complete set of representatives of the equivalence classes of $H$-irreducibles in $U$. Then
\begin{align*}
    Z_{GL(U)} (H) & \simeq \prod_{\ell} GL( \Hom_H ( X_{\ell}, U ) )  \simeq \prod_{\ell} GL \left ( \bigoplus_{k \in \mathcal{K}} \prescript{\wtilde{\gamma_k}}{}{B} \otimes \Hom_H (X_{\ell}, A_{ \wtilde{\gamma_k} }) \right ),
\end{align*}
and therefore
\begin{equation} \label{eq:H-centralizer-dimension-1}
    \dim Z_{GL(U)} (H) = \sum_{\ell} \left [  \sum_{k \in \mathcal{K}} (\dim B)^2 \cdot (\dim \Hom_H (X_{\ell} , A_{\wtilde{\gamma_k}} ) )^2 \right ] .
\end{equation}
However, we know that $Z_{GL(U)}(H) = G^{\circ} = GL(B)^{\vert \mathcal{K}} \vert$, which gives that
\begin{equation} \label{eq:H-centralizer-dimension-2}
    \dim Z_{GL(U)} (H) = \vert \mathcal{K} \vert \cdot (\dim B)^2.
\end{equation}
It is clear that the expressions in \eqref{eq:H-centralizer-dimension-1} and \eqref{eq:H-centralizer-dimension-2} are equal exactly when each $A_{\wtilde{\gamma_k}}$ is equivalent to a distinct $X_{\ell}$. In this way, we see that the set $\{ ( \alpha_{\wtilde{\gamma_k}}, A_{\wtilde{\gamma_k}} ) \}_{k \in \mathcal{K}}$ is a complete set of representatives of the equivalence classes of $H$-irreducibles appearing in $U$. For notational convenience, set $A:= A_{\wtilde{1}} = A_{\wtilde{\gamma_0}}$. Similarly, set $\alpha := \alpha_{\wtilde{1}} = \alpha_{\wtilde{\gamma_0}}$.

\subsection{Identifying each $A_{\wtilde{\gamma}}$ ($\gamma \in \Gamma$) with $A$}

In contrast with $\{ \prescript{\delta}{}{F} \}_{\delta \in \hat{\Gamma}}$ and $\{ \prescript{\wtilde{\gamma}}{}{B} \}_{\gamma \in \Gamma}$, the spaces $\{ A_{\wtilde{\gamma}} \}_{\gamma \in \Gamma}$ are \textit{not} naturally the same as vector spaces. However, the following lemma establishes for each $\gamma \in \Gamma$ an identification between $A_{\wtilde{\gamma}}$ and $A$ by showing that $\wtilde{\gamma} \cdot A_{\wtilde{\gamma}} = A$. (Here, the action of $\wtilde{\gamma}$ on $A_{\wtilde{\gamma}}$ is given by the action of $\wtilde{\gamma}$ on the range of $a_{\wtilde{\gamma}} \in A_{\wtilde{\gamma}} = \Hom_{G^{\circ}} ( \prescript{\wtilde{\gamma}}{}{B}, U )$.)

\begin{lemma} \label{lem:identify-A-gammas}
For any $\gamma \in \Gamma$, we have that $\wtilde{\gamma} \cdot A_{\wtilde{\gamma}} = A$ (as vector spaces). 
\end{lemma}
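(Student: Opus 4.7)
The plan is to verify the set equality directly by chasing the $G^{\circ}$-equivariance condition through the definition of the twisted action. Recall that for $a_{\wtilde{\gamma}} \in A_{\wtilde{\gamma}}$, the element $\wtilde{\gamma} \cdot a_{\wtilde{\gamma}}$ refers to post-composing $a_{\wtilde{\gamma}} \colon B \to U$ with the action of $\wtilde{\gamma}$ on the target $U$. So I would first take an arbitrary $a_{\wtilde{\gamma}} \in A_{\wtilde{\gamma}}$, set $a := \wtilde{\gamma} \circ a_{\wtilde{\gamma}} \colon B \to U$, and check that $a \in A$, i.e.\ that $a(\beta(g_0) \cdot b) = g_0 \cdot a(b)$ for all $g_0 \in G^{\circ}$ and $b \in B$.

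The key manipulation is to recognize $\beta(g_0)$ as a value of the twisted representation $\prescript{\wtilde{\gamma}}{}{\beta}$. Since $G^{\circ}$ is normal in $G$, the element $g_0' := \wtilde{\gamma}^{-1} g_0 \wtilde{\gamma}$ again lies in $G^{\circ}$, and by the definition of the $\Gamma$-action on $G^{\circ}$-irreducibles from Subsection~\ref{subsec:Gamma-action-on-G-circ}, we have
\[
\prescript{\wtilde{\gamma}}{}{\beta}(g_0') = \beta\bigl(\wtilde{\gamma} g_0' \wtilde{\gamma}^{-1}\bigr) = \beta(g_0).
\]
Plugging this into $a_{\wtilde{\gamma}}$ and using its $\prescript{\wtilde{\gamma}}{}{\beta}$-equivariance gives
$a_{\wtilde{\gamma}}(\beta(g_0) \cdot b) = g_0' \cdot a_{\wtilde{\gamma}}(b)$,
and then applying $\wtilde{\gamma}$ converts $g_0'$ back to $g_0$ in the target, yielding $a(\beta(g_0)\cdot b) = g_0 \cdot a(b)$. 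Hence $a \in A$, which proves the inclusion $\wtilde{\gamma} \cdot A_{\wtilde{\gamma}} \subseteq A$.

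For the reverse inclusion, I would run the same computation in reverse: starting with $a \in A$, set $a_{\wtilde{\gamma}} := \wtilde{\gamma}^{-1} \circ a \colon B \to U$ and check that $a_{\wtilde{\gamma}}(\prescript{\wtilde{\gamma}}{}{\beta}(g_0') \cdot b) = g_0' \cdot a_{\wtilde{\gamma}}(b)$ for all $g_0' \in G^{\circ}$, using $\prescript{\wtilde{\gamma}}{}{\beta}(g_0') = \beta(\wtilde{\gamma} g_0' \wtilde{\gamma}^{-1})$ and the $\beta$-equivariance of $a$. This gives $a_{\wtilde{\gamma}} \in A_{\wtilde{\gamma}}$ and $\wtilde{\gamma} \cdot a_{\wtilde{\gamma}} = a$, establishing $A \subseteq \wtilde{\gamma} \cdot A_{\wtilde{\gamma}}$.

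There is no substantive obstacle here; the whole argument is a short, symmetric bookkeeping exercise that hinges on the single identity $\prescript{\wtilde{\gamma}}{}{\beta}(g_0') = \beta(\wtilde{\gamma} g_0' \wtilde{\gamma}^{-1})$, which is just the definition of the $\Gamma$-action chosen in Subsection~\ref{subsec:Gamma-action-on-G-circ}. The only care required is to keep straight the difference between $\beta(g_0)$ acting on the source $B$ and $g_0$ acting on the target $U$, with conjugation by $\wtilde{\gamma}$ mediating between the two.
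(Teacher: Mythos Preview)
Your argument is correct and is the same equivariance chase the paper carries out; the only difference is organizational. You work directly with $A_{\wtilde{\gamma}} = \Hom_{G^{\circ}}(\prescript{\wtilde{\gamma}}{}{B}, U)$, whereas the paper first decomposes $A_{\wtilde{\gamma}} = \bigoplus_{j \in \mathcal{J}} E_{\delta_j} \otimes \Hom_{G^{\circ}}(\prescript{\wtilde{\gamma}}{}{B}, \prescript{\delta_j}{}{F})$ and then proves the finer statement $\prescript{\delta_j}{}{\varphi}(\wtilde{\gamma}) \cdot \Hom_{G^{\circ}}(\prescript{\wtilde{\gamma}}{}{B}, \prescript{\delta_j}{}{F}) = \Hom_{G^{\circ}}(B, \prescript{\delta_j}{}{F})$ for each $j$. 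Your global version is slightly cleaner for the lemma as stated, but note that the paper's component-wise version is not gratuitous: the proof of Lemma~\ref{lem:identify-hom-spaces} explicitly invokes this finer identity to conclude $\Hom_{H^{\circ}}(E_{\delta_j}, \prescript{\gamma_k}{}{A}) = \Hom_{G^{\circ}}(B, \prescript{\delta_j}{}{F})$, so if you adopt your streamlined proof here you will need to redo that small piece of the computation when you reach Lemma~\ref{lem:identify-hom-spaces}.
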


\begin{proof}
To start, note that 
$$A_{\wtilde{\gamma}} = \Hom_{G^{\circ}} (\prescript{\wtilde{\gamma}}{}{B}, U) = \bigoplus_{j \in \mathcal{J}} E_{\delta_j} \otimes \Hom_{G^{\circ}} ( \prescript{\wtilde{\gamma}}{}{B}, \prescript{\delta_j}{}{F} ).$$
Therefore, we see that
$$\wtilde{\gamma} \cdot A_{\wtilde{\gamma}} = \bigoplus_{j \in \mathcal{J}} E_{\delta_j} \otimes \prescript{\delta_j}{}{\varphi} ( \wtilde{\gamma} ) \cdot \Hom_{G^{\circ}}(\prescript{\wtilde{\gamma}}{}{B}, \prescript{\delta_j}{}{F}).$$
As a result, to prove the lemma, it suffices to show that 
$$\prescript{\delta_j}{}{\varphi} (\wtilde{\gamma}) \cdot \Hom_{G^{\circ}} (\prescript{\wtilde{\gamma}}{}{B}, \prescript{\delta_j}{}{F}) = \Hom_{G^{\circ}} (B , \prescript{\delta_j}{}{F})$$
for each $j \in \mathcal{J}$. To see this, suppose first that $\ell \in \Hom_{G^{\circ}} ( \prescript{\wtilde{\gamma}}{}{B}, \prescript{\delta_j}{}{F} )$. Then 
$$\ell ( \prescript{\wtilde{\gamma}}{}{\beta} (g_0) \cdot b ) = \prescript{\delta_j}{}{\varphi} (g_0) \cdot \ell (b) \hspace{.25cm} \text{ for all } b \in B, \, g_0 \in G^{\circ},$$
which gives that 
$$\ell (\beta (g_0) \cdot b) = \prescript{\delta_j}{}{\varphi} ( \wtilde{\gamma}^{-1} g_0 \wtilde{\gamma} ) \cdot \ell (b) \hspace{.25cm} \text{ for all } b \in B, \, g_0 \in G^{\circ}.$$
Consequently,
\begin{align*}
[ \prescript{\delta_j}{}{\varphi} (\wtilde{\gamma}) \cdot \ell ] ( \beta (g_0) \cdot b ) &= \prescript{\delta_j}{}{\varphi} (\wtilde{\gamma}) \prescript{\delta_j}{}{\varphi} ( \wtilde{\gamma}^{-1} g_0 \wtilde{\gamma} ) \cdot \ell (b) = \prescript{\delta_j}{}{\varphi} (g_0) [ \prescript{\delta_j}{}{\varphi} (\wtilde{\gamma}) \cdot \ell ](b).
\end{align*}
It follows that $\prescript{\delta_j}{}{\varphi} (\wtilde{\gamma}) \cdot \ell \in \Hom_{G^{\circ}} (B, \prescript{\delta_j}{}{F})$, and hence that $\prescript{\delta_j}{}{\varphi} (\wtilde{\gamma}) \cdot \Hom_{G^{\circ}} (\prescript{\wtilde{\gamma}}{}{B}, \prescript{\delta_j}{}{F}) \subseteq \Hom_{G^{\circ}} (B, \prescript{\delta_j}{}{F})$. Towards proving containment in the other direction, suppose that $\ell \in \Hom_{G^{\circ}} (B, \prescript{\delta_j}{}{F})$. Then
$$\ell (\beta (g_0) \cdot b) = \prescript{\delta_j}{}{\varphi} (g_0) \cdot \ell (b) \hspace{.25cm} \text{ for all } b \in B, \, g_0 \in G^{\circ},$$
so we see that 
\begin{align*}
[\prescript{\delta_j}{}{\varphi}(\wtilde{\gamma})^{-1} \cdot \ell] ( \prescript{\wtilde{\gamma}}{}{\beta} (g_0) \cdot b ) &= \prescript{\delta_j}{}{\varphi} (\wtilde{\gamma})^{-1} \prescript{\delta_j}{}{\varphi} (\wtilde{\gamma} g_0 \wtilde{\gamma}^{-1}) \cdot \ell (b) = \prescript{\delta_j}{}{\varphi} (g_0) [ \prescript{\delta_j}{}{\varphi} (\wtilde{\gamma})^{-1} \cdot \ell ](b).
\end{align*}
It follows that $\Hom_{G^{\circ}} (B,\prescript{\delta_j}{}{F}) \subseteq \prescript{\delta_j}{}{\varphi}(\wtilde{\gamma}) \cdot \Hom_{G^{\circ}} ( \prescript{\wtilde{\gamma}}{}{B}, \prescript{\delta_j}{}{F} )$, completing the proof.
\end{proof}

As a result of this lemma, we see that each $\wtilde{\gamma_k} \vert_{A_{\wtilde{\gamma_k}}}$ defines an isomorphism $\wtilde{\gamma_k}\vert_{A_{\wtilde{\gamma_k}}} \colon A_{\wtilde{\gamma_k}} \xrightarrow{\sim} A$ (with inverse $(\wtilde{\gamma_k}\vert_{A_{\wtilde{\gamma_k}}})^{-1} = ( \wtilde{\gamma_k}^{-1} )\vert_A \colon A \xrightarrow{\sim} A_{\wtilde{\gamma_k}}$.) In the next subsection, we will see that each $\wtilde{\gamma_k} \vert_{A_{\wtilde{\gamma_k}}}$ defines a certain equivalence of representations.

\subsection{Defining a $\Gamma$-action on the $H$-irreducibles} \label{subsec:Gamma-action-on-H-irreps}

For $g \in G$ and $(\alpha_i, A_i)$ an $H$-irreducible in $U$, define 
\begin{equation} \label{eq:alpha-twisted-by-g}
\prescript{g}{}{\alpha_i} (h) := \alpha_i ( ghg^{-1} ) = \alpha_i (\mu (g,h) g) = ( \alpha_i (\cdot) \otimes \mu (g,\cdot) )(h).
\end{equation}
Recall that $\mu(g_{\gamma}, \cdot) = \mu (g_{\gamma}', \cdot)$ for any $\gamma \in \Gamma$ and any $g_{\gamma}, g_{\gamma}' \in \pi_G^{-1}(\gamma)$. Therefore, this $G$-action descends to a $\Gamma$-action on the $H$-irreducibles: for $\gamma \in \Gamma$, define
\begin{equation} \label{eq:alpha-twisted-by-gamma}
\prescript{\gamma}{}{\alpha_i}(\cdot) := \alpha_i (\cdot ) \otimes \mu (\gamma , \cdot).
\end{equation}
Note that since $\prescript{g_{\gamma}}{}{\alpha_i}$ and $\prescript{g_{\gamma}'}{}{\alpha_i}$ are not only equivalent but \textit{equal} representations of $H$ for $g_{\gamma}, g_{\gamma}' \in \pi_G^{-1}(\Gamma)$, $\prescript{\gamma}{}{\alpha_i}$ is a well-defined representation of $H$. Since $\alpha_i$ is irreducible and $\mu (\gamma, \cdot)$ is a one-dimensional (irreducible) representation of $H$, we indeed have that $\prescript{\gamma}{}{\alpha_i}$ is irreducible. Write $(\prescript{\gamma}{}{\alpha_i}, \prescript{\gamma}{}{A_i})$ for this twisted representation; here, the space $\prescript{\gamma}{}{A_i}$ is the same vector space as $A_i$ but with a different $H$-action. As in Subsection \ref{subsec:hat-Gamma-action-on-G-irreps}, we can easily check that this indeed defines a $\Gamma$-action. For an $H$-irreducible $\alpha_i$ in $U$, set 
$$I_G(\alpha_i) := \{ g \in G \, : \, \prescript{g}{}{\alpha_i} \simeq \alpha_i \}.$$

Recall that each $\wtilde{\gamma_k} \vert_{A_{\wtilde{\gamma_k}}}$ defines an isomorphism $ \wtilde{\gamma_k} \vert_{A_{\wtilde{\gamma_k}}} \colon A_{\wtilde{\gamma_k}} \xrightarrow{\sim} A$. We now see that each $\wtilde{\gamma_k} \vert_{A_{\wtilde{\gamma_k}}}$ in fact defines an equivalence of representations between $\alpha_{\wtilde{\gamma_k}}$ and $\prescript{\gamma_k}{}{\alpha}$: Indeed, using that $\wtilde{\gamma_k} \cdot A_{\wtilde{\gamma_k}} = A$, it is not hard to see that 
$$\prescript{\gamma_k}{}{\alpha} (h) = \alpha ( \wtilde{\gamma_k} h \wtilde{\gamma_k}^{-1} ) = (\wtilde{\gamma_k}) \vert_{A_{\wtilde{\gamma_k}}} \alpha_{\wtilde{\gamma_k}} (h) ( \wtilde{\gamma_k})\vert_{A_{\wtilde{\gamma_k}}}^{-1}  \hspace{.25cm} \text{ for all } h \in H.$$
Therefore, $\{ ( \prescript{\gamma_k}{}{\alpha}, \prescript{\gamma_k}{}{A} ) \}_{k \in \mathcal{K}}$ is a complete set of representatives of the equivalence classes of $H$-irreducibles in $U$. Consequently, we see that 
\begin{equation} \label{eq:I_G}
I_G(\alpha) = I_G(\beta) =: I_G.
\end{equation}

With the $H$-irreducibles $\{ (\prescript{\gamma_k}{}{\alpha}, \prescript{\gamma_k}{}{A})  \}_{k \in \mathcal{K}}$ defined, we make the following observation about how the $H^{\circ}$-irreducibles $\{ ( \varepsilon_{\delta_j}, E_{\delta_j} ) \}_{j \in \mathcal{J}}$ sit inside of the $H$-irreducibles $\{ (\prescript{\gamma_k}{}{\alpha}, \prescript{\gamma_k}{}{A})  \}_{k \in \mathcal{K}}$:

\begin{lemma} \label{lem:identify-hom-spaces}
For all $k \in \mathcal{K}$, $\Res^{H}_{H^{\circ}} ( \prescript{\gamma_k}{}{A} ) = \bigoplus_{j \in J} E_{\delta_j} \otimes \Hom_{G^{\circ}} ( B, F ) $. In particular, 
$$\Hom_{H^{\circ}} ( E_{\delta_j}, A ) = \Hom_{H^{\circ}} (E_{\delta_j} , \prescript{\gamma_k}{}{A}) = \Hom_{G^{\circ}} (B, \prescript{\delta_j}{}{F}) = \Hom_{G^{\circ}} (B, F)$$
for all $j \in \mathcal{J}$ and $k \in \mathcal{K}$.
\end{lemma}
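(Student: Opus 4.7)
The plan is to reduce to computing $\Res^{H}_{H^{\circ}}(A)$ and then decompose $A = \Hom_{G^{\circ}}(B,U)$ using the $G$-isotypic structure of $U$. First, I will observe that $\Res^{H}_{H^{\circ}}(\prescript{\gamma_k}{}{A}) = \Res^{H}_{H^{\circ}}(A)$: by \eqref{eq:alpha-twisted-by-gamma}, $\prescript{\gamma_k}{}{\alpha}$ differs from $\alpha$ by the character $\mu(\gamma_k,\cdot)$, which (by \eqref{eq:mu-gamma-delta}) factors through $\hat{\Gamma} = H/H^{\circ}$ and hence is trivial on $H^{\circ}$. So it suffices to decompose $A$ as an $H^{\circ}$-module.

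Next, since $H^{\circ} = Z_{GL(U)}(G)$ by Corollary \ref{cor:identity-components-centralizers}, the actions of $G$ and $H^{\circ}$ on $U$ commute, and I will use the $G \times H^{\circ}$-decomposition
$$U \simeq \bigoplus_{j \in \mathcal{J}} \prescript{\delta_j}{}{F} \otimes E_{\delta_j},$$
where $H^{\circ}$ acts via $\varepsilon_{\delta_j}$ on the second factor and trivially on the first (the first has only a $G$-action). Applying $\Hom_{G^{\circ}}(B,-)$ then yields
$$A = \Hom_{G^{\circ}}(B,U) \simeq \bigoplus_{j \in \mathcal{J}} \Hom_{G^{\circ}}(B,\prescript{\delta_j}{}{F}) \otimes E_{\delta_j}$$
as $H^{\circ}$-modules, with trivial $H^{\circ}$-action on each Hom factor.

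Finally, I will identify each $\Hom_{G^{\circ}}(B,\prescript{\delta_j}{}{F})$ with $\Hom_{G^{\circ}}(B,F)$: by \eqref{eq:phi-twisted-by-delta}, $\prescript{\delta_j}{}{\varphi}$ differs from $\varphi$ by the character $\mu(\cdot,\delta_j)^{-1}$, which is trivial on $G^{\circ}$ (again since $\mu$ descends to $\Gamma \times \hat{\Gamma}$). Hence $\prescript{\delta_j}{}{F}$ and $F$ restrict to the same $G^{\circ}$-representation, establishing the main decomposition. The three ``in particular'' equalities then follow at once: the first from the opening observation; the second from the decomposition combined with the fact that the $\{E_{\delta_j}\}_{j \in \mathcal{J}}$ are pairwise inequivalent $H^{\circ}$-irreducibles (Subsection \ref{subsec:H0-irreps}); and the third from the $G^{\circ}$-triviality of the twist. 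There is no genuine obstacle here — the entire argument is an exercise in unwinding the twists and tracking which ones vanish on the identity components $G^{\circ}$ and $H^{\circ}$; the only care required is confirming that the $H^{\circ}$-action lives entirely in the $E_{\delta_j}$ tensor factor of the $G$-isotypic decomposition of $U$.
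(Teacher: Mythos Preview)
Your proposal is correct and follows essentially the same line as the paper's proof: both reduce to $k=0$ via triviality of $\mu(\gamma_k,\cdot)$ on $H^{\circ}$, both identify $\Hom_{G^{\circ}}(B,\prescript{\delta_j}{}{F})$ with $\Hom_{G^{\circ}}(B,F)$ via triviality of $\mu(\cdot,\delta_j)$ on $G^{\circ}$, and both establish the key identification $\Hom_{H^{\circ}}(E_{\delta_j},A)=\Hom_{G^{\circ}}(B,\prescript{\delta_j}{}{F})$ by decomposing $A=\Hom_{G^{\circ}}(B,U)$ along the $G$-isotypic decomposition $U=\bigoplus_j \prescript{\delta_j}{}{F}\otimes E_{\delta_j}$. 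The only cosmetic difference is that the paper packages this last step as a citation of the proof of Lemma~\ref{lem:identify-A-gammas} (which carried out the same computation for $A_{\wtilde{\gamma}}$), whereas you redo the $\gamma=1$ case directly; your route is slightly more self-contained.
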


\begin{proof}
By construction, we have that $\Res^{H}_{H^{\circ}} (A) = \bigoplus_{j \in \mathcal{J}} E_{\delta_j} \otimes \Hom_{H^{\circ}} (E_{\delta_j}, A)$. Recalling that $\prescript{\gamma_k}{}{\alpha} (\cdot ) = \alpha (\cdot ) \otimes \mu (\gamma_k, \cdot)$ (where $\mu (\gamma_k, \cdot)$ is trivial on $H^{\circ}$), we see that $\Res^{H}_{H^{\circ}} ( \prescript{\gamma_k}{}{A} ) = \Res^H_{H^{\circ}} (A)$ and that $\Hom_{H^{\circ}} ( E_{\delta_j}, \prescript{\gamma_k}{}{A} ) = \Hom_{H^{\circ}} ( E_{\delta_j} , A )$ for all $k \in \mathcal{K}$. By the same reasoning (and as noted in Subsection \ref{subsec:identify-G-circ-irreps}), we have that $\Hom_{G^{\circ}} (B, F) = \Hom_{G^{\circ}} (B, \prescript{\delta_j}{}{F})$. Finally, by the proof of Lemma \ref{lem:identify-A-gammas}, we have that 
$$\Hom_{H^{\circ}} (E_{\delta_j}, \prescript{\gamma_k}{}{A}) = \prescript{\delta_j}{}{\varphi} (\wtilde{\gamma_k}) \cdot \Hom_{G^{\circ}} ( \prescript{\wtilde{\gamma_k}}{}{B}, \prescript{\delta_j}{}{F}) = \Hom_{G^{\circ}} (B, \prescript{\delta_j}{}{F}),$$
and hence that $\Hom_{H^{\circ}} (E_{\delta_j}, A) = \Hom_{G^{\circ}} (B, F)$, completing the proof. 
\end{proof}

\subsection{Identifying the embedding of $H$ with $\bigoplus_{k \in \mathcal{K}} \prescript{\gamma_k}{}{\alpha} \otimes B$} \label{subsec:identify-H-irreps}

As in Subsection \ref{subsec:identify-G-circ-irreps}, we will define an isomorphism $U \xrightarrow{\sim} U'$ so that $\bigoplus_{k \in \mathcal{K}} \prescript{\gamma_k}{}{\alpha} \otimes B$ is the embedding of $H$ in $U'$. Noting that 
$$U = \bigoplus_{k \in \mathcal{K}} \prescript{\wtilde{\gamma_k}}{}{B} \otimes A_{\wtilde{\gamma_k}} = \bigoplus_{\substack{j \in \mathcal{J} \\ k \in \mathcal{K}}} \prescript{\wtilde{\gamma_k}}{}{B} \otimes E_{\delta_j} \otimes \Hom_{G^{\circ}}(\prescript{\wtilde{\gamma_k}}{}{B}, F)$$
and defining 
$$U' := \bigoplus_{ k \in \mathcal{K}} \prescript{\wtilde{\gamma_k}}{}{B} \otimes \prescript{\gamma_k}{}{A} = \bigoplus_{\substack{j \in \mathcal{J} \\ k \in \mathcal{K}}} \prescript{\wtilde{\gamma_k}}{}{B} \otimes E_{\delta_j} \otimes \Hom_{G^{\circ}} (B, F), $$
we see that $D:= \text{diag} \{\text{id}_B \otimes \wtilde{\gamma_k} \vert_{A_{\wtilde{\gamma_k}} } \}$ gives an isomorphism $U \xrightarrow{\sim} U'$.

Under this isomorphism, each $x \in GL(U)$ gets sent to $\sigma (x) := DxD^{-1} \in GL(U')$. In particular, $H \subseteq \prod_{k \in \mathcal{K}} GL(A_{\wtilde{\gamma_k}}) \subseteq GL(U)$ gets sent to $\sigma ( H ) \subseteq \prod_{k \in \mathcal{K}} GL(\prescript{\gamma_k}{}{A}) \subseteq GL(U')$. We see that $\{ ( \prescript{\sigma ( \wtilde{\gamma_k} )}{}{\alpha} , \, \prescript{\sigma ( \wtilde{\gamma_k} )}{}{A}  ) \}_{k \in \mathcal{K}}$ is a complete set of representatives for the equivalence classes of $\sigma (H)$-irreducibles appearing in $U'$. Note that for $h_0 \in H^{\circ}$, $\sigma (h_0) = D h_0 D^{-1}$ is in some sense equal to $h_0$ (since conjugation by $D$ does not affect any of the $E_{\delta_j}$ spaces on which $H^{\circ}$ lives), but $\sigma (h_0)$ sits in $GL(U')$ whereas $h_0$ sits in $GL(U)$. We can therefore identify $\sigma (H^{\circ})$ with $H^{\circ}$.

Similarly, we can identify $\sigma (G^{\circ})$ with $G^{\circ}$. Set $F' := \bigoplus_{k \in \mathcal{K}} \prescript{\wtilde{\gamma_k}}{}{B} \otimes \Hom_{G^{\circ}} (B, F)$
and define $\varphi' \colon \sigma (G) \rightarrow GL(F')$ by $\varphi' (\sigma (g)) = D \vert_{F} \varphi (g) D \vert_F^{-1}$. Furthermore, for $\delta \in \hat{\Gamma}$, define $\prescript{\sigma (\delta)}{}{\varphi'}(\sigma (g)) := \varphi' ( \sigma (\wtilde{\delta}) \sigma (g) \sigma (\wtilde{\delta})^{-1} )$. Then 
$$\prescript{\sigma (\delta)}{}{\varphi'}(\sigma (g)) = \varphi' ( \sigma (\wtilde{\delta}g \wtilde{\delta}^{-1}) ) = D \vert_F \prescript{\delta}{}{\varphi}(g) D \vert_F^{-1}.$$
In this way, we see that $\{ ( \prescript{\sigma (\delta_j)}{}{\varphi'}, \prescript{\sigma (\delta_j)}{}{F'} ) \}_{j \in \mathcal{J}}$ is a full set of representatives for the equivalence classes of $\sigma (G)$-irreducibles appearing in $U'$. By construction, we have that $\Hom_{G^{\circ}} ( \prescript{\wtilde{\gamma_k}}{}{B}, F' ) = \Hom_{G^{\circ}} (B,F)$, and we see that $\{ ( \prescript{\sigma ( \wtilde{\gamma_k} )}{}{\beta} , \, \prescript{\sigma ( \wtilde{\gamma_k} )}{}{B}  ) \}_{k \in \mathcal{K}}$ is a complete set of representatives for the equivalence classes of $\sigma (G^{\circ})$-irreducibles appearing in $U'$. We can therefore replace $U$ with $U'$, $H$ with $\sigma (H)$, $G$ with $\sigma (G)$, $(\varphi, F)$ with $(\varphi' , F')$, $\wtilde{(\cdot)} \colon \Gamma \rightarrow G_d$ with $\wtilde{(\cdot)}^{\,'} \colon \Gamma \rightarrow \sigma(G_d)$, and $\wtilde{(\cdot)} \colon \hat{\Gamma} \rightarrow H_d$ with $\wtilde{(\cdot)}^{\,'} \colon \hat{\Gamma} \rightarrow \sigma (H_d)$ moving forward so that
\begin{itemize}
\item $\bigoplus_{k \in \mathcal{K}} \prescript{\gamma_k}{}{\alpha} \otimes B$ is the embedding of $H$ in $U$,
\item $\bigoplus_{k \in \mathcal{K}} \prescript{\wtilde{\gamma_k}}{}{\beta} \otimes A$ is the embedding of $G^{\circ}$ in $U$, and
\item $\bigoplus_{j \in \mathcal{J}} \prescript{\delta_j}{}{\varphi} \otimes E_{\delta_j}$ is the embedding of $G$ in $U$.
\end{itemize}

\subsection{Identifying each $E_{\delta}$ ($\delta \in \hat{\Gamma}$) with $E$}

In contrast with $\{ \prescript{\delta}{}{F} \}_{\delta \in \hat{\Gamma}}$, $\{ \prescript{\wtilde{\gamma}}{}{B} \}_{\gamma \in \Gamma}$, and $\{ \prescript{\gamma}{}{A} \}_{\gamma \in \Gamma}$, the spaces $\{ E_{\delta} \}_{\delta \in \hat{\Gamma}}$ are \textit{not} naturally the same as vector spaces. In the following lemma, we establish for each $\delta \in \hat{\Gamma}$ an identification between $E_{\delta}$ and $E$ by showing that $h_{\delta} \cdot E_{\delta} = E$ for any $h_{\delta} \in \pi_H^{-1}(\delta)$. (Here, the action of $h_{\delta}$ on $E_{\delta}$ is given by the action of $h_{\delta}$ on the range of $e_{\delta} \in E_{\delta} = \Hom_{G} (\prescript{\delta}{}{F},U)$.)

\begin{lemma} \label{lem:E-orbit}
For any $\delta \in \hat{\Gamma}$ and $h_{\delta} \in \pi_H^{-1}(\delta)$, we have that $h_{\delta} \cdot E_{\delta} = E$ (as vector spaces).
\end{lemma}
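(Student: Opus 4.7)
The plan is to adapt the proof strategy of Lemma \ref{lem:identify-A-gammas}, producing the symmetric statement on the $E_\delta$ side. The key ingredient is the commutation relation built into $\mu$: from $\mu(g, h_\delta) = g h_\delta g^{-1} h_\delta^{-1}$ one reads off $h_\delta g = \mu(g,\delta)^{-1} g h_\delta$, or equivalently $\mu(g,\delta)\, h_\delta g = g h_\delta$. Together with the defining formula $\prescript{\delta}{}{\varphi}(g) = \mu(g,\delta)^{-1}\varphi(g)$ from \eqref{eq:phi-twisted-by-delta}, this is exactly what is needed to transport intertwiners between $\prescript{\delta}{}{F}$ and $F$ under left multiplication by $h_\delta$ in $U$.

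First I would verify $h_\delta \cdot E_\delta \subseteq E$. Given $e \in E_\delta = \Hom_G(\prescript{\delta}{}{F}, U)$ and arbitrary $f \in F$, $g \in G$, the computation
\begin{align*}
(h_\delta \cdot e)(\varphi(g)\cdot f)
&= h_\delta \cdot e\bigl(\mu(g,\delta)\, \prescript{\delta}{}{\varphi}(g) \cdot f\bigr) \\
&= \mu(g,\delta)\, h_\delta g \cdot e(f) \\
&= g h_\delta \cdot e(f) = g \cdot (h_\delta \cdot e)(f)
\end{align*}
uses, in order, the identity $\varphi(g) = \mu(g,\delta)\prescript{\delta}{}{\varphi}(g)$, the intertwining property of $e$, and the commutation relation above. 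Hence $h_\delta \cdot e \in \Hom_G(F, U) = E$.

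For the reverse containment $E \subseteq h_\delta \cdot E_\delta$, I would run the identical calculation with $h_\delta^{-1}$ playing the role of $h_\delta$ (so that the relevant scalar becomes $\mu(g,\delta)^{-1}$), which shows $h_\delta^{-1} \cdot E \subseteq E_\delta$; applying $h_\delta$ gives $E \subseteq h_\delta \cdot E_\delta$. The main obstacle is no more than careful bookkeeping with the scalar $\mu(g,\delta)$-factors; once the commutation relation is invoked the result is essentially formal and is a mirror image of Lemma \ref{lem:identify-A-gammas}. A cleaner independent derivation is possible from the decomposition
$$E_\delta = \Hom_G(\prescript{\delta}{}{F}, U) = \bigoplus_{k \in \mathcal{K}} \Hom_{G^\circ}(\prescript{\delta}{}{F}, \prescript{\wtilde{\gamma_k}}{}{B}) \otimes A_{\wtilde{\gamma_k}},$$
but I expect the direct verification above to be the shortest route.
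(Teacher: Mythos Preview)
Your proof is correct and follows essentially the same approach as the paper's: both verify the two inclusions $h_\delta \cdot E_\delta \subseteq E$ and $h_\delta^{-1} \cdot E \subseteq E_\delta$ by a direct computation using the relation $\mu(g,\delta)\,h_\delta g = g h_\delta$ (the paper first rewrites $E_\delta$ as $\{e : e(\varphi(g)\cdot f) = (\mu(g,\delta)g)\cdot e(f)\}$, but the underlying calculation is identical). Your explicit isolation of the commutation relation up front is arguably cleaner than the paper's presentation, but there is no substantive difference.
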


\begin{proof}
We have that 
$$E := \Hom_{G}(F,U) = \{ e \colon F \rightarrow U \; \vert \; e(\varphi (g) \cdot f) = g \cdot e(f) \text{ for all } f\in F, \, g \in G \}.$$
Looking at $E_{\delta} := \Hom_{G} ( \prescript{\delta}{}{F}, U)$, we see that
\begin{align*}
    E_{\delta} &= \{ e \colon F \rightarrow U \; \vert \; e( \prescript{\delta}{}{\varphi} (g) \cdot f) = g \cdot e(f) \text{ for all } f \in F, \, g \in G \} \\ 
    &= \{ e \colon F \rightarrow U \; \vert \; e( \varphi (g) \cdot \mu (g,\delta)^{-1} \cdot f ) = g \cdot e(f) \text{ for all } f \in F, \, g \in G \} \\
    &= \{ e \colon F \rightarrow U \; \vert \; e(\varphi (g)\cdot f) = (\mu (g,\delta) g) \cdot e(f) \text{ for all } f \in F, g \in G \}.
\end{align*}
Suppose first that we have $h_{\delta}^{-1} \cdot e \in h_{\delta}^{-1} \cdot E$. Then for all $g \in G$ and $f \in F$,
$$h_{\delta}^{-1} \cdot e (\varphi (g) \cdot f) = h_{\delta}^{-1} (g \cdot e (f)) =( \mu(g, \delta) g) \cdot (h_{\delta}^{-1} \cdot e)(f),$$
so $h_{\delta}^{-1} \cdot e \in E_{\delta}$ and $e \in h_{\delta} \cdot E_{\delta}$. On the other hand, suppose we have $e_{\delta} \in E_{\delta}$, so
$$e_{\delta} ( \varphi (g) \cdot f) = (\mu (g,\delta)g) \cdot e_{\delta}(f) \text{ for all } f \in F, g \in G.$$
Then
\begin{align*}
    (h_{\delta} \cdot e_{\delta}) (\varphi (g) \cdot f) &= h_{\delta} \cdot (\mu (g,\delta) g) \cdot e_{\delta}(f) \\
    &= \mu (g,\delta) \cdot (h_{\delta} \cdot g) \cdot e_{\delta}(f) \\
    &= g (h_{\delta} \cdot e_{\delta})(f),
\end{align*}
so $h_{\delta} \cdot e_{\delta} \in E$. It follows that $h_{\delta} \cdot E_{\delta} = E$, as desired.
\end{proof}

In particular, we have that $\wtilde{\delta} \cdot E_{\delta} = E$ for all $\delta \in \hat{\Gamma}$. Therefore, each $\wtilde{\delta_j}\vert_{E_{\delta_j}}$ defines an isomorphism $\wtilde{\delta_j}\vert_{E_{\delta_j}} \colon E_{\delta_j} \xrightarrow{\sim} E$ (with inverse $(\wtilde{\delta_j}\vert_{E_{\delta_j}})^{-1}= (\wtilde{\delta_j}^{-1})\vert_E \colon E \xrightarrow{\sim} E_{\delta_j}$). In the next subsection, we will see that each $\wtilde{\delta_j}\vert_{E_{\delta_j}}$ defines a certain equivalence of representations. 

\subsection{Defining a $\hat{\Gamma}$-action on the $H^{\circ}$-irreducibles} \label{subsec:defining-hat-Gamma-action}

As in Subsection \ref{subsec:Gamma-action-on-G-circ}, we can define an $H$-action and a $\hat{\Gamma}$-action on the $H^{\circ}$-irreducibles appearing in $U$. For $h \in H$ and $(\varepsilon_j,E_j)$ an $H^{\circ}$-irreducible in $U$, define 
\begin{equation} \label{eq:epsilon-twisted-by-h}
\prescript{h}{}{\varepsilon_j(h_0)} := \varepsilon_j (h h_0 h^{-1}).
\end{equation}
We see that each $\prescript{h}{}{\varepsilon_j}$ is irreducible. However, as in Subsection \ref{subsec:Gamma-action-on-G-circ}, this action does not descend to a well-defined $\hat{\Gamma}$-action on $H^{\circ}$-irreducibles. To get a well-defined $\hat{\Gamma}$-action, we make use of our lift $\wtilde{(\cdot)} \colon \hat{\Gamma} \rightarrow H$, letting $\delta \in \hat{\Gamma}$ act as
\begin{equation*} 
\prescript{\wtilde{\delta}}{}{\varepsilon_j (h_0)} = \varepsilon_j ( \wtilde{\delta} h_0  (\wtilde{\delta})^{-1} ).
\end{equation*}
Let $\prescript{\wtilde{\delta}}{}{E_j}$ denote the corresponding representation space; note that $\prescript{\wtilde{\delta}}{}{E_j}$ is the same vector space as $E_j$ but with a different $H^{\circ}$-action. As in Subsection \ref{subsec:Gamma-action-on-G-circ}, we can check that this indeed defines a $\hat{\Gamma}$-action. For an $H^{\circ}$-irreducible $\varepsilon_j$ in $U$, set 
$$I_H(\varepsilon_j) := \{ h \in H \, : \, \prescript{h}{}{\varepsilon_j} \simeq \varepsilon_j \}.$$

Recall that each $\wtilde{\delta_j}\vert_{E_{\delta_j}}$ defines an isomorphism $\wtilde{\delta_j}\vert_{E_{\delta_j}} \colon E_{\delta_j} \xrightarrow{\sim} E$. We now see that each $\wtilde{\delta_j}\vert_{E_{\delta_j}}$ in fact defines an equivalence of representations between $\varepsilon_{\delta_j}$ and $\prescript{\wtilde{\delta_j}}{}{\varepsilon}$: indeed, using that $\wtilde{\delta_j} \cdot E_{\delta_j} = E$, it is not hard to see that 
$$\prescript{\wtilde{\delta_j}}{}{\varepsilon} (h_0) = \varepsilon (\wtilde{\delta_j} h_0 \wtilde{\delta_j}^{-1}) = (\wtilde{\delta_j})\vert_{E_{\delta_j}} \varepsilon_{\delta_j}(h_0) (\wtilde{\delta_j})\vert_{E_{\delta_j}}^{-1}.$$ 
Therefore, $\{ (\prescript{\wtilde{\delta_j}}{}{\varepsilon}, \prescript{\wtilde{\delta_j}}{}{E}) \}_{j \in \mathcal{J}}$ is a complete set of representatives of the equivalence classes of $H^{\circ}$-irreducibles in $U$. Consequently, we see that 
\begin{equation} \label{eq:I_H}
I_H(\varepsilon) = I_H(\varphi)=:I_H. 
\end{equation}

\subsection{Identifying the embedding of $H^{\circ}$ with $\bigoplus_{j\in \mathcal{J}} \prescript{\wtilde{\delta_j}}{}{\varepsilon} \otimes F$} \label{subsec:identify-H0}

As in Subsections \ref{subsec:identify-G-circ-irreps} and \ref{subsec:identify-H-irreps}, we would like to define an isomorphism $U \xrightarrow{\sim} U'$ (for an appropriate choice of $U'$) so that $\bigoplus_{j \in \mathcal{J}} \prescript{\wtilde{\delta_j}}{}{\varepsilon} \otimes F$ is the embedding of $H^{\circ}$ in $U'$. Recall that $U = \bigoplus_{j \in \mathcal{J}} E_{\delta_j} \otimes \prescript{\delta_j}{}{F}$ and define $U' := \bigoplus_{j \in \mathcal{J}} \prescript{\wtilde{\delta_j}}{}{E} \otimes \prescript{\delta_j}{}{F}$. Then $D:= \diag \{ (\wtilde{\delta_j})\vert_{E_{\delta_j}} \otimes \text{id}_{F} \}_{j \in \mathcal{J}}$ defines an isomorphism $U \xrightarrow{\sim} U'$.

Under this isomorphism, each $x \in GL(U)$ gets sent to $\sigma (x) := DxD^{-1} \in GL(U')$. In particular, $H^{\circ} = \prod_{j \in \mathcal{J}} GL(E_{\delta_j}) \subseteq GL(U)$ gets sent to $\sigma (H^{\circ}) = \prod_{j \in \mathcal{J}} GL(\prescript{\wtilde{\delta_j}}{}{E}) \subseteq GL(U')$. Note that for $g \in G$, we have $\sigma (g) = DgD^{-1}$ is in some sense equal to $g$ (since conjugation by $D$ does not affect any of the $\prescript{\delta_j}{}{F}$ spaces on which $G$ lives), but $\sigma (g)$ sits in $GL(U') = GL(\bigoplus_{j \in \mathcal{J}} \prescript{\delta_j}{}{F} \otimes \prescript{\wtilde{\delta_j}}{}{E})$ whereas $g$ sits in $GL(U) = GL(\bigoplus_{j \in \mathcal{J}} \prescript{\delta_j}{}{F} \otimes E_{\delta_j})$. We can therefore identify $\sigma (G)$ with $G$. 

Recall that $A = \bigoplus_{j \in \mathcal{J}} E_{\delta_j} \otimes \Hom_{G^{\circ}} (B,F)$. Set $A' := \bigoplus_{j \in \mathcal{J}} \prescript{\wtilde{\delta_j}}{}{E} \otimes \Hom_{G^{\circ}} (B,F)$. Then $D$ induces a map $D \vert_A \colon A \rightarrow A'$. Define $\alpha' \colon \sigma (H) \rightarrow GL(A')$ by $\alpha' ( \sigma (h) ) = D\vert_A \alpha (h) D\vert_A^{-1}$. Furthermore, for $\gamma \in \Gamma$, define $\prescript{\sigma (\gamma)}{}{\alpha'} ( \sigma (h) ) := \alpha' ( \sigma (\wtilde{\gamma}) \sigma (h) \sigma (\wtilde{\gamma})^{-1} ) $. Then 
$$\prescript{\sigma (\gamma)}{}{\alpha '} ( \sigma (h) ) = \alpha' ( \sigma ( \wtilde{\gamma} h \wtilde{\gamma}^{-1} ) ) = D\vert_A \prescript{\gamma}{}{\alpha}(h)D\vert_A^{-1}.$$
In this way, we see that $\{ ( \prescript{\sigma (\gamma_k)}{}{\alpha'}, \prescript{\sigma (\gamma_k)}{}{A'} ) \}_{k \in \mathcal{K}}$ is a full set of representatives for the equivalence classes of $\sigma (H)$-irreducibles appearing in $U'$. Note also that $\Hom_{\sigma (H)} (\prescript{\sigma (\gamma_k)}{}{A'}, U') = \Hom_H (\prescript{\gamma_k}{}{A}, U)$ and that 
$$\Hom_{\sigma(H^{\circ})} ( \prescript{\wtilde{\delta_j}}{}{E}, A') = \Hom_{H^{\circ}} (E_{\delta_j}, A) = \Hom_{G^{\circ}} (B,F).$$ 
We can therefore replace $U$ with $U'$, $H$ with $\sigma(H)$, $(\alpha, A)$ with $(\alpha', A')$, $G$ with $\sigma (G)$, $\wtilde{(\cdot)} \colon \Gamma \rightarrow G_d$ with $\wtilde{(\cdot)}^{\,'} \colon \Gamma \rightarrow \sigma(G_d)$, and $\wtilde{(\cdot)} \colon \hat{\Gamma} \rightarrow H_d$ with $\wtilde{(\cdot)}^{\,'} \colon \hat{\Gamma} \rightarrow \sigma (H_d)$ moving forward so that

\begin{itemize}
\item $\bigoplus_{j \in \mathcal{J}} \prescript{\wtilde{\delta_j}}{}{\varepsilon} \otimes F$ is the embedding of $H^{\circ}$ in $U$,
\item $\bigoplus_{k \in \mathcal{K}} \prescript{\gamma_k}{}{\alpha} \otimes B$ is the embedding of $H$ in $U$,
\item $\bigoplus_{k \in \mathcal{K}} \prescript{\wtilde{\gamma_k}}{}{\beta} \otimes A$ is the embedding of $G^{\circ}$ in $U$, and
\item $\bigoplus_{j \in \mathcal{J}} \prescript{\delta_j}{}{\varphi} \otimes E$ is the embedding of $G$ in $U$.
\end{itemize}

\section{Irreducible representations of \texorpdfstring{$I_G$ and $I_H$}{inertia subgroups}} \label{sec:reps-of-I_G-and-I_H}

By Section \ref{sec:orbits}, we have that $\Hom_{G^{\circ}} ( \prescript{\wtilde{\gamma_k}}{}{B}, \prescript{\delta_j}{}{F} ) = \Hom_{H^{\circ}} ( \prescript{\wtilde{\delta_{j'}}}{}{E}, \prescript{\gamma_{k'}}{}{A} )$ for all $j,j' \in \mathcal{J}$ and $k,k' \in \mathcal{K}$. Moving forward, we will call this vector space $L$. (When we wish to refer specifically to the copy of $L$ corresponding to a specific $j \in \mathcal{J}$ and $k\in \mathcal{K}$, we will write $L_{j,k}$.) It is worth noting that
$$L = \Hom_{G^{\circ}} ( B,F ) = \Hom_{G^{\circ}} (B, \Hom_{H^{\circ}} (E, U)) = \Hom_{G^{\circ} \times H^{\circ}} (B \otimes E, U),$$
where we are using the tensor-Hom adjunction. Thus, there are three ways to view $L$: as the multiplicity space of $B$ in $F$, as the multiplicity space of $E$ in $A$, or as the multiplicity space of $B \otimes E$ in $U$. Therefore, Section \ref{sec:orbits} gives the following:

\begin{cor} \label{cor:G,H-restricted-reps}
The $G$- and $G^{\circ}$-irreducibles (resp.~$H$- and $H^{\circ}$-irreducibles) are related by
$$\Res^{G}_{G^{\circ}} (\prescript{\delta_j}{}{\varphi}) = \bigoplus_{k \in \mathcal{K}} \prescript{\wtilde{\gamma_k}}{}{\beta} \otimes L \hspace{.5cm} \text{ (resp.} \hspace{.25cm} \Res^{H}_{H^{\circ}} (\prescript{\gamma_k}{}{\alpha} ) = \bigoplus_{j \in \mathcal{J}} \prescript{\wtilde{\delta_j}}{}{\varepsilon} \otimes L \text{)}$$
for all $j \in \mathcal{J}$ and $k \in \mathcal{K}$.
\end{cor}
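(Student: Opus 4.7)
The plan is to derive both decompositions as essentially direct consequences of the Clifford-type setup established in Section \ref{sec:orbits}, combined with the observation that the $\Gamma$- and $\hat{\Gamma}$-twists become trivial upon restriction to the identity components.

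First I would prove the $G$-statement. By equation \eqref{eq:phi-twisted-by-delta}, $\prescript{\delta_j}{}{\varphi}(g) = \varphi(g) \otimes \mu(g,\delta_j)^{-1}$. The key observation is that the one-dimensional character $\mu(\cdot,\delta_j)^{-1}$ is trivial on $G^{\circ}$: this is because $G^{\circ} = Z_{GL(U)}(H)$ by Corollary \ref{cor:identity-components-centralizers}, so any $g_0 \in G^{\circ}$ commutes pointwise with $H$ and in particular with any lift $h_{\delta_j} \in \pi_H^{-1}(\delta_j)$, giving $\mu(g_0,\delta_j) = 1$. Hence $\Res^G_{G^{\circ}}(\prescript{\delta_j}{}{\varphi}) = \Res^G_{G^{\circ}}(\varphi)$ and the $j$-dependence drops out. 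The Clifford-style decomposition obtained at the end of Subsection \ref{subsec:identify-G-circ-irreps} then reads
\[\Res^G_{G^{\circ}}(F) \simeq \bigoplus_{k \in \mathcal{K}} \prescript{\wtilde{\gamma_k}}{}{B} \otimes \Hom_{G^{\circ}}(\prescript{\wtilde{\gamma_k}}{}{B}, F),\]
and by the definition of $L$ at the beginning of Section \ref{sec:reps-of-I_G-and-I_H}, each multiplicity space $\Hom_{G^{\circ}}(\prescript{\wtilde{\gamma_k}}{}{B}, F)$ is exactly $L$, yielding the first claim.

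For the $H$-statement, the argument is entirely parallel. By equation \eqref{eq:alpha-twisted-by-gamma}, $\prescript{\gamma_k}{}{\alpha}(h) = \alpha(h) \otimes \mu(\gamma_k,h)$, and $\mu(\gamma_k,h_0) = 1$ for all $h_0 \in H^{\circ}$ since $H^{\circ} = Z_{GL(U)}(G)$. Thus $\Res^H_{H^{\circ}}(\prescript{\gamma_k}{}{\alpha}) = \Res^H_{H^{\circ}}(\alpha)$, and I would apply Clifford's theorem to $A$ as an $H$-representation, whose $H^{\circ}$-isotypic summands are indexed by $\{\prescript{\wtilde{\delta_j}}{}{\varepsilon}\}_{j \in \mathcal{J}}$ as set up in Subsection \ref{subsec:defining-hat-Gamma-action}. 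Lemma \ref{lem:identify-hom-spaces} identifies each multiplicity space $\Hom_{H^{\circ}}(\prescript{\wtilde{\delta_j}}{}{E}, A)$ with $\Hom_{G^{\circ}}(B, F) = L$, which finishes the claim.

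There is no real obstacle beyond bookkeeping: the corollary is essentially a repackaging of material from Section \ref{sec:orbits}, with the only substantive input being the triviality of the twist characters on the identity components, which is immediate from the mutual centralizer relation in Corollary \ref{cor:identity-components-centralizers}. The one point that deserves explicit mention in the write-up is why $L$ (rather than an a priori distinct $L_{j,k}$) appears uniformly on the right-hand side, and this is precisely the content of the definition of $L$ at the opening of Section \ref{sec:reps-of-I_G-and-I_H} together with Lemma \ref{lem:identify-hom-spaces}.
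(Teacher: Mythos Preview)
Your proposal is correct and matches the paper's approach: the paper does not give a separate proof of this corollary, but simply states ``Therefore, Section~\ref{sec:orbits} gives the following,'' and you have correctly unpacked precisely which pieces of Section~\ref{sec:orbits} are being invoked (triviality of the twist characters on identity components via Corollary~\ref{cor:identity-components-centralizers}, the Clifford decomposition from Subsection~\ref{subsec:identify-G-circ-irreps}, and Lemma~\ref{lem:identify-hom-spaces} for the $H$-side multiplicity spaces).
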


\subsection{Defining $I_G$- and $I_H$-irreducibles and relating them to the $G$-, $G^{\circ}$, $H$-, and $H^{\circ}$-irreducibles} \label{subsec:define-I_G-and-I_H-irreps}

Let $\Irr(\cdot)$ denote the set of all irreducible representations of the group $\cdot$, and let $\preceq$ denote the relation of being a subrepresentation. Then by Clifford correspondence \cite[Theorem 2.2]{Clifford}, we have for each $k \in \mathcal{K}$ that the map
\begin{align*}
\{ \rho \in \Irr (I_G) \, : \, \rho \preceq \Ind_{G^{\circ}}^{I_G} ( \prescript{\wtilde{\gamma_k}}{}{\beta} ) \} & \rightarrow \{ \theta \in \Irr (G) \, : \, \prescript{\wtilde{\gamma_k}}{}{\beta} \preceq \Res^{G}_{G^{\circ}} (\theta) \} \\
\rho & \mapsto \Ind_{I_G}^{G} (\rho)
\end{align*}
is a bijection. Therefore, for each $j \in \mathcal{J}$ and $k \in \mathcal{K}$, there exists $\rho_{j,k} \in \Irr (I_G)$ such that 
\begin{equation} \label{eq:rho-jk-def}
\rho_{j,k} \preceq \Ind_{G^{\circ}}^{I_G}(\prescript{\wtilde{\gamma_k}}{}{\beta})\hspace{.25cm} \text{ and } \hspace{.25cm} \Ind_{I_G}^{G} (\rho_{j,k}) = \prescript{\delta_j}{}{\varphi}.
\end{equation}
Let $R_{j,k}$ denote the representation space corresponding to $\rho_{j,k}$. 

In the same way, we see that for each $j \in \mathcal{J}$ and $ k \in \mathcal{K}$, there exists $\psi_{j,k} \in \Irr(I_H)$ such that 
\begin{equation} \label{eq:psi-jk-def}
\psi_{j,k} \preceq \Ind_{H^{\circ}}^{I_H}(\prescript{\wtilde{\delta_j}}{}{\varepsilon}) \hspace{.25cm} \text{ and } \hspace{.25cm} \Ind_{I_H}^{H} (\psi_{j,k}) = \prescript{\gamma_k}{}{\alpha}.
\end{equation}
Let $P_{j,k}$ denote the representation space corresponding to $\psi_{j,k}$.

Now, by Frobenius reciprocity, there are natural isomorphisms
$$\Hom_{G} ( \Ind_{I_G}^{G} (R_{j',k}), \prescript{\delta_j}{}{F} ) \simeq \Hom_{I_G} (R_{j',k}, \Res^{G}_{I_G} (\prescript{\delta_j}{}{F}) )$$
and
$$ \Hom_{H} ( \Ind_{I_H}^{H} (P_{j,k'}), \prescript{\gamma_k}{}{A} ) \simeq \Hom_{I_H} (P_{j,k'}, \Res^{H}_{I_H} (\prescript{\gamma_k}{}{A}) ).$$
In this way, we see that 
$$\Res^{G}_{I_G} ( \prescript{\delta_j}{}{\varphi} ) = \bigoplus_{k \in \mathcal{K}} \rho_{j,k}
 \hspace{.25cm} \text{ and } \hspace{.25cm} \Res^{H}_{I_H} ( \prescript{\gamma_k}{}{\alpha} ) = \bigoplus_{j \in \mathcal{J}} \psi_{j,k} .$$  
These relationships will allow us to show that $I_G$- and $I_H$-irreducibles we've defined are in fact complete sets of representatives of the irreducibles of these groups appearing in $U$.

\begin{lemma} \label{lem:full-set-of-interia-reps}
The set $\{ ( \rho_{j,k}, R_{j,k} ) \}_{ j \in \mathcal{J} ,\, k \in \mathcal{K}  }$ (resp.~$\{ ( \psi_{j,k}, P_{j,k} ) \}_{ j \in \mathcal{J} ,\, k \in \mathcal{K}  }$) is a complete set of pairwise non-equivalent representatives of the equivalence classes of $I_G$-irreducibles (resp.~$I_H$-irreducibles) appearing in $U$.
\end{lemma}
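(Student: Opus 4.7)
The plan is to establish pairwise non-equivalence and completeness for the $I_G$-family $\{\rho_{j,k}\}$, and then observe that the $I_H$-family $\{\psi_{j,k}\}$ follows by an exactly parallel argument obtained by swapping the roles of $G$ and $H$, of $\Gamma$ and $\hat{\Gamma}$, and of $\mathcal{K}$ and $\mathcal{J}$.

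For pairwise non-equivalence, I would split into two cases. Since $\Gamma$ is abelian (Theorem~\ref{thm:dual-finite-ab-gps}), the subgroup $I_G$ is normal in $G$ and equals the stabilizer of each twisted $G^{\circ}$-irreducible $\prescript{\wtilde{\gamma_k}}{}{\beta}$, not just of $\beta$ itself. Hence the first part of Clifford theory forces $\Res^{I_G}_{G^{\circ}}(\rho_{j,k})$ to be a multiple of $\prescript{\wtilde{\gamma_k}}{}{\beta}$ alone. Since the coset representatives $\gamma_k$ for $k \in \mathcal{K}$ were chosen so that the corresponding twisted $G^{\circ}$-irreducibles are pairwise inequivalent, this immediately yields $\rho_{j,k} \not\simeq \rho_{j',k'}$ whenever $k \neq k'$. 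In the remaining case $k = k'$, $j \neq j'$, we instead invoke the defining relation $\Ind_{I_G}^G(\rho_{j,k}) = \prescript{\delta_j}{}{\varphi}$: the single-orbit hypothesis makes the $\prescript{\delta_j}{}{\varphi}$ pairwise inequivalent across $j \in \mathcal{J}$, so $\rho_{j,k}$ and $\rho_{j',k}$ cannot be equivalent either.

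For completeness, I would restrict the $G$-decomposition $U \simeq \bigoplus_{j \in \mathcal{J}} \prescript{\delta_j}{}{\varphi} \otimes E_{\delta_j}$ down to $I_G$. The discussion preceding the lemma already establishes, via Frobenius reciprocity paired with the Clifford correspondence~\eqref{eq:rho-jk-def}, that
$$\Res^{G}_{I_G}(\prescript{\delta_j}{}{\varphi}) \simeq \bigoplus_{k \in \mathcal{K}} \rho_{j,k};$$
a quick sanity check is that $\dim \prescript{\delta_j}{}{\varphi} = |\mathcal{K}| \cdot \dim \rho_{j,k}$ by the induction formula, and each $\rho_{j,k}$ appears with multiplicity one by Frobenius, so dimensions balance and no additional $I_G$-irreducibles can fit. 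Combining with the $G$-decomposition of $U$ produces $\Res^{G}_{I_G}(U) \simeq \bigoplus_{j \in \mathcal{J}, \, k \in \mathcal{K}} \rho_{j,k} \otimes E_{\delta_j}$, which shows every $I_G$-irreducible appearing in $U$ is equivalent to some $\rho_{j,k}$.

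The one place demanding some care is the appeal to the abelianness of $\Gamma$: without it, the stabilizer of $\prescript{\wtilde{\gamma_k}}{}{\beta}$ would be a conjugate $\wtilde{\gamma_k} I_G \wtilde{\gamma_k}^{-1}$ rather than $I_G$ itself, and the $k \neq k'$ subcase would require tracking a separate inertia subgroup indexed by $k$. Theorem~\ref{thm:dual-finite-ab-gps} removes this obstacle for free. The $\{\psi_{j,k}\}$ half of the lemma proceeds identically, starting from the $H$-decomposition $U \simeq \bigoplus_{k \in \mathcal{K}} \prescript{\gamma_k}{}{\alpha} \otimes B$ of Subsection~\ref{subsec:identify-H-irreps}.
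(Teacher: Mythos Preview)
Your proposal is correct and follows essentially the same approach as the paper: completeness via the restriction decomposition $\Res^{G}_{I_G}(\prescript{\delta_j}{}{\varphi}) = \bigoplus_{k} \rho_{j,k}$, the $j \neq j'$ case via inequivalence of the induced $G$-irreducibles, and the $k \neq k'$ case via the fact that $\Res^{I_G}_{G^{\circ}}(\rho_{j,k})$ is $\prescript{\wtilde{\gamma_k}}{}{\beta}$-isotypic. The only difference is cosmetic: you explicitly invoke the abelianness of $\Gamma$ (Theorem~\ref{thm:dual-finite-ab-gps}) to identify $I_G$ with the inertia group of every $\prescript{\wtilde{\gamma_k}}{}{\beta}$, whereas the paper leaves this implicit in the equality $\Res^{I_G}_{G^{\circ}}(\Ind_{G^{\circ}}^{I_G}(\prescript{\wtilde{\gamma_k}}{}{\beta})) = \prescript{\wtilde{\gamma_k}}{}{\beta} \otimes (\text{multiplicity space})$.
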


\begin{proof}
Since $\Res^{G}_{I_G} ( \prescript{\delta_j}{}{\varphi} ) = \bigoplus_{k \in \mathcal{K}} \rho_{j,k}$ (and since $\{ (\prescript{\delta_j}{}{\varphi} , \prescript{\delta_j}{}{F} ) \}_{j \in \mathcal{J}}$ is a complete set of representatives of the $G$-irreducibles in $U$), it is clear that $\{ ( \rho_{j,k}, R_{j,k} ) \}_{ j \in \mathcal{J},\,  k \in \mathcal{K}  } $ is a complete set of representatives of the $I_G$-irreducibles in $U$. 

It remains to show that the elements of $\{ ( \rho_{j,k}, R_{j,k} ) \}_{ j \in \mathcal{J}, \,  k \in \mathcal{K}  }$ are pairwise non-equivalent. To this end, consider $\rho_{j,k}$ and $\rho_{j',k'}$, and suppose first that $j \neq j'$. Then 
$$\Ind_{I_G}^{G} (\rho_{j,k}) = \prescript{\delta_j}{}{\varphi} \not \simeq \prescript{\delta_{j'}}{}{\varphi} = \Ind_{I_G}^{G} (\rho_{j',k'}),$$
so $\rho_{j,k} \not \simeq \rho_{j',k'}$. Next, suppose that $k \neq k'$. The condition $\rho_{j,k} \preceq \Ind_{G^{\circ}}^{I_G}(\prescript{\wtilde{\gamma_k}}{}{\beta})$ gives that 
$$\Res^{I_G}_{G^{\circ}} (\rho_{j,k}) \preceq \Res^{I_G}_{G^{\circ}} (\Ind_{G^{\circ}}^{I_G}(\prescript{\wtilde{\gamma_k}}{}{\beta})) = \prescript{\wtilde{\gamma_k}}{}{\beta} \otimes \Hom_{G^{\circ}} ( \prescript{\wtilde{\gamma_k}}{}{B} , \Res^{I_G}_{G^{\circ}} ( \Ind_{G^{\circ}}^{I_G} ( \prescript{\wtilde{\gamma_k}}{}{B} ) )  ).$$ 
Similarly, we have that
$$\Res^{I_G}_{G^{\circ}} (\rho_{j',k'}) \preceq \Res^{I_G}_{G^{\circ}} (\Ind_{G^{\circ}}^{I_G}(\prescript{\wtilde{\gamma_{k'}}}{}{\beta})) = \prescript{\wtilde{\gamma_{k'}}}{}{\beta} \otimes \Hom_{G^{\circ}} ( \prescript{\wtilde{\gamma_{k'}}}{}{B} , \Res^{I_G}_{G^{\circ}} ( \Ind_{G^{\circ}}^{I_G} ( \prescript{\wtilde{\gamma_{k'}}}{}{B} ) )  ).$$ 
Since $\prescript{\wtilde{\gamma_k}}{}{\beta} \not \simeq \prescript{\wtilde{\gamma_{k'}}}{}{\beta}$, it follows that $\rho_{j,k} \not \simeq \rho_{j',k'}$. This completes the proof for $\{ ( \rho_{j,k}, R_{j,k} ) \}_{ j \in \mathcal{J} ,\, k \in \mathcal{K}  }$, and the case of $\{ ( \psi_{j,k}, P_{j,k} ) \}_{ j \in \mathcal{J} ,\, k \in \mathcal{K}  }$ follows in the same way. 
\end{proof}

To understand how these irreducibles relate to the irreducibles of $G^{\circ}$ and $H^{\circ}$, we note that 
$$\Res^{G}_{G^{\circ}} (\prescript{\delta_j}{}{\varphi}) = \Res_{G^{\circ}}^{I_G} ( \Res^{G}_{I_G} (\prescript{\delta_j}{}{\varphi}) ) = \Res^{I_G}_{G^{\circ}} \Big ( \bigoplus_{k \in \mathcal{K}} \rho_{j,k} \Big ) = \bigoplus_{k \in \mathcal{K}} \Res^{I_G}_{G^{\circ}} ( \rho_{j,k}).$$
On the other hand, we have that $\Res^{G}_{G^{\circ}} (\prescript{\delta_j}{}{\varphi}) = \bigoplus_{k \in \mathcal{K}} \prescript{\wtilde{\gamma_k}}{}{\beta} \otimes L$. Since $\rho_{j,k} \preceq \Ind_{G^{\circ}}^{I_G} ( \prescript{\wtilde{\gamma_k}}{}{\beta} )$, we see that
$$\Res^{I_G}_{G^{\circ}} (\rho_{j,k}) = \prescript{\wtilde{\gamma_k}}{}{\beta} \otimes L.$$
Similarly, we get that 
$$\Res^{I_H}_{H^{\circ}} (\psi_{j,k}) = \prescript{\wtilde{\delta_j}}{}{\varepsilon} \otimes L.$$
Now, note that we have natural isomorphisms
$$L = \Hom_{G^{\circ}} ( \prescript{\wtilde{\gamma_k}}{}{B}, R_{j,k} ) \simeq \Hom_{I_G} ( \Ind_{G^{\circ}}^{I_G} (\prescript{\wtilde{\gamma_k}}{}{B}) , R_{j,k} ) \simeq \Hom_{I_G} ( R_{j,k}, \Ind_{G^{\circ}}^{I_G}(\prescript{\wtilde{\gamma_k}}{}{B}) )^*$$
and 
$$L = \Hom_{H^{\circ}} ( \prescript{\wtilde{\delta_j}}{}{E}, P_{j,k} ) \simeq \Hom_{I_H} ( \Ind_{H^{\circ}}^{I_H} (\prescript{\wtilde{\delta_j}}{}{E}) , P_{j,k} ) \simeq \Hom_{I_H} ( P_{j,k}, \Ind_{H^{\circ}}^{I_H}(\prescript{\wtilde{\delta_j}}{}{E}) )^*.$$
Therefore, we have 
$$\Ind_{G^{\circ}}^{I_G}(\prescript{\wtilde{\gamma_k}}{}{\beta}) = \bigoplus_{j \in \mathcal{J}} \rho_{j,k} \otimes L^* \hspace{.5cm} \text{ and } \hspace{.5cm} \Ind_{H^{\circ}}^{I_H}(\prescript{\wtilde{\delta_j}}{}{\varepsilon}) = \bigoplus_{k \in \mathcal{K}} \psi_{j,k} \otimes L^* .$$

With this, we have established the following:

\begin{cor} \label{cor:inertia-summary}
For all $j\in \mathcal{J}$ and $k \in \mathcal{K}$, we have the following relationships between the $I_G$-, $G$-, and $G^{\circ}$-irreducibles (as well as the $I_H$-, $H$-, and $H^{\circ}$-irreducibles):
\begin{itemize}
\item $\Res^{G}_{I_G} (\prescript{\delta_j}{}{\varphi}) = \bigoplus_{k \in \mathcal{K}} \rho_{j,k}$ and $\Ind_{I_G}^{G} (\rho_{j,k}) = \prescript{\delta_j}{}{\varphi}$.
\item $\Res^{H}_{I_H} (\prescript{\gamma_k}{}{\alpha}) = \bigoplus_{j \in \mathcal{J}} \psi_{j,k}$ and $\Ind_{I_H}^{H} (\psi_{j,k}) = \prescript{\gamma_k}{}{\alpha}$. 
\item $\Res^{I_G}_{G^{\circ}} (\rho_{j,k}) = \prescript{\wtilde{\gamma_k}}{}{\beta} \otimes L$ and $\Ind_{G^{\circ}}^{I_G}(\prescript{\wtilde{\gamma_k}}{}{\beta}) = \bigoplus_{k \in \mathcal{K}} \rho_{j,k} \otimes L^*$.
\item $\Res^{I_H}_{H^{\circ}} (\psi_{j,k}) = \prescript{\wtilde{\delta_j}}{}{\varepsilon} \otimes L$ and $\Ind_{H^{\circ}}^{I_H}(\prescript{\wtilde{\delta_j}}{}{\varepsilon}) = \bigoplus_{k \in \mathcal{K}} \psi_{j,k} \otimes L^*$.
\end{itemize}
\end{cor}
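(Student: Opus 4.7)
The plan is to observe that all four bullets have essentially been derived in the discussion preceding the statement, and to organize the argument into a clean two-step pattern that treats the $G$-side and $H$-side symmetrically.

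First I would dispatch the two induction formulas of bullets (1) and (2): $\Ind_{I_G}^{G}(\rho_{j,k}) = \prescript{\delta_j}{}{\varphi}$ and $\Ind_{I_H}^{H}(\psi_{j,k}) = \prescript{\gamma_k}{}{\alpha}$ are literally the defining properties of $\rho_{j,k}$ and $\psi_{j,k}$ coming from Clifford correspondence, as recorded in \eqref{eq:rho-jk-def} and \eqref{eq:psi-jk-def}. For the corresponding restriction formulas $\Res^{G}_{I_G}(\prescript{\delta_j}{}{\varphi}) = \bigoplus_{k \in \mathcal{K}} \rho_{j,k}$ and $\Res^{H}_{I_H}(\prescript{\gamma_k}{}{\alpha}) = \bigoplus_{j \in \mathcal{J}} \psi_{j,k}$, I would apply Frobenius reciprocity: the natural isomorphism $\Hom_G(\Ind_{I_G}^{G}(R_{j',k}), \prescript{\delta_j}{}{F}) \simeq \Hom_{I_G}(R_{j',k}, \Res^{G}_{I_G}(\prescript{\delta_j}{}{F}))$ combined with Schur's lemma (using that $\{\prescript{\delta_j}{}{\varphi}\}_{j \in \mathcal{J}}$ are pairwise inequivalent $G$-irreducibles) forces each $\rho_{j',k}$ to appear in $\Res^{G}_{I_G}(\prescript{\delta_j}{}{\varphi})$ with multiplicity $\delta_{j,j'}$ and to account for all irreducible constituents by Lemma \ref{lem:full-set-of-interia-reps}; the $H$-side is identical.

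Next I would handle bullets (3) and (4) using the double-restriction trick already suggested in the text. By transitivity of restriction,
\begin{equation*}
\Res^{G}_{G^{\circ}}(\prescript{\delta_j}{}{\varphi}) \;=\; \Res^{I_G}_{G^{\circ}}\Bigl(\bigoplus_{k \in \mathcal{K}} \rho_{j,k}\Bigr) \;=\; \bigoplus_{k \in \mathcal{K}} \Res^{I_G}_{G^{\circ}}(\rho_{j,k}),
\end{equation*}
which must match the decomposition $\bigoplus_{k \in \mathcal{K}} \prescript{\wtilde{\gamma_k}}{}{\beta} \otimes L$ from Corollary \ref{cor:G,H-restricted-reps}. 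Since $\rho_{j,k} \preceq \Ind_{G^{\circ}}^{I_G}(\prescript{\wtilde{\gamma_k}}{}{\beta})$ forces $\Res^{I_G}_{G^{\circ}}(\rho_{j,k})$ to be $\prescript{\wtilde{\gamma_k}}{}{\beta}$-isotypic, and the summands on the right-hand side are indexed by distinct $G^{\circ}$-irreducibles, I can match piece-by-piece to get $\Res^{I_G}_{G^{\circ}}(\rho_{j,k}) = \prescript{\wtilde{\gamma_k}}{}{\beta} \otimes L$. The dual induction formula follows from Frobenius reciprocity: the $\rho_{j,k}$-multiplicity in $\Ind_{G^{\circ}}^{I_G}(\prescript{\wtilde{\gamma_k}}{}{\beta})$ equals $\dim \Hom_{G^{\circ}}(\prescript{\wtilde{\gamma_k}}{}{\beta}, R_{j,k}) = \dim L$, and summing $\dim R_{j,k} \cdot \dim L = \dim L \cdot \dim(\prescript{\wtilde{\gamma_k}}{}{\beta}) \cdot \dim L$ over $j \in \mathcal{J}$ matches $[I_G:G^{\circ}] \cdot \dim(\prescript{\wtilde{\gamma_k}}{}{\beta})$, confirming that these are all the constituents. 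The $H$-side is proved by the same template.

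The main obstacle here is not conceptual but organizational: keeping straight the parallel $G$/$H$, $j$/$k$, and restrict/induce bookkeeping, and remembering that one may freely pass between the three interpretations of $L$ (as $\Hom_{G^{\circ}}(B,F)$, as $\Hom_{H^{\circ}}(E,A)$, and as $\Hom_{G^{\circ} \times H^{\circ}}(B \otimes E, U)$) whenever Frobenius reciprocity is invoked. There is no hard analytic step because Clifford correspondence and the natural identifications of $L$-Hom spaces in the preceding paragraphs have already done the essential work.
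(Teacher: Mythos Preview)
Your proposal follows the paper's argument closely: the induction identities in bullets (1)–(2) are the definitions \eqref{eq:rho-jk-def}–\eqref{eq:psi-jk-def}; the companion restriction identities come from Frobenius reciprocity together with Lemma~\ref{lem:full-set-of-interia-reps}; the restriction formulas in bullets (3)–(4) come from the double-restriction comparison with Corollary~\ref{cor:G,H-restricted-reps}; and the induction formulas come from a second application of Frobenius reciprocity identifying the multiplicity space with $L^*$. This is exactly the skeleton of the discussion in Subsection~\ref{subsec:define-I_G-and-I_H-irreps}.

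The one deviation is your closing dimension count, where you assert that $\sum_{j\in\mathcal{J}}(\dim R_{j,k})(\dim L)$ equals $[I_G:G^{\circ}]\cdot\dim B$, i.e.\ that $|\mathcal{J}|\,(\dim L)^2=[I_G:G^{\circ}]$, to confirm that the $\rho_{j,k}$ exhaust $\Ind_{G^{\circ}}^{I_G}(\prescript{\wtilde{\gamma_k}}{}{\beta})$. That identity is not yet available at this point in the paper: it is precisely the content of Corollary~\ref{cor:|L|=dimL}, whose proof \emph{uses} Corollary~\ref{cor:inertia-summary}, so invoking it here would be circular. The paper itself does not perform this check; it simply records the natural isomorphism $\Hom_{I_G}\bigl(R_{j,k},\Ind_{G^{\circ}}^{I_G}(\prescript{\wtilde{\gamma_k}}{}{B})\bigr)\simeq L^*$ and writes down the resulting decomposition. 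You should drop the dimension-count sentence and mirror the paper's phrasing there.
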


To conclude this section, we show how the representation $\rho_{j,k}$ relates to $\rho_{0,k}$, and how the representation $\psi_{j,k}$ relates to $\psi_{0,k}$.

\newpage

\begin{cor} \label{cor:inertia-reps-jk}
We have the following:
\begin{itemize}
\item $\rho_{j,k}(\cdot) = \mu (\cdot, \delta_j)^{-1} \otimes \rho_{0,k}(\cdot) \text{ for all } j \in \mathcal{J} \text{ and } k\in \mathcal{K}.$
\item $\psi_{j,k}(\cdot) = \mu (\gamma_k, \cdot) \otimes \psi_{j,0} (\cdot) \text{ for all } j \in \mathcal{J} \text{ and } k\in \mathcal{K}.$
\end{itemize}
\end{cor}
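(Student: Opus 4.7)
The plan is to prove both bullets by invoking uniqueness in the Clifford correspondence (\ref{eq:rho-jk-def}) and (\ref{eq:psi-jk-def}). For the first bullet, I would define
$$\rho'_{j,k}(\cdot) := \mu(\cdot, \delta_j)^{-1} \otimes \rho_{0,k}(\cdot),$$
which is a well-defined representation of $I_G$ since $\mu(\cdot, \delta_j)^{-1}$ is a (one-dimensional) character of $G$ restricted to $I_G$. Because $\rho_{0,k}$ is irreducible and we are tensoring by a one-dimensional representation, $\rho'_{j,k}$ is automatically irreducible. The goal is then to verify that $\rho'_{j,k}$ satisfies the two conditions in \eqref{eq:rho-jk-def} that uniquely pin down $\rho_{j,k}$.

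First I would check that $\rho'_{j,k} \preceq \Ind_{G^{\circ}}^{I_G}(\prescript{\wtilde{\gamma_k}}{}{\beta})$. By Corollary \ref{cor:inertia-summary}, $\Res^{I_G}_{G^{\circ}}(\rho_{0,k}) = \prescript{\wtilde{\gamma_k}}{}{\beta} \otimes L$. Because $G^{\circ} = Z_{GL(U)}(H)$ commutes pointwise with $H$ (by Corollary \ref{cor:identity-components-centralizers}), the character $\mu(\cdot,\delta_j)^{-1}$ is trivial on $G^{\circ}$, so
$$\Res^{I_G}_{G^{\circ}}(\rho'_{j,k}) = \Res^{I_G}_{G^{\circ}}(\rho_{0,k}) = \prescript{\wtilde{\gamma_k}}{}{\beta} \otimes L.$$
By Frobenius reciprocity this forces $\rho'_{j,k} \preceq \Ind_{G^{\circ}}^{I_G}(\prescript{\wtilde{\gamma_k}}{}{\beta})$.

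Next, since $\mu(\cdot,\delta_j)^{-1}$ extends to a character of all of $G$, the projection (push-pull) formula gives
$$\Ind_{I_G}^{G}(\rho'_{j,k}) = \mu(\cdot,\delta_j)^{-1} \otimes \Ind_{I_G}^{G}(\rho_{0,k}) = \mu(\cdot,\delta_j)^{-1} \otimes \varphi = \prescript{\delta_j}{}{\varphi},$$
using \eqref{eq:phi-twisted-by-delta} at the last step. The uniqueness statement in the Clifford correspondence then forces $\rho'_{j,k} = \rho_{j,k}$, giving the first bullet.

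The second bullet is entirely parallel: define $\psi'_{j,k}(\cdot) := \mu(\gamma_k,\cdot) \otimes \psi_{j,0}(\cdot)$, use that $H^{\circ} = Z_{GL(U)}(G)$ to conclude $\mu(\gamma_k,\cdot)$ is trivial on $H^{\circ}$ (so the restriction to $H^{\circ}$ matches that of $\psi_{j,0}$ and thus equals $\prescript{\wtilde{\delta_j}}{}{\varepsilon} \otimes L$), then apply the projection formula together with \eqref{eq:alpha-twisted-by-gamma} to obtain $\Ind_{I_H}^{H}(\psi'_{j,k}) = \prescript{\gamma_k}{}{\alpha}$; uniqueness in \eqref{eq:psi-jk-def} finishes the proof. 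No step looks like a serious obstacle; the only point requiring a little care is verifying that the twisting character really does extend from $I_G$ (resp.~$I_H$) to all of $G$ (resp.~$H$), which is immediate from the definition of $\mu$ on $G \times H$ in \eqref{al:mu-G-H}.
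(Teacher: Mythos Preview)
Your proof is correct. The paper takes a slightly different but closely related route: instead of verifying the two defining Clifford conditions for $\rho'_{j,k}$ and invoking uniqueness, it computes $\Res^{G}_{I_G}(\prescript{\delta_j}{}{\varphi})$ in two ways. On one hand, Corollary~\ref{cor:inertia-summary} gives $\Res^{G}_{I_G}(\prescript{\delta_j}{}{\varphi}) = \bigoplus_{k \in \mathcal{K}} \rho_{j,k}$; on the other, since $\prescript{\delta_j}{}{\varphi} = \mu(\cdot,\delta_j)^{-1}\otimes\varphi$ and the character passes through restriction, one gets $\mu(\cdot,\delta_j)^{-1}\otimes\bigl(\bigoplus_{k}\rho_{0,k}\bigr)$. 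Matching summands by $G^{\circ}$-isotype (each side has exactly one summand above $\prescript{\wtilde{\gamma_k}}{}{\beta}$) yields the conclusion. Your argument goes in the other direction, pushing up via the projection formula for induction rather than pulling down via restriction; the two are essentially Frobenius-dual to one another. Your version makes the ``matching summands'' step completely explicit through the uniqueness clause of the Clifford correspondence, at the cost of a couple of extra lines.
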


\begin{proof}
By Corollary \ref{cor:inertia-summary}, we have that $\Res^{G}_{I_G} ( \prescript{\delta_j}{}{\varphi} ) = \bigoplus_{k \in \mathcal{K}} \rho_{j,k}$. On the other hand,
$$\Res^{G}_{I_G} (\prescript{\delta_j}{}{\varphi}) = \Res^{G}_{I_G} ( \mu (\cdot,\delta_j)^{-1} \otimes \varphi (\cdot) ) = \mu (\cdot, \delta_j)^{-1} \otimes \Res^{G}_{I_G}(\varphi) = \mu (\cdot, \delta_j)^{-1} \otimes \Big ( \bigoplus_{k \in \mathcal{K}} \rho_{0,k} \Big ).$$
Comparing these two expressions, we conclude that $\rho_{j,k}(\cdot) = \mu (\cdot, \delta_j)^{-1} \otimes \rho_{0,k}(\cdot)$ for all $j \in \mathcal{J}$ and $k \in \mathcal{K}$. Similarly, we conclude that $\psi_{j,k}(\cdot ) = \mu (\gamma_k, \cdot) \otimes \psi_{j,0} (\cdot)$ for all $j \in \mathcal{J}$ and $k \in \mathcal{K}$. 
\end{proof}

\section{Understanding \texorpdfstring{$G$}{G} and \texorpdfstring{$H$}{H} explicitly} \label{sec:explicit}

In this section, we work in the context of $G$ and state the anolog of each result for $H$.

\subsection{Showing that $(I_G)_d, \, (I_H)_d \subseteq GL(L)^{\vert \mathcal{J} \vert \cdot \vert \mathcal{K} \vert }$} \label{subsec:(I_G)_d-in-GL(L)'s}

We already know from Lemma \ref{lem:full-set-of-interia-reps} and Corollary \ref{cor:inertia-summary} that
$$I_G, \, I_H \subseteq GL(B \otimes L)^{\vert \mathcal{J} \vert \cdot \vert \mathcal{K} \vert}  .$$
In this subsection, we show that each $g_I \in I_G$ (resp.~$h_I \in I_H$) takes the form of a certain product of block diagonal matrices. In doing so, we show that $(I_G)_d, \, (I_H)_d \subseteq GL(L)^{\vert \mathcal{J} \vert \cdot \vert \mathcal{K} \vert}$.

\begin{prop} \label{prop:I_G-and-I_H}
\hspace{1cm}
\begin{itemize}
\item $I_G = [GL(B)^{\vert \mathcal{K} \vert} \times (I_G)_d ]/((\mathbb{C}^{\times})^{\vert \mathcal{K} \vert})_{\antidiag}$, and $(I_G)_d \subseteq GL(L)^{\vert \mathcal{J} \vert \cdot \vert \mathcal{K} \vert}$.
\item $I_H = [GL(E)^{\vert \mathcal{J} \vert} \times (I_H)_d ]/((\mathbb{C}^{\times})^{\vert \mathcal{J} \vert})_{\antidiag}$, and $(I_H)_d \subseteq GL(L)^{\vert \mathcal{J} \vert \cdot \vert \mathcal{K} \vert}$.
\end{itemize}
\end{prop}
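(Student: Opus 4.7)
The plan is as follows. By Proposition \ref{prop:bijection-extensions} applied to the extension $1 \to G^{\circ} \to I_G \to I_G/G^{\circ} \to 1$, we have $I_G \simeq [G^{\circ} \rtimes (I_G)_d]/Z(G^{\circ})_{\antidiag}$. Since $G^{\circ} = GL(B)^{\vert \mathcal{K} \vert}$ and $Z(G^{\circ}) = (\mathbb{C}^{\times})^{\vert \mathcal{K} \vert}$, the two remaining tasks are (a) to show that the semidirect product reduces to a direct product, and (b) to show that $(I_G)_d \subseteq GL(L)^{\vert \mathcal{J} \vert \cdot \vert \mathcal{K} \vert}$.

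As preliminary setup, I would observe that since $\Gamma$ is abelian (Theorem \ref{thm:dual-finite-ab-gps}), $I_G$ is normal in $G$, from which a short calculation gives $\prescript{g}{}{\prescript{\wtilde{\gamma_k}}{}{\beta}} \simeq \prescript{\wtilde{\gamma_k}}{}{\beta}$ for every $g \in I_G$ and every $k \in \mathcal{K}$. Hence every element of $I_G$ preserves each $G^{\circ}$-isotypic component $W_k := \prescript{\wtilde{\gamma_k}}{}{B} \otimes A_{\wtilde{\gamma_k}}$, and Schur's lemma forces its restriction to $W_k$ to factor (up to antidiagonal $\mathbb{C}^{\times}$) as $M_k \otimes N_k$ with $M_k \in GL(B)$ and $N_k \in GL(A_{\wtilde{\gamma_k}})$.

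For (a), fix $g_d \in (I_G)_d$. Since $g_d$ acts on each factor $GL(\prescript{\wtilde{\gamma_k}}{}{B})$ of $G^{\circ}$ by inner conjugation via $M_k(g_d)$, the requirement that this automorphism preserve both the diagonal torus and the standard simple root vectors $\{E_{i,i+1}\}$ forces $M_k(g_d)$ to be simultaneously monomial (to normalize the torus) and, after computing $M_k E_{i,i+1} M_k^{-1}$, scalar. Thus $g_d$ commutes with $G^{\circ}$, the semidirect product collapses to a direct product, and we obtain $I_G = [GL(B)^{\vert \mathcal{K} \vert} \times (I_G)_d]/((\mathbb{C}^{\times})^{\vert \mathcal{K} \vert})_{\antidiag}$.

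For (b), I would use Corollary \ref{cor:inertia-summary} to decompose $U = \bigoplus_{j,k} R_{j,k} \otimes E_{\delta_j}$ as an $I_G \times H^{\circ}$-representation, with $R_{j,k} \simeq \prescript{\wtilde{\gamma_k}}{}{B} \otimes L$ as $G^{\circ}$-representations. On each summand $g_d$ acts as $\rho_{j,k}(g_d) \otimes \text{id}_{E_{\delta_j}}$, and the scalar-$M_k$ argument combined with Schur's lemma yields $\rho_{j,k}(g_d) = c_{j,k} \, \text{id}_B \otimes n_{j,k}$ for some $c_{j,k} \in \mathbb{C}^{\times}$ and $n_{j,k} \in GL(L)$. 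Absorbing the $c_{j,k}$ into the $n_{j,k}$ realizes $g_d$ as an element of $GL(L)^{\vert \mathcal{J} \vert \cdot \vert \mathcal{K} \vert}$ acting trivially on every $B$- and $E_{\delta_j}$-factor of $U$. The proof for $I_H$ is entirely symmetric, with the roles of $(G, B, \mathcal{K})$ and $(H, E, \mathcal{J})$ exchanged. The principal obstacle is the scalar-$M_k$ computation, which pins down the distinguished condition for the relevant type-$A$ extensions; once that is in hand, the rest is tensor-product bookkeeping via Schur's lemma.
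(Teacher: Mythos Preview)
Your argument is correct and follows essentially the same route as the paper: factor each $g\in I_G$ as a $GL(B)^{|\mathcal K|}$-part times something centralizing $G^\circ$ inside $GL(B\otimes L)^{|\mathcal J|\cdot|\mathcal K|}$, which lands in $GL(L)^{|\mathcal J|\cdot|\mathcal K|}$. The only organizational difference is that the paper first proves $I_G\subseteq[GL(B)^{|\mathcal K|}\times GL(L)^{|\mathcal J|\cdot|\mathcal K|}]/((\mathbb C^\times)^{|\mathcal K|})_{\antidiag}$ and then invokes the general fact $(I_G)_d\cap G^\circ=Z(G^\circ)$ from \cite[Proposition~3.1]{Disconnected} to identify $(I_G)_d$ with the $GL(L)$-factor, whereas you instead compute directly that the distinguished condition on a product of type-$A$ factors forces each $M_k$ to be scalar; your hands-on pinning calculation is exactly what that cited proposition encodes in this case, so the two arguments are equivalent.
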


\begin{proof}
We start by showing that $I_G \subseteq \left [ GL(B)^{\vert \mathcal{K} \vert } \times GL(L)^{\vert \mathcal{J} \vert \cdot \vert \mathcal{K} \vert } \right ]/ ((\mathbb{C}^{\times})^{\vert \mathcal{K} \vert})_{\antidiag}$. To this end, fix $g \in I_G$. By definition of $I_G$, we have that for each $k \in \mathcal{K}$,
$$\prescript{g}{}{ ( \prescript{\wtilde{\gamma_k}}{}{\beta} (\cdot) )} = N_k \prescript{\wtilde{\gamma_k}}{}{\beta} (\cdot ) N_k^{-1}$$ 
for some $N_k \in GL(B)$. Set
$$N := \diag ( \underbrace{ N_0 , \ldots , N_0}_{\vert \mathcal{J} \vert \cdot (\dim L)} , \ldots , \underbrace{ N_k , \ldots , N_k }_{\vert \mathcal{J} \vert \cdot (\dim L)} , \ldots ).$$
Recalling that $g_0 \in G^{\circ}$ gets identified with 
$$\diag ( \underbrace{\beta (g_0), \ldots , \beta (g_0)}_{\vert \mathcal{J} \vert \cdot (\dim L)}, \ldots , \underbrace{\prescript{\wtilde{\gamma_k}}{}{\beta}(g_0), \ldots , \prescript{\wtilde{\gamma_k}}{}{\beta} (g_0)}_{\vert \mathcal{J} \vert \cdot (\dim L)}, \ldots ) \in GL(B \otimes L)^{\vert \mathcal{J} \vert \cdot \vert \mathcal{K} \vert } ,$$
we see that 
$$g g_0 g^{-1} = N g_0 N^{-1}.$$
In particular, we see that 
$$N^{-1} g \in Z_{ GL(B \otimes L)^{\vert \mathcal{J} \vert \cdot \vert \mathcal{K} \vert } } \left ( GL(B)^{\vert \mathcal{K} \vert } \right ) = GL(L)^{\vert \mathcal{J} \vert \cdot \vert \mathcal{K} \vert }.$$
It follows that $g$ can be written as $g = N M$ for some element $M \in GL(L)^{\vert \mathcal{J} \vert \cdot \vert \mathcal{K} \vert }$, and hence that $I_G \subseteq \left [ GL(B)^{\vert \mathcal{K} \vert } \times GL(L)^{\vert \mathcal{J} \vert \cdot \vert \mathcal{K} \vert } \right ]/ ((\mathbb{C}^{\times})^{\vert \mathcal{K} \vert})_{\antidiag}$.

Therefore, since $GL(B)^{\vert \mathcal{K} \vert } \subseteq I_G$, we can write
$$I_G = [ GL(B)^{\vert \mathcal{K} \vert} \times I_G' ]/ ((\mathbb{C}^{\times})^{\vert \mathcal{K} \vert})_{\antidiag}$$
for some subgroup $I_G'$ of $GL(L)^{\vert \mathcal{J} \vert \cdot \vert \mathcal{K} \vert}$ containing $(\mathbb{C}^{\times})^{\vert \mathcal{K} \vert}$. By definition of $(I_G)_d$, we have that $I_G' \subseteq (I_G)_d$. Additionally, by \cite[Proposition 3.1]{Disconnected}, we have that 
$$(I_G)_d \cap GL(B)^{\vert \mathcal{K} \vert} = Z( GL(B)^{\vert \mathcal{K} \vert} ) = (\mathbb{C}^{\times})^{\vert \mathcal{K} \vert}.$$
It follows that 
$$(I_G)_d = [ (\mathbb{C}^{\times})^{\vert \mathcal{K} \vert} \times I_G' ]/((\mathbb{C}^{\times})^{\vert \mathcal{K} \vert})_{\antidiag} = I_G' \subseteq GL(L)^{ \vert \mathcal{J} \vert \cdot \vert \mathcal{K} \vert },$$
as desired. 
\end{proof}

\subsection{Writing $G_d$ (resp.~$H_d$) in terms of $(I_G)_d$ and $\mathcal{K}$ (resp.~$(I_H)_d$ and $\mathcal{J}$)}\label{subsec:delta-J}

Set $J:= L^2 (\mathcal{J},\mathbb{C})$ and $K:=L^2(\mathcal{K},\mathbb{C})$. By Corollary \ref{cor:inertia-summary}, we see that 
$$U = \left [ F \otimes E \right ]^{\oplus \vert \mathcal{J} \vert} = \Big [ \bigoplus_{k \in \mathcal{K}} R_{j,k} \Big ]^{\oplus \vert \mathcal{J} \vert} = \left [ B \otimes E \otimes L \right ]^{\oplus \vert \mathcal{J} \vert \cdot \vert \mathcal{K} \vert} ,$$
which we can identify with $E \otimes B \otimes J \otimes K \otimes L$. Under this identification, each element of $\mathcal{J}$ (resp.~$\mathcal{K}$) can be viewed as the image in $PGL(J)$ (resp.~$PGL(K)$) of a permutation matrix in $GL(J)$ (resp.~$GL(K)$), and each element of $\hat{\mathcal{J}}$ (resp.~$\hat{\mathcal{K}}$) can be viewed as the image in $PGL(J)$ (resp.~$PGL(K)$) of a diagonal matrix in $GL(J)$ (resp.~$GL(K)$). (See the proof of Proposition \ref{prop:AxA^-simple} to recall the details of such an identification.)

\begin{prop} \label{prop:G_d-and-H_d}
\hspace{1cm}
\begin{itemize}
\item $G_d = (I_G)_d \rtimes_{\mathbb{C}^{\times}} p_K^{-1}(\mathcal{K})$.
\item $H_d = (I_H)_d \rtimes_{\mathbb{C}^{\times}} p_J^{-1}(\mathcal{J})$.
\end{itemize}
\end{prop}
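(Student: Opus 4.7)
The plan is to prove the $G$-version; the $H$-version follows by the symmetric argument applied to $H$. The strategy is to realize $G_d$ as an extension of $\mathcal{K}$ by $(I_G)_d$ and to exhibit $p_K^{-1}(\mathcal{K})$ as a concrete section up to $\mathbb{C}^{\times}$.

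First, I would establish that $(I_G)_d$ sits inside $G_d$ as a normal subgroup. Since $(I_G)^{\circ} = G^{\circ}$, the subgroups $(I_G)_d$ and $G_d$ are defined using the same maximal torus and simple root vectors of $G^{\circ}$, so $(I_G)_d = I_G \cap G_d \subseteq G_d$. The subgroup $I_G$ is normal in $G$ because $I_G/G^{\circ}$ is a subgroup of the abelian group $\Gamma = G/G^{\circ}$, whence $(I_G)_d$ is normal in $G_d$. Applying Proposition \ref{prop:bijection-extensions}(2) to $G$, every element of $G$ factors as a product of an element in $G^{\circ}$ and an element in $G_d$; composing with $G \twoheadrightarrow G/I_G = \mathcal{K}$ then shows that $G_d \twoheadrightarrow \mathcal{K}$ is surjective with kernel $(I_G)_d$.

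Next, I would verify $p_K^{-1}(\mathcal{K}) \subseteq G_d$. By the embeddings recorded at the end of Section \ref{sec:orbits}, the lift $\wtilde{\gamma_k} \in G_d$ acts on $U = B \otimes E \otimes J \otimes K \otimes L$ by shifting the $K$-index by $k$, since conjugation by it permutes the $|\mathcal{K}|$ factors of $G^{\circ} = GL(B)^{|\mathcal{K}|}$ via the shift $k' \mapsto k'+k$. The pure permutation matrix $\tau_k \in p_K^{-1}(\mathcal{K})$ has this same shifting action on $K$ and acts trivially on the remaining tensor factors, so $\wtilde{\gamma_k} \tau_k^{-1}$ is block-diagonal in the $K$-decomposition. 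A computation using Corollary \ref{cor:inertia-summary} together with Proposition \ref{prop:I_G-and-I_H} then identifies this product as an element of $(I_G)_d$. Consequently $\tau_k = (\wtilde{\gamma_k} \tau_k^{-1})^{-1} \cdot \wtilde{\gamma_k} \in (I_G)_d \cdot G_d \subseteq G_d$, and combined with $\mathbb{C}^{\times} \subseteq G_d$ this gives $p_K^{-1}(\mathcal{K}) \subseteq G_d$.

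Finally, the two inclusions combine to give the result. For any $g \in G_d$ with image $k \in \mathcal{K}$, the product $g \tau_k^{-1}$ lies in $I_G \cap G_d = (I_G)_d$, yielding $G_d = (I_G)_d \cdot p_K^{-1}(\mathcal{K})$. The intersection $(I_G)_d \cap p_K^{-1}(\mathcal{K}) = \mathbb{C}^{\times}$ because, by Proposition \ref{prop:I_G-and-I_H}, $(I_G)_d \subseteq GL(L)^{|\mathcal{J}| \cdot |\mathcal{K}|}$ is block-diagonal in $K$, while any non-scalar element of $p_K^{-1}(\mathcal{K})$ non-trivially permutes $K$-blocks. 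Combined with the normality of $(I_G)_d$ in $G_d$ and the fact that $p_K^{-1}(\mathcal{K})$ normalizes $(I_G)_d$ inside $G_d$, this produces the $\rtimes_{\mathbb{C}^{\times}}$ structure of \eqref{eq:weird-semidirect-product-def}. The main obstacle is the second step, specifically showing $\wtilde{\gamma_k} \tau_k^{-1} \in (I_G)_d$; this requires carefully tracking $\wtilde{\gamma_k}$ through the successive identifications of Section \ref{sec:orbits} to confirm that, after those identifications, the only ``non-permutation'' content of $\wtilde{\gamma_k}$ lies in the $L$-factors and so places the difference in $GL(L)^{|\mathcal{J}| \cdot |\mathcal{K}|} \cap G_d$.
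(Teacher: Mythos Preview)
Your proposal takes essentially the same approach as the paper. Both arguments establish the short exact sequence $1 \to (I_G)_d \to G_d \to \mathcal{K} \to 1$ via $(I_G)_d = I_G \cap G_d$, and both reduce to showing that $\wtilde{\gamma_k}$ factors as a block permutation in the $K$-index times an element of $GL(L)^{\vert \mathcal{J} \vert \cdot \vert \mathcal{K} \vert}$. The paper carries out this last point by an explicit $2$-cocycle computation showing $\prescript{\wtilde{\gamma_k \gamma_{k'}}}{}{\beta} = \prescript{\wtilde{\gamma_{k+k'}}}{}{\beta}$ (using that $\wtilde{t_G(k,k')} \in (I_G)_d \subseteq GL(L)^{\vert \mathcal{J} \vert \cdot \vert \mathcal{K} \vert}$ from Proposition~\ref{prop:I_G-and-I_H}), whence $P_k \wtilde{\gamma_k}$ centralizes $GL(B)^{\vert \mathcal{K} \vert}$ inside $GL(F)^{\vert \mathcal{J} \vert}$ and so lands in $GL(L)^{\vert \mathcal{J} \vert \cdot \vert \mathcal{K} \vert}$; this is exactly the ``tracking through Section~\ref{sec:orbits}'' that you flag as the main obstacle.

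One organizational difference is worth noting. You aim to show directly that $\wtilde{\gamma_k} \tau_k^{-1} \in (I_G)_d$ (hence $\tau_k \in G_d$), but the paper does \emph{not} claim this intermediate element lies in $(I_G)_d$---only that it lies in $GL(L)^{\vert \mathcal{J} \vert \cdot \vert \mathcal{K} \vert}$. From that weaker fact the paper concludes $G_d \subseteq GL(L)^{\vert \mathcal{J} \vert \cdot \vert \mathcal{K} \vert} \rtimes_{\mathbb{C}^{\times}} p_K^{-1}(\mathcal{K})$, then sets $G_d' := G_d \cap GL(L)^{\vert \mathcal{J} \vert \cdot \vert \mathcal{K} \vert}$ and checks $G_d' = (I_G)_d$ by double inclusion. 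Your route is slightly more direct but requires knowing $\wtilde{\gamma_k} \tau_k^{-1} \in G$, which is essentially equivalent to $\tau_k \in G$; the paper's route sidesteps this by never asserting membership of $M_k$ in $G$ and instead arguing at the level of the ambient containment.
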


\begin{proof}
Let $t_G \colon \mathcal{K} \times \mathcal{K} \rightarrow I_G/G^{\circ}$ be the 2-cocycle defining our extension
$$1 \rightarrow I_G/G^{\circ} \rightarrow \Gamma \rightarrow \mathcal{K} \rightarrow 1,$$
so that $\gamma_k \gamma_{k'} = t_G(k,k') \gamma_{k+k'}$. Consider a fixed $k \in \mathcal{K}$. Then for all $k' \in \mathcal{K}$ and $g_0 \in G^{\circ}$, 
\begin{align*}
\prescript{\wtilde{\gamma_{k}\gamma_{k'}}}{}{\beta}(g_0) &= \beta ( \wtilde{\gamma_{k}\gamma_{k'}} g_0 (\wtilde{\gamma_{k}\gamma_{k'}})^{-1} ) \\
&= \beta ( s_G(t_G(k,k'), \gamma_{k+k'})^{-1}  \wtilde{t_G(k,k')} \wtilde{\gamma_{k+k'}} g_0 \wtilde{\gamma_{k+k'}}^{-1} \wtilde{t_G(k,k')}^{-1} s_G(t_G(k,k'), \gamma_{k+k'})) \\
&= \beta (\wtilde{t_G(k,k')} \wtilde{\gamma_{k+k'}} g_0 \wtilde{\gamma_{k+k'}}^{-1} \wtilde{t_G(k,k')}^{-1} ) \\
&= \prescript{\wtilde{\gamma_{k+k'}}}{}{\beta}(g_0),
\end{align*}
where in the last step we have used that $\wtilde{t_G(k,k')} \in (I_G)_d \subseteq GL(L)^{\vert \mathcal{J} \vert \cdot \vert \mathcal{K} \vert}$ (by Proposition \ref{prop:I_G-and-I_H}). From this, we see that $\wtilde{\gamma_k} g_0 \wtilde{\gamma_k}^{-1} = P_k^{-1} g_0 P_k$, where $P_k$ is the block permutation sending $\prescript{\wtilde{\gamma_{k'}}}{}{B}$ to $\prescript{\wtilde{\gamma_{k+k'}}}{}{B}$ for all $k' \in \mathcal{K}$. In particular,
$$P_k \wtilde{\gamma_k} \in Z_{ GL(F)^{\vert \mathcal{J} \vert } } \left ( GL(B)^{\vert \mathcal{K} \vert} ) \right ) = GL(L)^{\vert \mathcal{J} \vert \cdot \vert \mathcal{K} \vert}.$$
It follows that $\wtilde{\gamma_k}$ can be written as $\wtilde{\gamma_k} = M_k P_k^{-1}$ for some element $M_k \in GL(L)^{\vert \mathcal{J} \vert \cdot \vert \mathcal{K} \vert}$. 

Now, notice that
$$(I_G)_d = \{ g \in I_G \, : \, \text{Ad}(g) \text{ is distinguished} \} = I_G \cap G_d = I_{G_d}.$$
Therefore, we have that $\mathcal{K} = G/I_G = ([G^{\circ} \rtimes G_d]/Z(G^{\circ})_{\antidiag})/([G^{\circ} \rtimes (I_G)_d]/Z(G^{\circ})_{\antidiag}) = G_d/(I_G)_d$. In this way, we see that we have the following short exact sequence:
$$1 \rightarrow (I_G)_{d} \rightarrow G_{d} \rightarrow \mathcal{K} \rightarrow 1.$$
In particular, any $g \in G_d$ can be written as $g = g_I \wtilde{\gamma_k}$ for some $g_I \in (I_G)_d$ and some $k \in \mathcal{K}$. Therefore, since $(I_G)_d \subseteq GL(L)^{\vert \mathcal{J} \vert \cdot \vert \mathcal{K} \vert}$ (by Proposition \ref{prop:I_G-and-I_H}) and since $\wtilde{\gamma_k} \in GL(L)^{\vert \mathcal{J} \vert \cdot \vert \mathcal{K} \vert} \rtimes_{\mathbb{C}^{\times}} p_K^{-1}(\mathcal{K})$ (by what we just showed above), we see that $G_d \subseteq GL(L)^{\vert \mathcal{J} \vert \cdot \vert \mathcal{K} \vert} \rtimes_{\mathbb{C}^{\times}} p_K^{-1}(\mathcal{K})$. Therefore, we can write $G_d = G_d' \rtimes_{\mathbb{C}^{\times}} p_K^{-1}(\mathcal{K})$ for some subgroup $G_d'$ of $GL(L)^{\vert \mathcal{J} \vert \cdot \vert \mathcal{K} \vert}$ containing $\mathbb{C}^{\times}$. Then since $(I_G)_d \subseteq G_d \cap GL(L)^{\vert \mathcal{J} \vert \cdot \vert \mathcal{K} \vert}$, we see that $(I_G)_d \subseteq G_d'$. On the other hand, since $G_d' \subseteq G_d \cap GL(L)^{\vert \mathcal{J} \vert \cdot \vert \mathcal{K} \vert}$, we see that $G_d' \subseteq (I_G)_d$ (by definition of $(I_G)_d$). This completes the proof. 
\end{proof}

\begin{cor} \label{cor:Gamma=I_GxK}
\hspace{.5cm}
\begin{itemize}
    \item $\Gamma = I_G/G^{\circ} \times \mathcal{K}$.
    \item $\hat{\Gamma} = I_H/H^{\circ} \times \mathcal{J}$.
\end{itemize}
\end{cor}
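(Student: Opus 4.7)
The plan is to establish that the short exact sequence
$$1 \to I_G/G^{\circ} \to \Gamma \to \mathcal{K} \to 1$$
splits, and then to promote the resulting semidirect product to a direct product by invoking the fact that $\Gamma$ is abelian (Theorem \ref{thm:dual-finite-ab-gps}). The same argument, with $H$, $\mathcal{J}$, and $p_J^{-1}$ in place of $G$, $\mathcal{K}$, and $p_K^{-1}$, will then handle the $\hat{\Gamma}$ case.

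First I would extract a splitting at the $G_d$-level. Proposition \ref{prop:G_d-and-H_d} writes $G_d$ as $[(I_G)_d \rtimes p_K^{-1}(\mathcal{K})]/(\mathbb{C}^{\times})_{\antidiag}$. Inside $G_d$, the image of $p_K^{-1}(\mathcal{K})$ is isomorphic to $p_K^{-1}(\mathcal{K})/\mathbb{C}^{\times} = \mathcal{K}$, intersects $(I_G)_d$ trivially, and maps isomorphically onto $G_d/(I_G)_d = \mathcal{K}$ (as observed in the proof of Proposition \ref{prop:G_d-and-H_d}). This produces a splitting of
$$1 \to (I_G)_d \to G_d \to \mathcal{K} \to 1.$$

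Next I would push this splitting down to $\Gamma$. By Proposition \ref{prop:bijection-extensions}(1) applied both to $G$ and to $I_G$ (using that $(I_G)^{\circ} = G^{\circ}$), the group $Z(G^{\circ})$ is a common central subgroup of $(I_G)_d$ and $G_d$, with $G_d/Z(G^{\circ}) = \Gamma$ and $(I_G)_d/Z(G^{\circ}) = I_G/G^{\circ}$. Since $Z(G^{\circ}) \subseteq (I_G)_d$, the splitting image of $\mathcal{K}$ in $G_d$ injects into $\Gamma$, giving a splitting of
$$1 \to I_G/G^{\circ} \to \Gamma \to \mathcal{K} \to 1.$$
Thus $\Gamma$ is at worst a semidirect product of $I_G/G^{\circ}$ by $\mathcal{K}$; but because $\Gamma$ is abelian, the induced conjugation action of $\mathcal{K}$ on $I_G/G^{\circ}$ is trivial, and the semidirect product collapses to the direct product $\Gamma = I_G/G^{\circ} \times \mathcal{K}$.

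No step here is really the hard part: the heavy lifting has already been done in Proposition \ref{prop:G_d-and-H_d} (the semidirect structure of $G_d$) and in Theorem \ref{thm:dual-finite-ab-gps} (the abelianness of $\Gamma$). The only point to verify with some care is that the splitting at the $G_d$-level survives the quotient by $Z(G^{\circ})$, which is automatic since $Z(G^{\circ})$ lies entirely inside the normal factor $(I_G)_d$ and hence does not interfere with the section of $\mathcal{K}$.
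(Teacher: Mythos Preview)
Your approach matches the paper's: both pass to $G_d$ via Proposition~\ref{prop:G_d-and-H_d}, identify $\Gamma$ with $G_d/Z(G^\circ)$ using Proposition~\ref{prop:bijection-extensions}, and read off the product decomposition (the paper then cites Proposition~\ref{prop:I_G-and-I_H} for $(I_G)_d/Z(G^\circ)=I_G/G^\circ$, and your explicit appeal to the abelianness of $\Gamma$ makes transparent what the paper leaves implicit when it writes ``$\times$'' rather than ``$\rtimes$''). One small slip: inside $G_d=[(I_G)_d\rtimes p_K^{-1}(\mathcal{K})]/(\mathbb{C}^\times)_{\antidiag}$, the image of $p_K^{-1}(\mathcal{K})$ is \emph{not} $\mathcal{K}$ but all of $p_K^{-1}(\mathcal{K})$ (the antidiagonal meets $\{1\}\times p_K^{-1}(\mathcal{K})$ only in the identity), and this image meets $(I_G)_d$ in $\mathbb{C}^\times$, not trivially. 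This does not damage the argument, since after the further quotient by $Z(G^\circ)\supseteq\mathbb{C}^\times$ the image of $p_K^{-1}(\mathcal{K})$ in $\Gamma$ really is $\mathcal{K}$ and really does meet $I_G/G^\circ$ trivially; alternatively, you could already produce the $G_d$-level splitting by taking the genuine permutation matrices $\{\tau_k\}_{k\in\mathcal{K}}\simeq\mathcal{K}$ inside $p_K^{-1}(\mathcal{K})$.
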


\begin{proof}
By Proposition \ref{prop:bijection-extensions}, we have that $G = [G^{\circ} \rtimes G_d]/Z(G^{\circ})_{\antidiag}$, where $G^{\circ} \cap G_d = Z(G^{\circ})$ by \cite[Proposition 3.1]{Disconnected}. Therefore, we see that $\Gamma = G/G^{\circ} = G_d/Z(G^{\circ})$. By Proposition \ref{prop:G_d-and-H_d}, we see that $G_d/Z(G^{\circ}) = (I_G)_d/Z(G^{\circ}) \times \mathcal{K}$. Finally, applying Proposition \ref{prop:I_G-and-I_H}, we see that $(I_G)_d/Z(G^{\circ}) = I_G/G^{\circ}$, completing the proof. 
\end{proof}

\subsection{Defining $\Omega_{j,k}$ and $\Lambda_{j,k}$} \label{subsec:PGL(L)-dual-pairs}

Notice that by Proposition \ref{prop:G_d-and-H_d}, we have
\begin{equation} \label{eq:rho_jk(I_G)}
\rho_{j,k} (I_G) = GL(B) \times_{\mathbb{C}^{\times}} (I_G)_d \vert_{L_{j,k}} \hspace{.5cm} \text{ and } \hspace{.5cm} \psi_{j,k}(I_H) = GL(E) \times_{\mathbb{C}^{\times}} (I_H)_d \vert_{L_{j,k}} 
\end{equation} 
for all $j \in \mathcal{J}$ and $k \in \mathcal{K}$. 

Now, since $I_G^{\circ} = G^{\circ} = GL(B)^{\vert \mathcal{K} \vert}$, we see that $\rho_{j,k}(I_G)^{\circ} = GL(B)$. Therefore, (\ref{eq:rho_jk(I_G)}) gives that
$$\rho_{j,k}(I_G) / GL(B) = (I_G)_d \vert_{L_{j,k}} / \mathbb{C}^{\times}  =: \Omega_{j,k},$$ 
which can be viewed as a finite subgroup of $PGL(L_{j,k})$. Similarly, $(\psi_{j,k}(I_H))^{\circ} = GL(E)$, and 
$$ \psi_{j,k}(I_H)/GL(E) = (I_H)_d \vert_{L_{j,k}} / \mathbb{C}^{\times} =: \Lambda_{j,k} ,$$
where $\Lambda_{j,k}$ can be viewed as a subgroup of $PGL(L_{j,k})$. With this notation, we see that we have the following short exact sequences associated to each pair $j,k$:
$$1 \rightarrow GL(B) \rightarrow \rho_{j,k}(I_G) \rightarrow \Omega_{j,k} \rightarrow 1 \hspace{.25cm} \text{ and } \hspace{.25cm} 1 \rightarrow GL(E) \rightarrow \psi_{j,k} (I_H) \rightarrow \Lambda_{j,k} \rightarrow 1.$$

\subsection{Showing that $\Omega_{j,k}$ and $\Lambda_{j,k}$ are dual finite abelian groups} \label{subsec:Omega-Lambda-duality}

\begin{lemma} \label{lem:dual-inertia-comp}
For all $j \in \mathcal{J}$ and $k \in \mathcal{K}$, $\Omega_{j,k}$ and $\Lambda_{j,k}$ are finite abelian groups with natural isomorphisms 
$$\Omega_{j,k} \simeq \hat{\Lambda}_{j,k} \hspace{.5cm} \text{ and } \hspace{.5cm} \Lambda_{j,k} \simeq \hat{\Omega}_{j,k}.$$
\end{lemma}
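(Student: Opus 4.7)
The plan is to mimic the proof of Theorem~\ref{thm:dual-finite-ab-gps}, replacing the mutual-centralizer pair $(G,H)$ in $GL(U)$ by the pair $((I_G)_d\vert_{L_{j,k}},\,(I_H)_d\vert_{L_{j,k}})$ inside $GL(L_{j,k})$. The commutator of any two such elements will be a scalar precisely because $I_G\subseteq G$ and $I_H\subseteq H$ already commute up to a scalar in all of $GL(U)$.

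First I would define a commutator pairing
\[
\mu_{j,k}\colon (I_G)_d\vert_{L_{j,k}}\times(I_H)_d\vert_{L_{j,k}}\longrightarrow\mathbb{C}^\times,\qquad(\widetilde{\omega},\widetilde{\lambda})\longmapsto\widetilde{\omega}\widetilde{\lambda}\widetilde{\omega}^{-1}\widetilde{\lambda}^{-1}.
\]
To justify that this is well defined and scalar-valued, I would pick preimages $\omega_0\in(I_G)_d\subseteq G$ and $\lambda_0\in(I_H)_d\subseteq H$ and use that $[\omega_0,\lambda_0]=\mu(\omega_0,\lambda_0)\in\mathbb{C}^\times$ is a root of unity by Lemma~\ref{lem:roots-of-unity}. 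By Proposition~\ref{prop:I_G-and-I_H} the elements $\omega_0$ and $\lambda_0$ act only through the $L$-factors of the identification $U\simeq B\otimes E\otimes J\otimes K\otimes L$, so the same scalar commutator is obtained upon restriction to any single block $L_{j,k}$. Rescaling either argument by $\mathbb{C}^\times$ leaves the commutator unchanged, so $\mu_{j,k}$ descends to a bimultiplicative pairing $\Omega_{j,k}\times\Lambda_{j,k}\to\mathbb{C}^\times$ (bimultiplicativity follows from the standard commutator identity since the relevant commutators are central). This yields a natural homomorphism
\[
M_{j,k}\colon\Omega_{j,k}\longrightarrow\widehat{\Lambda}_{j,k},\qquad\omega\longmapsto\mu_{j,k}(\omega,\cdot),
\]
and by symmetry also $M'_{j,k}\colon\Lambda_{j,k}\to\widehat{\Omega}_{j,k}$.

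The key step is proving $M_{j,k}$ is injective. If $\omega\in\ker M_{j,k}$, then any lift $\widetilde{\omega}\in GL(L_{j,k})$ literally commutes (not merely up to a scalar) with every element of $(I_H)_d\vert_{L_{j,k}}$. Here I would invoke Schur's lemma: the representation $(\psi_{j,k},P_{j,k})$ of $I_H$ is irreducible on $P_{j,k}\simeq E\otimes L_{j,k}$, so $Z_{GL(P_{j,k})}(\psi_{j,k}(I_H))=\mathbb{C}^\times$. Since $\psi_{j,k}(I_H)$ contains $\psi_{j,k}(H^\circ)=GL(E)\otimes 1$, whose centralizer in $GL(E\otimes L_{j,k})$ is $1\otimes GL(L_{j,k})$, this forces $Z_{GL(L_{j,k})}((I_H)_d\vert_{L_{j,k}})=\mathbb{C}^\times$, so $\widetilde{\omega}$ is scalar and $\omega$ is trivial in $\Omega_{j,k}$.

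Once $M_{j,k}$ is injective, $\Omega_{j,k}$ embeds in the abelian group $\widehat{\Lambda}_{j,k}$ and is therefore abelian; swapping the roles of $G$ and $H$ shows $\Lambda_{j,k}$ is abelian. Finiteness is inherited from $I_G/G^\circ$ and $I_H/H^\circ$ (finite by combining Corollary~\ref{cor:Gamma=I_GxK} and Theorem~\ref{thm:dual-finite-ab-gps}), since $\Omega_{j,k}$ and $\Lambda_{j,k}$ are quotients of these. With both groups now finite abelian, Pontryagin duality converts the injectivity of $M'_{j,k}$ into the surjectivity of $M_{j,k}$, completing the isomorphism. The main obstacle will be the injectivity of $M_{j,k}$: it requires transferring the irreducibility of $\psi_{j,k}$ on the full space $P_{j,k}$ into a centralizer statement on the single multiplicity factor $L_{j,k}$, which is where Proposition~\ref{prop:I_G-and-I_H} and Schur's lemma must be combined carefully.
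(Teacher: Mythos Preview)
Your proposal is correct and follows essentially the same argument as the paper: define the commutator pairing, use the irreducibility of $\psi_{j,k}$ (Schur's lemma) to prove injectivity of $M_{j,k}$, deduce abelianness, and then apply Pontryagin duality to upgrade the symmetric injectivity of $M'_{j,k}$ to surjectivity of $M_{j,k}$. The only cosmetic difference is that the paper carries out the argument on $V=B\otimes E\otimes L$ with the groups $\rho_{j,k}(I_G)$ and $\psi_{j,k}(I_H)$ (so the kernel computation reads directly as $Z_{GL(V)}(\psi_{j,k}(I_H))\cap\rho_{j,k}(I_G)=GL(B)$), whereas you work on the factor $L_{j,k}$ with $(I_G)_d\vert_{L_{j,k}}$ and $(I_H)_d\vert_{L_{j,k}}$ and then transfer the Schur conclusion from $P_{j,k}=E\otimes L_{j,k}$ down to $L_{j,k}$ via Eq.~(\ref{eq:rho_jk(I_G)}); both are equivalent.
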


\begin{proof}
We follow the argument from the proof of Theorem \ref{thm:dual-finite-ab-gps}. For notational convenience, set $V := B \otimes E \otimes L$. By Lemma \ref{lem:roots-of-unity}, our usual map 
$$\mu \colon G \times H \rightarrow \{ (\dim U)\text{-th roots of unity} \}$$ 
restricts to a map 
$$\mu \colon \rho_{j,k} (I_G) \times \psi_{j,k}(I_H) \rightarrow \{ (\dim V)\text{-th roots of unity} \}.$$
As in the proof of Theorem \ref{thm:dual-finite-ab-gps}, we see that this map further descends to a map
$$\mu \colon \Omega_{j,k} \times \Lambda_{j,k} \rightarrow \{ (\dim V)\text{-th roots of unity} \}$$ 
arising from a group homomorphism 
\begin{align*}
M \colon \Omega_{j,k} & \rightarrow \hat{\Lambda}_{j,k} \\
\omega & \mapsto \mu (\omega , \cdot)
\end{align*}
We will show that $M$ is in fact an isomorphism. Towards proving injectivity, define $\Omega_{j,k}' := \ker M$. Then $\Omega_{j,k} '$ corresponds to the following subgroup of $\rho_{j,k} (I_G)$:
$$\rho_{j,k}(I_G)' := \{ x \in \rho_{j,k} (I_G) \, : \, xyx^{-1} = y \text{ for all } y \in \psi_{j,k}(I_H) \}.$$
But by the irreducibility of $\psi_{j,k}$, we see that $\rho_{j,k}(I_G)' = GL(B) = (\rho_{j,k}(I_G))^{\circ}$. Therefore, $\ker M = \Omega_{j,k} '$ is trivial, meaning $M$ is injective. Since $\hat{\Lambda}_{j,k}$ is abelian, this gives that $\Omega_{j,k}$ is abelian as well. Switching the roles of $\Omega_{j,k}$ and $\Lambda_{j,k}$ in this argument gives that
\begin{align*}
M' \colon \Lambda_{j,k} & \rightarrow \hat{\Omega}_{j,k} \\
\lambda & \mapsto \mu (\cdot , \lambda)
\end{align*} 
is injective and that $\Lambda_{j,k}$ is abelian. 

Finally, since the dual group functor is a contravariant exact functor for locally compact abelian groups, the injectivity of $M'$ implies the surjectivity of $M$, completing the proof.
\end{proof}

Moving forward, we will identify $\Lambda_{j,k}$ with $\hat{\Omega}_{j,k}$ and will no longer use ``$\Lambda_{j,k}$."

\subsection{Showing that $(\rho_{j,k}(I_G)/\mathbb{C}^{\times}, \psi_{j,k}(I_H)/\mathbb{C}^{\times} )$ is a dual pair in $PGL(B \otimes E \otimes L)$  } \label{subsec:PGL(BxExL)-dual-pair}

We will now use Lemma \ref{lem:dual-inertia-comp} and will again apply the argument from the proof of Theorem \ref{thm:dual-finite-ab-gps} to prove that $(\rho_{j,k}(I_G)/\mathbb{C}^{\times}, \psi_{j,k}(I_H)/\mathbb{C}^{\times})$ is a dual pair in $PGL(B \otimes E \otimes L)$:

\begin{prop} \label{prop:PGL(Ujk)-dps}
For all $j \in \mathcal{J}$ and $k \in \mathcal{K}$, $(\rho_{j,k}(I_G)/\mathbb{C}^{\times}, \psi_{j,k} (I_H)/\mathbb{C}^{\times})$ is a dual pair in $PGL(B \otimes E \otimes L)$.
\end{prop}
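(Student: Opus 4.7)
The plan is to prove the two centralizer identities
\[Z_{PGL(B\otimes E\otimes L)}(p(\rho_{j,k}(I_G))) = p(\psi_{j,k}(I_H)) \quad\text{and}\quad Z_{PGL(B\otimes E\otimes L)}(p(\psi_{j,k}(I_H))) = p(\rho_{j,k}(I_G)).\]
By the $G\leftrightarrow H$ symmetry it suffices to establish the first. Throughout, I view $\rho_{j,k}(I_G) = GL(B)\cdot\tilde{\Omega}_{j,k}$ inside $GL(B\otimes E\otimes L_{j,k})$ with $GL(B)$ acting on the $B$-factor and $\tilde{\Omega}_{j,k} := (I_G)_d\vert_{L_{j,k}}$ on the $L_{j,k}$-factor (both trivially on $E$), and similarly $\psi_{j,k}(I_H) = GL(E)\cdot\tilde{\Lambda}_{j,k}$ with $\tilde{\Lambda}_{j,k} := (I_H)_d\vert_{L_{j,k}}$ acting on $L_{j,k}$ and $GL(E)$ on $E$ (both trivially on $B$). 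Note that both $\tilde{\Omega}_{j,k}$ and $\tilde{\Lambda}_{j,k}$ contain $\mathbb{C}^\times$ as a central subgroup, and their images in $PGL(L_{j,k})$ are $\Omega_{j,k}$ and $\Lambda_{j,k}$, which are dual finite abelian groups by Lemma \ref{lem:dual-inertia-comp}.

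For the forward containment, write $x = x_Bx_L\in\rho_{j,k}(I_G)$ and $y = y_Ey_L\in\psi_{j,k}(I_H)$. All pairs among $\{x_B,x_L,y_E,y_L\}$ except $(x_L,y_L)$ act on disjoint tensor factors and hence commute, so a direct expansion gives $xyx^{-1}y^{-1} = \text{id}_B\otimes\text{id}_E\otimes[x_L,y_L]$. By Proposition \ref{prop:I_G-and-I_H}, $x_L$ and $y_L$ are $(j,k)$-block restrictions of elements of $G$ and $H$ whose commutator lies in $\mathbb{C}^\times\cdot\text{id}_U$; hence $[x_L,y_L]\in\mathbb{C}^\times$, so $p(x)$ and $p(y)$ commute.

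For the reverse containment, take $z\in GL(B\otimes E\otimes L_{j,k})$ with $p(z)$ centralizing $p(\rho_{j,k}(I_G))$. The assignment $g\mapsto zgz^{-1}g^{-1}$ is a continuous homomorphism $GL(B)\to\mathbb{C}^\times$ landing in the $(\dim U)$-th roots of unity (Lemma \ref{lem:roots-of-unity}); since $GL(B)$ is connected, this map is constant, hence trivial. So $z$ exactly commutes with $GL(B)$, forcing $z\in Z_{GL(B\otimes E\otimes L_{j,k})}(GL(B)) = \text{id}_B\otimes GL(E\otimes L_{j,k})$, and I write $z=\text{id}_B\otimes z'$. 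Next, $\omega\mapsto z'(\text{id}_E\otimes\omega){z'}^{-1}(\text{id}_E\otimes\omega)^{-1}$ is a homomorphism $\tilde{\Omega}_{j,k}\to\mathbb{C}^\times$ trivial on the central $\mathbb{C}^\times$, hence descends to a character $\chi\in\hat{\Omega}_{j,k}$. By Lemma \ref{lem:dual-inertia-comp}, $\chi = \mu(\cdot,\lambda)$ for some $\lambda\in\Lambda_{j,k}$; choosing any lift $\tilde{\lambda}\in\tilde{\Lambda}_{j,k}$, conjugation by $\text{id}_E\otimes\tilde{\lambda}$ realizes exactly the character $\chi$ on $\tilde{\Omega}_{j,k}$. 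Hence $z'\cdot(\text{id}_E\otimes\tilde{\lambda})^{-1}$ commutes exactly with $\text{id}_E\otimes\tilde{\Omega}_{j,k}$, and since the irreducibility of $\rho_{j,k}$ on $B\otimes L_{j,k}$ forces $\tilde{\Omega}_{j,k}$ to act irreducibly on $L_{j,k}$, Schur's lemma gives $z'\cdot(\text{id}_E\otimes\tilde{\lambda})^{-1}\in GL(E)\otimes\text{id}_{L_{j,k}}$. Therefore $z'\in GL(E)\cdot\tilde{\Lambda}_{j,k}=\psi_{j,k}(I_H)$, so $p(z)\in p(\psi_{j,k}(I_H))$.

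The main obstacle is the careful $\mathbb{C}^\times$-bookkeeping in the two Schur-type reductions, especially invoking the duality $\Lambda_{j,k}\simeq\hat{\Omega}_{j,k}$ from Lemma \ref{lem:dual-inertia-comp} at exactly the right moment to trade the character $\chi$ produced by the first Schur step for a group element $\tilde{\lambda}\in\tilde{\Lambda}_{j,k}$ whose conjugation undoes $\chi$ on $\tilde{\Omega}_{j,k}$. The degenerate case $\dim B = 1$ (where $GL(B) = \mathbb{C}^\times$) is handled identically with the first Schur step vacuous, and the $G\leftrightarrow H$ symmetry of the setup delivers the second centralizer identity.
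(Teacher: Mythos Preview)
Your proof is correct and shares the same skeleton as the paper's---both reduce to $z\in\text{id}_B\otimes GL(E\otimes L)$ via connectedness of $GL(B)$, and both invoke the duality $\Lambda_{j,k}\simeq\hat{\Omega}_{j,k}$ from Lemma~\ref{lem:dual-inertia-comp}---but the final step is organized differently. The paper sets $G_{j,k}:=p^{-1}(Z_{PGL(V)}(\psi_{j,k}(I_H)/\mathbb{C}^\times))$, shows $(G_{j,k})^\circ=GL(B)$, and then proves that the induced commutator pairing gives an isomorphism $G_{j,k}/GL(B)\xrightarrow{\sim}\hat{\Lambda}_{j,k}$; combining this with $\Omega_{j,k}\simeq\hat{\Lambda}_{j,k}$ from Lemma~\ref{lem:dual-inertia-comp} yields $\Omega_{j,k}=G_{j,k}/GL(B)$ by a cardinality comparison, hence $\rho_{j,k}(I_G)=G_{j,k}$. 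You instead work element by element: after extracting the character $\chi\in\hat{\Omega}_{j,k}$ from $z'$, you use Lemma~\ref{lem:dual-inertia-comp} to realize $\chi$ by some $\tilde{\lambda}\in\tilde{\Lambda}_{j,k}$, and then appeal to the irreducibility of $\tilde{\Omega}_{j,k}$ on $L_{j,k}$ (deduced from the irreducibility of $\rho_{j,k}$ on $B\otimes L_{j,k}$ together with \eqref{eq:rho_jk(I_G)}) to apply Schur and land in $GL(E)\cdot\tilde{\Lambda}_{j,k}$. Your route is more constructive and avoids the component-group counting; the paper's route is a bit slicker in that it never needs to isolate the action on $L_{j,k}$ alone, using instead irreducibility of the full $\rho_{j,k}$ only once (to identify $Z_{GL(V)}(\rho_{j,k}(I_G))=GL(E)$ when checking injectivity of the dual map). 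Both are equally valid.
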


\begin{proof}
For notational convenience, set $V := B \otimes E \otimes L$. Let $G_{j,k}$ denote the preimage in $GL(V)$ of $Z_{PGL(V)}(\psi_{j,k}(I_H)/\mathbb{C}^{\times})$. The containment $\rho_{j,k}(I_G)/\mathbb{C}^{\times} \subseteq Z_{PGL(V)} ( \psi_{j,k} (I_H)/\mathbb{C}^{\times} ) $ (i.e.,~$\rho_{j,k}(I_G) \subseteq G_{j,k}$) is clear. Towards containment in the other direction, note that we have a map
\begin{center}
\begin{tabular}{c l l}
$\nu \colon G_{j,k} \times \psi_{j,k}(I_H)$ & $\rightarrow$ & $\{ (\dim V)\text{-th roots of unity} \}$ \\
$(x,y)$ & $\mapsto$ & $xyx^{-1}y^{-1}$
\end{tabular}
\end{center}
Now, since $\nu$ is constant on connected components, we see that 
$$(G_{j,k})^{\circ} \subseteq \{ x \in G_{j,k} \, : \, xyx^{-1} = y \text{ for all } y \in \psi_{j,k}(I_H) \} = GL(B).$$
On the other hand, since $\rho_{j,k}(I_G) \subseteq G_{j,k}$, it is clear that $GL(B) \subseteq (G_{j,k})^{\circ}$. In this way, we see that $(G_{j,k})^{\circ} = GL(B)$. Define $\Omega_{j,k} ' := G_{j,k}/(G_{j,k})^{\circ} = G_{j,k}/GL(B)$. Then we see that $\nu$ descends to a map 
$$\nu \colon \Omega_{j,k}' \times \Lambda_{j,k}  \rightarrow \{ (\dim V)\text{-th roots of unity} \}.$$ 
By the same reasoning as in the proofs of Theorem \ref{thm:dual-finite-ab-gps} and Lemma \ref{lem:dual-inertia-comp}, we see that $\nu$ arises from a group homomorphism
\begin{align*}
N \colon \Omega_{j,k}' & \rightarrow \hat{\Lambda}_{j,k} \\
\omega'  & \mapsto \nu (\omega' , \cdot)
\end{align*} 
We will show that $N$ is in fact an isomorphism. To see that $N$ is injective, note that $\ker N$ corresponds to the identity component 
$$(G_{j,k})^{\circ} = \{ x \in G_{j,k} \, : \, xyx^{-1} = y \text{ for all } y \in \psi_{j,k}(I_H) \} = GL(B)$$
of $G_{j,k}$. For surjectivity, it suffices to show that the following homomorphism is injective:
\begin{align*}
N' \colon \Lambda_{j,k} & \rightarrow \hat{\Omega'}_{j,k} \\
\lambda & \mapsto \nu (\cdot , \lambda)
\end{align*}
To this end, note that $\ker N'$ corresponds to the following subgroup of $\psi_{j,k}(I_H)$:
$$\{ y \in \psi_{j,k}(I_H) \, : \, xyx^{-1} = y \text{ for all } x \in G_{j,k} \} = Z_{GL(V)}(G_{j,k}) \cap \psi_{j,k}(I_H).$$
Here, $Z_{GL(V)} (G_{j,k}) \subseteq Z_{GL(V)} (\rho_{j,k}(I_G)) = GL(E)$. Moreover, since $Z_{PGL(V)} ( \psi_{j,k} (I_H)/\mathbb{C}^{\times} ) \subseteq Z_{PGL(V)} ( GL(E)/\mathbb{C}^{\times} )$, we see that $G_{j,k} \subseteq GL(B \otimes L)$, and hence that $Z_{GL(V)} (G_{j,k}) \supseteq Z_{GL(V)} (B \otimes L) = GL(E)$. In this way, we see that $\ker N'$ corresponds to 
$$ Z_{GL(V)} (G_{j,k})\cap \psi_{j,k}(I_H) = GL(E),$$
and hence that $N'$ is injective. By the same argument as in the proof of Theorem \ref{thm:dual-finite-ab-gps}, this shows that $N$ is surjective, and hence an isomorphism. 

Finally, applying Lemma \ref{lem:dual-inertia-comp}, we see that  
$$\rho_{j,k}(I_G)/GL(B) = \Omega_{j,k} \simeq \hat{\Lambda}_{j,k} \simeq \Omega_{j,k}' = G_{j,k}/ GL(B).$$
From this, it is clear that $\rho_{j,k}(I_G) = G_{j,k}$, as desired. Applying the same argument with the roles of $\rho_{j,k}(I_G)$ and $\psi_{j,k}(I_H)$ reversed completes the proof.
\end{proof}

\subsection{Showing that $\Omega_{j,k} = \Omega_{j',k'} =: \Omega$ for all $j,j' \in \mathcal{J}$ and $k,k' \in \mathcal{K}$} \label{subsec:omegas-are-all-the-same}

\begin{lemma} \label{lem:omegas-are-all-the-same}
For all $j \in \mathcal{J}$ and $k \in \mathcal{K}$, we have natural isomorphisms $\Omega_{j,k} \simeq \Omega_{0,0}$ and $\hat{\Omega}_{j,k} \simeq \hat{\Omega}_{0,0}$. 
\end{lemma}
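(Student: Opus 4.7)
The plan is to use the character--twist identities from Corollary \ref{cor:inertia-reps-jk} to compare the various $L_{j,k}$--projections of $(I_G)_d$ and $(I_H)_d$, first varying $j$ with $k$ fixed (via the $G$--side identity), then varying $k$ with $j$ fixed (via the $H$--side identity combined with the duality of Lemma \ref{lem:dual-inertia-comp}). The key observation enabling everything is that the characters $\mu(\cdot,\delta_j)^{-1}$ and $\mu(\gamma_k,\cdot)$ are trivial on $G^{\circ}$ and $H^{\circ}$ respectively, so that the $G^{\circ}$-isotypic decomposition of $\prescript{\delta_j}{}{F}$ agrees canonically with that of $F$, and similarly for $\prescript{\gamma_k}{}{A}$ and $A$. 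Under these identifications, $L_{j,k}=L_{0,k}$ and $L_{j,k}=L_{j,0}$ as subspaces, so that all the groups $\Omega_{j,k}$ and $\Lambda_{j,k}$ can be compared inside the same $PGL(L)$.

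First I would show $\Omega_{j,k}=\Omega_{0,k}$. By Corollary \ref{cor:inertia-reps-jk}, for every $g_I\in I_G$ we have
$$\rho_{j,k}(g_I)=\mu(g_I,\delta_j)^{-1}\,\rho_{0,k}(g_I)$$
as elements of $GL(B\otimes L)$, with scalar twist $\mu(g_I,\delta_j)^{-1}\in\mathbb{C}^{\times}$. Hence $\rho_{j,k}(I_G)=\rho_{0,k}(I_G)\cdot\mathbb{C}^{\times}=\rho_{0,k}(I_G)$ (as $\mathbb{C}^{\times}\subseteq\rho_{0,k}(I_G)$), and restricting to the multiplicity space $L$ and modding out by $\mathbb{C}^{\times}$ using the presentation from \eqref{eq:rho_jk(I_G)} gives $\Omega_{j,k}=\Omega_{0,k}$ as subgroups of $PGL(L)$.

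Second, I would run the symmetric argument on the $H$--side: the identity $\psi_{j,k}(h)=\mu(\gamma_k,h)\,\psi_{j,0}(h)$ combined with the triviality of $\mu(\gamma_k,\cdot)$ on $H^{\circ}$ shows in exactly the same way that $\Lambda_{j,k}=\Lambda_{j,0}$ inside $PGL(L)$. Invoking the duality $\Omega_{j,k}\simeq\hat{\Lambda}_{j,k}$ from Lemma \ref{lem:dual-inertia-comp}, this yields
$$\Omega_{j,k}\;\simeq\;\hat{\Lambda}_{j,k}\;=\;\hat{\Lambda}_{j,0}\;\simeq\;\Omega_{j,0}.$$
Chaining the two steps gives $\Omega_{j,k}=\Omega_{0,k}\simeq\Omega_{0,0}$, and dualizing produces $\hat{\Omega}_{j,k}\simeq\hat{\Omega}_{0,0}$, as required.

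The only nontrivial point to pin down carefully is the initial identification of $L_{j,k}$ with $L_{0,k}$ (and with $L_{j,0}$) as subspaces, since without that the comparison of the groups $(I_G)_d|_{L_{j,k}}$ for varying $(j,k)$ is not literally meaningful; but this identification is forced by the fact that the characters producing the twists restrict trivially to the identity components, so that the relevant isotypic decompositions (and their multiplicity spaces) genuinely coincide. Once that is in place, everything else is a routine manipulation of scalar factors that vanish modulo $\mathbb{C}^{\times}$.
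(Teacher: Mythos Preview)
Your proposal is correct and follows essentially the same route as the paper's proof: both use Corollary~\ref{cor:inertia-reps-jk} on the $G$-side to identify $\Omega_{j,k}$ with $\Omega_{0,k}$ (the scalar $\mu(g_I,\delta_j)^{-1}$ vanishes modulo $\mathbb{C}^{\times}$), then invoke the symmetric $H$-side identity together with the duality $\Lambda_{j,k}\simeq\hat{\Omega}_{j,k}$ from Lemma~\ref{lem:dual-inertia-comp} to handle variation in $k$. Your chaining $\Omega_{j,k}=\Omega_{0,k}\simeq\Omega_{0,0}$ is in fact slightly more direct than the paper's, which passes through the double dual $\Omega_{0,k}\simeq\hat{\hat{\Omega}}_{0,k}\simeq\hat{\hat{\Omega}}_{0,0}$; but the underlying mechanism is identical, and your remark about the identification of the various $L_{j,k}$ is exactly the point the paper records at the start of Section~\ref{sec:reps-of-I_G-and-I_H}.
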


\begin{proof}
By Corollary \ref{cor:inertia-reps-jk}, we have that $\rho_{j,k}(g_I) \cdot GL(B) = \mu (g_I, \delta_j)^{-1} \cdot \rho_{0,k}(g_I) \cdot GL(B) = \rho_{0,k}(g_I) \cdot GL(B)$ for all $g_I \in I_G$, $j \in \mathcal{J}$, and $k \in \mathcal{K}$. Therefore, we obtain a natural isomorphism $\Omega_{j,k} \xrightarrow{\sim} \Omega_{0,k}$ via $\rho_{j,k}(g_I) \cdot GL(B) \mapsto \rho_{0,k}(g_I) \cdot GL(B)$. Applying Lemma \ref{lem:dual-inertia-comp} and this same argument, we see that there is also a natural isomorphism $\hat{\Omega}_{j,k} \simeq \hat{\Omega}_{j,0}$. Using that there is a natural isomorphism between a group and its double dual, we further get that there are natural isomorphisms
$$\Omega_{j,k} \simeq \Omega_{0,k} \simeq \hat{ \hat{\Omega}}_{0,k}  \simeq \hat{ \hat{\Omega}}_{0,0}  = \Omega_{0,0}.$$
Similarly, $\hat{\Omega}_{j,k} \simeq \hat{\Omega}_{0,0}$, completing the proof. 
\end{proof}

Moving forward, we identify $\Omega_{j,k}$ with $\Omega := \Omega_{0,0}$ for all $j \in \mathcal{J}$ and $k \in \mathcal{K}$.

\subsection{Showing that $(\Omega, \hat{\Omega})$ is a dual pair in $PGL(L)$, and that $\Omega = \hat{\Omega} = \mathcal{L} \times \hat{\mathcal{L}}$ for a finite abelian group $\mathcal{L}$} \label{subsec:omega-omega-hat-dual-pair}

\begin{cor} \label{cor:reducing-to-PGL(L)-dp}
$( \Omega, \hat{\Omega} )$ is a dual pair in $PGL(L)$.
\end{cor}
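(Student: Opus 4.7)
The plan is to deduce the $PGL(L)$ dual pair structure from the ambient $PGL(B \otimes E \otimes L)$ dual pair structure supplied by Proposition \ref{prop:PGL(Ujk)-dps} at the indices $j = k = 0$. Using (\ref{eq:rho_jk(I_G)}) and passing to the quotient by $\mathbb{C}^{\times}$, I have
$$\rho_{0,0}(I_G)/\mathbb{C}^{\times} = PGL(B) \cdot \Omega \hspace{.25cm} \text{ and } \hspace{.25cm} \psi_{0,0}(I_H)/\mathbb{C}^{\times} = PGL(E) \cdot \hat{\Omega}$$
inside $PGL(B \otimes E \otimes L)$, where $\Omega$ and $\hat{\Omega}$ are viewed as subgroups of $PGL(L)$ and embedded into $PGL(B \otimes E \otimes L)$ via the natural map $\ell \mapsto \text{id}_B \otimes \text{id}_E \otimes \ell$.

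The containment $\hat{\Omega} \subseteq Z_{PGL(L)}(\Omega)$ is immediate from Proposition \ref{prop:PGL(Ujk)-dps}, since the two sides lie inside the centralizing dual pair members in $PGL(B \otimes E \otimes L)$ and this centralization descends to the subgroup $PGL(L)$. For the reverse containment, I would take $\bar{y} \in Z_{PGL(L)}(\Omega)$ and view it inside $PGL(B \otimes E \otimes L)$. Since an element of $PGL(L)$ acts as the identity on the $B$- and $E$-factors, $\bar{y}$ automatically commutes with all of $PGL(B)$, and by hypothesis it commutes with $\Omega$; hence $\bar{y}$ centralizes $\rho_{0,0}(I_G)/\mathbb{C}^{\times} = PGL(B) \cdot \Omega$. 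By Proposition \ref{prop:PGL(Ujk)-dps}, it follows that $\bar{y} \in \psi_{0,0}(I_H)/\mathbb{C}^{\times}$. The argument will then conclude by verifying that $PGL(L) \cap \psi_{0,0}(I_H)/\mathbb{C}^{\times} = \hat{\Omega}$: a class $[e \otimes f]$ with $e \in GL(E)$ and $f \in (I_H)_d\vert_{L_{0,0}}$ represents an element of $PGL(L)$ precisely when $e \otimes f$ is a scalar multiple of $\text{id}_E \otimes \ell$ for some $\ell \in GL(L)$, which forces $e \in \mathbb{C}^{\times}$ and hence $[e \otimes f] = [f] \in \hat{\Omega}$. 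This yields $Z_{PGL(L)}(\Omega) = \hat{\Omega}$, and swapping the roles of $G$ and $H$ gives $Z_{PGL(L)}(\hat{\Omega}) = \Omega$.

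The main technical subtlety lies in carefully distinguishing the two nested $\mathbb{C}^{\times}$ quotients at play (the one built into the $\times_{\mathbb{C}^{\times}}$ product that defines $\psi_{0,0}(I_H)$, and the projectivization to $PGL$), and in making the intersection computation $PGL(L) \cap \psi_{0,0}(I_H)/\mathbb{C}^{\times} = \hat{\Omega}$ rigorous by exploiting the uniqueness, up to scalar, of a decomposition of the form $\text{id}_E \otimes \ell$ on $E \otimes L$.
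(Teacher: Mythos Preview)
Your argument is correct and is exactly the unpacking of the paper's one-line proof, which simply asserts that the result ``follows immediately from (\ref{eq:rho_jk(I_G)}) and Proposition \ref{prop:PGL(Ujk)-dps}.'' You use the same two inputs in the same way: the product decomposition (\ref{eq:rho_jk(I_G)}) to identify $\rho_{0,0}(I_G)/\mathbb{C}^{\times}$ and $\psi_{0,0}(I_H)/\mathbb{C}^{\times}$ with $PGL(B)\cdot\Omega$ and $PGL(E)\cdot\hat{\Omega}$, and then Proposition \ref{prop:PGL(Ujk)-dps} to transfer the centralizer relation in $PGL(B\otimes E\otimes L)$ down to $PGL(L)$ via the intersection $PGL(L)\cap\bigl(PGL(E)\cdot\hat{\Omega}\bigr)=\hat{\Omega}$.
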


\begin{proof}
This follows immediately from (\ref{eq:rho_jk(I_G)}) and Proposition \ref{prop:PGL(Ujk)-dps}.
\end{proof}

\newpage

\begin{prop} \label{prop:Omega=Omega-hat}
$\Omega = \hat{\Omega}$.
\end{prop}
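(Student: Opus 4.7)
The plan is to observe that this proposition follows almost immediately from the two results just proved in Subsections \ref{subsec:Omega-Lambda-duality}--\ref{subsec:omega-omega-hat-dual-pair}. By Lemma \ref{lem:dual-inertia-comp}, $\Omega$ is a finite abelian group and $\hat{\Omega}$ is (identified with) its Pontryagin dual; by Corollary \ref{cor:reducing-to-PGL(L)-dp}, $(\Omega, \hat{\Omega})$ is a dual pair in $PGL(L)$, so that $\hat{\Omega} = Z_{PGL(L)}(\Omega)$ as subgroups of $PGL(L)$. The whole proof will amount to combining these two facts with a cardinality count.

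First I would observe that since $\Omega$ is abelian, every element of $\Omega$ commutes with every other element of $\Omega$ inside $PGL(L)$. Thus $\Omega$ is contained in its own centralizer in $PGL(L)$, giving the inclusion
$$\Omega \;\subseteq\; Z_{PGL(L)}(\Omega) \;=\; \hat{\Omega}$$
as subgroups of $PGL(L)$.

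Next I would invoke the classical fact that a finite abelian group and its Pontryagin dual have the same order, so $\vert \Omega \vert = \vert \hat{\Omega} \vert$. Combined with the inclusion $\Omega \subseteq \hat{\Omega}$ of finite subgroups of $PGL(L)$, this forces the equality $\Omega = \hat{\Omega}$, as desired.

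There is no genuine obstacle here: all of the substantive content (establishing abelianness of $\Omega$, building the commutator pairing, and verifying the mutual centralizer condition inside $PGL(L)$) has already been handled in Lemma \ref{lem:dual-inertia-comp} and Corollary \ref{cor:reducing-to-PGL(L)-dp}. I expect the proof to be only a few lines long, and to set up the stronger structural statement one presumably wants next, namely that $\Omega$ carries a non-degenerate alternating pairing and so must be of the form $\mathcal{L} \times \hat{\mathcal{L}}$ for some finite abelian group $\mathcal{L}$.
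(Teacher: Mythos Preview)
Your proof is correct and close in spirit to the paper's, with two small but genuine differences worth noting. For the inclusion $\Omega \subseteq \hat{\Omega}$, you simply cite the abelianness of $\Omega$ already established in Lemma \ref{lem:dual-inertia-comp}; the paper instead re-derives this concretely via the cocycle identity $\wtilde{\gamma_1}\,\wtilde{\gamma_2} = s_G(\gamma_1,\gamma_2)\,s_G(\gamma_2,\gamma_1)^{-1}\,\wtilde{\gamma_2}\,\wtilde{\gamma_1}$ (valid because $\Gamma$ is abelian and $s_G$ takes values in $Z(G^\circ)$), and then restricts to $L_{j,k}$. For the reverse inclusion, the paper repeats the symmetric cocycle argument on the $H$-side to get $\hat{\Omega} \subseteq \Omega$ directly, whereas you short-circuit this with the cardinality count $|\Omega| = |\hat{\Omega}|$. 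Your route is more economical; the paper's has the minor advantage of making the commutator of lifts explicitly visible in cocycle terms, but that is not actually needed downstream since $\mu_\Omega$ in Proposition \ref{prop:omega=LxL} is defined independently.
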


\begin{proof}
For $\gamma_1, \gamma_2 \in I_G/G^{\circ}$, we have that
$$\wtilde{\gamma_1} \wtilde{\gamma_2} = s_G(\gamma_1, \gamma_2) \wtilde{\gamma_1 \gamma_2} = s_G(\gamma_1, \gamma_2) s_G(\gamma_2, \gamma_1)^{-1} \wtilde{\gamma_2} \wtilde{\gamma_1},$$
where $s_G$ takes values in $Z(G^{\circ})$. Restricting to $L_{j,k}$ for any $j \in \mathcal{J}$ and $k \in \mathcal{K}$ shows that
$$\Omega = \Omega_{j,k} \subseteq Z_{PGL(L_{j,k})} (\Omega_{j,k}) = \hat{\Omega}_{j,k} = \hat{\Omega},$$
where we have used Lemma \ref{lem:omegas-are-all-the-same} and Corollary \ref{cor:reducing-to-PGL(L)-dp}. Repeating this process with $\hat{\Omega}$ in place of $\Omega$, we get containment in the other direction. It follows that $\Omega = \hat{\Omega}$, as desired.
\end{proof}

\begin{prop} \label{prop:omega=LxL}
$\Omega$ can be written as $\Omega = \mathcal{L} \times \hat{\mathcal{L}}$ for some finite abelian group $\mathcal{L}$.
\end{prop}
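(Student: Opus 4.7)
The plan is to extract from the earlier machinery a non-degenerate alternating bilinear pairing on $\Omega$ and then apply the standard ``hyperbolic decomposition'' for such symplectic finite abelian groups. First I would observe that the commutator map
$$\mu \colon \Omega \times \Omega \to \mathbb{C}^{\times}, \qquad (\omega_1, \omega_2) \mapsto \tilde{\omega}_1 \tilde{\omega}_2 \tilde{\omega}_1^{-1} \tilde{\omega}_2^{-1}$$
(well-defined on $\Omega$ because $\Omega = \hat{\Omega}$ as subgroups of $PGL(L)$ by Proposition \ref{prop:Omega=Omega-hat}) is $\mathbb{Z}$-bilinear, is non-degenerate (the associated map $M \colon \Omega \to \hat{\Omega}$ is an isomorphism by Lemma \ref{lem:dual-inertia-comp} under the identification of Proposition \ref{prop:Omega=Omega-hat}), and is alternating in the strong sense $\mu(\omega, \omega) = 1$, since any element commutes with itself in $GL(L)$.

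Next I would reduce to the prime-power case. Decomposing $\Omega = \bigoplus_p \Omega_p$ into Sylow subgroups, any $x \in \Omega_p$ and $y \in \Omega_q$ with $p \neq q$ satisfy $\mu(x,y)^{|x|} = 1 = \mu(x,y)^{|y|}$, forcing $\mu(x,y) = 1$. Thus distinct Sylow subgroups are $\mu$-orthogonal, so $\mu$ restricts to a non-degenerate alternating pairing on each $\Omega_p$, and it suffices to prove the proposition for $\Omega$ a $p$-group.

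For $\Omega$ a $p$-group, I would construct a ``symplectic plane'' by taking $x \in \Omega$ of maximal order $p^k$ and, using non-degeneracy, choosing $y \in \Omega$ with $\mu(x,y)$ a primitive $p^k$-th root of unity. Since $\mu(x, y^{p^{k-1}}) = \mu(x,y)^{p^{k-1}} \neq 1$, $y$ has order exactly $p^k$; since $y \in \langle x \rangle$ would force $\mu(x,y) = 1$, we have $\langle x, y \rangle \simeq \mathbb{Z}/p^k \oplus \mathbb{Z}/p^k$, and $\mu$ is non-degenerate on this subgroup (a direct check: $\mu(ax+by, x) = \mu(x,y)^{-b}$ and $\mu(ax+by,y) = \mu(x,y)^a$ together force $a \equiv b \equiv 0 \pmod{p^k}$).

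Finally, I would perform the standard orthogonal decomposition and induct. The restriction of $\mu$ to $\langle x, y \rangle$ being non-degenerate implies $\Omega = \langle x, y \rangle \oplus \langle x, y \rangle^{\perp}$: the intersection is killed by non-degeneracy on $\langle x,y\rangle$, and for any $z \in \Omega$ the character $\mu(z, \cdot)\vert_{\langle x,y \rangle}$ equals $\mu(w, \cdot)\vert_{\langle x,y \rangle}$ for some $w \in \langle x, y \rangle$, giving $z - w \in \langle x, y \rangle^{\perp}$. The pairing remains non-degenerate on $\langle x, y \rangle^{\perp}$, so induction on $|\Omega|$ yields $\Omega \simeq \bigoplus_i \mathbb{Z}/p_i^{k_i} \oplus \mathbb{Z}/p_i^{k_i}$. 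Setting $\mathcal{L} := \bigoplus_i \mathbb{Z}/p_i^{k_i}$ (one factor of each hyperbolic pair) and using the canonical (non-canonical is fine here) isomorphism $\mathcal{L} \simeq \hat{\mathcal{L}}$ for a finite abelian group finishes the proof. The main technical step is the symplectic plane construction in the $p$-group case, where one must verify simultaneously that $y$ has maximal order and that $\langle x, y \rangle$ has full rank $2$; everything else is formal linear algebra over $\mathbb{Z}/p^k$.
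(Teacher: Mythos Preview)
Your proof is correct and follows essentially the same approach as the paper: both extract the commutator pairing $\mu_\Omega$ on $\Omega$, observe it is nondegenerate and alternating (via $\Omega=\hat\Omega$ and the dual-pair machinery), and then run the standard hyperbolic-plane/orthogonal-complement induction. The only organizational difference is that you first reduce to Sylow $p$-subgroups before picking an element of maximal order, whereas the paper works directly in $\Omega$ with an element of globally maximal order; neither step is essential to the other, and the resulting decomposition is the same.
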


\begin{proof}
For each $\omega \in \Omega$, pick some $\wideparen{\omega} \in GL(L)$ so that $p_L(\wideparen{\omega}) = \omega$. Then by Lemma \ref{lem:roots-of-unity}, Corollary \ref{cor:reducing-to-PGL(L)-dp}, and Proposition \ref{prop:Omega=Omega-hat}, we see that we have a non-degenerate symplectic form
\begin{align*}
\mu_{\Omega} \colon \Omega \times \Omega & \rightarrow \{ (\dim L)\text{-th roots of unity} \} \\
(\omega_1, \omega_2) & \mapsto \wideparen{\omega}_1  \wideparen{\omega}_2  \wideparen{\omega}_1^{-1} \wideparen{\omega}_2^{-1} 
\end{align*}

Let $\lambda$ be an element of maximal order (say $r$) in $\Omega$. Since $\mu_{\Omega}$ is nondegenerate, we see that there exists $\lambda' \in \Omega$ (with order $r$) such that $\wideparen{\lambda} \,  \wideparen{\lambda'} \, \wideparen{\lambda}^{-1} \, \wideparen{\lambda'}^{-1}$ equals a primitive $r$-th root of unity. 

We claim that $\mu_{\Omega} \vert_{\langle \lambda \rangle \times \langle \lambda' \rangle }$ is nondegenerate. To see this, suppose that $\lambda^{r_1} (\lambda')^{r_1'} \in \langle \lambda \rangle \times \langle \lambda ' \rangle$ is such that $\mu_{\Omega} ( \lambda^{r_1} (\lambda ' )^{r_1'}, \lambda^{r_2} (\lambda')^{r_2'} ) = 1$ for all $\lambda^{r_2} (\lambda ' )^{r_2'} \in \langle \lambda \rangle \times \langle \lambda ' \rangle $. Then we get that
\begin{align*}
\mu_{\Omega} ( \lambda^{r_1} (\lambda ' )^{r_1'}, \lambda^{r_2} (\lambda')^{r_2'} ) &= \wideparen{( \lambda^{r_1} (\lambda')^{r_1'} )} \wideparen{ ( \lambda^{r_2} (\lambda ' )^{r_2'} ) } ( \wideparen{\lambda^{r_1} (\lambda')^{r_1'} }  )^{-1} ( \wideparen{\lambda^{r_2} (\lambda')^{r_2'} }  )^{-1} \\
&= (\wideparen{\lambda})^{r_1} ( \wideparen{\lambda'} )^{r_1'} (\wideparen{\lambda})^{r_2} (\wideparen{\lambda'})^{r_2'} (\wideparen{\lambda'})^{-r_1'} (\wideparen{\lambda})^{-r_1} (\wideparen{\lambda'})^{-r_2'} (\wideparen{\lambda})^{-r_2} \\
&= \zeta^{r_1' r_2 - r_1r_2'} = 1
\end{align*} 
for all $0 \leq r_2, r_2' \leq r$, where $\zeta$ denotes a primitive $r$-th root of unity. This clearly forces $r_1 = r_1' = 0$, which proves that $\mu_{\Omega} \vert_{\langle \lambda \rangle \times \langle \lambda ' \rangle}$ is nondegenerate. Letting $( \langle \lambda \rangle \times \langle \lambda' \rangle )^{\perp}$ denote the orthogonal complement of $\langle \lambda \rangle \times \langle \lambda' \rangle$ with respect to $\mu_{\Omega}$, we therefore see that 
$$ \langle \lambda \rangle \times \langle \lambda' \rangle \cap (\langle \lambda \rangle \times \langle \lambda' \rangle)^{\perp} = \{ 1 \} .$$
Since $\Omega$ is generated by $\langle \lambda \rangle \times \langle \lambda ' \rangle$ and $(\langle \lambda \rangle \times \langle \lambda ' \rangle)^{\perp}$ (which can be seen by noting that $(\langle \lambda \rangle \times \langle \lambda ' \rangle)^{\perp}$ is the kernel of the map $\hat{\Omega} \rightarrow \hat{ ( \langle \lambda \rangle \times \langle \lambda ' \rangle )}$ defined by $\omega \mapsto \omega \vert_{\langle \lambda \rangle \times \langle \lambda ' \rangle }$), we furthermore get that
$$\Omega = \langle \lambda \rangle \times \langle \lambda ' \rangle \times (\langle \lambda \rangle \times \langle \lambda ' \rangle)^{\perp} = \langle \lambda \rangle \times \hat{\langle \lambda \rangle} \times (\langle \lambda \rangle \times \hat{\langle \lambda \rangle})^{\perp}.$$ 
Now, since $\mu_{\Omega}$ is nondegenerate on $\Omega$ and on $\langle \lambda \rangle \times \hat{\langle \lambda \rangle}$, we see that $\mu_{\Omega}$ is nondegenerate on $\Omega ' := ( \langle \lambda \rangle \times \hat{\langle \lambda \rangle} )^{\perp}$. We can therefore iterate this process (replacing $\Omega$ with $\Omega'$), which proves the result.
\end{proof}

\subsection{Understanding $(I_G)_d$ and $(I_H)_d$ explicitly} \label{subsec:(I_G)_d-and-(I_H)_d}

\begin{prop} \label{prop:(I_G)_d-and-(I_H)_d}
\hspace{1cm}
\begin{itemize}
\item $(I_G)_d =  p_L^{-1}(\Omega) \times_{\mathbb{C}^{\times}} (\mathbb{C}^{\times})^{\vert \mathcal{K} \vert} \times_{\mathbb{C}^{\times}} p_J^{-1}(\hat{\mathcal{J}}) $.
\item $(I_H)_d = p_L^{-1}(\Omega) \times_{\mathbb{C}^{\times}} (\mathbb{C}^{\times})^{\vert \mathcal{J} \vert} \times_{\mathbb{C}^{\times}} p_K^{-1}(\hat{\mathcal{K}})  $.
\end{itemize}
\end{prop}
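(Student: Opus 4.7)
The two claims are symmetric under swapping the roles of $(G, \Gamma, \mathcal{K}, \mathcal{J}, K, J)$ with $(H, \hat{\Gamma}, \mathcal{J}, \mathcal{K}, J, K)$, so my plan is to prove only the first. By Proposition \ref{prop:I_G-and-I_H}, $(I_G)_d \subseteq \prod_{(j,k) \in \mathcal{J} \times \mathcal{K}} GL(L_{j,k})$; for $g_I \in (I_G)_d$ write $T_{j,k} := g_I\vert_{L_{j,k}}$, and for $h_I \in (I_H)_d$ write $S_{j,k} := h_I\vert_{L_{j,k}}$. First I would use Corollary \ref{cor:Gamma=I_GxK} to refine the choice of coset representatives $\{\delta_j\}_{j \in \mathcal{J}}$ so that they form a subgroup of $\hat{\Gamma}$. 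Then Corollary \ref{cor:inertia-reps-jk}, combined with the fact that elements of $(I_G)_d$ act trivially on the $B$-factor, forces $T_{j,k} = \eta_{g_I}(j)\, T_{0,k}$, where $\eta_{g_I}(j) := \mu(g_I, \delta_j)^{-1}$ is now a character of $\mathcal{J}$. This packages the full $j$-dependence of $g_I$ as the element $\sigma_{\eta_{g_I}} \in p_J^{-1}(\hat{\mathcal{J}})$.

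The main obstacle is to show that the residual tuple $(T_{0,k})_{k \in \mathcal{K}}$ satisfies $[T_{0,k}] = [T_{0,0}]$ in $PGL(L)$ for every $k$, so that $T_{0,k} = z_k\, T$ for a single $T \in p_L^{-1}(\Omega)$ and scalars $(z_k)_k \in (\mathbb{C}^{\times})^{\vert \mathcal{K} \vert}$. For any $h_I \in (I_H)_d$, the mutual centralizer relation $g h g^{-1} h^{-1} = \mu(g, h)$, restricted to the summand $B \otimes E \otimes L_{0,k}$ of $U$, yields
$$T_{0,k}\, S_{0,k}\, T_{0,k}^{-1}\, S_{0,k}^{-1} = \mu(g_I, h_I)$$
for every $k$. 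Applying the $H$-analog of Corollary \ref{cor:inertia-reps-jk} gives $S_{0,k} = \mu(\gamma_k, h_I)\, S_{0,0}$, so $[S_{0,k}] = [S_{0,0}]$ in $PGL(L)$. Hence both $[T_{0,k}]$ and $[T_{0,0}]$, which lie in $\Omega$ by Lemma \ref{lem:omegas-are-all-the-same}, pair identically (via the commutator) with every element $[S_{0,0}] \in \hat{\Omega}$ as $h_I$ ranges over $(I_H)_d$. The non-degeneracy of the commutator pairing $\Omega \times \hat{\Omega} \to \mathbb{C}^{\times}$ established in Lemma \ref{lem:dual-inertia-comp} (and realized concretely in Proposition \ref{prop:omega=LxL}) then forces $[T_{0,k}] = [T_{0,0}]$.

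These two reductions combine to give $(I_G)_d \subseteq p_L^{-1}(\Omega) \times_{\mathbb{C}^{\times}} (\mathbb{C}^{\times})^{\vert \mathcal{K} \vert} \times_{\mathbb{C}^{\times}} p_J^{-1}(\hat{\mathcal{J}})$. For the reverse inclusion I would verify each factor: the identity component $(\mathbb{C}^{\times})^{\vert \mathcal{K} \vert} = Z(G^{\circ})$ is contained in $(I_G)_d$ by construction; for any $\sigma_{\eta} \in p_J^{-1}(\hat{\mathcal{J}})$ one finds a preimage of $\eta$ under the homomorphism $g_I G^{\circ} \mapsto \eta_{g_I}$, whose surjectivity follows from the cardinality equality $\vert I_G/G^{\circ} \vert = \vert \Omega \vert \cdot \vert \mathcal{J} \vert$ obtained by dimension-counting in Corollary \ref{cor:inertia-summary}; and any $T \in p_L^{-1}(\Omega)$ placed diagonally on every $L_{j,k}$ lifts from the kernel of $g_I G^{\circ} \mapsto \eta_{g_I}$, which by the above surjects onto $\Omega$ via $g_I \mapsto [T_{0,0}]$. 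The second statement then follows from the analogous argument.
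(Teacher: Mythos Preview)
Your forward inclusion is essentially the paper's argument: you use Corollary~\ref{cor:inertia-reps-jk} to peel off the $\hat{\mathcal{J}}$-factor and then the commutator relation together with the non-degeneracy from Lemma~\ref{lem:dual-inertia-comp} to show that $[T_{0,k}]=[T_{0,0}]$ in $\Omega$. That part is fine.

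The reverse inclusion, however, has a gap. Your argument hinges on the cardinality equality $|I_G/G^{\circ}|=|\Omega|\cdot|\mathcal{J}|$, which you attribute to ``dimension-counting in Corollary~\ref{cor:inertia-summary}.'' That corollary does give $|I_G/G^{\circ}|=|\mathcal{J}|\cdot(\dim L)^2$ (compare dimensions in $\Ind_{G^{\circ}}^{I_G}(\beta)=\bigoplus_{j}\rho_{j,0}\otimes L^*$), but to conclude your equality you still need $|\Omega|=(\dim L)^2$. That is precisely the content of Corollary~\ref{cor:|L|=dimL}, which in the paper is deduced \emph{from} the present proposition via Corollary~\ref{cor:I_G/G0=JxOmega}; invoking it here is circular. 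One can rescue your approach by appealing to the Stone--von~Neumann--type fact that a finite Heisenberg group over $\Omega$ with non-degenerate commutator form (established in the proof of Proposition~\ref{prop:omega=LxL}) has a unique irreducible representation with central character, of dimension $\sqrt{|\Omega|}$, applied to $p_L^{-1}(\Omega)$ acting on $L$; but that is a genuine additional input, not a consequence of Corollary~\ref{cor:inertia-summary}.

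The paper avoids this counting entirely. Having proved the forward inclusions for \emph{both} $(I_G)_d$ and $(I_H)_d$, it uses the latter (together with Propositions~\ref{prop:bijection-extensions} and~\ref{prop:G_d-and-H_d}) to bound $H$ by an explicit group, and then simply checks that every element of $p_L^{-1}(\Omega)\times_{\mathbb{C}^{\times}}(\mathbb{C}^{\times})^{|\mathcal{K}|}\times_{\mathbb{C}^{\times}}p_J^{-1}(\hat{\mathcal{J}})$ commutes up to scalar with that bounding group. This forces the target group into $p^{-1}(Z_{PGL(U)}(p(H)))=G$, and since it already sits in $GL(L)^{|\mathcal{J}|\cdot|\mathcal{K}|}$, it lands in $(I_G)_d$. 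This symmetric ``dual-inclusion'' trick is both shorter and sidesteps any need to know $|\Omega|$ in advance.
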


\begin{proof}
We first show that $(I_G)_d \subseteq p_L^{-1}(\Omega) \times_{\mathbb{C}^{\times}} (\mathbb{C}^{\times})^{\vert \mathcal{K} \vert} \times_{\mathbb{C}^{\times}} p_J^{-1}(\hat{\mathcal{J}})$. To this end, let $g \in (I_G)_d \subseteq GL(L)^{\vert \mathcal{J} \vert \cdot \vert \mathcal{K} \vert}$. Then for all $j \in \mathcal{J}$ and $k \in \mathcal{K}$, $\rho_{j,k}(g) \in GL(L)$. We claim that for any $k \in \mathcal{K}$, $\rho_{0,k}(g)$ and $\rho_{0,0}(g)$ have the same image in $\Omega \subset PGL(L)$. To see this, note that for any $h \in (I_H)_d$, we have 
\begin{align*}
\mu ( g, h ) &= \rho_{0,k} (g) \psi_{0,k}(h) \rho_{0,k}(g)^{-1} \psi_{0,k}(h)^{-1} \\
&= \rho_{0,k} (g) \mu (\gamma_k, h) \psi_{0,0}(h) \rho_{0,k} (g)^{-1} \psi_{0,0}(h)^{-1} \mu (\gamma_k, h)^{-1} \\
&= \rho_{0,k}(g) \psi_{0,0} (h) \rho_{0,k}(g)^{-1} \psi_{0,0} (h)^{-1},
\end{align*}
where we have used Corollary \ref{cor:inertia-reps-jk}. On the other hand,  we have 
$$ \mu (g,h) = \rho_{0,0}(g) \psi_{0,0}(h) \rho_{0,0} (g)^{-1} \psi_{0,0}(h)^{-1}.$$
Therefore, since $h \in (I_H)_d$ was chosen arbitrarily, we see that $\rho_{0,k}(g)$ and $\rho_{0,0}(g)$ define the same linear functional on $\hat{\Omega}_{0,0}$. In other words, $\rho_{0,k}(g)$ and $\rho_{0,0}(g)$ have the same image in $\Omega \subset PGL(L)$, and therefore differ only by a scalar in $GL(L)$.

Next, by Corollary \ref{cor:inertia-reps-jk}, we have that $\rho_{j,k}(g) = \mu (g,\delta_j)^{-1} \rho_{0,k} (g)$ for all $j \in \mathcal{J}$ and $k \in \mathcal{K}$. Putting this all together and recalling that $g \in (I_G)_d$ gets identified with 
$$\diag ( \rho_{0,0}(g), \ldots , \rho_{j,k}(g), \ldots ) \in GL(L)^{\vert \mathcal{J} \vert \cdot \vert \mathcal{K} \vert},$$ 
we see that 
$$(I_G)_d \subseteq  p_L^{-1}(\Omega) \times_{\mathbb{C}^{\times}} (\mathbb{C}^{\times})^{\vert \mathcal{K} \vert} \times_{\mathbb{C}^{\times}} p_J^{-1}(\hat{\mathcal{J}}) .$$   
Similarly, we see that 
$$(I_H)_d \subseteq  p_L^{-1}(\Omega) \times_{\mathbb{C}^{\times}} (\mathbb{C}^{\times})^{\vert \mathcal{J} \vert} \times_{\mathbb{C}^{\times}} p_K^{-1}(\hat{\mathcal{K}}) . $$
Therefore,
\begin{align*}
H &= [GL(E)^{\vert \mathcal{J} \vert} \rtimes H_d]/((\mathbb{C}^{\times})^{\vert \mathcal{J} \vert})_{\antidiag} \\
&\subseteq \left [ GL(E)^{\vert \mathcal{J} \vert} \rtimes \left ( [  p_L^{-1}(\Omega) \times_{\mathbb{C}^{\times}}   (\mathbb{C}^{\times})^{\vert \mathcal{J} \vert} \times_{\mathbb{C}^{\times}} p_K^{-1}(\hat{\mathcal{K}})  ] \rtimes_{\mathbb{C}^{\times}} p_J^{-1}(\mathcal{J})  \right ) \right ]/((\mathbb{C}^{\times})^{\vert \mathcal{J} \vert})_{\antidiag},  
\end{align*}
where we have used Propositions \ref{prop:bijection-extensions} and \ref{prop:G_d-and-H_d}. Since $ p_L^{-1}(\Omega) \times_{\mathbb{C}^{\times}} (\mathbb{C}^{\times})^{\vert \mathcal{K} \vert} \times_{\mathbb{C}^{\times}} p_J^{-1}(\hat{\mathcal{J}}) $ commutes (up to a scalar) with this latter expression, we see that 
$$ p_L^{-1}(\Omega) \times_{\mathbb{C}^{\times}} (\mathbb{C}^{\times})^{\vert \mathcal{K} \vert} \times_{\mathbb{C}^{\times}} p_J^{-1}(\hat{\mathcal{J}})  \subseteq G \cap GL(L)^{\vert \mathcal{J} \vert \cdot \vert \mathcal{K} \vert} = (I_G)_d, $$
completing the proof.
\end{proof}

\begin{cor} \label{cor:I_G/G0=JxOmega}
Let $\mathcal{L}$ be as in Proposition \ref{prop:omega=LxL}. Then we have the following:
\begin{itemize}
\item $\Gamma = I_G/G^{\circ} \times \mathcal{K} = \mathcal{L} \times \hat{\mathcal{L}} \times \hat{\mathcal{J}} \times \mathcal{K} $.
\item $\hat{\Gamma} = I_H/H^{\circ} \times \mathcal{J} = \mathcal{L} \times \hat{\mathcal{L}} \times \mathcal{J} \times \hat{\mathcal{K}} $.
\end{itemize}
\end{cor}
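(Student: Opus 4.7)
The plan is to deduce both equalities by combining the results of Corollary \ref{cor:Gamma=I_GxK}, Proposition \ref{prop:I_G-and-I_H}, Proposition \ref{prop:(I_G)_d-and-(I_H)_d}, and Proposition \ref{prop:omega=LxL}. I will write out the argument for $\Gamma$; the argument for $\hat{\Gamma}$ is entirely parallel, with the roles of $\mathcal{J}$ and $\mathcal{K}$, and $B$ and $E$, swapped.

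First, Corollary \ref{cor:Gamma=I_GxK} already gives $\Gamma = I_G/G^{\circ} \times \mathcal{K}$, so the task reduces to identifying $I_G/G^{\circ}$ with $\mathcal{L} \times \hat{\mathcal{L}} \times \hat{\mathcal{J}}$. To do this, I would use Proposition \ref{prop:I_G-and-I_H}, which writes $I_G = [GL(B)^{|\mathcal{K}|} \times (I_G)_d]/((\mathbb{C}^{\times})^{|\mathcal{K}|})_{\antidiag}$ with $(I_G)_d \subseteq GL(L)^{|\mathcal{J}| \cdot |\mathcal{K}|}$. Intersecting $(I_G)_d$ with $G^{\circ} = GL(B)^{|\mathcal{K}|}$ inside $GL(B \otimes L)^{|\mathcal{J}|\cdot|\mathcal{K}|}$ gives exactly the scalar block matrices $(\mathbb{C}^{\times})^{|\mathcal{K}|} = Z(G^{\circ})$, so the inclusion $(I_G)_d \hookrightarrow I_G$ descends to an isomorphism
\begin{equation*}
(I_G)_d/Z(G^{\circ}) \xrightarrow{\sim} I_G/G^{\circ}.
\end{equation*}

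Next, I would invoke Proposition \ref{prop:(I_G)_d-and-(I_H)_d}, which gives
\begin{equation*}
(I_G)_d = p_L^{-1}(\Omega) \times_{\mathbb{C}^{\times}} (\mathbb{C}^{\times})^{|\mathcal{K}|} \times_{\mathbb{C}^{\times}} p_J^{-1}(\hat{\mathcal{J}}),
\end{equation*}
and compute the quotient by $Z(G^{\circ}) = (\mathbb{C}^{\times})^{|\mathcal{K}|}$ explicitly. Since the middle factor is precisely $Z(G^{\circ})$, and the central $\mathbb{C}^{\times}$ inside $p_L^{-1}(\Omega)$ (resp.~$p_J^{-1}(\hat{\mathcal{J}})$) embeds diagonally into $(\mathbb{C}^{\times})^{|\mathcal{K}|}$, the natural surjection $p_L^{-1}(\Omega) \times p_J^{-1}(\hat{\mathcal{J}}) \twoheadrightarrow (I_G)_d/Z(G^{\circ})$ has kernel exactly $\mathbb{C}^{\times} \times \mathbb{C}^{\times}$ (the two central scalars). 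This yields
\begin{equation*}
(I_G)_d/Z(G^{\circ}) \simeq \Omega \times \hat{\mathcal{J}}.
\end{equation*}
Finally, Proposition \ref{prop:omega=LxL} supplies $\Omega = \mathcal{L} \times \hat{\mathcal{L}}$, giving $I_G/G^{\circ} = \mathcal{L} \times \hat{\mathcal{L}} \times \hat{\mathcal{J}}$, and combining with Corollary \ref{cor:Gamma=I_GxK} completes the proof of the first bullet.

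The only point that requires some care is the bookkeeping in the quotient step: one must confirm that the various $\mathbb{C}^{\times}$ identifications built into the $\times_{\mathbb{C}^{\times}}$ notation are all absorbed into $Z(G^{\circ})$ and do not create further relations among the surviving $\Omega$ and $\hat{\mathcal{J}}$ factors. This is immediate once one observes that $p_L^{-1}(\Omega)$ acts on each block $L_{j,k}$ independently of $j$ and $k$, while $p_J^{-1}(\hat{\mathcal{J}})$ acts by a scalar depending only on $j$, so the only matrices they share (modulo the scalar ambiguity) are scalars, which are exactly killed. The argument for $\hat{\Gamma}$ proceeds verbatim by swapping $\mathcal{J} \leftrightarrow \mathcal{K}$, $G \leftrightarrow H$, and $B \leftrightarrow E$.
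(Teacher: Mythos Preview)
Your proposal is correct and follows essentially the same route as the paper's proof: both chain together Corollary~\ref{cor:Gamma=I_GxK}, Proposition~\ref{prop:I_G-and-I_H}, Proposition~\ref{prop:(I_G)_d-and-(I_H)_d}, and Proposition~\ref{prop:omega=LxL} in that order to identify $\Gamma$ with $\mathcal{L}\times\hat{\mathcal{L}}\times\hat{\mathcal{J}}\times\mathcal{K}$. The only difference is that you spell out the quotient bookkeeping in more detail than the paper does, which is fine but not essential.
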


\begin{proof}
Combining several of the above results, we get that
\begin{center}
    \begin{tabular}{rclcl}
    $\Gamma$ & $=$ & $I_G/G^{\circ} \times \mathcal{K}$ & \hspace{.25cm} &(Corollary \ref{cor:Gamma=I_GxK}) \\
     & $=$ & $(I_G)_d/Z(G^{\circ}) \times \mathcal{K}$ & \hspace{.25cm} &(Proposition \ref{prop:I_G-and-I_H}) \\
     & $=$ & $\Omega \times \hat{\mathcal{J}} \times \mathcal{K} $ & \hspace{.25cm} &(Proposition \ref{prop:(I_G)_d-and-(I_H)_d}) \\
     & $=$ & $\mathcal{L} \times \hat{\mathcal{L}} \times \hat{\mathcal{J}} \times \mathcal{K} $ & \hspace{.25cm} &(Proposition \ref{prop:omega=LxL}),
\end{tabular}
\end{center}
as desired.
\end{proof}

\begin{cor} \label{cor:|L|=dimL}
Let $\mathcal{L}$ be as in Proposition \ref{prop:omega=LxL}. Then $\vert \mathcal{L} \vert = \dim L$.
\end{cor}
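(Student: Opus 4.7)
The plan is to show that $L$ is an irreducible representation of $\tilde{\Omega} := p_L^{-1}(\Omega)$, and then compute $\dim L$ using the Heisenberg-like structure of $\tilde{\Omega}$ together with the Lagrangian decomposition $\Omega = \mathcal{L} \times \hat{\mathcal{L}}$.

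For the irreducibility step, I would apply Corollary \ref{cor:identity-components-centralizers} to the dual pair $(\Omega,\Omega)$ in $PGL(L)$ (which is a dual pair by Corollary \ref{cor:reducing-to-PGL(L)-dp} together with Proposition \ref{prop:Omega=Omega-hat}). Since $\Omega$ is finite, $\tilde{\Omega}^{\circ}=\mathbb{C}^{\times}$, and we obtain $Z_{GL(L)}(\tilde{\Omega})=\mathbb{C}^{\times}$. The commutant $\End_{\tilde{\Omega}}(L)$ is a finite-dimensional semisimple $\mathbb{C}$-algebra (since $\tilde{\Omega}$ is reductive: its identity component is the torus $\mathbb{C}^{\times}$ and the component group $\Omega$ is finite), and a semisimple $\mathbb{C}$-algebra whose unit group is a single $\mathbb{C}^{\times}$ must equal $\mathbb{C}$. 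Hence $L$ is $\tilde{\Omega}$-irreducible.

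Next, from the construction in the proof of Proposition \ref{prop:omega=LxL}, both $\mathcal{L}$ and $\hat{\mathcal{L}}$ are Lagrangian subgroups of $\Omega$ with respect to the alternating, non-degenerate commutator form $\mu_{\Omega}$, so $\tilde{\mathcal{L}} := p_L^{-1}(\mathcal{L})$ and $\tilde{\hat{\mathcal{L}}} := p_L^{-1}(\hat{\mathcal{L}})$ are abelian subgroups of $\tilde{\Omega}$. Decomposing $L$ into $\tilde{\mathcal{L}}$-isotypic components $L=\bigoplus_{\chi}L_{\chi}$, every $\chi$ that appears must restrict to $\mathbb{C}^{\times}\subseteq\tilde{\mathcal{L}}$ as the tautological character (since $\mathbb{C}^{\times}$ acts on $L$ by scalars), so the characters that appear form a torsor under $\hat{\mathcal{L}}$. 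The conjugation action of $\tilde{\hat{\mathcal{L}}}$ on these characters is given by $\tilde{\xi}\cdot\chi=\mu_{\Omega}(\xi,\cdot)^{-1}\chi$, which is simply transitive by non-degeneracy of $\mu_{\Omega}\big|_{\mathcal{L}\times\hat{\mathcal{L}}}$.

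Finally, choose $\chi$ appearing in $L$ and $0 \neq v \in L_{\chi}$. The vectors $\{\tilde{\xi} v\}_{\xi\in\hat{\mathcal{L}}}$ lie in pairwise distinct isotypic components, hence are linearly independent, giving $\dim L\geq |\hat{\mathcal{L}}|=|\mathcal{L}|$. Conversely, their span is stable under $\tilde{\mathcal{L}}$ (which acts by a scalar on each summand) and under $\tilde{\hat{\mathcal{L}}}$ (which permutes them up to scalars, since $\tilde{\hat{\mathcal{L}}}$ is abelian), hence $\tilde{\Omega}$-stable; by irreducibility of $L$, the span is all of $L$, giving $\dim L\leq|\mathcal{L}|$. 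The main obstacle is the passage from the centralizer identity $Z_{GL(L)}(\tilde{\Omega})=\mathbb{C}^{\times}$ to full irreducibility of $L$; once this Schur-type step is established, the remainder is a standard Heisenberg-group computation, effectively a version of the Stone--von Neumann theorem in this finite setting.
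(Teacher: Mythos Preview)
Your proof is correct and takes a genuinely different route from the paper's. The paper argues by a Clifford-theoretic multiplicity count: it computes $\Res^H_{H^{\circ}}\bigl(\Ind_{H^{\circ}}^{H}(\varepsilon)\bigr)$ in two ways --- once via the induction/restriction formulas of Corollaries~\ref{cor:G,H-restricted-reps} and~\ref{cor:inertia-summary}, yielding multiplicity $(\dim L)^2\cdot|\mathcal{K}|$, and once via \cite[Theorem 2.1(1)]{Clifford}, yielding multiplicity $|I_H/H^{\circ}|$ --- and then uses Corollary~\ref{cor:I_G/G0=JxOmega} to rewrite $|I_H/H^{\circ}|$ as $|\mathcal{K}|\cdot|\mathcal{L}|^2$. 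Your argument, by contrast, never leaves $PGL(L)$: you use the dual pair $(\Omega,\Omega)$ there (via Corollary~\ref{cor:reducing-to-PGL(L)-dp} and Proposition~\ref{prop:Omega=Omega-hat}) together with the argument behind Corollary~\ref{cor:identity-components-centralizers} to see that $L$ is an irreducible $p_L^{-1}(\Omega)$-module, and then run the finite Stone--von Neumann computation using the Lagrangian subgroup $\mathcal{L}\subset\Omega$ built in the proof of Proposition~\ref{prop:omega=LxL}. Your approach makes the symplectic/Heisenberg nature of $\Omega$ explicit and explains \emph{why} $\dim L=|\mathcal{L}|$ (it is the dimension of the unique irreducible with given central character), whereas the paper's approach is shorter given the machinery already in place and does not require verifying that the $\mathcal{L}$ produced in Proposition~\ref{prop:omega=LxL} is actually isotropic. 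One small remark: the irreducibility step is cleaner than you suggest --- once you know $L$ is semisimple over the reductive group $\tilde{\Omega}$, the condition $Z_{GL(L)}(\tilde{\Omega})=\mathbb{C}^{\times}$ immediately forces a single isotypic component with multiplicity one, so there is no real obstacle there.
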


\begin{proof}
    By Corollaries \ref{cor:G,H-restricted-reps} and \ref{cor:inertia-summary}, we have that
    \begin{align*}
        \Res^H_{H^{\circ}} \left ( \Ind_{H^{\circ}}^{H} (\varepsilon) \right ) &= \Res^H_{H^{\circ}} \left ( \Ind_{I_H }^{H} \left ( \Ind_{H^{\circ}}^{I_H} (\varepsilon) \right ) \right ) = \Res^H_{H^{\circ}} \left ( \Ind_{I_H }^{H} \left ( \bigoplus_{k\in \mathcal{K}} \psi_{0,k} \otimes L^* \right ) \right ) \\
        &= \Res^H_{H^{\circ}} \left ( \bigoplus_{k \in \mathcal{K}} \prescript{\gamma_k}{}{\alpha} \otimes L^* \right ) = \Big [ \bigoplus_{j \in \mathcal{J}} \prescript{\wtilde{\delta_j}}{}{\varepsilon} \otimes L \otimes L^* \Big ]^{\oplus \vert \mathcal{K} \vert} \\
        &\simeq  \Big [ \bigoplus_{j \in \mathcal{J}} \prescript{\wtilde{\delta_j}}{}{\varepsilon} \Big ]^{\oplus (\dim L)^2 \cdot \vert \mathcal{K} \vert}.
    \end{align*}
    On the other hand, by \cite[Theorem 2.1(1)]{Clifford}, we have that
    $$\Res^H_{H^{\circ}} \left ( \Ind_{H^{\circ}}^{H} (\varepsilon) \right ) \simeq  \Big [ \bigoplus_{j \in \mathcal{J}} \prescript{\wtilde{\delta_j}}{}{\varepsilon} \Big ]^{\oplus \vert I_H/H^{\circ} \vert}.$$
    Using Corollary \ref{cor:I_G/G0=JxOmega}, it follows that
    $$(\dim L)^2 \cdot \vert \mathcal{K} \vert = \vert I_H/H^{\circ} \vert = \vert \Gamma \vert/\vert \mathcal{J} \vert = \vert \mathcal{K} \vert \cdot \vert \mathcal{L} \vert^2,$$
    and hence that $\vert \mathcal{L} \vert = \dim L$.
\end{proof}

\subsection{Understanding $G$, $G_d$, $H$, and $H_d$ explicitly} \label{subsec:G_d-and-H_d}

\begin{thm} \label{thm:G_d}
Let $\mathcal{L}$ be as in Proposition \ref{prop:omega=LxL}. Then we have the following:
\begin{itemize}
\item $G_d = \left [ (\mathbb{C}^{\times})^{\vert \mathcal{K} \vert} \times_{\mathbb{C}^{\times}} p_L^{-1} (\mathcal{L} \times \hat{\mathcal{L}}) \times_{\mathbb{C}^{\times}} p_J^{-1}(\hat{\mathcal{J}}) \right ] \rtimes_{\mathbb{C}^{\times}} p_K^{-1}(\mathcal{K})$.
\item $H_d = \left [ (\mathbb{C}^{\times})^{\vert \mathcal{J} \vert} \times_{\mathbb{C}^{\times}} p_L^{-1} (\mathcal{L} \times \hat{\mathcal{L}}) \times_{\mathbb{C}^{\times}} p_K^{-1}(\hat{\mathcal{K}}) \right ] \rtimes_{\mathbb{C}^{\times}} p_J^{-1}(\mathcal{J})$.
\end{itemize}
Therefore, $G$ and $H$ are of the form described in Theorem \ref{thm:single-orbit-general}.
\end{thm}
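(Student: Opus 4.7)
The plan is that this theorem is essentially an assembly of the three preceding structural results, so the proof is short. Starting from Proposition \ref{prop:G_d-and-H_d}, which gives the semidirect product decomposition
$$G_d = (I_G)_d \rtimes_{\mathbb{C}^{\times}} p_K^{-1}(\mathcal{K}), \hspace{.25cm} H_d = (I_H)_d \rtimes_{\mathbb{C}^{\times}} p_J^{-1}(\mathcal{J}),$$
I would substitute in the explicit description of the inertia-type factors $(I_G)_d$ and $(I_H)_d$ obtained in Proposition \ref{prop:(I_G)_d-and-(I_H)_d}. This immediately rewrites $G_d$ as
$$\bigl[ p_L^{-1}(\Omega) \times_{\mathbb{C}^{\times}} (\mathbb{C}^{\times})^{\vert \mathcal{K} \vert} \times_{\mathbb{C}^{\times}} p_J^{-1}(\hat{\mathcal{J}}) \bigr] \rtimes_{\mathbb{C}^{\times}} p_K^{-1}(\mathcal{K}),$$
and analogously for $H_d$. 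Finally, I would apply Proposition \ref{prop:omega=LxL}, which identifies $\Omega = \mathcal{L} \times \hat{\mathcal{L}}$, to replace $p_L^{-1}(\Omega)$ by $p_L^{-1}(\mathcal{L} \times \hat{\mathcal{L}})$. The associativity of $\times_{\mathbb{C}^{\times}}$ (as noted in the discussion surrounding \eqref{eq:weird-direct-product-def} and \eqref{eq:weird-semidirect-product-def}) justifies rearranging the factors into the order stated in the theorem.

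For the concluding remark that $G$ and $H$ match the form in Theorem \ref{thm:single-orbit-general}, I would invoke Proposition \ref{prop:bijection-extensions}, which gives the presentation $G = [G^{\circ} \rtimes G_d]/Z(G^{\circ})_{\antidiag}$. Using Corollary \ref{cor:identity-components-centralizers} (in the single-orbit setting, the index set has one element so that $G^{\circ} = GL(B)^{\vert \mathcal{K} \vert}$ after the identifications of Subsection \ref{subsec:identify-H-irreps}), one has $Z(G^{\circ}) = (\mathbb{C}^{\times})^{\vert \mathcal{K} \vert}$, and thus
$$G = \bigl[ GL(B)^{\vert \mathcal{K} \vert} \rtimes G_d \bigr] / ((\mathbb{C}^{\times})^{\vert \mathcal{K} \vert})_{\antidiag}.$$
Comparing this with the definition of $G_1$ in Theorem \ref{thm:single-orbit-general}, and noting that the $G_1'$ appearing there coincides exactly with the formula we have just derived for $G_d$, gives $G = G_1$. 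The analogous argument with $E$, $\mathcal{J}$, $\hat{\mathcal{K}}$ in the roles of $B$, $\mathcal{K}$, $\hat{\mathcal{J}}$ identifies $H$ with $G_2$.

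Since every ingredient needed has already been established in the preceding subsections, there is no genuine obstacle here; the only care required is in bookkeeping the order and grouping of the factors under $\times_{\mathbb{C}^{\times}}$ and $\rtimes_{\mathbb{C}^{\times}}$, and in verifying that after passing from $G_d$ to $G$ one recovers precisely the shape of $G_1$ from Theorem \ref{thm:single-orbit-general} (and similarly $H \leftrightarrow G_2$). The proof therefore reduces to a sequence of substitutions with a final comparison of notations.
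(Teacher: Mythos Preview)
Your proposal is correct and matches the paper's own proof essentially verbatim: the paper also derives the expressions for $G_d$ and $H_d$ by combining Propositions \ref{prop:G_d-and-H_d}, \ref{prop:(I_G)_d-and-(I_H)_d}, and \ref{prop:omega=LxL}, and then invokes Proposition \ref{prop:bijection-extensions} to conclude that $G$ and $H$ have the form of Theorem \ref{thm:single-orbit-general}. Your write-up in fact supplies slightly more detail on the bookkeeping than the paper does.
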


\begin{proof}
The expressions for $G_d$ and $H_d$ follow immediately from Propositions \ref{prop:G_d-and-H_d}, \ref{prop:omega=LxL}, and \ref{prop:(I_G)_d-and-(I_H)_d}. The conclusion that $G$ and $H$ are of the form described in Theorem \ref{thm:single-orbit-general} follows from Proposition \ref{prop:bijection-extensions}.
\end{proof}

\section{Classifying ``multi-orbit" dual pairs in $PGL(U)$} \label{sec:single-orbit}

In this section, we first construct a class of ``multi-orbit" dual pairs by ``gluing" together a set of single-orbit dual pairs whose component groups are all isomorphic (Subsection \ref{subsec:general-construction}). We then show that an arbitrary $PGL(U)$ dual pair is necessarily of this form (Subsection \ref{subsec:general-showing-we've-found-all}). Note that the notation in this section is analogous to the notation introduced in Sections \ref{sec:prelims-for-showing-all}--\ref{sec:explicit} (albeit with more indices), and hence will not be included in the Index of Notation.

\subsection{Constructing ``multi-orbit" dual pairs in $PGL(U)$} \label{subsec:general-construction}

For $1 \leq i \leq r$, let $U_i$ be a finite-dimensional complex vector space, and let $(G_i, H_i)$ be a pair of subgroups of $GL(U_i)$ that descend to a single-orbit dual pair in $PGL(U_i)$ (of the form described in Theorem \ref{thm:single-orbit-general}). Set $\Gamma_i := G_i / G_i^{\circ}$, so that $\hat{\Gamma}_i = H_i / H_i^{\circ}$. For each $1 \leq i \leq r$, define
\begin{align*}
    \mu_i \colon G_i \times H_i & \rightarrow \{ (\dim U_i)\text{-th roots of unity}\} \\
    (g_i,h_i) & \mapsto g_i h_i g_i^{-1} h_i^{-1},
\end{align*}
which descends to a map on $\Gamma_i \times \hat{\Gamma}_i$. Suppose that for each $1 \leq i , i' \leq r$, we have isomorphisms
$$q_{i,i'} \colon \Gamma_i \xrightarrow{\sim} \Gamma_{i'} \hspace{.5cm} \text{ and } \hspace{.5cm} u_{i,i'} \colon \hat{\Gamma}_i \xrightarrow{\sim} \hat{\Gamma}_{i'}$$
such that 
$$\mu_i (\gamma_i , \delta_i) = \mu_{i'} ( q_{i,i'}(\gamma_i) , u_{i,i'}(\delta_i) ) $$ 
for all $\gamma_i \in \Gamma_i$ and $\delta_i \in \hat{\Gamma}_i$, and such that 
$$q_{i',i''} \circ q_{i,i'} = q_{i,i''} \hspace{.5cm} \text{ and } \hspace{.5cm} u_{i',i''} \circ u_{i,i'} = u_{i,i''}$$ 
for all $1 \leq i , i' , i'' \leq r$ (with $q_{i,i}$ and $u_{i,i}$ equalling the identity operators). 

Note that $q_{i,i'}$ determines $u_{i,i'}$. Indeed, with a fixed choice of $q_{i,i'}$, we know that for all $\gamma_i \in \Gamma_i$ and $\delta_i \in \hat{\Gamma}_i$, we have 
$$\mu_{i'} ( q_{i,i'}(\gamma_i), u_{i,i'} (\delta_i) ) = \mu_i (\gamma_i, \delta_i).$$
Letting $\gamma_i$ vary over $\Gamma_i$, we see that each $u_{i,i'}(\delta_i)$ is uniquely determined. The composition requirement therefore gives that shows that the full collection of isomorphisms $\{ q_{i,i'}, u_{i,i'} \}_{1 \leq i \leq i' \leq r}$ is determined by (for example) the set $\{ q_{1,i} \}_{1 \leq i \leq r}$.

Consider the following subgroups of $\Gamma_1 \times \cdots \times \Gamma_r$ and $\hat{\Gamma}_1 \times \cdots \times \hat{\Gamma}_r$:
$$\Gamma := \{ ( \gamma_1, \, q_{1,2}(\gamma_1), \, q_{1,3}(\gamma_1), \, \ldots, \, q_{1,r}(\gamma_1) ) \}_{\gamma_1 \in \Gamma_1} \subset \Gamma_1 \times \cdots \times \Gamma_r$$
and
$$\hat{\Gamma} := \{ ( \delta_1, \, u_{1,2}(\delta_1), \, u_{1,3}(\delta_1), \, \ldots, \, u_{1,r}(\delta_1) ) \}_{\delta_1 \in \hat{\Gamma}_1} \subset \hat{\Gamma}_1 \times \cdots \times \hat{\Gamma}_r.$$
Let $G$ and $H$ be the corresponding subgroups of $G_1 \times \cdots \times G_r$ and $H_1 \times \cdots \times H_r$, respectively. Set $U := \bigoplus_{1 \leq i \leq r} U_i$. Then we have the following:

\begin{thm} \label{thm:general-construction}
Let $(G,H)$ be as constructed above. Then $(p(G), p(H))$ is a dual pair in $PGL(U)$.
\end{thm}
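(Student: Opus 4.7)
The plan is to verify the two centralizer identities $Z_{PGL(U)}(p(G)) = p(H)$ and $Z_{PGL(U)}(p(H)) = p(G)$; reductivity of $G$ and $H$ is immediate since each is a finite extension of a product of general linear groups.

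The containment $p(H) \subseteq Z_{PGL(U)}(p(G))$ (and symmetrically) will follow from a direct block computation. Any $g = (g_1, \ldots, g_r) \in G$ and $h = (h_1, \ldots, h_r) \in H$ are block-diagonal with respect to $U = \bigoplus_i U_i$, so on each block the commutator $ghg^{-1}h^{-1}$ acts as the scalar $\mu_i(\pi_{G_i}(g_i), \pi_{H_i}(h_i))$. Because $(\pi_{G_i}(g_i))_i$ lies in the diagonal subgroup $\Gamma$ and $(\pi_{H_i}(h_i))_i$ in the diagonal subgroup $\hat{\Gamma}$, the hypothesized identity $\mu_i(q_{1,i}(\gamma_1), u_{1,i}(\delta_1)) = \mu_1(\gamma_1, \delta_1)$ forces these per-block scalars to coincide across all $i$. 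Hence $ghg^{-1}h^{-1} \in \mathbb{C}^{\times} \cdot \text{id}_U$, so $p(g)$ and $p(h)$ commute in $PGL(U)$.

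For the reverse containment $Z_{PGL(U)}(p(G)) \subseteq p(H)$, take any $x \in p^{-1}(Z_{PGL(U)}(p(G)))$. The first step is to show that $x$ is block-diagonal with respect to $U = \bigoplus_i U_i$. For each $i$ and each $z \in \mathbb{C}^{\times}$, the element $g_z^{(i)} := (z \cdot \text{id}_{U_i}) \oplus \text{id}_{\bigoplus_{j \neq i} U_j}$ sits in $G$ because its image in each $\Gamma_j$ is trivial. Since $x$ centralizes $g_z^{(i)}$ up to a scalar $c(z) \in \mathbb{C}^{\times}$, and since $z \mapsto c(z)$ is a continuous homomorphism $\mathbb{C}^{\times} \to \mathbb{C}^{\times}$ taking values in the $(\dim U)$-th roots of unity (by Lemma \ref{lem:roots-of-unity}), one concludes $c(z) \equiv 1$. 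Writing $x = (x_{jk})$ in block form with $x_{jk} \colon U_k \to U_j$, the relation $xg_z^{(i)} = g_z^{(i)} x$ at any $z \neq 1$ then yields $x_{ik} = 0$ for $k \neq i$ and $x_{ji} = 0$ for $j \neq i$; hence $x = \diag(x_1, \ldots, x_r)$ with each $x_i \in GL(U_i)$.

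Now for any $g \in G$, the scalar commutator $xgx^{-1}g^{-1} = c(g) \cdot \text{id}_U$ reads $x_i g_i x_i^{-1} g_i^{-1} = c(g)$ in each block. Since $G \twoheadrightarrow G_i$ is surjective (by the construction of the diagonal $\Gamma$), $p(x_i)$ centralizes $p(G_i)$, so the dual pair property in $PGL(U_i)$ gives $p(x_i) \in p(H_i)$; absorbing the scalar ambiguity into $H_i$ using $\mathbb{C}^{\times} \subseteq H_i$ allows us to take $x_i \in H_i$. Finally, setting $\gamma_1 := \pi_{G_1}(g_1)$ and $\delta_i := \pi_{H_i}(x_i)$, the chain $\mu_i(q_{1,i}(\gamma_1), \delta_i) = c(g) = \mu_1(\gamma_1, \delta_1) = \mu_i(q_{1,i}(\gamma_1), u_{1,i}(\delta_1))$ yields $\mu_i(q_{1,i}(\gamma_1), \delta_i \cdot u_{1,i}(\delta_1)^{-1}) = 1$ for every $\gamma_1 \in \Gamma_1$; the nondegeneracy of $\mu_i$ established in the proof of Theorem \ref{thm:dual-finite-ab-gps} then forces $\delta_i = u_{1,i}(\delta_1)$. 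Thus $(x_1, \ldots, x_r) \in H$ and $p(x) \in p(H)$. The symmetric inclusion $Z_{PGL(U)}(p(H)) \subseteq p(G)$ follows by interchanging the roles of $G$ and $H$. The main obstacle in this plan is the block-diagonalization step; once it is in hand, the problem reduces cleanly to the single-orbit dual pair property in each $PGL(U_i)$ together with a nondegeneracy calculation with $\mu_i$.
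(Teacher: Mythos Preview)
Your proof is correct and follows essentially the same strategy as the paper: establish the easy containment by a block computation, block-diagonalize an arbitrary element of the centralizer, invoke the single-orbit dual pair property in each $PGL(U_i)$, and then use the compatibility and nondegeneracy of the $\mu_i$ to pin down the component-group data. The only notable difference is in the block-diagonalization step: the paper obtains it in one line from $p^{-1}(Z_{PGL(U)}(p(H))) \subseteq Z_{GL(U)}(H^{\circ}) = \prod_i Z_{GL(U_i)}(H_i^{\circ})$ (using connectedness of $H^\circ$ and the known structure of $GL$-centralizers), whereas you argue more explicitly with the scalar elements $g_z^{(i)} \in G^{\circ}$; both arguments rest on the same continuity-on-a-connected-group idea.
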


\begin{proof}
Recall from the construction of $(G,H)$ that $(p(G_i),p(H_i))$ is a $PGL(U_i)$ dual pair for each $1 \leq i \leq r$, and that $G$ and $H$ are the subgroups of $G_1 \times \cdots \times G_r$ and $H_1 \times \cdots \times H_r$ corresponding to  
$$\Gamma := \{ ( \gamma_1, \, q_{1,2}(\gamma_1), \, q_{1,3}(\gamma_1), \, \ldots, \, q_{1,r}(\gamma_1) ) \}_{\gamma_1 \in \Gamma_1} \subset \Gamma_1 \times \cdots \times \Gamma_r$$
and
$$\hat{\Gamma} := \{ ( \delta_1, \, u_{1,2}(\delta_1), \, u_{1,3}(\delta_1), \, \ldots, \, u_{1,r}(\delta_1) ) \}_{\delta_1 \in \hat{\Gamma}_1} \subset \hat{\Gamma}_1 \times \cdots \times \hat{\Gamma}_r.$$
Therefore, for any $g=(g_1,\ldots,g_r) \in G$ and $h=(h_1,\ldots,h_r) \in H$, we have that
$$\mu_i (g_i,h_i) = \mu_i ( q_{1,i}( g_1 \cdot G_1^{\circ} ), \, u_{1,i} (h_1 \cdot H_1^{\circ}) ) = \mu_1 ( g_1, h_1) $$
for all $1 \leq i \leq r$. Therefore, it is clear that 
$$p(H) \subseteq Z_{PGL(U)} ( p(G) ) \hspace{.5cm} \text{ and } \hspace{.5cm} p(G) \subseteq Z_{PGL(U)} ( p(H) ) .$$
It remains to show the containments in the other direction. To this end, notice that
\begin{align*}
    p^{-1}(Z_{PGL(U)}(p(H))) \subseteq Z_{GL(U)} (H^{\circ}) &= Z_{GL(U)} ( H_1^{\circ} \times \cdots \times H_r^{\circ} ) \\
    &= Z_{GL(U_1)} (H_1^{\circ}) \times \cdots \times Z_{GL(U_r)}(H_r^{\circ}) \\
    &\subseteq GL(U_1) \times \cdots \times GL(U_r).
\end{align*}
From this, we see that 
\begin{align*}
    p^{-1}(Z_{PGL(U)}(p(H))) & \subseteq p_{U_1}^{-1}( Z_{PGL(U_1)}(p(H_1)) ) \times \cdots \times p_{U_r}^{-1}( Z_{PGL(U_r)}(p(H_r)) ) \\
    &= G_1 \times \cdots \times G_r.
\end{align*}
We can therefore write $p^{-1}(Z_{PGL(U)}(p(H)))$ as 
\[ \left \{
\begin{tabular}{c|c}
     & $\mu_i (g_i,h_i) = \mu_{i'} (g_{i'}, h_{i'})$  \\
    $(g_1,\ldots , g_r) \in G_1 \times \cdots \times G_r$ & $\forall \;(h_1,\ldots,h_r) \in H$ \\
     & $\forall \; 1 \leq i,i' \leq r$
\end{tabular}
\right \},
\]
or, equivalently, as
\[ \left \{
\begin{tabular}{c|c}
     & $\mu_i (g_i \cdot G_i^{\circ}, \, \delta_i) = \mu_{i'} (g_{i'} \cdot G_{i'}^{\circ}, \, u_{i,i'}( \delta_i) )$  \\
    $(g_1,\ldots , g_r) \in G_1 \times \cdots \times G_r$ & $\forall \; \delta_i \in \hat{\Gamma}_i$ \\
     & $\forall \; 1 \leq i,i' \leq r$
\end{tabular}
\right \}.
\]
In this way, we see that for any $(g_1,\ldots,g_r) \in p^{-1}(Z_{PGL(U)}(p(H)))$, we have that $g_{i'} \cdot G_{i'}^{\circ} = q_{i,i'} ( g_i \cdot G_i^{\circ} ) $ for all $1 \leq i,i' \leq r$. In other words, 
$$p^{-1}(Z_{PGL(U)}(p(H))) \subseteq G.$$
Swapping the roles of $G$ and $H$ and repeating this argument completes the proof.
\end{proof}

\subsection{Showing we've found all dual pairs in $PGL(U)$} \label{subsec:general-showing-we've-found-all}

The goal of this subsection is to prove that an arbitrary dual pair $(\overline{G},\overline{H})$ in $PGL(U)$ is necessarily of the form described in the previous subsection. To this end, set $G=p^{-1}(\overline{G})$ and $H = p^{-1}(\overline{H})$. Now, instead of assuming that $\{ (\prescript{\delta_j}{}{\varphi}, \prescript{\delta_j}{}{F}) \}_{j \in J}$ is a complete set of representatives of the equivalence classes of $G$-irreducibles in $U$ (as we did in Section \ref{sec:orbits}), let us choose a set of representatives $\{ \varphi_1, \ldots, \varphi_r \}$ for the $\hat{\Gamma}$-orbits of $G$-irreducibles in $U$. Set $\mathcal{J}_i := H/I_H(\varphi_i)$. For each $j_{i,\ell} \in \mathcal{J}_i$, choose a corresponding coset representative $\delta_{j_{i,\ell}} \in \hat{\Gamma}$; for $j_{i,0} = 0$, choose $\delta_{j_{i,0}} = 1$. Then, by construction, 
$$\{ ( \prescript{\delta_{j_{i,\ell}}}{}{\varphi_i}, \prescript{\delta_{j_{i,\ell}}}{}{F_i}  ) \}_{1 \leq i \leq r, \; j_{i,\ell} \in \mathcal{J}_i}$$ 
is a complete set of representatives for the equivalence classes of $G$-irreducibles in $U$. 

The definitions and results of Section \ref{sec:orbits} can be extended (with only superficial changes to the statements and proofs) beyond the single orbit case. In particular, for $g \in G$, $\gamma \in \Gamma$, and $(\beta, B)$ a $G^{\circ}$-irreducible in $U$, $\prescript{g}{}{\beta}$, $\prescript{\wtilde{\gamma}}{}{\beta}$, and $I_G(\beta)$ can be defined in the same way as in the single orbit case. For each $i$, let $\{ (\beta_{i,m}, B_{i,m}) \}_m$ be the set of $G^{\circ}$-irreducibles in $U$ such that 
$$\Res^G_{G^{\circ}} (F_i) = \bigoplus_{m} B_{i,m} \otimes \Hom_{G^{\circ}} (B_{i,m}, F_i).$$
Set $(\beta_i, B_i) := (\beta_{i,1}, B_{i,1})$ and set $\mathcal{K}_i := G/I_G(\beta_i)$. For each $k_{i,m} \in \mathcal{K}_i$, choose a corresponding coset representative $\gamma_{k_{i,m}} \in \Gamma$; for $k_{i,0}=0$, choose $\gamma_{k_{i,0}} = 1$. Then, as in Subsection \ref{subsec:identify-G-circ-irreps}, we can identify 
$$ \bigoplus_{i,m} \prescript{\wtilde{\gamma_{k_{i,m}}}}{}{\beta_i} \otimes \Hom_{G^{\circ}} (  \prescript{\wtilde{\gamma_{k_{i,m}}}}{}{B_i}, U) $$
with the embedding of $G^{\circ}$ in $U$. Under this identification, $G^{\circ}$ gets identified with $\prod_{i,m} GL(\prescript{\wtilde{\gamma_{k_{i,m}}}}{}{B_i}) = \prod_{i,m} GL(B_i)^{\vert \mathcal{K}_i \vert }$.

Next, similar to the single orbit case, for each $1 \leq i \leq r$ and for $\delta \in \hat{\Gamma}$, we can define $(E_i)_{\delta} := \Hom_G ( \prescript{\delta}{}{F_i}, U )$ so that 
$$H^{\circ} = Z_{GL(U)} (G) = \prod_{i,\ell} GL( (E_i)_{\delta_{j_{i,\ell}}} ) .$$
Then for $i,\ell$, define $(\varepsilon_i)_{\delta_{j_{i,\ell}}} \colon H^{\circ} \rightarrow GL((E_i)_{\delta_{j_{i,\ell}}})$ so that the set $\{ ( (\varepsilon_i)_{\delta_{j_{i,\ell}}}, (E_i)_{\delta_{j_{i,\ell}}} ) \}_{i,\ell}$ is a complete set of representatives of the equivalence classes of $H^{\circ}$-irreducibles in $U$. For each $i$, set $E_i := (E_i )_{\delta_{j_{i,0}}} = (E_i)_1$ and $\varepsilon_i := (\varepsilon_i )_{\delta_{j_{i,0}}} = (\varepsilon_i)_1$.

Similarly, define $(A_i)_{\wtilde{\gamma_{k_{i,m}}}} := \Hom_{G^{\circ}} ( \prescript{\wtilde{\gamma_{k_{i,m}}}}{}{B_i}, U )$, so that $H \subseteq \prod_{i,m} GL((A_i)_{\wtilde{\gamma_{k_{i,m}}}})$. Define $(\alpha_i)_{\wtilde{\gamma_{k_{i,m}}}} \colon H \rightarrow GL((A_i)_{\wtilde{\gamma_{k_{i,m}}}})$ as $h \mapsto (h) \vert_{(A_i)_{\wtilde{\gamma_{k_{i,m}}}}}$ so that $\{ ( (\alpha_i)_{\wtilde{\gamma_{k_{i,m}}}}, (A_i)_{\wtilde{\gamma_{k_{i,m}}}} ) \}_{i,m}$ is a complete set of representatives of the equivalence classes of $H$-irreducibles in $U$. For $1 \leq i \leq r$, set $A_i:= (A_i)_{\wtilde{\gamma_{k_{i,0}}}} = (A_i)_{\wtilde{1}}$ and $\alpha_i:= (\alpha_i)_{\wtilde{\gamma_{k_{i,0}}}} = (\alpha_i)_{\wtilde{1}}$.    

For $g\in G$, $\gamma \in \Gamma$, and $(\alpha, A)$ an $H$-irreducible in $U$, we can define $\prescript{g}{}{\alpha}$, $\prescript{\gamma}{}{\alpha}$, and $I_G(\alpha)$ in the same way as in the single orbit case. Continuing to follow the single orbit sections further shows that 
$$\bigoplus_{i,m} \prescript{ \gamma_{k_{i,m}} }{}{\alpha}_i \otimes B_i  $$
can be identified with the embedding of $H$ in $U$. Under this identification, $H$ sits inside of $\prod_{i,m} GL( \prescript{ \gamma_{k_{i,m}} }{}{A}_i ) = \prod_{i,m} GL(A_i)^{\vert \mathcal{K}_i \vert} $, and we see that 
\begin{equation} \label{eq:I_G's-equal-general}
    I_G(\alpha_i) = I_G(\beta_i) =: (I_G)_i
\end{equation}
for each $1 \leq i \leq r$. 

Similarly, for $h \in H$, $\delta \in \hat{\Gamma}$, and $(\varepsilon, E)$ an $H^{\circ}$-irreducible in $U$, we can define $\prescript{h}{}{\varepsilon}$, $\prescript{\wtilde{\delta}}{}{\varepsilon}$, and $I_H(\varepsilon)$ in the same way as in the single orbit case. Following the single orbit sections, we see that 
$$\bigoplus_{i,\ell} \prescript{\wtilde{\delta_{j_{i,\ell}}}}{}{\varepsilon_i} \otimes F_i$$
can be identified with the embedding of $H^{\circ}$ in $U$. Under this identification, $H^{\circ}$ gets identified with $\prod_{i,\ell} GL( \prescript{ \wtilde{\delta_{j_{i,\ell}}} }{}{ E_i} ) = \prod_{i,\ell} GL(E_i)^{\vert \mathcal{J}_i \vert} $, and we see that 
\begin{equation} \label{eq:I_H's-equal-general}
I_H(\varepsilon_i) = I_H(\varphi_i) =: (I_H)_i    
\end{equation}
for each $1 \leq i \leq r$.

By \eqref{eq:I_G's-equal-general} and \eqref{eq:I_H's-equal-general}, for each $1 \leq i \leq r$ we can define 
$$U_i := (F_i \otimes E_i)^{\oplus \vert \mathcal{J}_i \vert} = (A_i \otimes B_i)^{\oplus \vert \mathcal{K}_i \vert} ,$$
so that $U=\bigoplus_{1\leq i \leq r} U_i$. In other words, we have defined subspaces $U_i$ of $U$ such that each $U_i$ corresponds to a single orbit of $H$-, $G^{\circ}$-, $G$-, and $H^{\circ}$-irreducibles. Now, set $G_i := G \vert_{U_i}$ and $H_i := H \vert_{U_i}$. Much of the remainder of this section is dedicated to proving that each $(p_{U_i}(G_i), p_{U_i}(H_i))$ ($1 \leq i \leq r$) is a single-orbit dual pair in $PGL(U_i)$.

\subsubsection{Understanding the component groups of the $G_i$'s and $H_i$'s}

By definition of $G_i$ and $H_i$, we see that 
$$G \subseteq \prod_{1 \leq i \leq r} G_i \hspace{.5cm} \text{ and } \hspace{.5cm}  H \subseteq \prod_{1 \leq i \leq r} H_i.$$  
Also, we see that
$$\prod_{1 \leq i \leq r} Z_{GL(U_i)} (H_i) = Z_{GL(U)} \Big ( \prod_{1\leq i \leq r} H_i \Big )  \subseteq Z_{GL(U)} (H) = G^{\circ} \subseteq \prod_{1 \leq i \leq r} G_i^{\circ} ,$$
and hence that $Z_{GL(U_i)} (H_i) \subseteq G_i^{\circ}$, where we have used Corollary \ref{cor:identity-components-centralizers}. On the other hand, we have that our usual map $\mu \colon G \times H \rightarrow \{ (\dim U)\text{-th roots of unity}\}$ induces a map
\begin{align*}
    \mu_i \colon G_i \times H_i & \rightarrow \{ (\dim U_i)\text{-th roots of unity}\} \\
    (g_i,h_i) & \mapsto g_i h_i g_i^{-1} h_i^{-1}
\end{align*}
for each $1 \leq i \leq r$. Using that a continuous map is constant on connected components, we see that $G_i^{\circ} \subseteq Z_{GL(U_i)}(H_i)$ for each $1 \leq i \leq r$. It follows that
$$\prod_{1 \leq i \leq r} G_i^{\circ} = \prod_{1 \leq i \leq r} Z_{GL(U_i)} (H_i) = G^{\circ} ,$$
and similarly that
$$\prod_{1 \leq i \leq r} H_i^{\circ} = \prod_{1 \leq i \leq r} Z_{GL(U_i)} (G_i) = H^{\circ} .$$
Therefore, we have that
$$\Gamma = G/G^{\circ} \subseteq (G_1 \times \cdots \times G_r)/(G_1^{\circ} \times \cdots \times G_r^{\circ}) =: \Gamma_1 \times \cdots \times \Gamma_r$$
and that
$$\hat{\Gamma} = H/H^{\circ} \subseteq (H_1 \times \cdots \times H_r)/(H_1^{\circ} \times \cdots \times H_r^{\circ}) =: \Delta_1 \times \cdots \times \Delta_r,$$
where $\Gamma_i:= G_i/G_i^{\circ}$ and $\Delta_i:=H_i/H_i^{\circ}$.

\begin{prop} \label{prop:Gamma-i-isos}
    For each $1 \leq i, i' \leq r$, there are isomorphisms 
    \begin{itemize}
        \item $q_{i,i'} \colon \Gamma_i \xrightarrow{\sim} \Gamma \xrightarrow{\sim} \Gamma_{i'}$, and
        \item $u_{i,i'} \colon \hat{\Gamma}_i \xrightarrow{\sim} \hat{\Gamma} \xrightarrow{\sim} \hat{\Gamma}_{i'}$
    \end{itemize}
    such that $\mu_i ( \gamma_i, \delta_i ) = \mu_{i'} ( q_{i,i'}(\gamma_i), u_{i,i'}(\delta_i) ) $ for all $\gamma_i \in \Gamma_i$ and $\delta_i \in \hat{\Gamma}_i$, and such that $q_{i',i''} \circ q_{i,i'} = q_{i,i''}$ and $u_{i',i''} \circ u_{i,i'} = u_{i,i''}$ for all $1 \leq i,i',i'' \leq r$.
\end{prop}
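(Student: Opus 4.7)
The plan is to show that each of the natural projections $\pi_i \colon \Gamma \to \Gamma_i$ and $\widehat{\pi}_i \colon \hat{\Gamma} \to \Delta_i$ (induced by the restriction maps $g \mapsto g\vert_{U_i}$ and $h \mapsto h\vert_{U_i}$) is an isomorphism, and then to define $q_{i,i'} := \pi_{i'} \circ \pi_i^{-1}$ and $u_{i,i'} := \widehat{\pi}_{i'} \circ \widehat{\pi}_i^{-1}$. The composition identities $q_{i',i''} \circ q_{i,i'} = q_{i,i''}$ and $u_{i',i''} \circ u_{i,i'} = u_{i,i''}$ are then immediate, since every map in sight factors through the fixed groups $\Gamma$ or $\hat{\Gamma}$.

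Surjectivity of $\pi_i$ is built into the definition $G_i = G\vert_{U_i}$: any coset $g_i G_i^{\circ} \in \Gamma_i$ is hit by $g G^{\circ}$ for any $g \in G$ restricting to $g_i$. The heart of the matter is injectivity. The key observation is that $\mu$ takes values as \emph{global} scalars on $U$: if $g \in G$ satisfies $g\vert_{U_i} \in G_i^{\circ}$, then using the already-established identity $G_i^{\circ} = Z_{GL(U_i)}(H_i)$, the commutator $ghg^{-1}h^{-1}$ restricts to the identity on $U_i$ for every $h \in H$; since this commutator is a single scalar on all of $U$, it must equal $1$ on every component, forcing $g \in Z_{GL(U)}(H) = G^{\circ}$. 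The same argument, with the roles of $G$ and $H$ swapped (and using $H_i^{\circ} = Z_{GL(U_i)}(G_i)$), shows that $\widehat{\pi}_i \colon \hat{\Gamma} \to \Delta_i$ is an isomorphism.

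With these isomorphisms in place, the identification $\Delta_i \simeq \hat{\Gamma}_i$ is obtained by transporting the perfect pairing $\mu \colon \Gamma \times \hat{\Gamma} \to \mathbb{C}^{\times}$ of Theorem \ref{thm:dual-finite-ab-gps} through $\pi_i \times \widehat{\pi}_i$; the resulting pairing is exactly $\mu_i$, which is therefore nondegenerate and lets us write $\Delta_i = \hat{\Gamma}_i$. The compatibility $\mu_i(\gamma_i, \delta_i) = \mu_{i'}(q_{i,i'}(\gamma_i), u_{i,i'}(\delta_i))$ then falls out immediately, since both sides are computed as the single global scalar $\mu(\gamma, \delta)$, where $\gamma \in \Gamma$ and $\delta \in \hat{\Gamma}$ are the unique lifts of $\gamma_i$ and $\delta_i$.

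The main obstacle is the injectivity step, but as outlined this reduces to a single observation about $\mu$ being a global scalar; everything else is formal manipulation of the short exact sequences defining $\Gamma$ and $\hat{\Gamma}$ and their single-orbit counterparts.
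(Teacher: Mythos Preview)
Your proposal is correct and matches the paper's approach essentially line for line: the paper also defines projections $Q_i \colon \Gamma \to \Gamma_i$ (your $\pi_i$), sets $q_{i,i'} = Q_{i'} \circ Q_i^{-1}$, and proves injectivity by the same ``global scalar'' observation (phrased there as $\mu(\gamma,\delta) = \mu_1(\gamma_1,\delta_1) = \mu(\gamma',\delta)$ for all $\delta$, hence $\gamma = \gamma'$ by the nondegeneracy of $\mu$ from Theorem~\ref{thm:dual-finite-ab-gps}). Your explicit remark that $\mu_i$ is the transport of the perfect pairing $\mu$ along $\pi_i \times \widehat{\pi}_i$, and hence nondegenerate, is a nice addition that the paper leaves implicit when it silently writes $\hat{\Gamma}_i$ for $\Delta_i$.
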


\begin{proof}
We start by showing that we have an isomorphism $\Gamma \simeq \Gamma_i$ for each $1 \leq i \leq r$. For each $1 \leq i \leq r$, we have a homomorphism $Q_i \colon \Gamma \rightarrow \Gamma_{i}$, where the coset $gG^{\circ} \in \Gamma$ corresponding to $g = (g_1,\ldots , g_r) \in G$ gets sent to $g_i G_i^{\circ} \in \Gamma_i$. To check that this is well-defined, notice that 
$$gG^{\circ} = g_1 G_1^{\circ} \times \cdots \times g_r G_r^{\circ}.$$ 
Therefore, if we have $g G^{\circ} = g' G^{\circ} \in \Gamma$, it is clear that $g_i G_i^{\circ} = g_i' G_i^{\circ} \in \Gamma_i$. We see that $Q_i$ is surjective: Indeed, by definition of $G_i$, we have that for any $g_i \in G_i$, there is some $g \in G$ with $U_i$-component equal to $g_i$. 

Next, we claim that $Q_i$ is injective. We will prove the result for $Q_1$, but the same proof idea works for $2 \leq i \leq r$. Suppose that $\gamma = (\gamma_1, \gamma_2, \ldots , \gamma_r)$ and $\gamma' = (\gamma_1, \gamma_2', \ldots , \gamma_r')$ are elements of $\Gamma$ that both map to $\gamma_1 \in \Gamma_1$ under $Q_1$. Then for all $\delta = (\delta_1, \ldots, \delta_r) \in \hat{\Gamma}$, we have that 
$$ \mu (\gamma, \delta) = \mu_1(\gamma_1, \delta_1) = \mu (\gamma ' , \delta).$$
It follows that $\gamma = \gamma '$, and hence that $Q_1$ is injective, as desired.

Hence we have established that for each $1 \leq i \leq r$, there is an isomorphism $Q_i \colon \Gamma \xrightarrow{\sim} \Gamma_i$. By the same argument, we see that there is an isomorphism $U_i \colon \hat{\Gamma} \xrightarrow{\sim} \hat{\Gamma}_i$. For each $1 \leq i,i' \leq r$, define
$$q_{i,i'} := Q_{i'} \circ Q_i^{-1} \colon \Gamma_i \xrightarrow{\sim} \Gamma \xrightarrow{\sim} \Gamma_{i'} $$
and
$$u_{i,i'} := U_{i'} \circ U_i^{-1} \colon \hat{\Gamma}_i \xrightarrow{\sim} \hat{\Gamma} \xrightarrow{\sim} \hat{\Gamma}_{i'} .$$
By construction, it is clear that $\mu_i ( \gamma_i, \delta_i ) = \mu_{i'} ( q_{i,i'}(\gamma_i), u_{i,i'}(\delta_i) ) $ for all $\gamma_i \in \Gamma_i$ and $\delta_i \in \hat{\Gamma}_i$, and that $q_{i',i''} \circ q_{i,i'} = q_{i,i''}$ and $u_{i',i''} \circ u_{i,i'} = u_{i,i''}$ for all $1 \leq i,i',i'' \leq r$, completing the proof.    
\end{proof}

\begin{cor} \label{cor:q_i,i'-Gamma-picture}
The groups $\Gamma$ and $\hat{\Gamma}$ can be identified with
$$\Gamma = \{ ( \gamma_1, \, q_{1,2}(\gamma_1), \, q_{1,3}(\gamma_1), \, \ldots, \, q_{1,r}(\gamma_1) ) \}_{\gamma_1 \in \Gamma_1} \subset \Gamma_1 \times \cdots \times \Gamma_r$$
and
$$\hat{\Gamma} = \{ ( \delta_1, \, u_{1,2}(\delta_1), \, u_{1,3}(\delta_1), \, \ldots, \, u_{1,r}(\delta_1) ) \}_{\delta_1 \in \hat{\Gamma}_1} \subset \hat{\Gamma}_1 \times \cdots \times \hat{\Gamma}_r.$$
\end{cor}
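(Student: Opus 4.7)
The plan is that this corollary should follow almost immediately by unpacking the construction of $q_{1,i}$ and $u_{1,i}$ from the proof of Proposition \ref{prop:Gamma-i-isos}. Recall from that proof that $Q_i \colon \Gamma \xrightarrow{\sim} \Gamma_i$ is the projection $gG^\circ \mapsto g_i G_i^\circ$ (under the containment $\Gamma \subseteq \Gamma_1 \times \cdots \times \Gamma_r$ established before the proposition), and that $q_{1,i} = Q_i \circ Q_1^{-1}$; analogously for $U_i$ and $u_{1,i}$.

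First I would show the inclusion $\subseteq$. Take an arbitrary element $\gamma \in \Gamma$, and write it as $\gamma = (\gamma_1, \ldots, \gamma_r)$ via the containment $\Gamma \subseteq \Gamma_1 \times \cdots \times \Gamma_r$; by construction $Q_i(\gamma) = \gamma_i$ for each $i$. Then
\[
q_{1,i}(\gamma_1) = Q_i\bigl(Q_1^{-1}(\gamma_1)\bigr) = Q_i(\gamma) = \gamma_i,
\]
so $\gamma = (\gamma_1, q_{1,2}(\gamma_1), \ldots, q_{1,r}(\gamma_1))$, as claimed.

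Next I would show the reverse inclusion $\supseteq$. Given any $\gamma_1 \in \Gamma_1$, the element $Q_1^{-1}(\gamma_1) \in \Gamma$ has $i$-th component $Q_i(Q_1^{-1}(\gamma_1)) = q_{1,i}(\gamma_1)$, so $(\gamma_1, q_{1,2}(\gamma_1), \ldots, q_{1,r}(\gamma_1))$ does lie in $\Gamma$. This gives the claimed identification for $\Gamma$, and the identical argument with $U_i$ and $u_{1,i}$ in place of $Q_i$ and $q_{1,i}$ handles $\hat{\Gamma}$.

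There is really no obstacle here: the content of the corollary is already encoded in the definitions $q_{1,i} := Q_i \circ Q_1^{-1}$ and $u_{1,i} := U_i \circ U_1^{-1}$ together with the fact (established in the proof of Proposition \ref{prop:Gamma-i-isos}) that each $Q_i$ and $U_i$ is an isomorphism given by projection onto the $i$-th factor. The corollary is essentially just the observation that a subgroup $\Gamma$ of a direct product, whose projection to the first factor is a bijection onto $\Gamma_1$, is the graph of the composite isomorphisms $q_{1,i}$.
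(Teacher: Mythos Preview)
Your proof is correct and is precisely the argument the paper intends: the corollary is stated without proof in the paper, since it follows immediately from the definitions $q_{1,i} := Q_i \circ Q_1^{-1}$ and $u_{1,i} := U_i \circ U_1^{-1}$ in the proof of Proposition \ref{prop:Gamma-i-isos}, together with the fact that each $Q_i$ (resp.~$U_i$) is the projection isomorphism onto the $i$-th factor. Your write-up makes this explicit and is exactly how one would fill in the details.
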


Now note that $(I_G)_i$ and $(I_H)_i$ (as subgroups of $G$ and $H$, respectively) sit inside of $\prod_i GL(U_i)$. With this in mind, it is not hard to see that
$$ (I_G)_i \vert_{U_i} = I_{G_i}(\beta_i \vert_{G_i}) =: I_{G_i} \hspace{.5cm} \text{ and } \hspace{.5cm} (I_H)_i \vert_{U_i} = I_{H_i}(\varepsilon_i \vert_{H_i}) =: I_{H_i}.$$
The proof of Proposition \ref{prop:Gamma-i-isos} gives that 
$$I_G(\beta_i)/G^{\circ} \simeq I_{G_i}/G_i^{\circ} \hspace{.5cm} \text{ and } \hspace{.5cm} I_H(\varepsilon_i)/H^{\circ} \simeq I_{H_i}/H_i^{\circ} .$$
Therefore, for each $i$ we have 
$$1 \rightarrow G_i^{\circ} \rightarrow G_i \rightarrow \Gamma_i \simeq \Gamma \rightarrow 1 \hspace{.5cm} \text{ and } \hspace{.5cm} 1 \rightarrow I_{G_i}/G_i^{\circ} \rightarrow \Gamma_i \simeq \Gamma \rightarrow \mathcal{K}_i \rightarrow 1.$$
In the same way,
$$1 \rightarrow H_i^{\circ} \rightarrow H_i \rightarrow \hat{\Gamma}_i \simeq \hat{\Gamma} \rightarrow 1 \hspace{.5cm} \text{ and } \hspace{.5cm} 1 \rightarrow I_{H_i}/H_i^{\circ} \rightarrow \hat{\Gamma}_i \simeq \hat{\Gamma} \rightarrow \mathcal{J}_i \rightarrow 1.$$
In this way, we see that on each $U_i$-factor, the pair $(G_i,H_i)$ looks a lot like the single-orbit dual pair picture outlined earlier in the paper. 

\subsubsection{Showing that each $(p_{U_i}(G_i), p_{U_i}(H_i))$ is a single-orbit dual pair in $PGL(U_i)$}

Towards showing that $(p_{U_i}(G_i), p_{U_i}(H_i))$ indeed gives a single-orbit dual pair in $PGL(U_i)$, notice that we have the following relationships between the groups of distinguished automorphisms in $G_i$, $G$, $H_i$, and $H$:
$$(G_i)_d = (G_d)\vert_{U_i} \hspace{.5cm} \text{and} \hspace{.5cm} (I_{G_i})_d = (I_G)_d \vert_{U_i} = I_{(G_i)_d}.$$
Similarly,
$$(H_i)_d = (H_d)\vert_{U_i} \hspace{.5cm} \text{and} \hspace{.5cm} (I_{H_i})_d = (I_H)_d \vert_{U_i} = I_{(H_i)_d}.$$
Now, as in Section \ref{sec:reps-of-I_G-and-I_H}, for each fixed $i$, we can identify the spaces
$$ \Hom_{G^{\circ}} ( \prescript{\wtilde{\gamma_{k_{i,m}}}}{}{B_i}, \prescript{\delta_{j_{i,\ell }}}{}{F_i}  ) = \Hom_{H^{\circ}} ( \prescript{\wtilde{\delta_{j_{i,\ell'}}}}{}{E_i}, \prescript{\gamma_{k_{i,m '}}}{}{A_i}  ) =: L_i$$
for all $\ell$, $\ell'$, $m$, and $m'$. When we wish to refer to the particular copy of $L_i$ corresponding to $j \in \mathcal{J}_i$ and $k \in \mathcal{K}_i$, we will write $L_{i,j,k}$. With this established, we can now state general analogs to many of the results in Section \ref{sec:explicit}. The proofs from Section \ref{sec:explicit} can be carried over with only superficial modification.

\begin{prop}[Analog of Proposition \ref{prop:I_G-and-I_H}] \label{prop:I_G_i-general}
\hspace{.5cm}
    \begin{itemize}
        \item $I_{G_i} = [ GL(B_i)^{\vert \mathcal{K}_i \vert} \times (I_{G_i})_d ] / ((\mathbb{C}^{\times})^{\vert \mathcal{K}_i \vert})_{\antidiag}$, and $(I_{G_i})_d \subseteq GL(L_i)^{\vert \mathcal{J}_i \vert \cdot \vert \mathcal{K}_i \vert}$.
        \item $I_{H_i} = [ GL(E_i)^{\vert \mathcal{J}_i \vert} \times (I_{H_i})_d ] / ((\mathbb{C}^{\times})^{\vert \mathcal{J}_i \vert})_{\antidiag}$, and $(I_{H_i})_d \subseteq GL(L_i)^{\vert \mathcal{J}_i \vert \cdot \vert \mathcal{K}_i \vert}$.
    \end{itemize}
\end{prop}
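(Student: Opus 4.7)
The plan is to carry out the proof of Proposition \ref{prop:I_G-and-I_H} essentially verbatim, but inside $GL(U_i)$ and using the single-orbit structure that has been arranged for $U_i$. The preceding setup has produced subgroups $G_i \subseteq GL(U_i)$ and $H_i \subseteq GL(U_i)$ together with a decomposition $U_i = \bigoplus_m \prescript{\wtilde{\gamma_{k_{i,m}}}}{}{B_i} \otimes (A_i)_{\wtilde{\gamma_{k_{i,m}}}}$ under which $G_i^\circ$ identifies with $GL(B_i)^{\vert \mathcal{K}_i \vert}$, and analogously for $H_i$. Since the $G^\circ$-irreducibles appearing in $U_i$ form a single $\Gamma$-orbit and the $G$-irreducibles in $U_i$ form a single $\hat\Gamma$-orbit (and likewise for $H^\circ, H$), every structural fact used in the single-orbit proof is available over $U_i$.

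For the first bullet, fix $g_i \in I_{G_i}$. Since $\Gamma_i \simeq \Gamma$ is abelian (Theorem \ref{thm:dual-finite-ab-gps}), if $g_i$ intertwines $\beta_i$ with an equivalent $G_i^\circ$-irreducible, then it intertwines each $\prescript{\wtilde{\gamma_{k_{i,m}}}}{}{\beta_i}$ with an equivalent one as well; pick $N_{i,m} \in GL(B_i)$ realizing these equivalences and assemble them into the block-diagonal $N_i$ acting on each $\prescript{\wtilde{\gamma_{k_{i,m}}}}{}{B_i}$-factor with multiplicity $\vert \mathcal{J}_i \vert \cdot \dim L_i$. Then $g_i g_0 g_i^{-1} = N_i g_0 N_i^{-1}$ for all $g_0 \in G_i^\circ = GL(B_i)^{\vert \mathcal{K}_i \vert}$, so $N_i^{-1}g_i$ centralizes $G_i^\circ$. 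Because elements of $I_{G_i}$ preserve the $G_i^\circ$-isotypic decomposition of each $\prescript{\delta_{j_{i,\ell}}}{}{\varphi_i}$, they sit inside $GL(B_i \otimes L_i)^{\vert \mathcal{J}_i \vert \cdot \vert \mathcal{K}_i \vert}$, where the centralizer of $GL(B_i)^{\vert \mathcal{K}_i \vert}$ is $GL(L_i)^{\vert \mathcal{J}_i \vert \cdot \vert \mathcal{K}_i \vert}$. Hence $g_i = N_i M_i$ with $M_i$ in that group, so $I_{G_i} \subseteq [GL(B_i)^{\vert \mathcal{K}_i \vert} \times GL(L_i)^{\vert \mathcal{J}_i \vert \cdot \vert \mathcal{K}_i \vert}]/((\mathbb{C}^\times)^{\vert \mathcal{K}_i \vert})_{\antidiag}$. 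Writing this as $I_{G_i} = [GL(B_i)^{\vert \mathcal{K}_i \vert} \times I_{G_i}']/((\mathbb{C}^\times)^{\vert \mathcal{K}_i \vert})_{\antidiag}$ with $I_{G_i}' \subseteq GL(L_i)^{\vert \mathcal{J}_i \vert \cdot \vert \mathcal{K}_i \vert}$, one has $I_{G_i}' \subseteq (I_{G_i})_d$ by definition of distinguished automorphisms. The reverse inclusion uses \cite[Proposition 3.1]{Disconnected}, which yields $(I_{G_i})_d \cap GL(B_i)^{\vert \mathcal{K}_i \vert} = Z(GL(B_i)^{\vert \mathcal{K}_i \vert}) = (\mathbb{C}^\times)^{\vert \mathcal{K}_i \vert}$, forcing $(I_{G_i})_d = I_{G_i}'$ and completing the first bullet. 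The second bullet is identical with the roles of $(G, \beta, \mathcal{K})$ and $(H, \varepsilon, \mathcal{J})$ swapped.

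The only real thing to verify, beyond transcribing the original proof, is that the two ingredients used above genuinely survive the restriction from the ambient $G$ to $G_i$: namely (i) that membership in $I_{G_i} = I_{G_i}(\beta_i\vert_{G_i})$ already implies intertwining of every $\prescript{\wtilde{\gamma_{k_{i,m}}}}{}{\beta_i}$ with itself up to equivalence, and (ii) the containment $I_{G_i} \subseteq GL(B_i \otimes L_i)^{\vert \mathcal{J}_i \vert \cdot \vert \mathcal{K}_i \vert}$. Point (i) follows immediately from abelianness of $\Gamma$, and (ii) follows from the single-orbit structure of $U_i$ in precisely the way it did for $U$ in the proof of Proposition \ref{prop:I_G-and-I_H}. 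Neither is a substantive obstacle, confirming that the passage from the single-orbit to the multi-orbit setting is, as stated in the excerpt, superficial.
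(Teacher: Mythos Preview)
Your proposal is correct and follows essentially the same approach as the paper, which simply asserts that the proof of Proposition~\ref{prop:I_G-and-I_H} carries over with only superficial modification. You have spelled out the transcription and identified the two points that need checking (abelianness of $\Gamma_i$ for point~(i), the single-orbit structure of $U_i$ for point~(ii)), which is exactly what the paper intends.
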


Now, Proposition \ref{prop:I_G_i-general} gives that $(I_{G_i})\vert_{L_{i,j,k}} = GL(B_i) \times_{\mathbb{C}^{\times}} (I_{G_i})_d \vert_{L_{i,j,k}}$ for $j \in \mathcal{J}_i$ and $k \in \mathcal{K}_i$. Define 
$$\Omega_{i,j,k} := (I_{G_i})\vert_{L_{i,j,k}} / GL(B_i) = (I_{G_i})_d \vert_{L_{i,j,k}} / \mathbb{C}^{\times} .$$
Similarly, define 
$$\Lambda_{i,j,k} := (I_{H_i})\vert_{L_{i,j,k}} / GL(E_i) = (I_{H_i})_d \vert_{L_{i,j,k}} / \mathbb{C}^{\times} .$$
These can be viewed as finite subgroups of $PGL(L_i)$.

\begin{lemma}[Analog of Lemma \ref{lem:dual-inertia-comp}]
    For all $1 \leq i \leq r$, $j \in \mathcal{J}_i$, and $k \in \mathcal{K}_i$, $\Omega_{i,j,k}$ and $\Lambda_{i,j,k}$ are finite abelian groups with natural isomorphisms
    $$ \Omega_{i,j,k} \simeq \hat{\Lambda}_{i,j,k} \hspace{.5cm} \text{ and } \hspace{.5cm} \Lambda_{i,j,k} \simeq \hat{\Omega}_{i,j,k} .$$ 
\end{lemma}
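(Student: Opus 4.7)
The plan is to transcribe the argument of Lemma~\ref{lem:dual-inertia-comp} to the $i$-indexed setting, since by construction the pair $(G_i, H_i)$ on the factor $U_i$ behaves exactly like the single-orbit dual pair studied in Sections~\ref{sec:orbits}--\ref{sec:explicit}, just with an extra index $i$. Fix $1 \leq i \leq r$, $j \in \mathcal{J}_i$, and $k \in \mathcal{K}_i$, and set $V_i := B_i \otimes E_i \otimes L_i$. Let $\rho_{i,j,k}$ and $\psi_{i,j,k}$ denote the natural analogs of $\rho_{j,k}$ and $\psi_{j,k}$ for the factor $U_i$; by Proposition~\ref{prop:I_G_i-general},
$$\rho_{i,j,k}(I_{G_i}) = GL(B_i) \times_{\mathbb{C}^{\times}} (I_{G_i})_d\vert_{L_{i,j,k}} \hspace{.25cm}\text{and}\hspace{.25cm} \psi_{i,j,k}(I_{H_i}) = GL(E_i) \times_{\mathbb{C}^{\times}} (I_{H_i})_d\vert_{L_{i,j,k}},$$
so $\Omega_{i,j,k}$ and $\Lambda_{i,j,k}$ are the component groups of these images.

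By Lemma~\ref{lem:roots-of-unity} the commutator restricts to a map
$$\mu_i \colon \rho_{i,j,k}(I_{G_i}) \times \psi_{i,j,k}(I_{H_i}) \to \{(\dim V_i)\text{-th roots of unity}\}$$
which is a homomorphism in each variable and, being continuous into a discrete target, is constant on connected components; it therefore descends to $\mu_i \colon \Omega_{i,j,k} \times \Lambda_{i,j,k} \to \{(\dim V_i)\text{-th roots of unity}\}$. Define
$$M \colon \Omega_{i,j,k} \to \hat{\Lambda}_{i,j,k}, \hspace{.5cm} \omega \mapsto \mu_i(\omega, \cdot).$$
The kernel of $M$ corresponds to $\{ x \in \rho_{i,j,k}(I_{G_i}) : xyx^{-1} = y \text{ for all } y \in \psi_{i,j,k}(I_{H_i})\}$, and the irreducibility of $\psi_{i,j,k}$ (together with Schur's lemma applied on $V_i$) forces this centralizer to equal $GL(B_i) = \rho_{i,j,k}(I_{G_i})^{\circ}$. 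Thus $\ker M$ is trivial, $M$ is injective, and $\Omega_{i,j,k}$ is abelian.

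Swapping the roles of $\rho_{i,j,k}(I_{G_i})$ and $\psi_{i,j,k}(I_{H_i})$ and running the same argument gives that $M' \colon \Lambda_{i,j,k} \to \hat{\Omega}_{i,j,k}$, $\lambda \mapsto \mu_i(\cdot, \lambda)$, is injective and that $\Lambda_{i,j,k}$ is abelian. Finally, by the contravariant exactness of Pontryagin duality on locally compact abelian groups, the injectivity of $M'$ upgrades to the surjectivity of $M$ (and vice versa), giving the claimed natural isomorphisms $\Omega_{i,j,k} \simeq \hat{\Lambda}_{i,j,k}$ and $\Lambda_{i,j,k} \simeq \hat{\Omega}_{i,j,k}$. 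I do not anticipate a real obstacle here: the only step with genuine structural content is the identification $\ker M \leftrightarrow GL(B_i)$ via irreducibility of $\psi_{i,j,k}$, which is precisely the same input that drove the single-orbit proof; the rest is bookkeeping.
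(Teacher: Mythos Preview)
Your proposal is correct and matches the paper's approach: the paper explicitly states that the proofs from Section~\ref{sec:explicit} carry over with only superficial modification, and your transcription of the Lemma~\ref{lem:dual-inertia-comp} argument---defining the commutator pairing $\mu_i$, descending it to $\Omega_{i,j,k}\times\Lambda_{i,j,k}$, identifying $\ker M$ with $GL(B_i)$ via the irreducibility of $\psi_{i,j,k}$, and then invoking Pontryagin duality---is exactly that.
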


\begin{prop}[Analog of Proposition \ref{prop:PGL(Ujk)-dps}]
    For all $1 \leq i \leq r$, $j \in \mathcal{J}_i$, and $k \in \mathcal{K}_i$, $( (I_{G_i})\vert_{L_{i,j,k}} /\mathbb{C}^{\times}, (I_{H_i})\vert_{L_{i,j,k}}/\mathbb{C}^{\times}  )$ is a dual pair in $PGL(B_i \otimes E_i \otimes L_i)$.
\end{prop}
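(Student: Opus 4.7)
The proof will proceed in essentially the same way as Proposition \ref{prop:PGL(Ujk)-dps}, with the extra index $i$ threaded through but no new ideas needed. The reason is that, by construction, each subspace $U_i$ was carved out precisely so that the pair $(G_i, H_i)$ exhibits the same ``single-orbit'' structure on $U_i$ that $(G,H)$ did in Sections \ref{sec:orbits}--\ref{sec:explicit}; the analogs of Proposition \ref{prop:I_G-and-I_H} and Lemma \ref{lem:dual-inertia-comp} stated in this subsection make this precise.

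Set $V_i := B_i \otimes E_i \otimes L_i$ and let $G_{i,j,k}$ denote the preimage in $GL(V_i)$ of $Z_{PGL(V_i)}\bigl((I_{H_i})\vert_{L_{i,j,k}}/\mathbb{C}^{\times}\bigr)$. The containment $(I_{G_i})\vert_{L_{i,j,k}}/\mathbb{C}^{\times} \subseteq Z_{PGL(V_i)}\bigl((I_{H_i})\vert_{L_{i,j,k}}/\mathbb{C}^{\times}\bigr)$ is clear. For the reverse inclusion, I would mimic the single-orbit argument by introducing the commutator map
$$\nu_i \colon G_{i,j,k} \times (I_{H_i})\vert_{L_{i,j,k}} \to \{(\dim V_i)\text{-th roots of unity}\}, \quad (x,y) \mapsto xyx^{-1}y^{-1}.$$
Since $\nu_i$ is continuous with discrete target, it is constant on connected components, so $(G_{i,j,k})^{\circ}$ literally centralizes $(I_{H_i})\vert_{L_{i,j,k}}$. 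The irreducibility of the $I_{H_i}$-action on $E_i \otimes L_i$ coming from the analog of Proposition \ref{prop:I_G_i-general} then forces $(G_{i,j,k})^{\circ} = GL(B_i)$. Setting $\Omega'_{i,j,k} := G_{i,j,k}/GL(B_i)$, the map $\nu_i$ descends to a pairing $\Omega'_{i,j,k} \times \Lambda_{i,j,k} \to \{(\dim V_i)\text{-th roots of unity}\}$ and hence to a group homomorphism $N_i \colon \Omega'_{i,j,k} \to \hat{\Lambda}_{i,j,k}$.

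Injectivity of $N_i$ is immediate since $\ker N_i$ corresponds to $(G_{i,j,k})^{\circ} = GL(B_i)$. For surjectivity, I would (as in the single-orbit proof) establish injectivity of the companion map $N'_i \colon \Lambda_{i,j,k} \to \hat{\Omega}'_{i,j,k}$ and then invoke Pontryagin duality. The kernel of $N'_i$ corresponds to $Z_{GL(V_i)}(G_{i,j,k}) \cap (I_{H_i})\vert_{L_{i,j,k}}$. Since $G_{i,j,k}$ contains $(I_{G_i})\vert_{L_{i,j,k}}$, its centralizer is contained in $GL(E_i)$; conversely, the containment $Z_{PGL(V_i)}\bigl((I_{H_i})\vert_{L_{i,j,k}}/\mathbb{C}^{\times}\bigr) \subseteq Z_{PGL(V_i)}(GL(E_i)/\mathbb{C}^{\times})$ forces $G_{i,j,k} \subseteq GL(B_i \otimes L_i)$, giving $GL(E_i) \subseteq Z_{GL(V_i)}(G_{i,j,k})$. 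Thus $\ker N'_i$ corresponds to $GL(E_i)$, so $N'_i$ is injective and $N_i$ is therefore an isomorphism.

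To finish, I would apply the analog of Lemma \ref{lem:dual-inertia-comp} stated immediately before the target statement to get
$$(I_{G_i})\vert_{L_{i,j,k}}/GL(B_i) = \Omega_{i,j,k} \simeq \hat{\Lambda}_{i,j,k} \simeq \Omega'_{i,j,k} = G_{i,j,k}/GL(B_i),$$
from which $(I_{G_i})\vert_{L_{i,j,k}} = G_{i,j,k}$ follows. Swapping the roles of $(I_{G_i})\vert_{L_{i,j,k}}$ and $(I_{H_i})\vert_{L_{i,j,k}}$ and repeating the argument establishes the other centralizer equality. The only place where anything could go wrong is in the irreducibility claim used to identify $(G_{i,j,k})^{\circ}$ with $GL(B_i)$, but this is precisely ensured by the setup: the construction of $U_i$, $B_i$, $E_i$, $L_i$ was designed so that $\psi_{j,k} \vert_{L_{i,j,k}}$ (in the notation of the single-orbit case) is an irreducible representation, making the Schur-type argument go through verbatim. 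Hence the whole proof is routine and the statement holds.
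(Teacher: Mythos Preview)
Your proposal is correct and follows essentially the same approach as the paper. The paper does not spell out a separate proof for this analog; it simply remarks that ``the proofs from Section \ref{sec:explicit} can be carried over with only superficial modification,'' and the argument you have written is precisely that carried-over version of the proof of Proposition \ref{prop:PGL(Ujk)-dps} with the extra index $i$ threaded through.
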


\begin{prop}[Analog of Lemma \ref{lem:omegas-are-all-the-same}, Corollaries \ref{cor:reducing-to-PGL(L)-dp} and \ref{cor:|L|=dimL}, and Propositions \ref{prop:Omega=Omega-hat} and \ref{prop:omega=LxL}]
    For all $1 \leq i \leq r$, $j \in \mathcal{J}_i$, and $k \in \mathcal{K}_i$, we have the following:
    \begin{itemize}
        \item There are natural isomorphisms $\Omega_{i,j,k} \simeq \Omega_{i,0,0} =: \Omega_i$ and $\hat{\Omega}_{i,j,k} \simeq \hat{\Omega}_{i,0,0}$.
        \item $(\Omega_i , \hat{\Omega}_i)$ is a dual pair in $PGL(L_i)$.
        \item $\Omega_i$ equals its dual: $\Omega_i = \hat{\Omega}_i$.
        \item $\Omega_i$ can be written as $\Omega_i = \mathcal{L}_i \times \hat{\mathcal{L}}_i$ for some finite abelian group $\mathcal{L}_i$ with $\vert \mathcal{L}_i \vert = \dim L_i$.
    \end{itemize}
\end{prop}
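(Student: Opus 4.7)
The plan is to reduce each of the four claims for a fixed index $i$ to the corresponding single-orbit statement already established in Section \ref{sec:explicit}, by restricting the entire apparatus to the summand $U_i$. Since $G^{\circ} = \prod_{i'} G_{i'}^{\circ}$ and $H^{\circ} = \prod_{i'} H_{i'}^{\circ}$ act blockwise, all of the relevant constructions---inertia subgroups, distinguished lifts, cocycles, multiplicity spaces---respect the $U_i$-decomposition. In particular, the $i$-th block inherits a genuine single-orbit dual-pair structure with $\mathcal{J}_i$, $\mathcal{K}_i$, $L_i$ playing the roles of $\mathcal{J}$, $\mathcal{K}$, $L$, so the arguments of Section \ref{sec:explicit} apply verbatim after this restriction.

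For the first bullet I would establish the multi-orbit analog of Corollary \ref{cor:inertia-reps-jk}: the $I_{G_i}$-irreducibles on $L_{i,j,k}$ differ from those on $L_{i,0,k}$ only by a scalar twist by $\mu(\cdot, \delta_{j_{i,\ell}})^{-1}$, which is invisible in $PGL(L_i)$. This yields $\Omega_{i,j,k} \simeq \Omega_{i,0,k}$, and the symmetric argument on the $H_i$-side together with double duality via the just-stated analog of Lemma \ref{lem:dual-inertia-comp} chains these isomorphisms to $\Omega_{i,j,k} \simeq \Omega_{i,0,0}$. The second bullet then follows immediately: the preceding analog of Proposition \ref{prop:PGL(Ujk)-dps} specialized at $(j,k) = (0,0)$ supplies a dual pair in $PGL(B_i \otimes E_i \otimes L_i)$, and since the identity components $GL(B_i)$ and $GL(E_i)$ of the two members already centralize each other inside $GL(B_i \otimes E_i \otimes L_i)$, quotienting both members by them leaves $(\Omega_i, \hat{\Omega}_i)$ as a dual pair in $PGL(L_i)$.

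For the third bullet I would copy the cocycle argument from Proposition \ref{prop:Omega=Omega-hat}: for $\gamma_1, \gamma_2 \in I_{G_i}/G_i^{\circ}$, the relation $\wtilde{\gamma_1}\wtilde{\gamma_2} = s_{G_i}(\gamma_1, \gamma_2)\wtilde{\gamma_1 \gamma_2}$ with $s_{G_i}$ valued in $Z(G_i^{\circ})$ forces the commutator of lifts, when restricted to $L_{i,0,0}$, to lie in $\mathbb{C}^{\times}$. Hence $\Omega_i \subseteq Z_{PGL(L_i)}(\Omega_i) = \hat{\Omega}_i$, and symmetry closes the loop. For the structure $\Omega_i = \mathcal{L}_i \times \hat{\mathcal{L}}_i$, the commutator of lifts descends to a nondegenerate symplectic pairing on $\Omega_i$ valued in $(\dim L_i)$-th roots of unity, exactly as in Proposition \ref{prop:omega=LxL}, so the iterative extraction of hyperbolic planes and their orthogonal complements applies unchanged. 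Finally, for $|\mathcal{L}_i| = \dim L_i$ I would re-run the double-restriction computation of Corollary \ref{cor:|L|=dimL}, comparing $\Res^{H_i}_{H_i^{\circ}} \Ind_{H_i^{\circ}}^{H_i}(\varepsilon_i)$ as computed via Clifford with its computation through the $I_{H_i}$-irreducibles; the $i$-th analog of Corollary \ref{cor:I_G/G0=JxOmega} then rewrites $|I_{H_i}/H_i^{\circ}|$ as $|\mathcal{L}_i|^2 \cdot |\mathcal{K}_i|$, forcing $|\mathcal{L}_i| = \dim L_i$.

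The main obstacle I anticipate is confirming that restriction to $U_i$ truly reproduces the single-orbit picture---specifically, that $(I_{G_i})_d |_{L_{i,j,k}}$ agrees with the restriction of $(I_G)_d$ to the same space, and that the restricted cocycle $s_{G_i}$ really takes values in $Z(G_i^{\circ})$ rather than in some larger ``diagonal'' subgroup of $Z(G^{\circ})$. Both should follow from the blockwise action of $G^{\circ}$ together with the multi-orbit analog of Proposition \ref{prop:I_G-and-I_H} stated immediately above the claim; once this bookkeeping is pinned down, the rest of the proof is a line-by-line transcription of the single-orbit arguments from Section \ref{sec:explicit}.
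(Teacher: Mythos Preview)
Your proposal is correct and matches the paper's approach exactly: the paper does not give a separate proof but simply states that ``the proofs from Section \ref{sec:explicit} can be carried over with only superficial modification,'' and your plan---restrict everything to the summand $U_i$ so that $(G_i, H_i)$ sits in the single-orbit framework, then rerun the arguments of Lemma \ref{lem:omegas-are-all-the-same}, Corollary \ref{cor:reducing-to-PGL(L)-dp}, Propositions \ref{prop:Omega=Omega-hat} and \ref{prop:omega=LxL}, and Corollary \ref{cor:|L|=dimL} verbatim---is precisely that carry-over. The bookkeeping concern you flag (that $(I_{G_i})_d = (I_G)_d|_{U_i}$ and that $s_{G_i}$ is $Z(G_i^\circ)$-valued) is handled by the identifications $(G_i)_d = (G_d)|_{U_i}$ and $(I_{G_i})_d = (I_G)_d|_{U_i}$ recorded just before Proposition \ref{prop:I_G_i-general}.
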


Set $J_i := L^2( \mathcal{J}_i , \mathbb{C} )$ and $K_i := L^2 ( \mathcal{K}_i , \mathbb{C} )$.

\begin{thm}[Analog of Proposition \ref{prop:G_d-and-H_d}, Proposition \ref{prop:(I_G)_d-and-(I_H)_d}, and Theorem \ref{thm:G_d}] \label{thm:(G_i)_d}
For $1\leq i \leq r$, we have the following:
\begin{itemize}
    \item $(G_i)_d = (I_{G_i})_d \rtimes_{\mathbb{C}^{\times}} p_{K_i}^{-1}(\mathcal{K}_i) = \left [ (\mathbb{C}^{\times})^{\vert \mathcal{K}_i \vert} \times_{\mathbb{C}^{\times}} p_{L_i}^{-1} (\mathcal{L}_i \times \hat{\mathcal{L}}_i ) \times_{\mathbb{C}^{\times}} p_{J_i}^{-1} (\hat{\mathcal{J}}_i)  \right ] \rtimes_{\mathbb{C}^{\times}} p_{K_i}^{-1}(\mathcal{K}_i)$.
    \item $(H_i)_d = (I_{H_i})_d \rtimes_{\mathbb{C}^{\times}} p_{J_i}^{-1}(\mathcal{J}_i) = \left [ (\mathbb{C}^{\times})^{\vert \mathcal{J}_i \vert} \times_{\mathbb{C}^{\times}} p_{L_i}^{-1} (\mathcal{L}_i \times \hat{\mathcal{L}}_i ) \times_{\mathbb{C}^{\times}} p_{K_i}^{-1} (\hat{\mathcal{K}}_i)  \right ] \rtimes_{\mathbb{C}^{\times}} p_{J_i}^{-1}(\mathcal{J}_i)$.
    \item $G_i = [GL(B_i)^{\vert \mathcal{K}_i \vert} \rtimes (G_i)_d] / ( (\mathbb{C}^{\times})^{\vert \mathcal{K}_i \vert} )_{\antidiag}$.
    \item $H_i = [GL(E_i)^{\vert \mathcal{J}_i \vert} \rtimes (H_i)_d] / ( (\mathbb{C}^{\times})^{\vert \mathcal{J}_i \vert} )_{\antidiag}$.
\end{itemize}
\end{thm}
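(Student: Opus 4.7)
The plan is to replicate the argument of Proposition \ref{prop:G_d-and-H_d} in the multi-orbit setting, now restricted to the single $U_i$-block, and then to combine this with the already-established analog of Proposition \ref{prop:(I_G)_d-and-(I_H)_d} and with Proposition \ref{prop:bijection-extensions} to read off the full description of $G_i$ and $H_i$. By the identifications $(G_i)_d = G_d\vert_{U_i}$ and $(I_{G_i})_d = (I_G)_d\vert_{U_i} = I_{(G_i)_d}$ (and their $H$-analogs) noted just before the theorem, any computation we did inside $G$ and $H$ restricts faithfully to $U_i$, so the cited proofs should go through essentially verbatim.

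I would start by choosing, for each $k \in \mathcal{K}_i$, a lift $\wtilde{\gamma_{k}} \in (G_i)_d$ of the coset $\gamma_k \in \Gamma_i$, together with the $2$-cocycle $t_{G_i}\colon \mathcal{K}_i \times \mathcal{K}_i \to I_{G_i}/G_i^{\circ}$ defining the extension $1 \to I_{G_i}/G_i^{\circ} \to \Gamma_i \to \mathcal{K}_i \to 1$. Then, repeating the block-permutation computation from the proof of Proposition \ref{prop:G_d-and-H_d} with $B_i$, $L_i$, $\mathcal{J}_i$, $\mathcal{K}_i$ in place of $B$, $L$, $\mathcal{J}$, $\mathcal{K}$, and invoking the already-established analog of Proposition \ref{prop:I_G_i-general} to know $(I_{G_i})_d \subseteq GL(L_i)^{\vert \mathcal{J}_i\vert \cdot \vert \mathcal{K}_i\vert}$, I would show that $\wtilde{\gamma_k} = M_k P_k^{-1}$ for some $M_k \in GL(L_i)^{\vert \mathcal{J}_i\vert \cdot \vert \mathcal{K}_i\vert}$ and a block permutation $P_k$ realizing the $\mathcal{K}_i$-action. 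This produces a split short exact sequence $1 \to (I_{G_i})_d \to (G_i)_d \to \mathcal{K}_i \to 1$, yielding the first equality $(G_i)_d = (I_{G_i})_d \rtimes_{\mathbb{C}^{\times}} p_{K_i}^{-1}(\mathcal{K}_i)$; substituting the known description of $(I_{G_i})_d$ from the analog of Proposition \ref{prop:(I_G)_d-and-(I_H)_d} gives the second equality. The $H$-version is symmetric, obtained by interchanging the roles of $B_i,\mathcal{K}_i,K_i$ with $E_i,\mathcal{J}_i,J_i$.

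Finally, the expressions for $G_i$ and $H_i$ themselves follow from Proposition \ref{prop:bijection-extensions}(3): since $G_i^{\circ} = GL(B_i)^{\vert \mathcal{K}_i\vert}$ and $(G_i)_d$ has just been computed, we get $G_i = [G_i^{\circ} \rtimes (G_i)_d]/Z(G_i^{\circ})_{\antidiag} = [GL(B_i)^{\vert \mathcal{K}_i\vert} \rtimes (G_i)_d]/((\mathbb{C}^{\times})^{\vert \mathcal{K}_i\vert})_{\antidiag}$, and similarly for $H_i$.

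The only place that requires genuine care, rather than symbol-pushing, is the block-permutation step: one must check that the single-orbit computation of $\wtilde{\gamma_k}g_0\wtilde{\gamma_k}^{-1}$ still yields a pure block permutation when we restrict to $U_i$, i.e.~that nothing from the other orbits $U_{i'}$ ($i' \neq i$) leaks in. This is not really an obstacle because $G_i = G\vert_{U_i}$ preserves $U_i$ by construction and the cocycle $t_{G_i}$ lands in $I_{G_i}/G_i^{\circ}$, so the same centralizer identity $Z_{GL(F_i)^{\vert \mathcal{J}_i\vert}}(GL(B_i)^{\vert \mathcal{K}_i\vert}) = GL(L_i)^{\vert \mathcal{J}_i\vert \cdot \vert \mathcal{K}_i\vert}$ kicks in, but this is the step whose verification I would write out carefully rather than wave at.
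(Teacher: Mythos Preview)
Your proposal is correct and matches the paper's approach exactly. The paper does not write out a separate proof here: it simply states that the proofs from Section~\ref{sec:explicit} ``can be carried over with only superficial modification,'' and your sketch is precisely that carry-over, making explicit the substitution $B,L,\mathcal{J},\mathcal{K} \leadsto B_i,L_i,\mathcal{J}_i,\mathcal{K}_i$ and invoking the same auxiliary results (Propositions~\ref{prop:I_G_i-general}, \ref{prop:bijection-extensions}, and the analog of Proposition~\ref{prop:(I_G)_d-and-(I_H)_d}) in the same order.
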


\begin{cor}[Analog of Corollaries \ref{cor:Gamma=I_GxK} and \ref{cor:I_G/G0=JxOmega}]
For $1 \leq i \leq r$, we have the following:
\begin{itemize}
    \item $\Gamma_i = I_{G_i}/G_i^{\circ} \times \mathcal{K}_i = \Omega_i \times \hat{\mathcal{J}}_i \times  \mathcal{K}_i$.
    \item $\hat{\Gamma}_i = I_{H_i}/H_i^{\circ} \times \mathcal{J}_i = \Omega_i \times \hat{\mathcal{K}}_i \times  \mathcal{J}_i$.
\end{itemize}    
\end{cor}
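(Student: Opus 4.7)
The plan is to carry over the proofs of Corollaries \ref{cor:Gamma=I_GxK} and \ref{cor:I_G/G0=JxOmega} essentially verbatim, using the $i$-indexed analogs established immediately beforehand. I will work out the $\Gamma_i$ case in detail; the $\hat{\Gamma}_i$ case is obtained by swapping the roles of $G_i$ and $H_i$ (and $\mathcal{J}_i$ and $\mathcal{K}_i$).

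First I would invoke Proposition \ref{prop:bijection-extensions} applied to the extension $1 \to G_i^{\circ} \to G_i \to \Gamma_i \to 1$. Since $G_i^{\circ}$ is a product of general linear groups (being $Z_{GL(U_i)}(H_i)$, by the identity-component computation in the paragraph preceding Proposition \ref{prop:Gamma-i-isos}, together with Corollary \ref{cor:identity-components-centralizers} applied factor-by-factor), the proposition yields
\[
G_i = [G_i^{\circ} \rtimes (G_i)_d]/Z(G_i^{\circ})_{\antidiag}.
\]
Moreover, \cite[Proposition 3.1]{Disconnected} gives $G_i^{\circ} \cap (G_i)_d = Z(G_i^{\circ})$, so that
\[
\Gamma_i = G_i/G_i^{\circ} = (G_i)_d/Z(G_i^{\circ}).
\]

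Next, I would apply Theorem \ref{thm:(G_i)_d}, which is the multi-orbit analog of Proposition \ref{prop:G_d-and-H_d}. Its first clause gives $(G_i)_d = (I_{G_i})_d \rtimes_{\mathbb{C}^{\times}} p_{K_i}^{-1}(\mathcal{K}_i)$, and quotienting by $Z(G_i^{\circ})$ turns the $\rtimes_{\mathbb{C}^{\times}}$ into an honest direct product:
\[
(G_i)_d/Z(G_i^{\circ}) = (I_{G_i})_d/Z(G_i^{\circ}) \times \mathcal{K}_i.
\]
Now Proposition \ref{prop:I_G_i-general} (the multi-orbit analog of Proposition \ref{prop:I_G-and-I_H}) identifies $I_{G_i} = [GL(B_i)^{\vert \mathcal{K}_i \vert} \times (I_{G_i})_d]/((\mathbb{C}^{\times})^{\vert \mathcal{K}_i \vert})_{\antidiag}$, so quotienting by $G_i^{\circ} = GL(B_i)^{\vert \mathcal{K}_i \vert}$ gives $I_{G_i}/G_i^{\circ} = (I_{G_i})_d/Z(G_i^{\circ})$. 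Combining, we obtain $\Gamma_i = I_{G_i}/G_i^{\circ} \times \mathcal{K}_i$.

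To get the second equality, I would use Theorem \ref{thm:(G_i)_d} once more: its description of $(I_{G_i})_d$ shows
\[
(I_{G_i})_d/Z(G_i^{\circ}) = \bigl[p_{L_i}^{-1}(\mathcal{L}_i \times \hat{\mathcal{L}}_i) \times_{\mathbb{C}^{\times}} (\mathbb{C}^{\times})^{\vert \mathcal{K}_i \vert} \times_{\mathbb{C}^{\times}} p_{J_i}^{-1}(\hat{\mathcal{J}}_i)\bigr]\big/Z(G_i^{\circ}) = \Omega_i \times \hat{\mathcal{J}}_i,
\]
where the last step invokes the identification $\Omega_i = \mathcal{L}_i \times \hat{\mathcal{L}}_i$ from the previously-stated analog of Proposition \ref{prop:omega=LxL}. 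Assembling everything gives $\Gamma_i = \Omega_i \times \hat{\mathcal{J}}_i \times \mathcal{K}_i$. The $\hat{\Gamma}_i$ case follows identically from the $(H_i)_d$ and $(I_{H_i})_d$ clauses of Theorem \ref{thm:(G_i)_d} and Proposition \ref{prop:I_G_i-general}.

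There is no genuine obstacle here: the content is entirely in the auxiliary results (Proposition \ref{prop:I_G_i-general}, Theorem \ref{thm:(G_i)_d}, and the structure of $\Omega_i$), which are stated immediately before this corollary as direct analogs of the single-orbit results. The only thing to be careful about is that the semidirect-product decompositions become direct products after modding out by $Z(G_i^{\circ})$, which is precisely the role played by the antidiagonal embeddings in the definitions of $\times_{\mathbb{C}^{\times}}$ and $\rtimes_{\mathbb{C}^{\times}}$.
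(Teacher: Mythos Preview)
Your proposal is correct and follows exactly the approach the paper intends: the paper provides no explicit proof here, simply stating that the single-orbit arguments ``can be carried over with only superficial modification,'' and you have faithfully reproduced the proofs of Corollaries~\ref{cor:Gamma=I_GxK} and~\ref{cor:I_G/G0=JxOmega} with the appropriate $i$-indexed analogs (Proposition~\ref{prop:I_G_i-general} and Theorem~\ref{thm:(G_i)_d}) substituted in.
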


\begin{thm} \label{thm:single-orbit-reduction}
For $1 \leq i \leq r$, we have the following:
\begin{itemize}
    \item The pair $( (G_i)_d , (H_i)_d )$ descends to a single-orbit dual pair in $PGL(L_i \otimes J_i \otimes K_i )$ with component groups $(\Gamma, \hat{\Gamma})$.
    \item The pair $( G_i, H_i )$ descends to a single-orbit dual pair in $PGL(U_i)$ with component groups $(\Gamma, \hat{\Gamma})$.
\end{itemize}
\end{thm}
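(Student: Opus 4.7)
The plan is to prove both bullets by invoking Theorem \ref{thm:single-orbit-general} with appropriate choices of input data, using the explicit structural descriptions of $(G_i)_d$, $(H_i)_d$, $G_i$, and $H_i$ already established in Theorem \ref{thm:(G_i)_d}. All of the serious work has been carried out in the multi-orbit analogs of the results from Sections \ref{sec:reps-of-I_G-and-I_H}--\ref{sec:explicit}; what remains is essentially pattern matching.

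For the first bullet, I would apply Theorem \ref{thm:single-orbit-general} with $\mathcal{L} = \mathcal{L}_i$, $\mathcal{J} = \mathcal{J}_i$, and $\mathcal{K} = \mathcal{K}_i$. The groups called $G_1'$ and $G_2'$ in that theorem are then literally the expressions for $(G_i)_d$ and $(H_i)_d$ given in Theorem \ref{thm:(G_i)_d}. Thus Theorem \ref{thm:single-orbit-general} immediately yields that $((G_i)_d,(H_i)_d)$ descends to a dual pair in $PGL(L_i \otimes J_i \otimes K_i)$ with component groups $\mathcal{L}_i \times \hat{\mathcal{L}}_i \times \hat{\mathcal{J}}_i \times \mathcal{K}_i$ and $\mathcal{L}_i \times \hat{\mathcal{L}}_i \times \mathcal{J}_i \times \hat{\mathcal{K}}_i$. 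By the stated analog of Corollary \ref{cor:I_G/G0=JxOmega} for $\Gamma_i$ and $\hat{\Gamma}_i$, these are precisely $\Gamma_i$ and $\hat{\Gamma}_i$, which are canonically isomorphic to $\Gamma$ and $\hat{\Gamma}$ via Proposition \ref{prop:Gamma-i-isos}.

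For the second bullet, I would further take $B = B_i$ and $E = E_i$ in Theorem \ref{thm:single-orbit-general}; the groups $G_1$ and $G_2$ from that theorem are then, by Theorem \ref{thm:(G_i)_d}, literally $G_i$ and $H_i$. So Theorem \ref{thm:single-orbit-general} produces a dual pair in $PGL(B_i \otimes E_i \otimes L_i \otimes J_i \otimes K_i)$, which is $PGL(U_i)$ after identifying $U_i = (B_i \otimes E_i \otimes L_i)^{\oplus |\mathcal{J}_i|\cdot |\mathcal{K}_i|} \simeq B_i \otimes E_i \otimes L_i \otimes J_i \otimes K_i$. The single-orbit property is automatic: the subspace $U_i$ was defined precisely to carry a single $\hat{\Gamma}$-orbit of $G$-irreducibles (and by the constructions in the subsections that follow, a single orbit of each of the other three flavors of irreducible), so the restriction of the isotypic decomposition to $U_i$ realizes a single-orbit dual pair in the sense of Subsection \ref{subsec:G-irred-reps}.

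I do not anticipate a substantial obstacle; the main point requiring a small amount of care is verifying that the canonical isomorphisms $\Gamma_i \xrightarrow{\sim} \Gamma$ and $\hat{\Gamma}_i \xrightarrow{\sim} \hat{\Gamma}$ from Proposition \ref{prop:Gamma-i-isos} are compatible with the component group decompositions $\mathcal{L}_i \times \hat{\mathcal{L}}_i \times \hat{\mathcal{J}}_i \times \mathcal{K}_i$ and $\mathcal{L}_i \times \hat{\mathcal{L}}_i \times \mathcal{J}_i \times \hat{\mathcal{K}}_i$ produced by Theorem \ref{thm:single-orbit-general}. This follows by tracking how $\mathcal{J}_i, \mathcal{K}_i$, and $\mathcal{L}_i$ were defined (as quotients of $\hat{\Gamma}$ and $\Gamma$ by inertia subgroups, and via the symplectic decomposition of $\Omega_i$, respectively), all of which are intrinsic to the restriction data on $U_i$ and hence transport correctly along the isomorphisms from Proposition \ref{prop:Gamma-i-isos}.
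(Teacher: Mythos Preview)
Your proposal is correct and follows essentially the same approach as the paper: the paper's proof is a one-liner citing exactly Theorem \ref{thm:(G_i)_d}, Theorem \ref{thm:single-orbit-general}, and Proposition \ref{prop:Gamma-i-isos}, and you have simply spelled out how these three results combine.
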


\begin{proof}
This follows immediately from Theorem \ref{thm:(G_i)_d}, Theorem \ref{thm:single-orbit-general}, and Proposition \ref{prop:Gamma-i-isos}.
\end{proof}

\begin{thm} \label{thm:general-is-of-the-right-form}
    The pair $(G,H)$ is of the form described in Subsection \ref{subsec:general-construction} and Theorem \ref{thm:general-construction}.
\end{thm}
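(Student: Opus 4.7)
The plan is to observe that essentially all the pieces of the construction in Subsection \ref{subsec:general-construction} have already been assembled by the preceding work; this theorem is really a bookkeeping statement that ties them together. By Theorem \ref{thm:single-orbit-reduction}, each pair $(G_i, H_i)$ descends to a single-orbit dual pair in $PGL(U_i)$ of the form described in Theorem \ref{thm:single-orbit-general}. By Proposition \ref{prop:Gamma-i-isos}, we have the required system of isomorphisms $q_{i,i'}\colon \Gamma_i \xrightarrow{\sim} \Gamma_{i'}$ and $u_{i,i'}\colon \hat{\Gamma}_i \xrightarrow{\sim} \hat{\Gamma}_{i'}$, and these satisfy both the $\mu$-compatibility condition $\mu_i(\gamma_i,\delta_i) = \mu_{i'}(q_{i,i'}(\gamma_i), u_{i,i'}(\delta_i))$ and the cocycle conditions $q_{i',i''}\circ q_{i,i'} = q_{i,i''}$, $u_{i',i''}\circ u_{i,i'} = u_{i,i''}$.

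What remains is to verify that $G$ and $H$ themselves are the specific subgroups of $G_1 \times \cdots \times G_r$ and $H_1 \times \cdots \times H_r$ singled out by the construction. First I would recall that since each $U_i$ is $G$-invariant and $H$-invariant (by construction as a sum of isotypic components for $G^{\circ}$, $G$, $H^{\circ}$, and $H$), the restriction maps $g \mapsto (g\vert_{U_1}, \ldots, g\vert_{U_r})$ and $h \mapsto (h\vert_{U_1}, \ldots, h\vert_{U_r})$ give natural inclusions
$$G \hookrightarrow G_1 \times \cdots \times G_r \hspace{.5cm} \text{ and } \hspace{.5cm} H \hookrightarrow H_1 \times \cdots \times H_r.$$
Moreover, from the identities $G^{\circ} = G_1^{\circ} \times \cdots \times G_r^{\circ}$ and $H^{\circ} = H_1^{\circ} \times \cdots \times H_r^{\circ}$ established earlier in this subsection, these inclusions are equalities on identity components.

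Next I would invoke Corollary \ref{cor:q_i,i'-Gamma-picture}, which (together with Proposition \ref{prop:Gamma-i-isos}) identifies the image of $G$ in the quotient $\Gamma_1 \times \cdots \times \Gamma_r$ as precisely the ``diagonal''
$$\Gamma = \{(\gamma_1, q_{1,2}(\gamma_1), \ldots, q_{1,r}(\gamma_1))\}_{\gamma_1 \in \Gamma_1},$$
and similarly for $H$ and $\hat{\Gamma}$. Combining this identification of component groups with the equality of identity components then pins down $G$ and $H$ exactly as the preimages in $G_1\times \cdots \times G_r$ and $H_1\times \cdots \times H_r$ of $\Gamma$ and $\hat{\Gamma}$. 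This is exactly the description of the construction in Subsection \ref{subsec:general-construction}, so by Theorem \ref{thm:general-construction} we conclude that $(G,H)$ is of the prescribed form.

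The only subtlety worth highlighting — and the closest thing to a real obstacle — is ensuring that no information is lost in passing between $G$ and the pair $(G^{\circ}, \Gamma)$: that is, that a subgroup of $G_1\times \cdots \times G_r$ containing $G^{\circ}$ is determined by its image in the component quotient. Since $G^{\circ}$ is open in both $G$ and $G_1\times \cdots \times G_r$, this is immediate. The remainder of the argument is formal: all of the nontrivial content — that the $(G_i,H_i)$ are single-orbit dual pairs, that the component groups are dual finite abelian groups glued diagonally by the $q_{i,i'}$ and $u_{i,i'}$, and that $\mu$ respects this gluing — has already been proved.
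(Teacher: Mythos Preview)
Your proposal is correct and takes essentially the same approach as the paper, which simply states that the result follows immediately from Theorem \ref{thm:single-orbit-reduction} and Corollary \ref{cor:q_i,i'-Gamma-picture}. You have merely made explicit the bookkeeping that the paper leaves implicit, invoking Proposition \ref{prop:Gamma-i-isos} (on which Corollary \ref{cor:q_i,i'-Gamma-picture} already depends) and spelling out why the identity-component and component-group data together determine $G$ and $H$ inside the product.
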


\begin{proof}
    This follows immediately from Theorem \ref{thm:single-orbit-reduction} and Corollary \ref{cor:q_i,i'-Gamma-picture}.
\end{proof}

\section{Conclusion: The ``ingredients" that uniquely determine a dual pair in $PGL(U)$}

\subsection{Single-orbit dual pairs}

By Theorems \ref{thm:single-orbit-general} and \ref{thm:G_d}, a single-orbit dual pair is uniquely specified by the following:
\begin{itemize}
    \item finite-dimensional complex vector spaces $B$ and $E$, and
    \item finite abelian groups $\mathcal{L}$, $\mathcal{J}$, and $\mathcal{K}$.
\end{itemize}
Set $L = L^2(\mathcal{L},\mathbb{C})$, $J = L^2(\mathcal{J},\mathbb{C})$, and $K = L^2(\mathcal{K},\mathbb{C})$. Additionally, set
$$G = \left [ GL(B)^{\vert \mathcal{K} \vert} \times_{\mathbb{C}^{\times}} p_L^{-1}(\mathcal{L} \times \hat{\mathcal{L}}) \times_{\mathbb{C}^{\times}} p_J^{-1}( \hat{\mathcal{J}})  \right ] \rtimes_{\mathbb{C}^{\times}} p_K^{-1}(\mathcal{K})$$
and
$$H = \left [ GL(E)^{\vert \mathcal{J} \vert} \times_{\mathbb{C}^{\times}} p_L^{-1}(\mathcal{L} \times \hat{\mathcal{L}}) \times_{\mathbb{C}^{\times}} p_K^{-1}(\hat{\mathcal{K}})  \right ] \rtimes_{\mathbb{C}^{\times}} p_J^{-1}(\mathcal{J}).$$
Then $(G,H)$ descends to a dual pair in $PGL(B \otimes E \otimes L \otimes J \otimes K)$ (Theorem \ref{thm:single-orbit-general}), and every single-orbit dual pair is of this form (Theorem \ref{thm:G_d}).

\subsection{Multi-orbit dual pairs}

By Theorems \ref{thm:general-construction} and \ref{thm:general-is-of-the-right-form}, an $r$-orbit dual pair is uniquely specified by the following:
\begin{itemize}
    \item a finite abelian group $\Gamma$;
    \item for each $1 \leq i \leq r$, a finite-dimensional complex vector space $U_i$, a pair of subgroups $(G_i,H_i)$ in $GL(U_i)$ that descend to a single-orbit dual pair in $PGL(U_i)$, and an isomorphism $q_{i} \colon \Gamma \xrightarrow{\sim} \Gamma_i$ (where $\Gamma_i := G_i/G_i^{\circ}$).
\end{itemize}
Set $U = \bigoplus_{i = 1}^{r} U_i$. For each $1 \leq i \leq r$, let $u_{i} \colon \hat{\Gamma} \rightarrow \hat{\Gamma_i}$ be such that $u_{i}(\delta)( q_{i}(\gamma) ) = \delta (\gamma)$ for all $\gamma \in \Gamma$ and $\delta \in \hat{\Gamma}$. Let $G$ and $H$ be the subgroups of $G_1 \times \cdots \times G_r$ and $H_1 \times \cdots \times H_r$ corresponding to  
$$\{ ( q_{1} (\gamma), \, q_{2}(\gamma), \, q_{3}(\gamma), \, \ldots, \, q_{r}(\gamma) ) \}_{\gamma \in \Gamma} \subset \Gamma_1 \times \cdots \times \Gamma_r$$
and
$$\{ ( u_{1}(\delta), \, u_{2}(\delta), \, u_{3}(\delta), \, \ldots, \, u_{r}(\delta) ) \}_{\delta \in \hat{\Gamma}} \subset \hat{\Gamma}_1 \times \cdots \times \hat{\Gamma}_r.$$
Then $(G,H)$ descends to a dual pair in $PGL(U)$ (Theorem \ref{thm:general-construction}), and every dual pair in $PGL(U)$ is of this form (Theorem \ref{thm:general-is-of-the-right-form}).

\section*{Acknowledgements}

The author would like to thank David Vogan for suggesting this topic of study and for his guidance throughout the project.

\printbibliography

\newpage

\section*{Index of Notation}

\renewcommand{\arraystretch}{1.45}

\begin{tabularx}{\linewidth}{c c X}
\nind{$U$}{a finite-dimensional complex vector space}{Section \ref{sec:introduction}} 
\\
\nind{$Z_G(\cdot)$}
{the centralizer of $\cdot$ in the group $G$}
{Section \ref{sec:introduction}}
\\
\nind{$\mf{z}_{\mf{g}}(\cdot)$}
{the centralizer of $\cdot$ in the Lie algebra $\mf{g}$}
{Section \ref{sec:introduction}}
\\
\nind{$p_V$}
{the natural projection $GL(V) \rightarrow PGL(V)$ (written as $p$ if $V$ is clear)}
{Section \ref{sec:prelims-for-showing-all}}
\\
\nind{$\times_{\mathbb{C}^{\times}}$}{for $\mathbb{C}^{\times} \subseteq H_i \subseteq GL(U_i)$ ($i=1,2$), $H_1 \times_{\mathbb{C}^{\times}} H_2 := [H_1 \times H_2]/(\mathbb{C}^{\times})_{\antidiag}$ in $GL(U_1 \otimes U_2)$}{Section \ref{sec:constructing-single-orbit}, Eq.~\ref{eq:weird-direct-product-def}} 
\\
\nind{$\rtimes_{\mathbb{C}^{\times}}$}{for $\mathcal{X} \subseteq PGL(X)$ a permutation group and $\mathbb{C}^{\times} \subseteq H \subseteq GL(W)^{\dim X}$ normalized by $p_X^{-1}(\mathcal{X})$, $H \rtimes_{\mathbb{C}^{\times}} p_X^{-1}(\mathcal{X}) := [H \rtimes p_X^{-1}(\mathcal{X})]/(\mathbb{C}^{\times})_{\antidiag}$ in $GL(W \otimes X)$}{Section \ref{sec:constructing-single-orbit}, Eq.~\ref{eq:weird-semidirect-product-def}} 
\\
\nind{$(\overline{G}, \overline{H})$}
{a dual pair in $PGL(U)$ (assumed to be ``single-orbit" in Sections \ref{sec:prelims-for-showing-all}--\ref{sec:explicit})}
{Section \ref{sec:prelims-for-showing-all}} 
\\
\nind{$(G, H)$}
{the preimages $G:=p^{-1}(\overline{G})$, $H:=p^{-1}(\overline{H})$}
{Section \ref{sec:prelims-for-showing-all}}
\\
\nind{$\Gamma$, $\Delta$}
{the quotient groups $\Gamma:=G/G^{\circ}$, $\Delta := H/H^{\circ}$}
{Section \ref{sec:prelims-for-showing-all}, Eq.~(\ref{eq:short-exact-sequence})}
\\
\nind{$\pi_G$, $\pi_H$}
{the quotient maps $\pi_G \colon G \rightarrow \Gamma$, $\pi_H \colon H \rightarrow \Delta$}
{Section \ref{sec:prelims-for-showing-all}, Eq.~(\ref{eq:short-exact-sequence})}
\\
\nind{$\mu$}
{ -- the map $\mu \colon G \times H \rightarrow \mathbb{C}^{\times}$, $(g,h) \mapsto ghg^{-1}h^{-1}$}
{Section \ref{sec:prelims-for-showing-all}, Eq.~(\ref{al:mu-G-H})}
\\
\nind{}
{ -- the map $\mu \colon \Gamma \times \Delta \rightarrow \mathbb{C}^{\times}$ induced by $\mu \colon G \times H \rightarrow \mathbb{C}^{\times}$}
{Section \ref{sec:prelims-for-showing-all}, Eq.~(\ref{eq:mu-gamma-delta})}
\\      
\nind{$\hat{\Gamma}$}
{the character group of $\Gamma$, naturally isomorphic to $\Delta$}
{Theorem \ref{thm:dual-finite-ab-gps}}
\\
\nind{$G_d$, $H_d$}
{the subgroups of $G$, $H$ defining distinguished automorphisms of $G^{\circ}$, $H^{\circ}$}
{Section \ref{sec:prelims-for-showing-all}}
\\
\nind{$s_G$, $s_H$}
{the 2-cocycles $s_G \colon \Gamma \times \Gamma \rightarrow Z(G^{\circ})$, $s_H \colon \hat{\Gamma} \times \hat{\Gamma} \rightarrow Z(H^{\circ})$ defining the extensions in Eq.~(\ref{eq:pairs-of-extensions})}
{Section \ref{sec:prelims-for-showing-all}, Eq.~(\ref{eq:s_G-and-s_H})}
\\
\nind{$\wtilde{(\cdot)}$}
{(depending on context) the lift $\wtilde{(\cdot)} \colon \Gamma \rightarrow G$ satisfying $\wtilde{\gamma} \wtilde{\gamma'} = s_G (\gamma , \gamma') \wtilde{\gamma \gamma'}$ or the lift $\wtilde{(\cdot)} \colon \hat{\Gamma} \rightarrow H$ satisfying $\wtilde{\delta} \, \wtilde{\delta'} = s_H (\delta , \delta') \wtilde{\delta \delta'}$}
{Subsec.~\ref{subsec:choose-Gamma-reps}, Eq.~(\ref{eq:def-tilde-lift})}
\\    
\nind{$(\prescript{h}{}{\varphi_j}, \prescript{h}{}{F_j})$}
{a $G$-irrep $(\varphi_j, F_j)$ twisted by $h \in H$: $\prescript{h}{}{\varphi_j}(g) := \varphi_j (hgh^{-1})$}
{Subsec.~\ref{subsec:hat-Gamma-action-on-G-irreps}, Eq.~(\ref{eq:phi_i-twisted-by-h})} 
\\
\nind{$(\prescript{\delta}{}{\varphi_j}, \prescript{\delta}{}{F_j})$}
{a $G$-irrep $(\varphi_j, F_j)$ twisted by $\delta \in \hat{\Gamma}$: $\prescript{\delta}{}{\varphi_j}(\cdot) := \varphi_j (\cdot) \otimes \mu (\cdot , \delta)^{-1}$}
{Subsec.~\ref{subsec:hat-Gamma-action-on-G-irreps}, Eq.~(\ref{eq:phi-twisted-by-delta})}  
\\
\nind{$I_H(\varphi_j)$}
{the inertia group $I_H(\varphi_j) := \{ h \in H \, : \, \prescript{h}{}{\varphi_j} \simeq \varphi_j \}$ of a $G$-irrep $\varphi_j$}
{Subsec.~\ref{subsec:hat-Gamma-action-on-G-irreps}} 
\\
\nind{$(\varphi , F)$}
{a fixed irreducible subrepresentation of the embedding of $G$ in $U$}
{Subsec.~\ref{subsec:G-irred-reps}} 
\\
\nind{$\mathcal{J}$}
{the quotient group $\mathcal{J}:= H/I_H(\varphi)$}
{Subsec.~\ref{subsec:G-irred-reps}} 
\\
\nind{$\{ \delta_j \}_{j \in \mathcal{J}}$}
{coset representatives for $I_H(\varphi)/H^{\circ}$ in $\hat{\Gamma}$ (and a lift $\mathcal{J} \rightarrow \hat{\Gamma}$ given by $j \mapsto \delta_j$)}
{Subsec.~\ref{subsec:G-irred-reps}} 
\\
\nind{$(\prescript{g}{}{\beta_i}, \prescript{g}{}{B_i})$}
{a $G^{\circ}$-irrep $(\beta_i, B_i)$ twisted by $g \in G$: $\prescript{g}{}{\beta_i}(g_0) := \beta_i (gg_0g^{-1})$}
{Subsec.~\ref{subsec:Gamma-action-on-G-circ}, Eq.~(\ref{eq:beta-twisted-by-g})}  
\\
\nind{$I_G(\beta_i)$}
{the inertia group $I_G(\beta_i) := \{ g \in G \, : \, \prescript{g}{}{\beta_i} \simeq \beta_i \}$ of a $G^{\circ}$-irrep $\beta_i$}
{Subsec.~\ref{subsec:Gamma-action-on-G-circ}} 
\\
\nind{$(\beta , B)$}
{a fixed $G^{\circ}$-irreducible subrepresentation of $\Res^G_{G^{\circ}} (F)$}
{Subsec.~\ref{subsec:Gamma-action-on-G-circ}} 
\\
\nind{$\mathcal{K}$}
{the quotient group $\mathcal{K}:= G/I_G(\beta)$}
{Subsec.~\ref{subsec:Gamma-action-on-G-circ}} 
\\
\nind{$\{ \gamma_k \}_{k \in \mathcal{K}}$}
{coset representatives for $I_G(\beta)/G^{\circ}$ in $\Gamma$ (and a lift $\mathcal{K} \rightarrow \Gamma$ given by $k \mapsto \gamma_k$)}
{Subsec.~\ref{subsec:Gamma-action-on-G-circ}} 
\\
\nind{$E_{\delta}$}
{the multiplicity space $E_{\delta} := \Hom_G(\prescript{\delta}{}{F}, U)$ for $\delta \in \hat{\Gamma}$}
{Subsec.~\ref{subsec:H0-irreps}, Eq.~(\ref{eq:E_delta})} 
\\ 
\nind{$\varepsilon_{\delta_j}$}
{the $H^{\circ}$-irreducible $\varepsilon_{\delta_j} \colon H^{\circ} \rightarrow GL(E_{\delta_j})$ defined as $h_0 \in H^{\circ} \mapsto (h_0)\vert_{E_{\delta_j}}$ for $j \in \mathcal{J}$}
{Subsec.~\ref{subsec:H0-irreps}} 
\\
\nind{$(\varepsilon, E)$}
{the $H^{\circ}$-irreducible $\varepsilon := \varepsilon_1 = \varepsilon_{\delta_0}$ with representation space $E:= E_1 = E_{\delta_0}$}
{Subsec.~\ref{subsec:H0-irreps}} 
\\
\nind{$A_{\wtilde{\gamma}}$}
{the multiplicity space $A_{\wtilde{\gamma}} := \Hom_{G^{\circ}}(\prescript{\wtilde{\gamma}}{}{B}, U)$ for $\gamma \in \Gamma$}
{Subsec.~\ref{subsec:define-H-irreps}, Eq.~(\ref{eq:A_gamma})} 
\\ 
\nind{$\alpha_{\wtilde{\gamma_k}}$}
{the $H$-irreducible $\alpha_{\wtilde{\gamma_k}} \colon H \rightarrow GL(A_{\wtilde{\gamma_k}})$ defined as $h \in H \mapsto (h)\vert_{A_{\wtilde{\gamma_k}}}$ for $k \in \mathcal{K}$}
{Subsec.~\ref{subsec:define-H-irreps}} 
\\
\nind{$(\alpha, A)$}
{the $H$-irreducible $\alpha := \alpha_{\wtilde{1}} = \alpha_{\wtilde{\gamma_0}}$ with representation space $A:= A_{\wtilde{1}} = A_{\wtilde{\gamma_0}}$}
{Subsec.~\ref{subsec:define-H-irreps}} 
\\
\nind{$(\prescript{g}{}{\alpha_i}, \prescript{g}{}{A_i})$}
{an $H$-irrep $(\alpha_i, A_i)$ twisted by $g \in G$: $\prescript{g}{}{\alpha_i}(h) := \alpha_i (ghg^{-1})$}
{Subsec.~\ref{subsec:Gamma-action-on-H-irreps}, Eq.~(\ref{eq:alpha-twisted-by-g})} 
\\ 
\nind{$(\prescript{\gamma}{}{\alpha_i}, \prescript{\gamma}{}{A_i})$}
{an $H$-irrep $(\alpha_i, A_i)$ twisted by $\gamma \in \Gamma$: $\prescript{\gamma}{}{\alpha_i}(\cdot) := \alpha_i (\cdot) \otimes \mu (\gamma , \cdot)$}
{Subsec.~\ref{subsec:Gamma-action-on-H-irreps}, Eq.~(\ref{eq:alpha-twisted-by-gamma})} 
\\
\nind{$I_G(\alpha_i)$}
{the inertia group $I_G(\alpha_i) := \{ g \in G \, : \, \prescript{g}{}{\alpha_i} \simeq \alpha_i \}$ of an $H$-irrep $\alpha_i$}
{Subsec.~\ref{subsec:Gamma-action-on-H-irreps}} 
\\
\nind{$I_G$}
{the inertia group $I_G := I_G(\beta) = I_G(\alpha)$}
{Subsec.~\ref{subsec:Gamma-action-on-H-irreps}, Eq.~(\ref{eq:I_G})} 
\\
\nind{$(\prescript{h}{}{\varepsilon_j}, \prescript{h}{}{E_j})$}
{an $H^{\circ}$-irrep $(\varepsilon_j, E_j)$ twisted by $h \in H$: $\prescript{h}{}{\varepsilon_j}(h_0) := \varepsilon_j (hh_0h^{-1})$}
{Subsec.~\ref{subsec:defining-hat-Gamma-action}, Eq.~(\ref{eq:epsilon-twisted-by-h})}  
\\
\nind{$I_H(\varepsilon_j)$}
{the inertia group $I_H(\varepsilon_j) := \{ h \in H \, : \, \prescript{h}{}{\varepsilon_j} \simeq \varepsilon_j \}$ of an $H^{\circ}$-irrep $\varepsilon_j$}
{Subsec.~\ref{subsec:defining-hat-Gamma-action}} 
\\
\nind{$I_H$}
{the inertia group $I_H := I_H(\varphi) = I_H(\varepsilon)$}
{Subsec.~\ref{subsec:defining-hat-Gamma-action}, Eq.~(\ref{eq:I_H})} 
\\
\nind{$L$}
{the vector space $L := \Hom_{G^{\circ}} (B,F)$, naturally isomorphic to $\Hom_{H^{\circ}}(E,A)$}
{Section \ref{sec:reps-of-I_G-and-I_H}} 
\\
\nind{$L_{j,k}$}
{the vector space $L_{j,k} := \Hom_{G^{\circ}} ( \prescript{\wtilde{\gamma_k}}{}{B}, \prescript{\delta_j}{}{F} ) \simeq_{\text{naturally}} \Hom_{H^{\circ}} ( \prescript{\wtilde{\delta_j}}{}{E}, \prescript{\gamma_k}{}{A} )$, naturally isomorphic to $L$ for all $j \in \mathcal{J}$, $k\in \mathcal{K}$}
{Section \ref{sec:reps-of-I_G-and-I_H}} 
\\
\nind{$(\rho_{j,k}, R_{j,k})$}
{the $I_G$-irreducible satisfying $\rho_{j,k} \preceq \Ind_{G^{\circ}}^{I_G} (\prescript{\wtilde{\gamma_k}}{}{\beta})$ and $\Ind_{I_G}^G (\rho_{j,k}) = \prescript{\delta_j}{}{\varphi}$}
{Subsec.~\ref{subsec:define-I_G-and-I_H-irreps}, Eq.~(\ref{eq:rho-jk-def})} 
\\
\nind{$(\psi_{j,k}, P_{j,k})$}
{the $I_H$-irreducible satisfying $\psi_{j,k} \preceq \Ind_{H^{\circ}}^{I_H} (\prescript{\wtilde{\delta_j}}{}{\varepsilon})$ and $\Ind_{I_H}^H (\psi_{j,k}) = \prescript{\gamma_k}{}{\alpha}$}
{Subsec.~\ref{subsec:define-I_G-and-I_H-irreps}, Eq.~(\ref{eq:psi-jk-def})} 
\\
\nind{$\Omega_{j,k}$, $\Lambda_{j,k}$}
{the quotient groups $\Omega_{j,k} := \rho_{j,k}(I_G)/GL(B) = (I_G)_d \vert_{L_{j,k}}/\mathbb{C}^{\times}$ and $\Lambda_{j,k} := \psi_{j,k}(I_H)/GL(E) = (I_H)_d\vert_{L_{j,k}}/\mathbb{C}^{\times}$}
{Subsec.~\ref{subsec:PGL(L)-dual-pairs}} 
\\
\nind{$\hat{\Omega}_{j,k}$}
{the character group of $\Omega_{j,k}$, naturally isomorphic to $\Lambda_{j,k}$}
{Lemma \ref{lem:dual-inertia-comp}} 
\\
\nind{$\Omega$}
{the group $\Omega := \Omega_{0,0}$, naturally isomorphic to $\Omega_{j,k}$ for all $j \in \mathcal{J}$, $k \in \mathcal{K}$}
{Subsec.~\ref{subsec:omegas-are-all-the-same}}
\\   
\end{tabularx}

\end{document}